\documentclass[11pt]{article}
\usepackage[a4paper,margin=1in]{geometry}
\usepackage{amsmath,amssymb,amsthm,mathtools}
\usepackage{enumitem}
\usepackage{array}
\usepackage[T1]{fontenc}
\usepackage{lmodern}
\usepackage{mathrsfs}
\usepackage{microtype}
\usepackage{hyperref}
\usepackage{aliascnt}
\usepackage[nameinlink,capitalise,noabbrev]{cleveref}
\hypersetup{colorlinks=false,pdfborder={0 0 0}}
\usepackage{xcolor}
\usepackage{seqsplit}

\newtheorem{theorem}{Theorem}[section]
\newaliascnt{lemma}{theorem}
\newtheorem{lemma}[lemma]{Lemma}
\aliascntresetthe{lemma}
\newaliascnt{proposition}{theorem}
\newtheorem{proposition}[proposition]{Proposition}
\aliascntresetthe{proposition}

\newaliascnt{corollary}{theorem}
\newtheorem{corollary}[corollary]{Corollary}
\aliascntresetthe{corollary}
\newaliascnt{conjecture}{theorem}

\aliascntresetthe{conjecture}
\theoremstyle{definition}
\newaliascnt{definition}{theorem}
\newtheorem{definition}[definition]{Definition}
\aliascntresetthe{definition}
\newaliascnt{remark}{theorem}
\newtheorem{remark}[remark]{Remark}
\aliascntresetthe{remark}
\crefname{theorem}{Theorem}{Theorems}
\crefname{lemma}{Lemma}{Lemmas}
\crefname{proposition}{Proposition}{Propositions}
\crefname{corollary}{Corollary}{Corollaries}
\crefname{conjecture}{Conjecture}{Conjectures}
\crefname{definition}{Definition}{Definitions}
\crefname{remark}{Remark}{Remarks}

\newcommand{\F}{\mathbb F}
\newcommand{\Z}{\mathbb Z}
\newcommand{\Alt}{\operatorname{Alt}}
\newcommand{\rad}{\operatorname{rad}}
\newcommand{\Gr}{\operatorname{Gr}}
\newcommand{\im}{\operatorname{im}}
\newcommand{\rk}{\operatorname{rank}}
\newcommand{\codim}{\operatorname{codim}}
\newcommand{\linspan}{\mathrm{span}}
\newcommand{\Ann}{\operatorname{Ann}}
\newcommand{\supp}{\operatorname{supp}}
\newcommand{\tr}{\operatorname{tr}}
\newcommand{\GL}{\operatorname{GL}}

\newcommand{\ad}{\operatorname{ad}}
\newcommand{\Sp}{\operatorname{Sp}}
\newcommand{\res}{\operatorname{res}}
\newcommand{\row}{\operatorname{row}}
\newcommand{\gr}{\operatorname{gr}}
\newcommand{\core}{\mathrm{core}}
\newcommand{\deep}{\mathrm{deep}}
\newcommand{\trans}[1]{{#1}^{\mathrm{t}}}
\newcommand{\cE}{\mathcal E}
\newcommand{\cF}{\mathcal F}
\newcommand{\cB}{\mathcal B}
\newcommand{\cC}{\mathcal C}
\newcommand{\cA}{\mathcal A}
\newcommand{\cH}{\mathcal H}
\newcommand{\cL}{\mathcal L}

\newcommand{\cP}{\mathcal P}
\newcommand{\cR}{\mathcal R}

\newcommand{\cX}{\mathcal X}
\newcommand{\cZ}{\mathcal Z}

\newcommand{\qbinom}[3]{\genfrac{[}{]}{0pt}{}{#1}{#2}_{#3}}

\newaliascnt{assumption}{theorem}

\aliascntresetthe{assumption}
\crefname{assumption}{Input}{Inputs}
\crefname{part}{Part}{Parts}
\DeclareMathOperator{\Aut}{Aut}
\DeclareMathOperator{\Out}{Out}
\DeclareMathOperator{\Inn}{Inn}

\newcommand{\cc}{\frac{2}{27}}

\title{
From almost tight $p$-group enumeration \\
to $2$-group domination among finite groups
}
\author{
Yinan Li\thanks{Wuhan University} 
\and 
Youming Qiao\thanks{University of Technology Sydney}
}
\date{\today}

\begin{document}
\maketitle

\begin{abstract}
Let $f(n)$ denote the number of isomorphism classes of groups of order $n$. The pioneering works of Higman (\emph{Proc. Lond. Math. Soc.}, 1960) and Sims (\emph{Proc. Lond. Math. Soc.}, 1965), with improvement by Newman and Seeley, establish that 
\[
\frac{2}{27}m^3-\frac{4}{9}m^2+\Omega(m)\leq \log_p f(p^m)\leq \frac{2}{27}m^3+O(m^{5/2}).
\]
In the same paper in 1965, Sims speculated on improving the error term ($O(m^{8/3})$ at that time) in the upper bound. To show that $f(p^m)\leq \frac{2}{27}m^3+O(m^2)$ is now known as Sims' conjecture.
Our first main result proves Sims' conjecture in a strong form as follows:
\[
\log_p f(p^m)\leq \frac{2}{27}m^3-\frac{4}{9}m^2+O(m).
\]

Techniques developed to prove the above lead to progress on some well-known open questions in group enumeration in affirmative, including
\begin{itemize}
	\item 2-group domination in finite group enumeration (Pyber, \emph{Eur. Congr. Math.}, 1996): Let $F(n)$ be the number of groups of order $\leq n$, and $F_2(n)$ be the number of $2$-groups of order $\leq n$. Then 
	$$
	\lim_{n\to\infty}\frac{F_2(n)}{F(n)} =1. 
	$$
	\item A question attributed to Higman and Erd\H{o}s (see Blackburn--Neumann--Venkataraman, \emph{Cambridge Tracts in Maths.}, 2007) when $m$ is large enough: 
	$$
	\text{for } 1<n<2^m, f(n)<f(2^m).
	$$
\end{itemize}

%
%
%

Our proof refines Sims' enumeration framework using graded Lie
brackets, constraints on deeper commutator coordinates, and incidence
and Fourier estimates for alternating forms. Normalisation by changes
of generating bases then determines the quadratic term in the upper bound.
We also prove that
all but a $p^{-\Omega(m^2)}$ fraction of groups of order $p^m$ have
a central elementary abelian Frattini subgroup. Combining this
concentration with the sharp enumeration bound of
Bl\"aser--Li--Qiao--Rogovskyy (\emph{arXiv}, 2025) for $p$-groups
of Frattini class $2$ yields the linear error term. The same presentation estimates give fixed-quotient
extension bounds which, together with Pyber's prescribed-Sylow
theorem and the concentration result, establish domination by $2$-groups of Frattini class $2$. 

\end{abstract}

\newpage 

\begingroup
\small
\paragraph{AI and concurrent work disclosure.} 
\begin{description}
    \item[9 October, 2026.] We began this project in April 2026 as a natural continuation of
our previous joint work with M.~Bl\"aser and A.~Rogovskyy
\cite{BLQR}. In August 2026, one of us obtained, through
interactions with GPT-5.5 in ChatGPT, an argument establishing
\[
\log_p f(p^m)=\frac{2}{27}m^3+O(m^2).
\]
Further work with GPT-5.5 and subsequently GPT-6 led to the
results of the present paper in September 2026.

Since August 2026, we have been checking the proofs and revising
the exposition to reflect our understanding of the arguments.
At present, we are convinced of the correctness of the proofs.
In particular, we have reworked the proofs in Parts~I and~II
through further discussions with ChatGPT. 

On 9 October 2026, we became aware of the preprint by Su and Xia
\cite{SuXia}, which contains overlapping enumeration and
class-$2$ concentration results, together with finer asymptotic
enumeration formulas for each fixed prime.
In their AI declaration, the authors state
that their work ``was carried out in collaboration with the
AI system GPT-6.'' Following the appearance of their paper,
we decided to release our manuscript as it stood on
9 October 2026. 
We will continue to refine the exposition,
particularly in the later parts, to make the arguments
clearer and easier to follow. 
\item[17 October, 2026.] We have updated the title and abstract, rewritten the first two sections, and polished Part III. We also announce further results on enumerating finite rings and commutative algebras. The manuscript remains a work in progress, and we will continue to revise and improve it.

GPT-5.5 in the statement on 9 October should be GPT-5.6 in retrospect.



%


\end{description}

\par\endgroup

\clearpage
\setcounter{tocdepth}{2}
\tableofcontents



\newpage

\section{Introduction}\label{intro:section}

\subsection{Enumerating \texorpdfstring{$p$}{p}-groups}\label{intro:main-results}

\paragraph{Enumerating finite groups.} This paper concerns the number of finite groups of a fixed order up to isomorphism. The monograph by Blackburn, Neumann,
and Venkataraman \cite{BNV} provides an excellent introduction
to this classical topic in finite group theory.


Let $f(n)$ be the number of isomorphism classes of finite groups of order $n$. Let $\mu(n)$ denote the highest power of a prime that divides $n$. The celebrated work of Pyber \cite{Pyber} establishes that 
\begin{equation}\label{eq:pyber}
f(n)\leq n^{\frac{2}{27}\mu(n)^2+O(\mu(n)^{5/3})}.
\end{equation}

The coefficient of $\mu(n)^2$ on the exponent is $\frac{2}{27}$, which is tight by considering $p$-groups. Indeed, $p$-groups are long regarded as the major contributing factor to the many isomorphic types of groups, and precise questions or conjectures were formulated by Higman and Erd\H{o}s \cite[Question 22.16]{BNV} and Pyber \cite[Conjecture 2.4]{Pyber1998} to reflect the intuition that ``most groups are $p$-groups.'' So we turn to review works on enumerating $p$-groups.

\paragraph{Enumerating $p$-groups by Higman and Sims.} 
The classical works of Higman \cite{Higman} and Sims \cite{Sims},
both with the main title \emph{Enumerating $p$-groups},
established that
\begin{equation}\label{eq: old estimate}
	p^{\frac{2}{27}m^3-\frac49m^2+\Omega(m)}
	\le f(p^m)
	\le p^{\frac{2}{27}m^3+O(m^{5/2})},
\end{equation}
where the asymptotics are taken as $m\to\infty$ and the implied constants in $O(\cdot)$ and $\Omega(\cdot)$ are absolute. More specifically, the lower bound in \cref{eq: old estimate} due to Higman \cite{Higman} is achieved at $p$-groups of Frattini class $2$. 
Recall that a $p$-group is of \emph{Frattini class at most $2$} if its Frattini subgroup is central and elementary abelian.
The upper bound in \cref{eq: old estimate} with a weaker error bound, that is $O(m^{8/3})$ instead of $O(m^{5/2})$, was first shown by Sims \cite{Sims}. The improvement to $O(m^{5/2})$ was by Newman and Seeley, whose proof was presented in \cite[Chapter~5]{BNV}.

For $n=p^m$, we have $\mu(n)=m$, and the lower bound in
\cref{eq: old estimate} gives
\[
n^{\frac{2}{27}\mu(n)^2-\frac49\mu(n)}
\le f(n)
\]
for sufficiently large $m$. Consequently, the coefficient
$\frac{2}{27}$ of $\mu(n)^2$ in the exponent of Pyber's upper
bound (\cref{eq:pyber}) is best possible.


\paragraph{Sims' conjecture.} In \cite[p.~153]{Sims}, Sims discussed the possibility of improving the error term in the upper bound, which was $O(m^{8/3})$ in his paper. He remarked that an error term of $O(m^2)$ ``should be possible'', and speculated that obtaining a smaller error, such as $o(m^2)$, would require estimating the average number of automorphisms of a group of order $p^m$, a task that ``appears to be a very difficult problem''.

Sims' paper \cite{Sims} appeared in 1965. The problem he raised---whether
the upper bound for $f(p^m)$ could be improved to
$p^{\frac{2}{27}m^3+O(m^2)}$---has long been regarded as a central
problem in group enumeration. It is known as Sims' conjecture \cite{Blackburn} in group enumeration
and is recorded as Question~22.3 in the monograph of Blackburn,
Neumann, and Venkataraman \cite{BNV}.

Blackburn's thesis \cite{Blackburn} provides important evidence
supporting Sims' conjecture. He proved that, for odd primes $p$,
the number of isomorphism classes of groups of order $p^m$
and nilpotency class at most $3$ is at most
$p^{\frac{2}{27}m^3+O(m^2)}$.

Echoing Sims' speculation that average number of automorphisms could be important in $p$-group enumeration, Bl\"aser, Li, Qiao, and Rogovskyy recently showed in \cite{BLQR} that the number of $p$-groups of Frattini class $2$ is 
$$
p^{\frac{2}{27}m^3-\frac49m^2+\Theta(m)}, 
$$
by improving the upper bound on the number of $p$-groups of Frattini class $2$ from $p^{\frac{2}{27}m^3+O(m)}$ to $p^{\frac{2}{27}m^3-\frac49m^2+O(m)}$. The main technical part of \cite{BLQR} tackles the average automorphism group orders of such groups.

\paragraph{Almost tight enumeration of $p$-groups.}
The main result of this paper is an almost tight upper bound for $f(p^m)$ as follows. 
\begin{theorem}\label{intro:sharp}
	Let $f(p^m)$ be the number of $p$-groups of order $p^m$ up to isomorphism. Then 
	\begin{equation}\label{intro:eq:sharp}
	 f(p^m)
	 \le p^{\frac{2}{27}m^3-\frac49m^2+O(m)}.
	\end{equation}
\end{theorem}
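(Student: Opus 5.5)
The plan is to split the groups of order $p^m$ into two classes: those of Frattini class at most $2$, and all the others. For the first class I will invoke the sharp bound of Bl\"aser--Li--Qiao--Rogovskyy \cite{BLQR}, which counts $p$-groups of Frattini class $2$ and gives $p^{\frac{2}{27}m^3-\frac49m^2+O(m)}$ directly. It therefore suffices to prove a concentration statement: the number of groups of order $p^m$ whose Frattini subgroup is not central elementary abelian is at most $p^{\frac{2}{27}m^3-\frac49m^2-\Omega(m^2)}$. In particular it is at most $p^{\frac{2}{27}m^3-\frac49m^2}$, and adding the two counts gives \cref{intro:eq:sharp}.

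For the concentration statement I will refine Sims' presentation-counting framework. Fix the rank $d=\dim G/\Phi(G)$. I will encode $G$ by a refined consistent power-commutator presentation along a central series through the lower exponent-$p$ series $P_1=G>P_2=\Phi(G)>P_3>\cdots$, with generating layers of dimensions $d=d_1,d_2,d_3,\dots$. The data consist of three pieces:
\begin{itemize}
\item the degree-one bracket $\Lambda^2\F_p^d\to P_2/P_3$ together with the $p$-power map, whose kernel is a subspace $W$ of $\Lambda^2 \F_p^d\oplus \F_p^d$ of codimension $d_2$;
\item the deeper coordinates, namely brackets into $P_3$ and the higher layers;
\item the tails, which record the relations among the deeper coordinates.
\end{itemize}

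The class-$2$ case corresponds to $P_3=1$. Its count, roughly $p^{d_2(\binom{d+1}{2}-d_2)}$ divided by the $|\GL_d|$ normalisation, is maximised near $d=m/3$ and gives $\frac2{27}m^3-\frac49m^2$. When $P_3\neq 1$, say $\dim P_3=e\ge1$, the deeper data are not free. Commutators with elements of $P_2$ and the brackets landing in $P_3$ must satisfy Jacobi and Hall--Witt constraints coming from the graded Lie bracket $\gr G$. As in Sims' argument, $P_3/P_4$ is spanned by brackets $[P_2,P_1]$, so its coordinates are determined by a bilinear map $(P_2/P_3)\times(P_1/P_2)\to P_3/P_4$ together with roughly $d_2\cdot e$ parameters. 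Meanwhile the number of free relation choices for the class-$2$ part drops, because the rank available to $P_2$ shrinks from $m-d$ to $m-d-e$. Comparing exponents, the total is $F(d,d_2)+(\text{deep parameters})$, and I expect a loss of at least $c\,e\,m^2-O(e\,m\cdot\text{stuff})$. Summed over $e\ge1$, this gives the saving $\Omega(m^2)$ below the quadratic term. The case where $d$ or $d_2$ is far from $m/3$, $2m/3$ is easy, since the cubic main term itself drops by $\Omega(m^2)$ after a quadratic deviation.

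The main obstacle is that the naive count of deeper coordinates is about $d\cdot d_2\cdot e$, which is of order $m^2 e$. This is the same order as the saving obtained from the rank shift, so a crude Sims-type bound only yields $O(m^{5/2})$-type errors. I will first try to cut the deeper parameters using the Jacobi constraint. The trilinear map $P_1\times P_1\times P_1\to P_3/P_4$ obtained by composing brackets must vanish on the alternating sum, which makes it a restriction on an alternating form in $\Lambda^2$ of a $d$-dimensional space. I will then count pairs (class-$2$ datum $W$, deep map) using an incidence/Fourier estimate over alternating forms: for most subspaces $W$ of codimension $d_2$, the space of compatible deep maps has dimension $O(m)$ per unit of $e$ rather than $\Theta(m^2)$. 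Exceptional $W$ form a $p^{-\Omega(m^2)}$ fraction by a character-sum bound. Finally I will divide by the basis-change normalisation ($\GL_d\times\GL_{d_2}$ orbits) only on the class-$2$ part, which keeps the $-\frac49m^2$ intact while the deep part is counted without normalisation. Summing over the $O(m^3)$ choices of the layer dimensions costs only $O(\log m)$ in the exponent.
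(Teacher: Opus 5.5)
Your top-level reduction is the same as the paper's: \cref{intro:sharp} follows by adding the \cite{BLQR} bound for $\mathfrak H_p$ to the concentration estimate \cref{intro:concentration}. The gap is that this concentration estimate is essentially the whole paper, and your sketch does not prove it. The decisive step (``for most $W$ the compatible deep maps have dimension $O(m)$ per unit of $e$; exceptional $W$ form a $p^{-\Omega(m^2)}$ fraction by a character-sum bound'') is stated as an expectation with no lemma behind it.

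The bookkeeping around that step also aims at the wrong data. The rank shift gives no saving at leading order. Lowering $d_2$ by $e$ saves about $\frac12 d^2e$; note the optimum is $d\approx 2m/3$, $d_2\approx m/3$, not $d\approx m/3$. But the $P_3$-tails of the $\binom d2$ commutators $[x_i,x_j]$ cost $\binom d2 e$. When $P_4=1$ these tails are central, cancel from every consistency check, and are genuinely free, so no Jacobi argument can cut them. This is why the paper keeps the full $\binom{r_1}{2}m_{\deep}$ and why its breadth saving vanishes in the top layer ($\mathfrak b_c=0$).

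The saving therefore has to come from constraining $W$ itself, which is your exceptional-$W$ idea. It must be quantified against the cost of $P_2/P_3\times P_1/P_2\to P_3/P_4$, which is up to $d\,d_2\,e$. The paper does this as follows:
\begin{itemize}
\item It restricts the second argument to a Sims core of rank $s$, so the cost is about $s\,d_2\,e$.
\item It uses Jacobi with a maximal-breadth core element $h^*$ to confine each outside--outside value of $L_1\times L_1\to L_2$ to a coset of $\ker\ad(h^*)$. This saves $\binom{r_1-s}{2}\mathfrak b_2$ with $\mathfrak b_2\ge s_3/s$ (\cref{p12:prop:count_leading_maps}).
\item It handles deeper layers by the adjoint-cocycle comparison of \cref{p12:thm:first-difference}, and optimizes the breadth trade-off in \cref{p3:lem:positive-depth}.
\end{itemize}
All of this needs $s=O(\sqrt m)$, that is, the saturated range $\binom{s-1}{2}<r_2$. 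The complementary range needs a separate count of alternating maps of large Sims rank: hollowing spaces, isotropic counts, Reed--Muller and Fourier estimates, and the binary endpoint $p=2$, $r_2=\binom{s-1}{2}$ (\cref{p12:cor:nonsaturated-group}). Your ``character-sum bound'' stands in for all of this without content.

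Your normalization step also fails as stated. Dividing a count of presentations by $|\GL_d|$ (or $|\GL_d\times\GL_{d_2}|$) gives a \emph{lower} bound on the number of orbits. An upper bound needs small stabilizers, which is exactly the hard part of \cite{BLQR}, and your exceptional family is where stabilizers can be large.

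You cannot skip normalization either. The $-\frac49m^2$ appears only after dividing by roughly $p^{d^2}$, so an unnormalized argument needs a deep saving exceeding $\frac49m^2$. The Jacobi saving when $\mathfrak b_2=1$ is only about $\binom{r_1-s}{2}\approx\frac29m^2$, and \cref{p3:lem:positive-depth} guarantees only $n^2/256$.

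The paper obtains the missing $-r_1^2$ on the non-class-two strata by a multiplicity argument (\cref{p3:lem:bases,p3:lem:kernel}):
\begin{itemize}
\item Fix the core and derived generators, and vary only a $u$-dimensional space of outside generators.
\item Show each complete array has at most $p^{u\dim K}$ preimages, where $K$ is the core centralizer.
\item Pay for prescribing $K$ with the $s\,r_2\dim K$ linear conditions it imposes.
\end{itemize}
Finally, since the target saving is only $\varepsilon m^2$, every $\Theta(m^2)$ term must be tracked with explicit constants: power words, the defects $\delta$ and $e$, and core and breadth costs. Your outline does not attempt this.
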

Combining with Higman's lower bound of $f(p^m)$ \cite{Higman}, we obtain
$$
f(p^m)= p^{\frac{2}{27}m^3-\frac49m^2+\Theta(m)}.
$$
Thus \cref{intro:sharp} settles Sims' conjecture in a stronger
form: it determines the quadratic coefficient in the exponent
and shows that the remaining error has linear order.

\paragraph{Implications to general group enumeration.} In his thesis \cite{Blackburn}, Blackburn indicated that an affirmative answer to Sims' conjecture implies a sharpening of Pyber's general group enumeration (\cref{eq:pyber}). We record this consequence of \cref{intro:sharp} here. 
\begin{corollary}\label{cor:pyber-sharpening}
	The number of groups of order $n$, $f(n)$, is upper bounded as 
	$$
	f(n)\leq n^{\frac{2}{27}\mu(n)^2+O(\mu(n))}.
	$$
\end{corollary}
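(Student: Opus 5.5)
We obtain this from \cref{intro:sharp} by following Pyber's reduction of general group enumeration to Sylow subgroups \cite{Pyber}, as Blackburn indicated. Write $n=\prod_{i} p_i^{m_i}$, so $\mu=\mu(n)=\max_i m_i$. Pyber's argument bounds $f(n)$ by the product of two factors. The first is the number of choices of the isomorphism types of the Sylow subgroups, $\prod_i f(p_i^{m_i})$. The second is the number of ways to assemble a group from the Sylow subgroups via a bounded-size generating structure. Pyber shows this second factor is at most $n^{O(\mu)}$; in \cite{Pyber} the error $O(\mu^{5/3})$ comes only from the $p$-group input, namely Sims' bound $p^{\frac{2}{27}m^3+O(m^{5/2})}$. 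We first check that this is so.

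Concretely, a finite group $G$ of order $n$ is determined up to isomorphism by three pieces of data. The first is the Sylow subgroups $P_i$ up to isomorphism. The second is a bounded number of elements generating each $P_i$ together with the action data. The third is the ``Sylow system'' embedding data, which Pyber counts using $n^{O(\mu)}$ choices: a group with all Sylow subgroups generated by at most $d$ elements has $O(\mu)$ generators, and presentations are recoverable from normalizer/Hall subgroup data. Granting this structure, \cref{intro:sharp} gives
\[
\prod_i f(p_i^{m_i})\le \prod_i p_i^{\frac{2}{27}m_i^3+O(m_i)} \le \prod_i p_i^{m_i\left(\frac{2}{27}\mu^2+O(1)\right)} = n^{\frac{2}{27}\mu^2+O(1)},
\]
using $m_i^3\le m_i\mu^2$ and dropping the negative quadratic term. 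Multiplying by the $n^{O(\mu)}$ assembly factor yields $f(n)\le n^{\frac{2}{27}\mu^2+O(\mu)}$.

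The main obstacle is checking that the non-$p$-group part of Pyber's proof really costs only $n^{O(\mu)}$ and not $n^{O(\mu^{3/2})}$ or similar. In particular, the solvable-radical and Hall-subgroup counting steps, and the count of actions of a Sylow tower on normal sections, must be controlled. I would first reread Pyber's decomposition, $f(n)\le n^{O(\mu)}\cdot \prod_i f(p_i^{m_i})\cdot(\text{extension counts})$. If some extension step there uses cohomology bounds that are only $n^{O(\mu^{5/3})}$, I would replace it with the fixed-quotient extension estimates developed later in this paper. Those estimates bound extensions in a way compatible with the same presentation counting that gives \cref{intro:sharp}.
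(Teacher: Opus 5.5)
Your proposal is correct and is essentially the argument the paper intends. The $n^{O(\mu)}$ assembly factor is exactly Pyber's prescribed-Sylow theorem $f(n)\le n^{75\mu(n)+16}\prod_{p\mid n}f(p^{v_p(n)})$ (quoted in the paper as \eqref{p4:eq:pyber}), which you can cite as a black box, so your ``main obstacle'' and the fallback to the paper's fixed-quotient extension estimates are unnecessary; that fallback would not apply here anyway, since those estimates carry constants depending on $\Gamma$. The one point to state explicitly is that \cref{p3:thm:linear} holds for every $m\ge1$ with a constant independent of $p$, which is what justifies $\prod_i p_i^{\frac{2}{27}m_i^3+O(m_i)}\le n^{\frac{2}{27}\mu^2+O(1)}$ when $n$ has many or large prime factors.
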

That is, \cref{cor:pyber-sharpening} improves the error term in \cref{eq:pyber} from $O(\mu(n)^{5/3})$ to $O(\mu(n))$.




\subsection{Most groups are \texorpdfstring{$2$}{2}-groups of Frattini class \texorpdfstring{$2$}{2}} 


\paragraph{Most groups are $2$-groups.} A familiar heuristic in group enumeration is that ``most groups
are $2$-groups''. This is illustrated by the fact that, up to
isomorphism, there are 11,759,892 groups of order at most $1023$
\cite{besche2002millennium}, compared with 49,487,367,289 groups
of order $1024$ \cite{burrell2022number}. Among the 11,759,892 many groups of order $\leq$ 1023, groups of order 512 take up the majority: there are 10,494,213 many of them. 



Some conjectures or questions have been formulated to formalise this heuristic. 

Let
\[
F(x)=\sum_{1\le n\le x}f(n)
\]
be the number of isomorphism classes of groups of order at most $x$. Let $F_{nil}(x)$ be the number of isomorphism classes of nilpotent groups of order at most $x$. Let $F_2(x)$ be the number of isomorphism classes of $2$-groups of order at most $x$. 

Pyber \cite[Conjecture~2.4]{Pyber1998} conjectured that
\begin{equation}\label{eq:pyber-conj}
	\lim_{x\to\infty}\frac{F_{\mathrm{nil}}(x)}{F(x)}=1;
\end{equation}
see also \cite[Question~1]{Mann1999} and
\cite[Question~22.19]{BNV}. His enumeration theorem \cite{Pyber}
implies the weaker logarithmic statement
\[
\lim_{x\to\infty}
\frac{\log F_{\mathrm{nil}}(x)}{\log F(x)}=1.
\]
As observed by Mann \cite{Mann1999}, \cref{eq:pyber-conj} implies
that
\[
\lim_{x\to\infty}\frac{F_2(x)}{F(x)}=1,
\]
which gives a precise formulation of the heuristic that
``most groups are $2$-groups''.

In \cite{Mann1999}, Mann discussed a connection between Pyber's conjecture and $p$-group enumeration, which was attributed to Shalev \cite{Shalev1999}. While we were not able to reproduce this connection in the form described by Mann following \cite[Proposition 2.1]{Shalev1999}, this motivated us to explore the possibility of proving Pyber's conjecture after we obtained a preliminary version of \cref{intro:sharp}.



\paragraph{Most groups are $2$-groups of Frattini class $2$.} Recall that a $p$-group is of Frattini class at most $2$, if its Frattini subgroup is central and elementary abelian. Formally, the family of $p$-groups of Frattini class $2$ is
\[
\mathfrak H_p=
\{G:\ |G|\text{ is a power of }p,\quad
\Phi(G)\le Z(G),\ \Phi(G)^p=1\}\subseteq\mathfrak B_{p^2,2},
\]
where $\Phi(G)=G^p[G,G]$ is the Frattini subgroup. Note that $p$-groups of Frattini class $2$ are also of nilpotency class $2$.


As discussed above, Higman's lower bound for $f(p^m)$ is obtained
by counting $p$-groups of Frattini class $2$ \cite{Higman}.
Taking the heuristic that ``most groups are $2$-groups'' one step
further, one may conjecture that ``most groups are $2$-groups of
Frattini class $2$''. A related conjecture was suggested by Holt
\cite{Holt2014MO} on MathOverflow. The strongest entry in his list
of ``reasonable'' conjectures was that ``Almost all finite groups
are $2$-groups of nilpotency class $2$ --- you can add extra
conditions to this''.

We prove that this is indeed the case, therefore also resolving Pyber's conjecture (\cref{eq:pyber-conj}). Let $f_{\mathfrak H_p}(p^m)$ be the number of isomorphism classes of $p$-groups of Frattini class $2$ of order $p^m$. Let $F_{\mathfrak H_p}(x)=\sum_{1\le n\le x}f_{\mathfrak H_p}(n)$ be number of isomorphism classes of $2$-groups of Frattini class $2$. 
\begin{theorem}\label{intro:domination}
	Let $F(x)$ be the number of groups of order $n\leq x$, and $F_{\mathfrak H_2}(x)$ the number of $2$-groups of Frattini class $2$ of order $n\leq x$. Then we have 
	\begin{equation}
		\lim_{x\to\infty}\frac{ F_{\mathfrak H_2}(x)}{F(x)}=1.
	\end{equation}
\end{theorem}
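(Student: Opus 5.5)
We would prove \cref{intro:domination} by comparing $F(x)$ with the single dominant term $f_{\mathfrak H_2}(2^M)$, where $M=\lfloor\log_2 x\rfloor$, and showing that every other contribution is $o(f_{\mathfrak H_2}(2^M))$. By Higman's construction, refined by the sharp count of \cite{BLQR}, we have $f_{\mathfrak H_2}(2^M)=2^{\frac{2}{27}M^3-\frac49M^2+\Theta(M)}$. The target is therefore
\[
F(x)-F_{\mathfrak H_2}(x)=o\bigl(2^{\frac{2}{27}M^3-\frac49M^2+\Theta(M)}\bigr).
\]
We split the orders $n\le x$ into three kinds: $2$-powers $2^k$ with $k<M$, non-$2$-power orders, and $n=2^M$ itself.

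\textbf{Smaller $2$-powers.} \Cref{intro:sharp} gives $f(2^k)\le 2^{\frac2{27}k^3-\frac49k^2+O(k)}$. The exponent drops by about $\frac29M^2$ when $k$ decreases from $M$ to $M-1$. Summing over $k\le M-1$ therefore gives $2^{\frac2{27}M^3-\frac49M^2-\Omega(M^2)}$, which is negligible.

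\textbf{The top $2$-power.} For $n=2^M$ we need that all but a vanishing fraction of groups of order $2^M$ lie in $\mathfrak H_2$. This is the concentration result announced in the abstract: the number of groups of order $p^m$ not of Frattini class $2$ is at most $p^{-\Omega(m^2)}f(p^m)$. We would prove it by rerunning the refined Sims framework behind \cref{intro:sharp}, keeping track of the extra constraints coming from nontrivial deeper commutator and $p$-power coordinates. Each such nontrivial coordinate should cost a factor $p^{\Omega(m^2)}$ relative to the Frattini-class-$2$ count.

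\textbf{Non-$2$-power orders.} Write $n=2^a n'$ with $n'>1$ odd. Following Pyber's prescribed-Sylow framework, a group of order $n$ is determined by its Sylow subgroups together with a bounded amount of gluing data. Pyber shows that the number of groups with given Sylow subgroups is at most $n^{O(\mu(n))}$, so the gluing costs only $2^{O(\mu\log n)}$. In the main subcase $n'$ is small, and the Sylow $2$-subgroup has order $2^a$ with $a\le M-1$ (or $a\le M-\log_2 3$). Counting Sylow $2$-subgroups by \cref{intro:sharp}, we get at most $f(2^a)\cdot 2^{O(M\log n')}$ groups. The factor $f(2^a)$ loses $2^{\Omega(M^2)}$ against $f_{\mathfrak H_2}(2^M)$, provided the gluing overhead is only $2^{O(M\log n')}$ and not $2^{O(M^2)}$.

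The naive Pyber overhead $n^{O(\mu)}$ is $2^{O(M^2)}$ when $\log n\approx M$, and this is too big. Fixing it is where the paper's fixed-quotient extension bounds enter: we need the number of groups of order $n$ with a given normal Sylow $2$-complement-type quotient to be at most $f(2^a)\,2^{O(M\log(n/2^a))}$. We would prove this by counting presentations: fix the $2$-part presentation and bound the action/extension data linearly in $a$ per odd prime factor.

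If instead the odd part carries most of the order (say $n'\ge 2^{\epsilon M}$), we use \cref{cor:pyber-sharpening} directly. It gives $f(n)\le n^{\frac2{27}\mu(n)^2+O(\mu(n))}$ with $\mu(n)\le(1-\epsilon)M$ (as a prime-power exponent, weighted by $\log p$). The exponent $\frac2{27}\mu^3\log_2 p$ is then maximised when $p=2$, and is otherwise smaller by $\Omega(M^3)$ or $\Omega(M^2)$. Summing over all $n\le x$ costs only a factor $x=2^{O(M)}$, which is harmless.

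\textbf{Main obstacle.} We expect the hardest step to be the intermediate regime: $n=2^a n'$ with $a$ close to $M$ and $n'$ small but not constant. There the deficit from \cref{intro:sharp}, about $\frac29M^2(M-a)$, must beat the extension overhead. This requires the sharp fixed-quotient extension bound with overhead linear in $M$ per unit of $\log n'$. We would attempt it first by bounding $H^2$-type extension data through the presentation estimates, with relations indexed by generators of the $2$-part.
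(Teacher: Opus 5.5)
Your decomposition matches the paper's: smaller $2$-powers, the top $2$-power via \cref{intro:concentration} (citing it is fine; the paper does the same), and non-$2$-power orders split by the size of the odd part. The smaller-$2$-power step is correct. The large-odd-part step also works; it is a slightly cruder version of the paper's direct use of Pyber's prescribed-Sylow inequality. The genuine gap is in the non-$2$-power orders, and it is not where you locate it.

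\textbf{The intermediate regime is not the obstacle.} Write $n=2^an'\le x$ with $L=\log_2 n'$. Then \cref{cor:pyber-sharpening} already gives $\log_2 f(n)\le\frac{2}{27}M^3-\frac{2}{27}LM^2+O(M^2)$, where the $O(M^2)$ comes from Pyber's overhead $n^{75\mu+16}$. This beats $\frac{2}{27}M^3-\frac49M^2$ as soon as $L$ exceeds an absolute constant; the paper's threshold is $2^{342}$. So everything except bounded $n'$ is easy.

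\textbf{The bounded-odd-part case is unproved.} For bounded $n'$, Pyber's overhead $2^{\Theta(M^2)}$ exceeds the deficit of about $\frac29(M-a)M^2$. Here your proposal rests entirely on the claimed bound $f(2^a)2^{O(M\log n')}$ ``by counting presentations'', with no argument. That is precisely the part of the theorem that goes beyond concentration.

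\textbf{Why the claimed bound is not routine.} A group of order $2^an'$ need not have a normal Sylow $2$-subgroup or a normal $2$-complement. The usable normal subgroup is $N=O_2(G)$, the core of a Sylow $2$-subgroup. Its quotient $\Gamma=G/N$ embeds in $\operatorname{Sym}(n')$, so only finitely many $\Gamma$ occur. Extensions of $N$ by $\Gamma$ do not have linear overhead. Take $\Gamma=S_3$ and a typical Frattini-class-$2$ kernel of order $2^n$. Then $\Out(N)$ contains the elementary abelian group $\mathrm{Hom}(N/\Phi(N),\Phi(N))/\Inn(N)$, of order about $2^{2n^2/9}$. Each of its involutions yields a split extension $N\rtimes S_3$. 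Moreover, $a(N)=|\operatorname{im}(\Aut(N)\to\GL(N/\Phi(N)))|$ can be as large as $2^{r_1^2}$. So a linear-overhead bound, if true, would need a genuinely different mechanism, such as recovering $G$ from its Sylow $2$-subgroup with few choices. You have not supplied one.

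\textbf{What the paper does instead.}
\begin{enumerate}
\item It accepts a quadratic cost per kernel, namely $a(N)^{h+1}2^{hr_1(n-r_1)}$. Schur--Zassenhaus pins down the odd-order first generator up to $a(N)$ choices, and the kernel of $\Out(N)\to A(N)$ has order at most $2^{r_1(n-r_1)}$.
\item It reruns the source-basis normalization with $a(N)\le 2^{sr_1+d(r_1-s)}$, controlled through the core centralizer $K$. This yields $g_\Gamma(n)\le 2^{\frac{2}{27}n^3+\frac{2h-4}{9}n^2+O(n^{3/2})}$.
\item The cutoff $n\le m-v$, together with $h\le v-1$, turns the cubic loss $\frac29 vm^2$ into a net saving of $\frac29m^2$ against Higman's lower bound.
\end{enumerate}
Your sketch contains none of these ingredients: the reduction to finitely many $\Gamma$, the Schur--Zassenhaus control, the averaged control of $a(N)$, or the trade-off $h\le v-1$. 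As it stands, the bounded-odd-part case, and hence the theorem, is not established.
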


\paragraph{Concentration of $p$-groups of Frattini class $2$ within $p$-groups.}

The strengthening from ``most groups are $2$-groups'' to ``most groups are $2$-groups of Frattini class $2$'' suggests that within $p$-groups, $p$-groups of Frattini class $2$ should take up the majority. This is indeed the case, as reflected in the following result.
\begin{theorem}[Concentration in Frattini class $2$]
\label{intro:concentration}
There are absolute constants $\varepsilon>0$ and $m_0$ such
that, for every prime $p$ and every $m\ge m_0$,
\[
 f(p^m)-f_{\mathfrak H_p}(p^m)
 \le p^{\frac{2}{27}m^3-\frac49m^2-\varepsilon m^2}.
\]
Consequently,
\[
 1\le\frac{f(p^m)}{f_{\mathfrak H_p}(p^m)}
 \le1+p^{-\varepsilon m^2}.
\]
\end{theorem}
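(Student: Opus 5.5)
The plan is to split the groups of order $p^m$ that are not of Frattini class $2$ according to a few structural parameters, and to show that each piece contributes at most $p^{\frac{2}{27}m^3-\frac49m^2-\varepsilon m^2}$. The second inequality is then immediate. Higman's lower bound gives $f_{\mathfrak H_p}(p^m)\ge p^{\frac{2}{27}m^3-\frac49m^2+\Omega(m)}$, so dividing by it gives the ratio bound after shrinking $\varepsilon$.

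Let $G$ have order $p^m$, put $d=\dim G/\Phi(G)$, and let $s=m-d$. The Burnside basis theorem gives a minimal generating set $x_1,\dots,x_d$. Following Sims, $G$ is encoded by a polycyclic presentation refining the lower exponent-$p$ central series $P_1=G\ge P_2=\Phi(G)\ge P_3\ge\cdots$. The relevant data are:
\begin{itemize}
\item the commutators $[x_i,x_j]$ and $p$-th powers $x_i^p$, read modulo $P_3$, which give a linear map $\Lambda^2\F_p^d\oplus\F_p^d\to P_2/P_3$;
\item the "deeper coordinates" in $P_3$;
\item the relations needed for consistency.
\end{itemize}
$G$ has Frattini class $2$ exactly when $P_3=1$. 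So for each $t=\log_p|P_3|$ with $1\le t\le s$, I bound the number of presentations with $|P_3|=p^t$.

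The Sims-type count goes as follows. The quotient $G/P_3$ lies in $\mathfrak H_p$ and has order $p^{m-t}$. The extension data from $P_3$ cost at most about $p^{(d^2/2)t+O(m t)}$, but consistency with the graded Lie bracket imposes strong constraints. The bracket $P_1/P_2\times P_2/P_3\to P_3/P_4$ is determined by the degree-one data, so the genuinely free deep parameters are only about $p^{O(d t)+O(t\,\cdot s)}$ in typical regimes. Combining this with the sharp Frattini class $2$ count of \cite{BLQR} for $G/P_3$, namely $p^{\frac{2}{27}(m-t)^3-\frac49(m-t)^2+O(m)}$, I get a bound of roughly
\[
p^{\frac{2}{27}m^3-\frac49m^2-c\,t m^2+O(m)}
\]
whenever $t$ is small. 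This follows because the derivative of $\frac{2}{27}m^3$ in $m$ is $\frac29m^2$, and losing $t$ from the order outweighs the extra freedom from the deep data, once the deep data are shown to be at most $(\frac29-c)m^2 t$ parameters. When $t$ is large, or $d$ is far from $2m/3$, Sims' original estimate already gives $p^{\frac{2}{27}m^3-\Omega(m^3)}$ or $p^{\frac{2}{27}m^3-\Omega(\delta^2 m)}$ with $\delta=|d-2m/3|$. Only $d=\frac23m+O(\sqrt m)$ and $t=o(m)$ need the refined treatment. Summing over at most $m^2$ choices of $(d,t)$ costs only a factor $p^{O(\log m)}$.

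The main obstacle is the incidence-type estimate showing that the deep commutator coordinates supply fewer than $\frac29m^2$ free parameters per dimension of $P_3$. The naive count allows about $\binom d2\approx\frac29m^2$ parameters, which exactly matches the gain and leaves no $\varepsilon m^2$ saving. To beat it, I would first try the following.
\begin{enumerate}
\item Exploit the Jacobi/Hall–Witt identity. The trilinear map $P_1/P_2\times P_1/P_2\times P_1/P_2\to P_3/P_4$ induced from the bracket must satisfy a linear constraint, of codimension about $\binom d3$, against the alternating forms defining $G/P_3$.
\item Show via a Fourier estimate over the space of $s$-tuples of alternating forms that, for all but a $p^{-\Omega(m^2)}$ fraction of those tuples, the Jacobi constraint cuts the admissible deep data down by a factor $p^{\Omega(m^2)}$ per unit of $t$.
\item Handle the exceptional tuples separately. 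They have large common radicals or low-rank spans, so their number is already smaller by $p^{-\Omega(m^2)}$.
\end{enumerate}
Normalising by changes of generating basis, that is, by the $\GL_d$ action, is what makes this counting compatible with the $-\frac49m^2$ term, and I would reuse the \cite{BLQR} orbit-counting to keep that term intact.
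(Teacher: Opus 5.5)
There is a genuine gap, and it begins before the step you flag as the main obstacle.

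\textbf{Large $t$ is not easy.} The reduction to $t=o(m)$ fails: for $d$ near $\frac23m$, Sims' estimate gives no $\Omega(m^3)$ saving when $t$ is large. Your own bookkeeping shows why. The loss $\frac29m^2t$ from counting $G/P_3$ at order $p^{m-t}$ is cancelled by the $\binom d2t$ naive $P_3$-components of the $[x_i,x_j]$ for \emph{every} $t$. To leading order the naive count (like Sims') is $\frac{d^2}{2}(m-d)$, whatever the split of $\Phi(G)$ between $P_2/P_3$ and $P_3$. For instance, with $d=\frac23m$ and $t=\frac m6$, the quotient with $d$ fixed gives about $\frac{d^2}{2}\cdot\frac m6=\frac{m^3}{27}$, and the naive components give another $\frac{m^3}{27}$. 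So the refined saving is needed uniformly in $t$.

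It must also beat deep costs that are not $O(mt)$. The bracket $P_2/P_3\times P_1/P_2\to P_3/P_4$ is not determined by degree-one data. Jacobi only reduces it to its restriction to a generating core, which still costs about $s\cdot\dim(P_2/P_3)\cdot t$, plus higher layers. Here $s$ is the Sims rank, not your $m-d$.

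\textbf{The key estimate is only announced, and it does not match the available mechanism.} In the paper the decisive saving when $G_3\ne1$ sits on the leading map, not on the deep data. Some core element $h$ has $\ad(h)\ne0$ on $L_2$, i.e.\ $\mathfrak b_2\ge1$. The Jacobi identity then places every outside--outside value $[\bar g_{1,j},\bar g_{1,k}]_L$ in a fixed coset of $\ker\ad(h)$, saving $\binom{r_1-s}{2}\mathfrak b_2$. The deeper components get analogous savings $\binom{r_1-s}{2}\mathfrak b_{\xi+2}$ from the adjoint-cocycle identity for $\delta_\xi$.

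Per unit of $\mathfrak b_{\mathrm{tot}}$, this saving ($\approx r_1^2/2$) is matched at leading order by the cost $s^2m_{\deep}$ from $sT_{\core}$, since saturation gives $s^2\lesssim2m_{\deep}$. Moreover, only $\mathfrak b_{\mathrm{tot}}\ge m_{\ge3}/s$ is guaranteed. Lemma~\ref{p3:lem:positive-depth} wins only through the exact inequality $s^2\le2(m_{\deep}-m_{\ge3})+3s$ and the cubic deficit in $z_{\mathrm{bal}}$. The outcome is a total saving $\kappa n^2$, not $\Omega(m^2)$ per unit of $t$.

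\textbf{Your decomposition puts the saving in the wrong place.} The restriction falls on the Frattini-class-two quotient $G/P_3$, which you have already counted with the full BLQR bound. To use it you would need the number of \emph{isomorphism classes} of quotients admitting a nontrivial extension. Knowing that only a $p^{-\Omega(m^2)}$ fraction of tuples is exceptional does not transfer to orbit counts. BLQR's bound does not restrict to a $\GL$-invariant subset whose members may have large stabilizers; this is exactly the average-automorphism difficulty.

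The paper avoids this with its own source-basis normalization (Lemmas~\ref{p3:lem:bases} and~\ref{p3:lem:kernel}). It recovers $-r_1^2$ up to $O(m^{3/2})$, an error the $\Omega(m^2)$ margin absorbs. BLQR is used only for the surviving family $\mathfrak H_p$.

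\textbf{Two further ingredients are missing.} First, the breadth saving needs a small Sims core, $s=O(\sqrt m)$, i.e.\ the saturated range. The complementary range $r_2\le\binom{s-1}{2}$ needs the separate hollowing-space counts of Part~II, including the binary endpoint. Second, depth coming from $p$-th powers (e.g.\ $G_3=1$ but $\Phi(G)^p\ne1$, which forces $\delta\ge1$) is untouched by any Jacobi argument. The paper handles it through the loss in $\frac{2}{27}(m-\delta)^3$: commutators lie in $G_2$, of order $p^{m-r_1-\delta}$, rather than in $\Phi(G)$. That loss is $\frac{2\delta}{9}m^2$ for bounded $\delta$ and cubic for large $\delta$.
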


See \cref{p3:sec:linear} for the proof of \cref{intro:concentration}.

\cref{intro:concentration} is the technical result that leads to \cref{intro:sharp} and \cref{intro:domination}. 

For \cref{intro:sharp}, we recall the following result. The authors, together with and Bl\"aser and Rogovskyy, recently establish an upper bound of $f_{\mathfrak H_p}(p^m)$~\cite[Theorem~1.3]{BLQR} in 
\[
 p^{\frac{2}{27}m^3-\frac49m^2}\le f_{\mathfrak H_p}(p^m)\le p^{\frac{2}{27}m^3-\frac49m^2+O(m)}.
\]
Adding the exceptional count in Theorem~\ref{intro:concentration}
to this estimate proves Theorem~\ref{intro:sharp}.

For Theorem~\ref{intro:domination}, we first use fixed-quotient
extension estimates and Pyber's prescribed-Sylow theorem
\cite[p.~204]{Pyber} to show that groups whose orders are not
powers of $2$ make a negligible contribution to $F(x)$.
It therefore remains to consider $2$-groups. The concentration
estimate then shows that those outside $\mathfrak H_2$ also
make a negligible contribution.


\paragraph{More results on group enumeration.} Blackburn, Neumann, and Venkataraman made a nice list of open questions about group enumeration in \cite{BNV}. We summarise the impact of \cref{intro:sharp,intro:domination,intro:concentration} as follows.
\begin{itemize}
	\item Theorem~\ref{intro:sharp} resolves Sims' conjecture
\cite[Question~22.3]{BNV}.
	\item Theorem~\ref{intro:domination} implies that almost all finite groups
are nilpotent, and hence soluble. It therefore answers~\cite[Questions~22.14, 22.15, and~22.19]{BNV} in affirmative.
	\item Theorem~\ref{intro:concentration} gives a ratio tending to $1$,
strengthening the comparison requested in Question~22.9.
Since the abelian groups of order $p^m$ number at most $p^m$,
it also shows that almost all groups of order $p^m$ are nonabelian
and have nilpotency class exactly $2$, answering Question~22.10.
\end{itemize}

We can say even more. 
The two main theorems 
determine the cumulative count up to
asymptotic equivalence and answer both assertions of
\cite[Question~22.18]{BNV}. Together with the bound of Pantelidakis \cite{Pantelidakis2003} as
recorded in \cite[pp.~265]{BNV}, they answer the asymptotic versions of \cite[Questions~22.16 and~22.17]{BNV}. As mentioned in \cite[pp.~264]{BNV}, Question~22.16 there was attributed to people including Higman and Erd\H{o}s. We add that \cite[Questions~22.16 and~22.17]{BNV} were formulated for any $m$, while our results apply to only large enough $m$. 


\begin{corollary}[Consequences for BNV Questions 22.16--22.18]
\label{intro:bnv-2218}
As $x\to\infty$ through real values,
\[
 F(x)\sim f\bigl(2^{\lfloor\log_2x\rfloor}\bigr).
\]
Consequently, as $m\to\infty$ through integers,
\[
 F(2^m-1)\sim f(2^{m-1}),\qquad
 \frac{F(2^m-1)}{f(2^m)}
 =2^{-2m^2/9+O(m)}\longrightarrow0.
\]
Moreover, there is an absolute integer $m_0$ such that, for every
integer $m\ge m_0$,
\[
 \begin{aligned}
 f(n)&<f(2^m)&&\text{for every integer }1\le n<2^m,\\
 f(2^m p)&<f(2^{m+2})&&\text{for every prime }p.
 \end{aligned}
\]
In particular, $F(2^m-1)<f(2^m)$ for all sufficiently large $m$.
\end{corollary}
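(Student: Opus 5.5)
The plan is to derive the corollary from \cref{intro:sharp}, Higman's lower bound, and the estimates behind \cref{intro:domination}: non-$2$-power orders contribute negligibly to $F(x)$, and within each order $2^k$ the count is $f(2^k)=2^{\frac{2}{27}k^3-\frac49k^2+\Theta(k)}$.

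\textbf{Step 1: the main asymptotic.} Write $m=\lfloor\log_2 x\rfloor$. By \cref{intro:domination}, $F(x)\sim F_{\mathfrak H_2}(x)\le F_2(x)=\sum_{k\le m}f(2^k)$. Set $g(k)=\frac{2}{27}k^3-\frac49k^2$. Then
\[
g(m)-g(m-1)=\tfrac{2}{9}m^2-O(m).
\]
So \cref{intro:sharp} and Higman's bound give $f(2^{k})\le f(2^m)\,2^{-\frac29 m^2+O(m)}$ for $k=m-1$, and a fortiori for all $k<m$. Summing over $k<m$ gives $F_2(x)=f(2^m)(1+o(1))$. Since also $F(x)\ge f(2^m)$, we conclude $F(x)\sim f(2^m)$.

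\textbf{Step 2: the statements about $2^m-1$.} Taking $x=2^m-1$ gives $\lfloor\log_2x\rfloor=m-1$, hence $F(2^m-1)\sim f(2^{m-1})$. The ratio $f(2^{m-1})/f(2^m)=2^{-2m^2/9+O(m)}$ follows from the two-sided estimate $\log_2 f(2^k)=g(k)+\Theta(k)$. Consequently $F(2^m-1)<f(2^m)$ for large $m$, and so $f(n)\le F(2^m-1)<f(2^m)$ for every $1\le n<2^m$.

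\textbf{Step 3: the bound $f(2^mp)<f(2^{m+2})$.} This is the main obstacle, because $p$ is unbounded and the comparison is pointwise rather than cumulative.

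For $p\le 2^{m+2}$, a weaker argument suffices, though not Step 2 directly, since $2^mp$ can exceed $2^{m+2}$. Here $\mu(2^mp)=m$ once $p<2^{m}$; for $p$ near $2^{m+2}$ one uses Pyber's bound, or better \cref{cor:pyber-sharpening}. With $n=2^mp$, this gives $\log_2 f(n)\le \frac{2}{27}m^2\log_2 n+O(m\log n)$.

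In general I would instead invoke the fixed-quotient extension bounds used for \cref{intro:domination}. Groups of order $2^mp$ have either a normal Sylow $p$-subgroup (cyclic of order $p$) or, by Sylow/Burnside-type arguments, a normal $2$-complement structure controlled by a group of order $2^m$. In the normal Sylow case, the group is $C_p\rtimes P$ with $|P|=2^m$. The action factors through $C_2$, so the count is at most $f(2^m)\cdot 2^{O(m)}$; here the $2^{O(m)}$ counts index-$\le2$ subgroups up to automorphism. The other cases are handled by Pyber's prescribed-Sylow bound, giving at most $f(2^m)\,2^{O(m^2)}$ with a constant I must check is below the gap $g(m+2)-g(m)=\frac{4}{9}m^2+O(m)$. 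Since the corresponding loss in Pyber's theorem is polynomially bounded in the Sylow sizes, this seems plausible. If that constant is uncontrolled, I would fall back on showing that such non-split configurations force $p\mid 2^i-1$ with small $i$, and count them by $f(2^m)\cdot 2^{O(m)}$ directly.

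Combining the cases gives $f(2^mp)\le f(2^m)2^{O(m)}$, or at worst $f(2^m)\,2^{cm^2}$ with $c<4/9$, against
\[
f(2^{m+2})\ge f(2^m)\,2^{\frac49m^2-O(m)}.
\]
This proves the final claim for $m\ge m_0$.
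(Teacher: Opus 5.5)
Your Steps~1 and~2 are essentially the paper's own argument: sum the two-sided bounds of \cref{intro:sharp} below level $m$, sandwich $F(x)$ via \cref{intro:domination}, and specialise to $x=2^m-1$. They are fine, and they also settle the last display for $p=2$ and $p=3$, since $2^{m+1},\,3\cdot2^m<2^{m+2}$.

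The gap is in Step~3, for primes $p\ge5$ and groups of order $2^mp$ whose Sylow $p$-subgroup is \emph{not} normal. This requires $p\mid2^i-1$ for some $1\le i\le m$, but $i$ need not be small. Since $g(m+2)=\frac{2}{27}m^3-\frac89m-\frac{32}{27}$ has no $m^2$-term, you need a bound of the shape $\log_2f(2^mp)\le\frac{2}{27}m^3-cm^2$, uniformly in $p$. None of the tools you name gives one:
\begin{itemize}
\item \Cref{cor:pyber-sharpening} yields $(\frac{2}{27}m^2+O(m))\log_2(2^mp)$. This has a positive, partly uncontrolled $m^2$-term for every odd $p$, and is about $\frac{4}{27}m^3$ for $p$ near $2^m$. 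Note also that $\mu(2^mp)=m$ for all odd $p$.
\item Pyber's prescribed-Sylow bound \eqref{p4:eq:pyber} loses the factor $(2^mp)^{75m+16}\ge2^{75m^2}$. The loss is polynomial in the order, but of degree $75\mu$, so its quadratic constant is $75$, not something below $\frac49$.
\item The fixed-quotient bound \cref{p4:prop:fixedQ} is not uniform in the quotient. Its errors are $O_\Gamma$, and the factor-system count in \cref{p4:lem:extensions} carries $2^{k|\Gamma|^2}$ for a kernel of order $2^k$. So it cannot treat quotients whose order involves an unbounded prime.
\end{itemize}

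Your fallback, counting the non-normal configurations by $f(2^m)2^{O(m)}$, is an unproved assertion, and not an easy one. Already for $N\rtimes C_p$ with $|N|=2^m$, bounding the outer actions by $a(N)\le|\GL_{r_1}(2)|$ costs roughly $2^{4m^2/9}$ for typical $N$ (those with $r_1\approx 2m/3$). That is the entire gap $g(m+2)-g(m)$.

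The paper closes this case by quoting Pantelidakis's estimate \cite[p.~265]{BNV}:
\[
 f(2^mp)\le2^{3m/2}f(2^m)+2^{(m^2+4m-3)/2}f(2^{m-1})\qquad(p\text{ odd}).
\]
The second term carries a loss $2^{m^2/2}$, but it is attached to $f(2^{m-1})$. Since $g(m+2)-g(m-1)=\frac23m^2+O(m)$, \cref{intro:sharp} then gives $f(2^mp)/f(2^{m+2})\le2^{-4m^2/9+O(m)}+2^{-m^2/6+O(m)}$, uniformly in $p$.

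Two smaller corrections:
\begin{itemize}
\item In the normal-Sylow-$p$ case, the action of $P$ on $C_p$ factors through a cyclic $2$-subgroup $C_{2^t}$ of $C_{p-1}$, not necessarily through $C_2$. Your $f(2^m)2^{O(m)}$ bound survives, since there are at most $|P/[P,P]|\le2^m$ homomorphisms $P\to C_{2^t}$.
\item Your case split does not cover all groups. A normal $2$-complement of a group of order $2^mp$ \emph{is} a normal Sylow $p$-subgroup, and $S_4$ has no normal Sylow subgroup at all.
\end{itemize}
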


The proof of \cref{intro:bnv-2218} is given in Subsection~\ref{p4:sec:bnv-2218}.

\subsection{Further results on finite rings and Poonen's conjecture}
\label{intro:ring-results}

Finite group enumeration belongs to the family of enumerating finite algebraic structures. After Higman and Sims' work \cite{Higman,Sims}, Kruse and Price studied enumerating finite rings \cite{KP70}. Recently, Poonen studied the moduli spaces of commutative algebras of finite rank in relation to Hilbert schemes, and enumerating commutative algebras appears there naturally \cite{Poonen} (see \cite{BM22} for a correction and improvement to the corresponding result). 

We announce the following results
from our work on finite-ring enumeration; their proofs will be presented separately in a work under preparation \cite{LiQiao27}.

All rings below are finite and associative, and all counts give
equal weight to isomorphism classes. For a prime $p$, let $A_p(n)$
and $C_p(n)$ count rings and commutative rings of order $p^n$,
respectively, with an identity optional in both counts. Let $U_p(n)$
count commutative rings with identity of order $p^n$.
The estimates with a subscript $p$ keep the prime fixed; their
constants may depend on $p$.

\paragraph{Rings with an optional identity.}
For $i\ge1$, write $R^i$ for the additive subgroup generated by
products of $i$ elements of $R$. A ring is nilpotent if $R^i=0$
for some $i$, and is cube-zero if $R^3=0$. Among rings of $p$-power
order, put
\[
 \mathcal K_p=
 \{R:R^3=0,\quad pR\subseteq R^2,\quad pR^2=0\}.
\]
Thus the additive groups of $R/R^2$ and $R^2$ are elementary
abelian for $R\in\mathcal K_p$. Let $K_p(n)$ and
$K_p^{\mathrm{com}}(n)$ count the rings and commutative rings in
$\mathcal K_p$ of order $p^n$, respectively.

\begin{theorem}[Finite-ring enumeration]\label{intro:finite-rings}
For each fixed prime $p$, as $n\to\infty$,
\[
 \begin{aligned}
 \log_p A_p(n)&=\frac4{27}n^3-\frac13n^2+O_p(n),\\
 \log_p C_p(n)&=\frac2{27}n^3-\frac29n^2+O_p(n).
 \end{aligned}
\]
Moreover,
\[
 \begin{aligned}
 \frac{A_p(n)-K_p(n)}{K_p(n)}
 &\le p^{-n^2/3+O_p(n^{5/3})},\\
 \frac{C_p(n)-K_p^{\mathrm{com}}(n)}{K_p^{\mathrm{com}}(n)}
 &\le p^{-n^2/9+O_p(n^{5/3})}.
 \end{aligned}
\]
In particular, the same enumeration estimates hold after restricting
to nilpotent rings or to cube-zero rings, in both the associative
and commutative settings.
\end{theorem}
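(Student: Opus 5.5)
We transport the group-theoretic strategy (Theorem~\ref{intro:concentration} together with the Frattini-class-$2$ count of \cite{BLQR}) to rings, treating the associative and commutative cases in parallel. The plan has three steps: count the cube-zero family $\mathcal K_p$ exactly up to $O_p(n)$ in the exponent, prove a matching upper bound for all rings, and bound the rings outside $\mathcal K_p$ by a gap of order $n^2$.

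\emph{Step 1 (the main family).} A ring $R\in\mathcal K_p$ with $\dim R/R^2=d$ and $\dim R^2=e$, where $d+e=n$, is determined by the following data.
\begin{itemize}
\item A bilinear map $\beta\colon V\times V\to W$ with $V=\F_p^d$ and $W=\F_p^e$ (symmetric in the commutative case) whose image spans $W$.
\item A linear map $V\to W$ recording $p\cdot$ on lifts. This contributes only $p^{O(n^2)}$ choices, so it must be handled carefully, exactly as the $p$-th power map is handled in \cite{BLQR}.
\end{itemize}
Isomorphism classes correspond to orbits of $\GL_d\times\GL_e$ together with the translation part $\operatorname{Hom}(V,W)$. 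The number of maps is $p^{d^2e}$, or $p^{\binom{d+1}{2}e}$ in the commutative case. Dividing by the group order $p^{d^2+e^2+de}$ gives the exponents
\[
d^2e-d^2-de-e^2
\qquad\text{and}\qquad
\tfrac{d(d+1)}2e-d^2-de-e^2 .
\]
For the first, the optimum is at $e\approx n/3$. Evaluating there gives $\frac4{27}n^3$. Expanding the correction terms near the optimum (set $e=n/3+t$ and optimise over $t$) yields $-\frac13n^2$; the commutative case similarly yields $\frac2{27}n^3-\frac29n^2$.

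These expressions give the lower bound directly. For the upper bound of the form $+O_p(n)$, the task is to show that the average stabiliser is $p^{O_p(n)}$. Here I would reuse the average-automorphism machinery of \cite{BLQR}, which the analogy strongly suggests applies. The relevant spaces are the space of bilinear or symmetric maps in place of alternating forms. The argument bounds the number of fixed points of a nonidentity $(A,B)\in\GL_d\times\GL_e$ on this space, with a Burnside/Fourier count summed over conjugacy-class types.

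\emph{Step 2 (the whole family).} For general rings of order $p^n$, I would follow the Sims-type refinement used for Theorem~\ref{intro:concentration}. Consider the filtration $R\supseteq R^2+pR\supseteq\cdots$, whose layer dimensions are $d_1,d_2,\dots$. A ring is determined by the following data.
\begin{itemize}
\item Multiplication from the top layer into the deeper layers, relative to a generating set of size $d_1$.
\item Bounded structure constants for the deeper products, since everything is generated by the $d_1$ generators.
\end{itemize}
This yields a bound of the shape $p^{d_1^2(n-d_1)-\text{(normalisation)}+\text{(deep terms)}}$. The deep terms lose relative to the cube-zero case unless $d_1\approx 2n/3$ and the deeper layers collapse, mirroring the cubic optimisation.

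\emph{Step 3 (the gap).} The gap estimate requires the following.
\begin{itemize}
\item If $R^3\ne0$, or $pR^2\ne0$, or $pR\not\subseteq R^2$, then some layer of positive dimension $k$ sits strictly below the second layer. Associativity constraints (in the group setting, the incidence estimates for alternating forms) then cut the count by $p^{\Omega(kn)}$ when $k$ is small. When $k$ is large, the cubic exponent is already smaller by $\Omega(n^2)$.
\item This gap must be explicit enough to give $p^{-n^2/3+O_p(n^{5/3})}$, respectively $p^{-n^2/9+\cdots}$. The $n^{5/3}$ error is the usual Pyber/Sims-type loss from the crude deep-layer counting.
\end{itemize}
The identity-optional aspect is harmless: adjoining or removing an identity changes the count by a factor that is at most polynomial in $p^n$. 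The final clause of the theorem follows because $\mathcal K_p$ consists of nilpotent, cube-zero rings.

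\emph{Main obstacle.} I expect the main obstacle to be the sharp constants in the gap, namely exactly $n^2/3$ and $n^2/9$. To obtain them I would first try to identify the extremal non-$\mathcal K_p$ configuration, presumably a single one-dimensional deeper layer. I would then show that it costs precisely the quadratic term difference, via the normalisation-by-change-of-basis argument of Part~II. A secondary obstacle is the average-stabiliser bound in the symmetric case; there I would first attempt a direct transfer of the argument of \cite{BLQR}.
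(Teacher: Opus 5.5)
The paper does not prove this statement; it is only announced, with the proof deferred to \cite{LiQiao27}. Your proposal therefore has to stand on its own, and as written it has genuine gaps.

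The first gap is already in Step~1: the exponent bookkeeping is wrong, and the claimed evaluation is false for your own formulas. The translations $x\mapsto x+\phi(x+R^2)$ with $\phi\in\operatorname{Hom}(V,W)$ are automorphisms of every ring in $\mathcal K_p$, because $R\cdot R^2=R^2\cdot R=0$ and $pR^2=0$. They fix both $\beta$ and the linear map $\pi\colon V\to W$ induced by multiplication by $p$, so they must not be divided out. On the other hand, the $p^{de}$ choices of $\pi$ must be counted. Isomorphism classes of type $(d,e)$ are exactly the $\GL_d(p)\times\GL_e(p)$-orbits on pairs $(\beta,\pi)$ with $\beta$ spanning. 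The correct heuristic exponents are therefore $d^2e+de-d^2-e^2$ and $\binom{d+1}{2}e+de-d^2-e^2$. At $d=2n/3$, $e=n/3$ these give $\frac4{27}n^3-\frac13n^2$ and $\frac2{27}n^3-\frac29n^2$, and the shift in $t$ costs only $O(n)$. Your expressions differ from these by $2de$ and evaluate to $\frac4{27}n^3-\frac79n^2$ and $\frac2{27}n^3-\frac23n^2$. Worse, in your acting group of order $p^{d^2+e^2+de}$ every stabiliser contains the translations. The average-stabiliser bound $p^{O_p(n)}$ you intend to prove is therefore false in your setup. Even with the corrected setup, the upper bound for $\mathcal K_p$ is an average-automorphism estimate for $\GL_d\times\GL_e$ acting on pairs consisting of a (symmetric) bilinear map and a linear map $\pi$. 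This is the heart of the main-term estimate, and you only assert it by analogy with \cite{BLQR}.

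Steps~2 and~3 do not work as stated either. The filtration $R\supseteq R^2+pR\supseteq\cdots$ never reaches $0$ when $R$ has a nonzero idempotent; for a unital ring, $R^2=R$. So non-nilpotent rings are not covered, including non-unital ones such as $\F_p\times N$. Your remark about adjoining an identity does not address them, since a Dorroh extension changes the order. They need a separate argument, for example lifting idempotents from $R/J(R)$ and using a Peirce decomposition.

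More seriously, the saving proposed in Step~3 is quantitatively far too weak. If $R^3$ has dimension $k$, every product of two generators acquires $k$ new coordinates. The naive count is then the same cubic $d^2(n-d)$ (respectively $\binom{d+1}{2}(n-d)$) as for $\mathcal K_p$, so a deeper layer of any size creates no cubic deficit by itself. For the same reason, your claim that large $k$ already lowers the cubic exponent is unfounded. The entire gap must come from associativity, and it must be of order $n^2$ already when $k=1$. This is analogous to the breadth saving in \cref{p12:prop:count_leading_maps}, where $\mathfrak b_2\ge1$ alone saves $\binom{r_1-s}{2}$. A saving of $p^{\Omega(kn)}$ for small $k$ is only $p^{\Omega(n)}$ at $k=1$, and does not even pay for the $kd^2$ new coordinates.

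Finally, the statement only bounds the excess from above, by $p^{-n^2/3}$ and $p^{-n^2/9}$ times the main count. You therefore need savings of at least these sizes, not an extremal family that costs exactly these amounts.
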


\paragraph{Commutative rings with identity.}
For a commutative ring with identity, let $J(R)$ be its Jacobson
radical, the intersection of its maximal ideals. The ring is local
if it has a unique maximal ideal, which is then $J(R)$; its residue
field is $R/J(R)$. When this field is $\F_p$, the embedding
dimension is $\dim_{\F_p}J(R)/J(R)^2$.

Put
\[
 \kappa_p=\prod_{j=1}^{\infty}(1-p^{-j}),\qquad
 \eta_p(n)=
 \begin{cases}
 1+p^{-4},&3\mid n,\\
 1,&3\nmid n,
 \end{cases}
\]
and define $E_n$ by
\[
 E_n=
 \begin{cases}
 2k^3-4k^2+3k-3,&n=3k+1,\\
 2k^3-2k^2+k-1,&n=3k+2,\\
 2k^3,&n=3k+3.
 \end{cases}
\]

\begin{theorem}[Commutative rings with identity]
\label{intro:unital-rings}
For each fixed prime $p$, there is $c_p>0$ such that
\[
 U_p(n)=\kappa_p^{-2}p^{E_n}
       \bigl(\eta_p(n)+O_p(p^{-c_p n})\bigr)
 \qquad(n\to\infty).
\]
In particular,
\[
 \log_p U_p(n)=\frac2{27}n^3-\frac23n^2+
 \begin{cases}
 2n,&3\mid n,\\
 19n/9,&3\nmid n
 \end{cases}
 +O_p(1).
\]
All but a proportion $O_p(p^{-c_p n})$ of these rings are local,
have residue field $\F_p$, and satisfy
$J(R)^3=0\ne J(R)^2$ and $\operatorname{char}(R)=p^3$.
Outside the same exceptional proportion, their embedding dimension
$r$ belongs to the following list:
\begin{equation}\label{intro:eq:ring-embedding-dimensions}
 \begin{array}{c|c}
 n\bmod3&r\\ \hline
 0&2n/3-1\ \text{or}\ 2n/3-2\\
 1&(2n-5)/3\\
 2&(2n-4)/3.
 \end{array}
\end{equation}
When $3\mid n$, the limiting proportions of the two listed
dimensions are $1/(1+p^{-4})$ and $p^{-4}/(1+p^{-4})$, respectively.
\end{theorem}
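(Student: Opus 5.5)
The plan is to reduce to a single dominant family of local rings, count that family by a mass formula, and then show that every other ring contributes only a $p^{-c_pn}$ proportion. A finite commutative ring with identity of order $p^n$ is a product of local rings. A product with two or more factors, or a local ring with residue field $\F_{p^f}$ for $f>1$, is described by data of total exponent at most about $\frac{2}{27}n^3-\frac23n^2-\Omega(n^2)$, because the cubic term of $E_n$ is strictly superadditive. So I will first establish $U_p(n)\le p^{E_n}(1+O(p^{-cn}))$ for the local, residue-field-$\F_p$ rings. I will then compare this with the non-local count factor by factor.

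\textbf{Step 1: reduction to depth three.} For local $(R,J)$ with $R/J=\F_p$, put $r=\dim J/J^2$ and $s=\dim J^2/J^3$. The ring $R/J^3$ is determined by the following data:
\begin{itemize}
\item the multiplication map $\beta\colon\mathrm{Sym}^2(J/J^2)\to J^2/J^3$, which is surjective;
\item the position of $p$ in $J$, which records the additive extension data.
\end{itemize}
The deeper layers $J^i$ for $i\ge3$ cost at least $\Omega(n^2)$ in the exponent. This mirrors the argument for \cref{intro:concentration}: the same "deeper coordinates are constrained" incidence estimates bound the extra parameters against the loss of the cubic term. The constraint $\operatorname{char}(R)=p^3$ appears because $p$ must be a genuine element of $J\setminus J^2$ with $p^2\ne0$. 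Forcing $p\in J^2$, or $p^2=0$, changes the count by a factor $p^{-\Omega(n)}$. I will verify this by computing the linear-in-$n$ terms directly.

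\textbf{Step 2: mass formula and optimisation in $r$.} For the cube-zero family with $s=n-1-r$, the number of isomorphism classes equals
\[
\sum_{\beta}\frac{1}{|\mathrm{Stab}(\beta)|}\cdot\frac{|\mathrm{Stab}(\beta)|}{|\GL_r(\F_p)|\,|\GL_s(\F_p)|},
\]
up to the extra translation and unipotent parts of the automorphism group, which act on the lift of $p$ and on the splitting $J=J/J^2\oplus J^2$. When stabilisers are trivial, the count is
\[
p^{s\binom{r+1}{2}-r^2-s^2-(\text{linear terms})}\prod_j(1-p^{-j})^{-2}.
\]
Here the two factors $\kappa_p^{-1}$ come from $|\GL_r|^{-1}$ and $|\GL_s|^{-1}$. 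Maximising the exponent over integers $r$ gives the critical value $r\approx 2n/3$. The residue of $n$ modulo $3$ then selects the listed values of $r$ and produces the three cubic polynomials $E_n$. When $3\mid n$ there is an exact tie up to $p^{-4}$ between $r=2n/3-1$ and $r=2n/3-2$, which yields $\eta_p(n)$ and the limiting proportions. All other values of $r$ lose $p^{-\Omega(n)}$ relative to the optimum.

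\textbf{Step 3: the main obstacle.} The hardest step is showing that the $\beta$ with nontrivial stabiliser in $\GL_r\times\GL_s$ (and in the unipotent radical) form a $p^{-c_pn}$ fraction, with an error sharp enough to give $O_p(p^{-c_pn})$ rather than a polynomial loss. I would first adapt the Fourier and incidence estimates for alternating forms from the group part to symmetric bilinear maps. Concretely, for each non-identity $g$ I would bound the number of fixed $\beta$ by the dimension of the fixed space of $g$ on $\mathrm{Hom}(\mathrm{Sym}^2V,W)$, and then sum over conjugacy classes of $g$. The case where $g$ is close to a scalar needs separate treatment, since a scalar $\lambda$ acts by $\lambda^{-2}\cdot\lambda'$ and can fix many maps. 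I expect to handle it exactly by quotienting out the one-dimensional torus that acts trivially. Its contribution is then absorbed into the constant.
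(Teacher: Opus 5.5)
The paper does not prove this statement. It is only announced in the introduction, with the proof deferred to \cite{LiQiao27}, so your outline has to stand on its own. The architecture is reasonable: reduce to local rings with residue field $\F_p$, show everything outside the cube-zero, characteristic-$p^3$ family is negligible, count that family exactly, and optimise in $r$. But the step carrying almost all of the difficulty is asserted, not proved.

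\textbf{The gap is Step~1.} You claim that rings with $J^3\ne0$ lose $\Omega(n^2)$ in the exponent because this ``mirrors'' Theorem~\ref{intro:concentration}. That argument does not transfer as a black box.
\begin{itemize}
\item Counting the layers $J^i/J^{i+1}$, $i\ge3$, against all of $J/J^2$ costs $r\,s_is_{i+1}$, which is cubic. You would need a core $U\le J/J^2$ with $\beta(\mathrm{Sym}^2U)=J^2/J^3$, so that $J^{i+1}=J^iU$ for $i\ge2$.
\item You would need breadth savings from $x(yz)=y(xz)$ in place of the Jacobi identity.
\item You would need control of the non-graded (deeper) components of products, as in Section~\ref{p12:sec:filtered-HW}.
\item Where the core is forced to be large compared with $\dim J^2/J^3$, you would need a count of spaces of \emph{symmetric} forms with the hollowing property. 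Part~\ref{p12:part:leading-maps} is specific to alternating forms (symplectic isotropic counts, and the $\F_2$ endpoint via the Fourier transform of $1_{\GL_k(2)}$). For symmetric forms, total isotropy also imposes the diagonal conditions $A(x,x)=0$, and over $\F_2$ the two theories differ, so new estimates are required.
\end{itemize}
Without these, the available Sims-type bounds have errors of order $n^{5/2}$ or worse in the exponent. That cannot even place the $J^3\ne0$ rings below $p^{E_n}$, let alone by a factor $p^{-cn}$.

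Nor can you shortcut through Theorem~\ref{intro:finite-rings} applied to the non-unital ring $J$. We have $\log_pC_p(n-1)=\frac2{27}n^3-\frac49n^2+O_p(n)$, which exceeds $\log_pU_p(n)$ by $\frac29n^2$, so a saving of $n^2/9$ is too weak. You must use, on every layer, the constraint that multiplication by the element $p\cdot1\in J$ equals multiplication by the integer $p$.

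\textbf{Steps~2--3 also need correcting.}
\begin{itemize}
\item Your displayed sum equals $\#\{\beta\}/(|\GL_r||\GL_s|)$. That is the mass, not the number of orbits, which is $|G|^{-1}\sum_{g\in G}|\mathrm{Fix}(g)|$.
\item Isomorphisms must fix $p\cdot1$. Write $R\cong\Z_p[[x_2,\dots,x_r]]/(\mathfrak m^3+I)$ with $\mathfrak m=(p,x_2,\dots,x_r)$. The dominant family is then the set of codimension-$s$ subspaces $I\le\mathfrak m^2/\mathfrak m^3\cong\mathrm{Sym}^2\F_p^r$ with $p^2\notin I$, modulo $\mathrm{Stab}_{\GL_r(\F_p)}(\bar p)$, a group of order $p^{r-1}|\GL_{r-1}(\F_p)|$.
\item This gives the exponent $s\bigl(\binom{r+1}2-s\bigr)-r^2+r$. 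The linear term is $+r$, not a subtraction; without it the tie for $3\mid n$ would be $p^{-3}$ rather than $p^{-4}$. With it you do recover $E_n$, the table of embedding dimensions, and $\eta_p(n)$.
\item Your treatment of scalars is wrong. The pair $(\lambda,\lambda^2)$ fixes $\beta$ but sends $\bar p$ to $\lambda\bar p$, so no torus acts trivially. Quotienting by one would multiply the count by $p-1$, which the exact constant $\kappa_p^{-2}$ cannot absorb.
\item Step~3 is not the main obstacle. Every $1\ne g\in\mathrm{Stab}(\bar p)$ is non-scalar, so every eigenspace of $\mathrm{Sym}^2g$ has codimension at least $r-1$, and more when $g$ is far from scalar. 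Burnside's lemma, stratified by $\min_\lambda\rk(g-\lambda)$, then makes the nontrivial-stabiliser correction $p^{-\Omega(n^2)}$ with no Fourier or incidence input.
\item The $O_p(p^{-c_pn})$ error actually comes from three explicit sources: neighbouring values of $r$; the competing cube-zero families ($p\in J^2$, or $p^2=0$); and replacing $\pi_{r-1}(p)$ and the Gaussian binomials by $\kappa_p$.
\end{itemize}
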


\paragraph{Poonen's conjecture.}
Let
\[
 U(X)=\#\{[R]:|R|\le X,\quad
              R\text{ is commutative with identity}\}.
\]
Poonen~\cite[Conjecture~11.4]{Poonen} predicted six simultaneous
properties of almost all rings in this cumulative count.
The following result proves that conjecture and gives an asymptotic
formula for $U(X)$.

\begin{theorem}[Poonen's conjecture]\label{intro:poonen}
Put $n=\lfloor\log_2X\rfloor$. There is an absolute constant $c>0$
such that, among the isomorphism classes counted by $U(X)$, all but
a proportion $O(2^{-cn})$ satisfy the following properties:
\begin{enumerate}[label=\textup{(\roman*)}]
 \item $|R|=2^n$;
 \item $R$ is local;
 \item its residue field is $\F_2$;
 \item its maximal ideal $J$ satisfies $J^3=0\ne J^2$;
 \item $\dim_{\F_2}J/J^2=2n/3+O(1)$;
 \item $\operatorname{char}(R)=8$.
\end{enumerate}
More precisely, the embedding dimensions are those in
\eqref{intro:eq:ring-embedding-dimensions}; when $3\mid n$, the two
limiting proportions are $16/17$ and $1/17$.
Uniformly for $2^n\le X<2^{n+1}$,
\[
 U(X)=\kappa_2^{-2}2^{E_n}
       \bigl(\eta_2(n)+O(2^{-cn})\bigr).
\]
\end{theorem}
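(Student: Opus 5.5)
The plan is to reduce everything to the single prime $p=2$ and the single order $2^n$, and then read off all the conclusions from \cref{intro:unital-rings} at $p=2$. Since $p=2$ is fixed there, its constants $c_2$ and $O_2(\cdot)$ are absolute, which is what the uniformity in the statement needs. The structural input is the primary decomposition of a finite commutative ring with identity: $R\cong\prod_{p}R_p$ with $|R_p|=p^{a_p}$, and the isomorphism type of $R$ determines and is determined by those of the factors. Hence for $N=\prod_p p^{a_p}$ the number of classes of order $N$ is $\prod_p U_p(a_p)$, and
\[
 U(X)=\sum_{N\le X}\ \prod_{p^{a}\,\|\,N}U_p(a).
\]

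Fix $2^n\le X<2^{n+1}$. The only $N\le X$ that is a power of $2$ with exponent $n$ is $N=2^n$. Every other $N\le X$ has the form $N=2^aM$ with $M$ odd and $a\le n-1$: indeed, if $M\ge3$ then $2^a\cdot 3\le X<2^{n+1}$ forces $a\le n-1$. I would bound the contribution of each such $N$ against $U_2(n)$, for which \cref{intro:unital-rings} gives $\log_2U_2(n)=\frac2{27}n^3-\frac23n^2+O(n)$.

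The odd factors need an upper bound on $U_p(b)$ that is uniform in $p$, and this is the step I expect to be the main obstacle, since \cref{intro:unital-rings} has $p$-dependent constants. I would aim for
\[
 U_p(b)\le p^{\frac2{27}b^3+Cb^2}\qquad\text{with $C$ absolute},
\]
which should follow from the classical Kruse--Price/Poonen structure-constant count, or from the $C_p(n)$ estimate in \cref{intro:finite-rings} provided its proof yields $p$-uniform lower-order terms.

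Granting this bound, write $L=\log_2M$. A factor $p^b\,\|\,M$ contributes at most $2^{\frac2{27}(b\log_2p)^3/(\log_2p)^2+O(b^2\log_2 p)}$. Since $(\log_2p)^2\ge(\log_23)^2>2.5$ and the cube is superadditive, the whole odd part contributes at most $2^{\frac{2}{27}L^3/2.5+O(L^2\log L)}$. The $2$-part contributes $U_2(a)$ with $a+L\le n+1$ and $a\le n-1$. A convexity check on
\[
 \tfrac2{27}a^3+\tfrac{2}{27\cdot2.5}L^3-\tfrac23a^2
\]
over this region shows the maximum is at $L=\log_23$, $a=n-1$. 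This gives, for each such $N$,
\[
 \frac{\#\{R:|R|=N\}}{U_2(n)}\le2^{-\frac29n^2+O(n)} .
\]
For $L\gtrsim n^{2/3}$ the cubic loss dominates trivially. In the intermediate range I would compare directly with $U_2(a)$, $a\le n-1$, using $U_2(a)/U_2(n)\le 2^{-\frac29 n^2+O(n)}$, and absorb the odd part's $2^{O(L^3)}$ by the gap.

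There are at most $X<2^{n+1}$ values of $N$, so all orders other than $2^n$ together contribute a proportion $2^{-\Omega(n^2)}$. This yields (i) and
\[
 U(X)=U_2(n)\bigl(1+O(2^{-cn^2})\bigr)=\kappa_2^{-2}2^{E_n}\bigl(\eta_2(n)+O(2^{-cn})\bigr),
\]
uniformly for $2^n\le X<2^{n+1}$.

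Properties (ii)–(vi) and the embedding-dimension table \eqref{intro:eq:ring-embedding-dimensions} then follow from the ``all but a proportion $O_p(p^{-c_pn})$'' clause of \cref{intro:unital-rings} with $p=2$. In particular (v) holds because each listed $r$ is $2n/3+O(1)$, and the limiting proportions are $1/(1+2^{-4})=16/17$ and $2^{-4}/(1+2^{-4})=1/17$. The exceptional proportions add, and we take $c$ to be the minimum of $c_2$ and the quadratic saving above.
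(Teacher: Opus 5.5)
The paper does not prove this statement. It is only announced in the introduction, with the proof deferred to \cite{LiQiao27}, so I can only assess your argument on its own terms.

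Your architecture is sound. It is the natural ring analogue of the paper's cumulative argument for groups in \cref{p4:sec:extensions}, only simpler. Because $R\cong\prod_pR_p$, the number of classes of order $N$ is exactly $\prod_{p^a\|N}U_p(a)$, so no extension counts or Pyber-type theorem are needed. Your bookkeeping at the prime $2$ is also correct. Orders $2^a$ with $a\le n-1$ cost a factor $2^{-\frac29n^2+O(n)}$ relative to $U_2(n)$. Properties (ii)--(vi), the embedding-dimension table and the proportions $16/17$, $1/17$ then follow from the $p=2$ case of \cref{intro:unital-rings}, whose constants are absolute.

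The genuine gap is the step you flag yourself: a bound on $U_p(b)$ that is uniform over odd $p$. This is not a technicality. The odd part $M$ ranges over all odd integers below $2^{n+1}$, so its prime factors are unbounded. Meanwhile \cref{intro:finite-rings,intro:unital-rings} are stated with constants that may depend on $p$, and the paper says so explicitly. In the group case, \cref{p4:prop:largeodd} can feed the uniform \cref{intro:sharp} into Pyber's inequality; nothing in the paper supplies the corresponding input for rings. As written, your proof is therefore conditional.

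You do ask for more than you need, though. Suppose $\log_pU_p(b)\le\alpha b^3+O(b^{3-\epsilon})$ uniformly in odd $p$, and put $\beta=\alpha/(\log_23)^2$ and $\gamma=1-\tfrac{27}{2}\beta$. The odd part then contributes at most $2^{\beta L^3+O(L^{3-\epsilon})}$. For $0\le L\le n$ and $\gamma>0$,
\[
 \tfrac{2}{27}\bigl(n^3-(n-L)^3\bigr)-\beta L^3
 =\tfrac{2}{27}L\bigl(3n^2-3nL+\gamma L^2\bigr)\ge\tfrac{2\gamma}{27}n^2L .
\]
Apply this with $n+1$ in place of $n$, since $a\le n+1-L$. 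It then exceeds the $\frac29n^2$ slack plus the lower-order terms $O(nL+L^{3-\epsilon}+n)$ by $\Omega(n^2L)$ once $L\ge L_0(\gamma)$. Bounded odd parts are handled by $a\le n-1$ exactly as you do.

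Consequently any estimate of that shape with $\alpha<\frac{2}{27}(\log_23)^2\approx0.186$ suffices. Since $U_p(b)\le A_p(b)$, even a $p$-uniform form of the Kruse--Price coefficient $\frac4{27}$ \cite{KP70} would do, as would a uniform form of the commutative count in \cite{Poonen} (cf.\ \cite{BM22}). The missing step is to supply such an estimate and actually check its uniformity in $p$.
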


\subsection{Organization of the paper}
\label{intro:organization}

Section~\ref{p12:sec:introduction} introduces Sims presentations,
the associated graded Lie ring, and the counting tools used
throughout the paper. Part~\ref{p12:part:valid-completions} counts
the power-word and commutator coefficient arrays and derives the
joint bounds and optimization objectives used in Part~III.
Part~\ref{p12:part:leading-maps} develops the alternating-map
estimates, including the binary endpoint, and converts them into
group-count bounds with the required parameter bookkeeping.

Part~\ref{part:sharp-enumeration} applies these estimates to group
isomorphism classes. Section~\ref{p3:sec:statement} records the
presentation inputs, and Section~\ref{p3:sec:normalization} establishes
the numerical savings and source-basis normalization.
Section~\ref{p3:sec:linear} proves structural concentration in the
Frattini-class-two family (\cref{intro:concentration}) and combines it with the enumeration
theorem of~\cite{BLQR} to prove
Theorem~\ref{intro:sharp}. 

Section~\ref{p4:sec:extensions} combines extension estimates,
the cumulative cutoff, and Pyber's prescribed-Sylow theorem to prove
Theorem~\ref{intro:domination}: almost all finite groups are
nonabelian $2$-groups of Frattini class exactly two. Its final
subsection proves Corollary~\ref{intro:bnv-2218}.
The finite-ring results and the resolution of Poonen's conjecture
in Subsection~\ref{intro:ring-results} are announced here; their
proofs will be presented separately.



\section{Framework and tools for \texorpdfstring{$p$}{p}-group enumeration}\label{p12:sec:introduction}
We use the convention $[n]:=\{1,\dots,n\}$ for any positive integer $n$. For a subset $S\subseteq G$ of group elements, we use $\langle S\rangle$ to denote the subgroup generated by elements of $S$. We use the group commutator convention $[x,y]=x^{-1}y^{-1}xy$.

For a finite group $G$ of order
$p^m$ and of nilpotency class $c$, let $G_1=G$, and $G_{i+1}=[G_i,G]$ for $i\in[c]$. The subgroup chain $(G_i)$ satisfying
\[
  G=G_1>G_2>\cdots>G_{c+1}=\{1\}
\]
is called the lower central series of $G$. We set $G_t=1$ for all $t\ge c+1$. When $c=1$, the group $G$ is abelian. 

For $i\in[c]$, put $L_i=G_i/G_{i+1}$; these are abelian $p$-groups. We set $L_t=0$ for $t>c$.
The \emph{rank} $r_i=d(G_i/G_{i+1})$ of $G_i/G_{i+1}$ is the minimum number of
generators of $G_i/G_{i+1}$.
Let $s_i=\log_p|G_i/G_{i+1}|$ for $i\in[c]$, which satisfy $\sum_{i=1}^c s_i=m$.  

A key notion in $p$-group enumeration is the following introduced by Sims \cite{Sims}.
\begin{definition}\label{def:sims-rank}
	Let $G$ be a finite $p$-group. The \emph{Sims rank} $s(G)$ of $G$ is 
\begin{equation}\label{eq: sims rank}
  s(G):=\min\{d(H):~H\leq G,~G_2=[H,H]G_3\}.  
\end{equation}
\end{definition}
In other words, $s(G)$ is the least number of generators of a subgroup $H\le G$ satisfying $G_2=[H,H]G_3$. We use $s=s(G)$ when there is no confusion. 
If $G$ is nonabelian, then $2\le s\le r_1$ and
$1\le r_2\le\binom{s}{2}$. If $G$ is abelian, then $s=0$ and $r_2=0$.

A minimizing subgroup $H$ in~\eqref{eq: sims rank} is called a \emph{Sims core} of $G$, whose lower central series is closely related to that of $G$.
\begin{lemma}[Proposition 3.8 in~\cite{BNV}]\label{lem:Sims core lower central}
  Let $G$ be a nilpotent group and $H$ be a subgroup of $G$ satisfying $G_2=[H,H]G_3$. Let $(H_i)$ and $(G_i)$ be the lower central series of $H$ and $G$, respectively. Then we have $H_i=G_i$ for all $2\leq i\leq c$.
\end{lemma}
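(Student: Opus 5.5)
We argue by induction on $i\ge 2$ that $H_i=G_i$. Two facts are used throughout: $H\le G$ gives $H_i\le G_i$ for every $i$, and the lower central series satisfies $[G_i,G_j]\le G_{i+j}$. Nilpotency of $G$ ensures that $G_{c+1}=1$.

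\textbf{Step 1: the case $i=2$.} Since $H_2=[H,H]$, the hypothesis gives $G_2=H_2G_3$. For the descent it is convenient to prove the stronger statement $G_i=H_iG_{i+1}$ for all $i\ge 2$. Once this holds, iterating gives $G_i=H_iG_{i+1}=H_iH_{i+1}G_{i+2}=\cdots=H_iG_{c+1}=H_i$, using $H_{j+1}\le H_j$.

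\textbf{Step 2: the inductive step.} Assume $G_i=H_iG_{i+1}$. Then
\[
G_{i+1}=[G_i,G]=[H_iG_{i+1},G].
\]
Modulo $G_{i+2}$, the commutator map $G_i/G_{i+1}\times G/G_2\to G_{i+1}/G_{i+2}$ is well defined and bilinear, because $[G_{i+1},G]\le G_{i+2}$ and $[G_i,G_2]\le G_{i+2}$. Hence $G_{i+1}$ is generated modulo $G_{i+2}$ by commutators $[x,y]$ with $x\in H_i$ and $y$ ranging over representatives of $G/G_2$.

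This is the main obstacle: $y$ runs over all of $G$, not just $H$. The hypothesis only controls $G_2$, so $H$ need not generate $G$ modulo $G_2$, and we cannot simply choose $y\in H$.

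\textbf{Step 3: overcoming the obstacle.} We reverse the roles and use $x$ as the factor to expand, rather than $y$. Since $H_i=[H_{i-1},H]$ (and $H_1=H$), we use the three-subgroup lemma, or more precisely its graded form modulo $G_{i+2}$ via the Hall–Witt identity. This gives
\[
[[H_{i-1},H],G]\le [[H,G],H_{i-1}]\,[[G,H_{i-1}],H]\,G_{i+2}.
\]
\emph{Case $i=2$.} Here $[H,G]\le G_2=H_2G_3$. Hence $[[H,G],H]\le [H_2,H]\,[G_3,H]\le H_3G_4$, and the first factor is symmetric to this, so
\[
G_3\le H_3G_4\,[G_3,G]\le H_3G_4.
\]
\emph{General $i$.} We use the inductive hypothesis in the form $[G,H_{i-1}]\le G_i=H_iG_{i+1}$. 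Then
\[
[[G,H_{i-1}],H]\le [H_i,H]\,G_{i+2}=H_{i+1}G_{i+2}.
\]
For the other term, $[[H,G],H_{i-1}]\le [H_2G_3,H_{i-1}]\le H_{i+1}G_{i+2}$, using $[H_2,H_{i-1}]\le H_{i+1}$ and $[G_3,G_{i-1}]\le G_{i+2}$.

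Together these give $G_{i+1}\le H_{i+1}G_{i+2}$, which completes the induction. The only delicate point is this three-subgroup manipulation, and it must be carried out modulo $G_{i+2}$, where all the subgroups involved are normal.
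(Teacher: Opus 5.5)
Your proof is correct. You prove $G_i=H_iG_{i+1}$ by induction, using the three-subgroup lemma against the subgroup $H_{i+1}G_{i+2}$, which is normal in $G$ because it lies between $G_{i+2}$ and $G_{i+1}$; this handles the fact that $H$ need not generate $G$ modulo $G_2$, and the descent to $G_{c+1}=1$ then finishes. This is the standard commutator-calculus argument, and the paper gives no proof of its own to compare against, only the citation to \cite[Proposition~3.8]{BNV}.
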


Let $\Phi(G)=G^p[G,G]$ be the \emph{Frattini subgroup} of a $p$-group $G$. We have 
\[
\begin{split}
  \Phi(G_1/G_2)&=(G_1/G_2)^p=G_1^pG_2/G_2=\Phi(G_1)/G_2\\
  \Phi(G_2/G_3)&=(G_2/G_3)^p=G_2^pG_3/G_3=\Phi(G_2)G_3/G_3
\end{split}
\] 
Let $V=G_1/\Phi(G_1)$ be the Frattini quotient of $G$ and denote $W=G_2/(G_3\Phi(G_2))$. By the above, $V$ is isomorphic to the Frattini quotient $(G_1/G_2)/\Phi(G_1/G_2)$, and $W$ is isomorphic to the Frattini quotient $(G_2/G_3)/\Phi(G_2/G_3)$. These isomorphisms identify $V$ and $W$ as vector spaces over $\F_p$, with dimensions $r_1$ and $r_2$, respectively.

Group commutation induces an
alternating bilinear map $\beta_G:V\times V\rightarrow W$ acting on $x\Phi(G),y\Phi(G)\in V$ as
\begin{equation}\label{p12:eq:group-alternating-map}
 \beta_G(x\Phi(G),y\Phi(G))=[x,y]G_3\Phi(G_2).
\end{equation}
Its values span $W$, so the induced linear map $\bigwedge^2 V\to W$ is surjective. For a subspace $U\le V$, we write $\beta_G(U,U)$ for the $\F_p$-linear span of the values $\beta_G(u,u')$ with $u,u'\in U$.
The Sims rank $s(G)$ of $G$ can be translated as the minimum generating dimension of $\beta_G$:
\begin{equation}\label{eq: linearised sims rank}
  s(G)=\min\{\dim(U):~U\leq V,~\beta_G(U,U)=W\}.
\end{equation}
A Sims core $H$ of $G$ determines a minimizing subspace $U=H\Phi(G)/\Phi(G)$ of~\eqref{eq: linearised sims rank} with $\dim(U)=s(G)$. The alternating bilinear map structure yields the following relation between $s(G)$ and $r_2$:
\begin{lemma}[Corollary~5.3 in~\cite{BNV}]\label{lem:relation between s(G) and r_2}
  Let $G$ be a finite $p$-group with Sims rank $s(G)$ and let $r_2$ be the rank of $G_2/G_3$. Then $s-1\leq r_2\leq \binom{s}{2}$. 
\end{lemma}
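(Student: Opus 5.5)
The statement is purely about the alternating bilinear map $\beta_G:V\times V\to W$ together with the characterization \eqref{eq: linearised sims rank}. I will therefore work with a minimizing subspace $U\le V$ with $\dim U=s$ and $\beta_G(U,U)=W$, where $\dim W=r_2$. Both inequalities are then linear algebra over $\F_p$.

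\emph{Upper bound.} Fix a basis $u_1,\dots,u_s$ of $U$. By bilinearity and the alternating property, $\beta_G(U,U)$ is spanned by the values $\beta_G(u_i,u_j)$ with $i<j$. Equivalently, the induced map $\bigwedge^2U\to W$ is surjective. There are $\binom{s}{2}$ such values, so $r_2=\dim W\le\binom{s}{2}$.

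\emph{Lower bound.} The idea is that minimality of $U$ forces each new generator to contribute at least one new dimension. Order the basis $u_1,\dots,u_s$ and set $U_k=\langle u_1,\dots,u_k\rangle$ and $W_k=\beta_G(U_k,U_k)$. This gives a chain $0=W_1\le W_2\le\cdots\le W_s=W$. The step to establish is that minimality of $s$ forces $W_k\ne W_{k+1}$ for every $k\in\{1,\dots,s-1\}$. Summing over the $s-1$ steps then gives $r_2\ge s-1$.

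\emph{The main obstacle.} Strictness does not follow for an arbitrary basis ordering, since the chain could stall at some step. I will handle this by a replacement argument. Suppose $W_{k+1}=W_k$ for some $k$. I first try to show that $u_{k+1}$ can be modified, or dropped, to give a spanning subspace of dimension $s-1$, which would contradict minimality.

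If that does not work directly, I switch to a cleaner greedy argument. Build a subspace $U'$ by adding vectors one at a time. At each step, if the current subspace $U'$ has $\beta_G(U',U')\ne W$, choose $v$ so that $\beta_G(U'+\langle v\rangle,U'+\langle v\rangle)$ strictly enlarges $\beta_G(U',U')$. Such a $v$ exists because the values of $\beta_G$ span $W$. Concretely, some pair $(x,y)$ has $\beta_G(x,y)\notin\beta_G(U',U')$, and adding one of $x$, $y$ either already enlarges the span, or else adding the other does after the first is included.

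The careful counting is this. Starting from $U'=\langle x,y\rangle$ with $\beta_G(x,y)\ne0$, each added vector increases $\dim U'$ by $1$ and $\dim\beta_G(U',U')$ by at least $1$. The one exception is the case needing two vectors. There I add $x$ first and then $y$, and argue that in that case adding $x$ alone already enlarges the span unless $x\in U'$, so only one vector is needed. The result is a subspace $U'$ with $\beta_G(U',U')=W$ and $\dim U'\le r_2+1$. Hence $s\le r_2+1$, which is the required bound $s-1\le r_2$.
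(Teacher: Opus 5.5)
\textbf{Verdict: gap in the lower bound.} Your upper bound $r_2\le\binom{s}{2}$ is correct. The lower bound $s-1\le r_2$ is not established.

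\textbf{The chain route.} Your first route, strictness of $W_k\subsetneq W_{k+1}$ for a basis of a minimal $U$, cannot be closed by ``modifying or dropping $u_{k+1}$ to contradict minimality''. Stalls occur even when $U$ is minimal.

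Take $V=\langle a,b,c,d\rangle$ with $\beta_G(a,b)=w_1$, $\beta_G(c,d)=w_2$, and all other basis pairs $0$. This is $\beta_G$ for $G=H\times H$ with $H$ extraspecial of order $p^3$, and $r_2=2$. The minimal core $\langle a+c,b,d\rangle$, ordered as $b,d,a+c$, already has $W_2=0=W_1$. Nothing can be dropped here. What is needed is a \emph{re-choice of basis} with a strictly increasing chain. That is exactly what \cite[Proposition~5.2]{BNV} supplies, and the paper uses it in its Sims framework to get $0=d_1<d_2<\cdots<d_s=r_2$, hence $r_2\ge s-1$.

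\textbf{The greedy fallback.} This fails at the step you single out. It is false that some single $v$ always enlarges $\beta_G(U',U')$. It is also false that ``adding $x$ alone already enlarges the span unless $x\in U'$''. Indeed $\beta_G(U'+\langle x\rangle,U'+\langle x\rangle)=\beta_G(U',U')+\beta_G(x,U')$, and $\beta_G(x,U')$ can lie in $\beta_G(U',U')$ with $x\notin U'$.

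In the example above, start from $U'=\langle a,b\rangle$. Every $v\in V$ satisfies $\beta_G(v,U')\subseteq\langle w_1\rangle$. The greedy must therefore add both $c$ and $d$, and it ends at dimension $4>r_2+1$. Reaching a $3$-dimensional generating space requires altering a vector already chosen, replacing $a$ by $a+c$, which monotone extension never does.

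\textbf{A fix that avoids the exchange argument.} Assume $G$ is nonabelian; the abelian case $s=r_2=0$ is trivial. Let $U$ be minimal with $\dim U=s\ge2$.

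\begin{enumerate}
\item By minimality, every hyperplane $H\le U$ has $\beta_G(H,H)\ne W$. So some nonzero $\phi\in W^*$ kills $\beta_G(H,H)$.
\item Then $A=\phi\circ\beta_G|_{U\times U}$ is a nonzero alternating form with $H$ totally isotropic. It is nonzero because $\beta_G(U,U)=W$.
\item A totally isotropic subspace has dimension at most $s-\rk A/2$. Hence $\rk A=2$ and $A=\lambda\wedge\mu$.
\item The totally isotropic hyperplanes of $A$ are exactly the kernels of the nonzero elements of $\langle\lambda,\mu\rangle$. There are $p+1$ of them.
\item The map $\phi\mapsto\phi\circ\beta_G|_U$ is injective. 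So the $(p^{r_2}-1)/(p-1)$ projective forms must cover all $(p^s-1)/(p-1)$ hyperplanes of $U$.
\item This gives $p^s-1\le(p+1)(p^{r_2}-1)$. If $r_2\le s-2$, the right side is at most $p^{s-1}+p^{s-2}-p-1<p^s-1$, a contradiction.
\end{enumerate}

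Hence $r_2\ge s-1$.
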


For fixed $r_1,r_2,s$, let
$N_p(r_1,r_2,s)$ be the base-$p$ logarithm of the number of
surjective linear maps
$\beta:\bigwedge^2\F_p^{r_1}\to\F_p^{r_2}$ whose corresponding alternating bilinear maps have minimum generating
dimension $s$; if there are no such maps, set
$N_p(r_1,r_2,s)=-\infty$.~\cite[Proposition~5.5]{BNV} proved that
\begin{equation}\label{eq:BNV's Nprs}
\begin{split}
  N_p(r_1,r_2,s)&\leq \frac{1}{2}r_1^2(r_2-(s-1))+O((r_1+r_2)^{8/3}),~~\text{and}\\
  N_p(r_1,r_2,s)&\leq \frac{1}{2}r_1^2(r_2-(s-1))+\frac{1}{2}r_1r_2(s-1)+O((r_1+r_2)^{5/2}).
\end{split}
\end{equation}

\subsection{Sims' enumeration framework}\label{p12:par:fixed-sims-data}
The idea of Sims' enumeration framework for $p$-groups~\cite{Sims} is to show that any $p$-group has a presentation of a restricted type, and then give an upper bound for the number of such presentations. We recall this framework following
\cite[Chapter~5]{BNV}, which will be largely utilized throughout the proofs of our results.

Let $G$ be a nonabelian group of order $p^{m}$ with nilpotency class $c$ and Sims rank $s$. Let $(G_i)_{i\in[c+1]}$ be the lower central series, and denote $r_i=d(G_i/G_{i+1})$ for $i\in[c]$. We first specify a generating set of $G$ and then describe the relations among the chosen generating set.

\paragraph{The Sims generating set.}
The generating set of $G$ is described as an ordered set 
\[
(g_{i,j})_{i\in[c],j\in[r_i]}=(g_{1,1},\dots,g_{1,r_1},g_{2,1},\dots,g_{2,r_2},\dots,g_{c,1},\dots,g_{c,r_c}), 
\]
where $g_{i,j}\in G_i$ is called the \emph{$j$th weight-$i$ generator}. The generating set will satisfy the following: 
\[
  \langle g_{i,1}G_{i+1},\dots,g_{i,r_i}G_{i+1}\rangle=G_i/G_{i+1}~~\forall\ i\in[c].
\]
We now specify how these $g_{i,j}$ are chosen
layer by layer, using a fixed Sims core $H$.

  For $i=1$, let the first $s$ elements $g_{1,1},\ldots,g_{1,s}$ generate $H$; i.e.~$H=\langle g_{1,1},\ldots,g_{1,s}\rangle$.  In the Frattini quotient
  $V\cong L_1/pL_1$, the minimality of $s$ implies that the images $g_{1,1}\Phi(G),\dots,g_{1,s}\Phi(G)$ are linearly independent in
  $V$.  Extend them to an ordered basis
  $(g_{1,j}\Phi(G))_{j\in[r_1]}$ of $V$ by choosing
  $g_{1,s+1},\ldots,g_{1,r_1}\in G$.  The Burnside basis
  theorem~\cite[Theorem~5.3.2]{Robinson} implies that
  $(g_{1,j})_{j\in[r_1]}$ is a minimum generating set of $G=G_1$.
  Thus, $\langle g_{1,1}G_{2},\dots,g_{1,r_1}G_{2}\rangle=G_1/G_{2}$.

  For $i=2$, we choose $g_{2,1},\dots,g_{2,r_2}$ using the structure of $\beta_G:V\times V\to W$ and the Sims core $H$. 
  Let $U=\linspan_{\F_p}\{g_{1,1}\Phi(G),\cdots,g_{1,s}\Phi(G)\}$, where these cosets are viewed as elements in $V\cong\F_p^{r_1}$. The Sims rank definition~\eqref{eq: linearised sims rank} gives 
  \[
    W=\beta_G(U,U)
  =\linspan_{\F_p}\{[g_{1,a},g_{1,b}]G_3\Phi(G_2):1\le a<b\le s\}
  \]
  For each $k\in[s]$, put
  \[
  W_k=\linspan_{\F_p}\{
    [g_{1,a},g_{1,b}]G_3\Phi(G_2):1\le a<b\le k\}.
  \]
  By \cite[Proposition~5.2]{BNV}, we can choose the Sims-core generators $g_{1,1},\ldots,g_{1,s}$ 
  such that 
  \[
   W_{k+1}=W_{k}+\linspan_{\F_p}\{
   [g_{1,a},g_{1,k+1}]G_3\Phi(G_2):1\le a <k+1\}
  \]
  for $k\in[s-1]$, which satisfy $0=W_1<W_2<\cdots<W_s=W$.
  Let $d_k=\dim W_k$ for $k\in[s]$, which satisfy $0=d_1<d_2<\cdots<d_s=r_2$. For convenience, let $d_0=0$.

  For each $2\le k\le s$, let $([g_{1,a_{k,\ell}},g_{1,k}]G_3\Phi(G_2)+W_{k-1})_{\ell\in[d_k-d_{k-1}]}$ be an ordered basis of $W_k/W_{k-1}$, where $a_{k,1},\dots,a_{k,d_k-d_{k-1}}\in[k-1]$. In particular, 
   \[
     W=\linspan_{\F_p}\{[g_{1,a_{k,\ell}},g_{1,k}]G_3\Phi(G_2):~2\leq k\leq s,~\ell\in[d_k-d_{k-1}]\}.
   \]
  We record this ordered basis by defining the
  auxiliary function $f:[r_2]\to[s-1]$ by $f(d_{k-1}+\ell)=a_{k,\ell}$ when $2\le k\le s$ and $1\le\ell\le d_k-d_{k-1}$.
  Thus $1\le f(j)<k$ whenever $d_{k-1}<j\le d_k$, and $f$ is
  injective on each such block.

  For each $j\in[r_2]$, let $k\in\{2,\ldots,s\}$ be the unique index with
  $d_{k-1}<j\le d_k$, and set
  $g_{2,j}:=[g_{1,f(j)},g_{1,k}]\in G_2$.  
   The elements $(g_{2,j}G_3\Phi(G_2))_{j\in[r_2]}$ form the chosen ordered basis
  of $W$, and $(g_{2,j}G_3)_{j\in[r_2]}$ is a minimum generating set of
  $G_2/G_3$. 

  For $i\geq 3$,~\cref{lem:Sims core lower central} (cf.~
  \cite[Proposition~3.8]{BNV}) gives
  $H_i=G_i$ for every $i\ge2$.  Thus for $i\ge2$, $G_{i+1}=H_{i+1}=[H_i,H]=[G_i,H]$.
  If $c\ge3$, choose $g_{3,1},\ldots,g_{3,r_3}\in G_3$ from 
  \[
  \{[g_{2,a},g_{1,b}]\in G_3:b\in[s],\ d_{b-1}<a\le r_2\},  
  \]
  so that their cosets modulo $G_4$ form a minimum generating set of
  $G_3/G_4$~\cite[Proposition~3.7 and Section~5.4, the argument following
  Equation~(5.7)]{BNV}.  

  For $4\le i\le c$, we recursively choose
  $g_{i,1},\ldots,g_{i,r_i}$ from
  \[
    \{[g_{i-1,a},g_{1,b}]\in G_i:a\in[r_{i-1}],\ b\in[s]\}
  \]
  so that their cosets modulo $G_{i+1}$ form a minimum generating set
  of $G_i/G_{i+1}$.  

\paragraph{Relations among a Sims generating set.} We now exhibit relations among the chosen Sims generating set $(g_{i,j})_{i\in[c],j\in[r_i]}$ which specify the $p$-group $G$. 

\emph{Relative orders of Sims generators.} For $i\in[c]$ and $j\in[r_i]$, we use $a(i,j)$ to denote the \emph{relative-order exponent} of $g_{i,j}$; that is, 
  \begin{equation}\label{eq: relative order of gij}
  a(i,j)=\min\{a\in \mathbb{N}:~g_{i,j}^{p^{a}}\in \langle g_{i,1},\ldots,g_{i,j-1},G_{i+1}\rangle\}.
  \end{equation}
  Equivalently, $p^{a(i,j)}$ is the exact order of
  $g_{i,j}G_{i+1}$ modulo
  $\langle g_{i,1}G_{i+1},\ldots, g_{i,j-1}G_{i+1}\rangle$ in $G_i/G_{i+1}$. We collect these exponents in the \emph{relative order array} $(a(i,j))$. The relative order array determines $s_i=\log_p|L_i|=\sum_{j=1}^{r_i}a(i,j)$ and $m=\sum_{i=1}^c s_i$.
  
  Relative to the Sims generating set, any $g\in G$ has a unique \emph{Sims normal form}
  \begin{equation}\label{p12:eq:sims_normal_form}
  g=\prod_{i=1}^c\prod_{j=1}^{r_i}g_{i,j}^{e(i,j)},~~0\le e(i,j)<p^{a(i,j)}
  \end{equation}
  where both products are taken in increasing index order.  For $i\in[c]$, we call $\prod_{j=1}^{r_i}g_{i,j}^{e(i,j)}$ the \emph{weight-$i$ block} of $g$ and $(e(i,j))_{j\in[r_i]}$ the \emph{weight-$i$ coordinates} of $g$.  
  In particular, if $g\in G_k$, then $e(i,j)=0$ for any $i\in[k-1]$ and $j\in[r_{i}]$. Moreover, if $g,g'\in G_i$ satisfy $gG_{i+1}=g'G_{i+1}$, then their weight-$i$ blocks and coordinates are the same.

\emph{Power words of Sims generators.} For $i\in[c]$ and $j\in[r_i]$, let the Sims normal form of $g_{i,j}^{p^{a(i,j)}}$ be
  \begin{equation}\label{p12:eq:power_word_normal_form}
  g_{i,j}^{p^{a(i,j)}}
  =\prod_{u=1}^{c}\prod_{v=1}^{r_u}g_{u,v}^{b(i,j,u,v)}
  =\left(\prod_{v=1}^{j-1}g_{i,v}^{b(i,j,i,v)}\right)
   \left(\prod_{u=i+1}^{c}\prod_{v=1}^{r_u}
     g_{u,v}^{b(i,j,u,v)}\right).
  \end{equation}
  We collect the Sims normal form coordinates of all $g_{i,j}^{p^{a(i,j)}}$ and call $(b(i,j,u,v))$ the \emph{power word array}.
  By~\eqref{p12:eq:sims_normal_form}, the coordinates $(b(i,j,u,v))$ satisfy $0\le b(i,j,u,v)<p^{a(u,v)}$ and
  $b(i,j,u,v)=0$ whenever $u<i$, or $u=i$ and $v\ge j$. 

\emph{Commutator coefficients of Sims generators.} By the choice of the Sims generating set, for $i\in[c-1]$, weight-($i+1$) generators are chosen as commutators of weight-$i$ generators and weight-$1$ generators. More generally, for $i\in[c-1]$, $j\in[r_i]$ and $k\in[r_1]$, write the Sims normal form of $[g_{i,j},g_{1,k}]$ as 
  \begin{equation}\label{p12:eq:relation_normal_form}
  [g_{i,j},g_{1,k}]
  =\prod_{u=1}^{c}\prod_{v=1}^{r_u}
  g_{u,v}^{c(i,j,k,u,v)},
  \end{equation}
  Record $c(i,j,k,u,v)$ as the exponent of $g_{u,v}$ in the Sims
  normal form of $[g_{i,j},g_{1,k}]$.  
  Again by~\eqref{p12:eq:sims_normal_form}, the coordinates satisfy
  $0\le c(i,j,k,u,v)<p^{a(u,v)}$ and
  $c(i,j,k,u,v)=0$ if $u\le i$.


  
It has been proved in~\cite[Theorem~5.6]{BNV} that the Sims generating set $(g_{i,j})_{i\in[c],j\in[r_i]}$ of $G$, together with the above Sims relation arrays, determines the isomorphism class of $G$, which we record here.
\begin{proposition}[Theorem~5.6 in~\cite{BNV}]\label{thm:sims_presentation_determines_isomorphism_classes_}
  Let $G$ be a nonabelian finite $p$-group.
  The isomorphism class of the group $G$ is determined by the integers $p$, $c$, $s$, $(r_i)_{i\in[c]}$, $(d_k)_{k\in[s]}$ together with the Sims relation arrays --- relative order array $(a(i,j))$, power word array $(b(i,j,u,v))$ and commutator coefficient array $(c(i,j,k,u,v))$ with the following index ranges:
   \[
  \begin{array}{ll}
  a(i,j):
   &1\le i\le c,\quad 1\le j\le r_i;\\
  b(i,j,u,v):
   &1\le i\le c,\quad 1\le j\le r_i,\quad 1\le u\le c,\quad 1\le v\le r_u;\\
  c(1,j,k,u,v):
   &1\le j<k\le r_1,\quad 2\le u\le c,\quad 1\le v\le r_u;\\
  c(2,j,k,u,v):
   &1\le k\le s,\quad d_{k-1}<j\le r_2~(d_0=0),\quad
     3\le u\le c,\quad 1\le v\le r_u,\\
  c(i,j,k,u,v):
   &3\le i\le c-1,\quad 1\le j\le r_i,\quad 1\le k\le s,\ i+1\le u\le c,\quad 1\le v\le r_u.
  \end{array}
  \]
\end{proposition}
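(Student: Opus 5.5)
The plan is to show that the listed data determine a polycyclic presentation of $G$ on the generators $(g_{i,j})$, and that any two groups with the same data are isomorphic via $g_{i,j}\mapsto g'_{i,j}$. The first step is a consistent collection argument. By \eqref{p12:eq:sims_normal_form}, every element of $G$ has a unique normal form. The relative orders $a(i,j)$ together with the power words $b(i,j,u,v)$ give the power relations. The commutator coefficients give the commutator relations $[g_{i,j},g_{1,k}]$. If the group multiplication is recoverable from these, then the map $g_{i,j}\mapsto g'_{i,j}$ is a bijection on normal forms that preserves products, hence an isomorphism.

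The key step is to recover every commutator $[g_{i,j},g_{i',j'}]$, not only those listed, from the recorded data. I would argue by downward induction on weight, or equivalently by induction on $i+i'$ together with position in the lower central series.

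\begin{itemize}
\item \emph{Weight-$1$ commutators.} The data give $[g_{1,j},g_{1,k}]$ for $j<k$. Commutators with $j>k$ follow by inversion.
\item \emph{Weight-$2$ commutators.} The data give $[g_{2,j},g_{1,k}]$ only for $k\le s$ with $j>d_{k-1}$. For the missing pairs I use the definition $g_{2,j}=[g_{1,f(j)},g_{1,k'}]$, which is determined by $(d_k)$ and the function $f$. I expand $[[g_{1,a},g_{1,b}],g_{1,k}]$ via the Hall--Witt identity into commutators of strictly higher weight or of earlier type. This is exactly where the restriction $j>d_{k-1}$ comes from in \cite[Theorem~5.6]{BNV}. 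In the same way, commutators with $k>s$ are expressed through the fact that $G_{i+1}=[G_i,H]$. Here I use that each $g_{i,j}$ with $i\ge3$ is a specified iterated commutator of lower-weight generators with core generators $g_{1,b}$, $b\le s$.
\item \emph{Commutators $[g_{i,j},g_{i',j'}]$ with $i'\ge2$.} I write $g_{i',j'}$ as a commutator $[x,g_{1,b}]$ and apply the Hall--Witt identity. This expresses the result through commutators of total weight $i+i'$ whose factors have smaller second weight. These lie in $G_{i+i'}$, so by induction they are already determined in normal form.
\end{itemize}

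Strictly speaking, the choice of which commutator each $g_{i,j}$ equals, for $i\ge3$, must be part of the data or be read off canonically. I would note that it is encoded in the commutator coefficient array: $g_{i,j}$ is the element whose normal form is a single generator. So the specifying index pair can be recovered by scanning $c(i-1,a,b,i,\cdot)$, which is legitimate since the array is given.

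The main obstacle is making this induction well-founded. Every Hall--Witt rewriting must reduce to commutators that are deeper in the lower central series, or earlier in a fixed well-order on pairs $(i,j;k)$. Collection to normal form then has to terminate using only the power and commutator relations. I would handle termination by the standard argument that collection in a group with a central series refining the lower central series terminates, since each rewriting pushes terms into $G_{t}$ for larger $t$ and $G_{c+1}=1$. Once all products of generators are determined in normal form, the multiplication table of $G$ is determined by the data, which gives the claimed determination of the isomorphism class.
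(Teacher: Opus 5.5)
The paper does not prove this statement; it only cites \cite[Theorem~5.6]{BNV}, so I judge your sketch on its merits. Your strategy is the right one. Every generator of weight at least two is a prescribed commutator of a lower generator with a core generator, so the unrecorded commutators should be forced by the recorded ones. These are $[g_{2,j},g_{1,k}]$ with $j\le d_{k-1}$, $[g_{i,j},g_{1,k}]$ with $k>s$, and $[g_{i,j},g_{i',j'}]$ with $i,i'\ge2$.

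However, the step you call the main obstacle is the whole content of the proof, and it is left unresolved. Downward induction on weight fails as stated. To get the full normal form of $[g_{i,j},g_{1,k}]$ with $k>s$ from $g_{i,j}=[g_{i-1,a},g_{1,b}]$, you must commute $z=[g_{1,k},g_{i-1,a}]\in G_i$ with $g_{1,b}$. The $L_u$-component of $[z,g_{1,b}]$ depends on $z$ modulo $G_u$, that is, on the deeper components of a commutator of \emph{smaller} total weight. An upward induction fails for the symmetric reason: collecting the Hall--Witt terms requires commutators of larger total weight, for instance conjugating elements of $G_{i+1}$ by elements of $G_2$. So a two-parameter order is needed, and you neither specify one nor check it.

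Termination of collection does not help here. It only says that rewriting stops once a complete set of relations is available, whereas the relations needed for collecting are among those you are trying to recover. The group Hall--Witt identity also carries inverses and conjugates, so you would still have to explain why it yields the linear Jacobi-type relations you actually use.

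Two smaller points. To read off a defining pair you must check $c(i-1,a,b,u,\cdot)$ for all $u\ge i$, not only $u=i$. The same reading-off is needed for $f$ at weight two, which the statement does not list among the data.

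The missing idea is to compare two groups $G^{(0)},G^{(1)}$ with the same data and induct on the \emph{excess} $\xi$, the target layer minus the sum of the source weights. The paper's Part~I comparison machinery carries this out, and it yields the statement.
\begin{enumerate}
\item Once all generator commutators agree below excess $\xi$, \cref{p12:prop:sims-collection-rules} gives coordinate-preserving isomorphisms $G_k^{(0)}/G_{u+1}^{(0)}\to G_k^{(1)}/G_{u+1}^{(1)}$ for $u\le k+\xi$, compatible with conjugation.
\item The first possible differences are central, so they enter additively. By \cref{p12:thm:first-difference} they form an alternating biadditive map $\delta_\xi:L\times L\to L$ of degree $\xi$ satisfying the linear cocycle identity~\eqref{p12:eq:cocycle}. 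That identity comes from the Jacobi identity in the graded Lie ring of the comparison $N$-series, and this is where Hall--Witt legitimately becomes Jacobi.
\item Vanishing of $\delta_\xi$ on recorded pairs then propagates as at the graded level. \Cref{p12:lem:triangular-second-layer-recovery} handles $L_2\times C_{\core}$ by induction on $k$. Left-normed core words handle $L_{\ge2}\times L$, and biadditivity handles $L_1\times L_1$ (\cref{p12:cor:selected-complete,p12:prop:selected-agreement}).
\item After excess $c-2$, taking $\xi=c-1$, $k=1$, $u=c$ gives an isomorphism $\varphi_{1,c}:G^{(0)}\to G^{(1)}$.
\end{enumerate}
Together with the determination of the graded bracket (\cref{p12:lem:relative-graded}, plus the Jacobi recovery of $[\bar g_{2,j},u_k]_L$ for $j\le d_{k-1}$), this completes the argument your sketch needs.
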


\subsection{The associated graded Lie ring.}
\label{p12:subsec:filtered-preliminaries}

A Lie ring is an abelian group $L$, written additively, together with a Lie bracket $[\cdot,\cdot]_L: L\times L\to L$ such that for all $x,y,z\in L$:
\begin{itemize}
  \item $[x,x]_L=0$;
  \item $[x,y+z]_L=[x,y]_L+[x,z]_L$, $[x+y,z]_L=[x,z]_L+[y,z]_L$;
  \item $[[x,y]_L,z]_L+[[y,z]_L,x]_L+[[z,x]_L,y]_L=0$.
\end{itemize}
A Lie ring $L$ is \emph{graded} if $L$ is expressed as a direct sum $L=L_1\oplus\cdots\oplus L_c$ of subgroups $L_i$, with $L_t=0$ for $t>c$, such that $[L_i,L_j]_L\subseteq L_{i+j}$ for all $i,j\in[c]$.

For a $p$-group $G$ of nilpotency class $c$ with lower central series $(G_i)_{i\in[c+1]}$, one can associate a graded Lie ring as follows~\cite[Section 5.6]{Gorenstein}. Recall that
$L_i=G_i/G_{i+1}$ is the \emph{$i$-th layer} of $G$, which
is an abelian $p$-group, and we shall write its group operation additively:
\[
(gG_{i+1})+(hG_{i+1})=ghG_{i+1}~~\forall\ g,h\in G_i.
\]
By induction
from the three-subgroup lemma (cf.~\cite[Lemma~2.1(b)]{GlaubermanLargeAbelian}),
we have $[G_i,G_j]\le G_{i+j}$.  
Consider the abelian group $L=\bigoplus_{i=1}^c L_i$.
An element of $L$ is a tuple $(z_1,\ldots,z_c)$ with $z_i\in L_i$,
and addition is componentwise.  An element of $L$ supported only on
$L_i$ is called \emph{homogeneous of degree $i$}.  A \emph{lift} of
$z\in L_i=G_i/G_{i+1}$ is an element $g\in G_i$ such that $gG_{i+1}=z$.

For $i,j\in [c]$ with $i+j\le c$, define the Lie bracket of 
$z=gG_{i+1}\in L_i$ and $w=hG_{j+1}\in L_j$ by
\begin{equation}\label{p12:eq:graded-bracket-lifts}
 [z,w]_L=[gG_{i+1},hG_{j+1}]_L
 :=[g,h]G_{i+j+1}\in L_{i+j}.
\end{equation}
Note that the definition is independent of the chosen lifts: changing either lift
changes the commutator only by an element of $G_{i+j+1}$. 
For fixed $i,j$ with $i+j\le c$, this gives a biadditive map
$L_i\times L_j\to L_{i+j}$.  If $i+j>c$, then
$[G_i,G_j]\le G_{i+j}=1$, so set $[L_i,L_j]_L=0$.  Extend these maps
to arbitrary elements of $L$ by
\[
 \left[\bigoplus_{i=1}^{c} z_i,\bigoplus_{j=1}^{c} w_j\right]_L
 =\sum_{\substack{1\le i,j\le c\\ i+j\le c}}[z_i,w_j]_L.
\]
The resulting Lie bracket $[\mathord\cdot,\mathord\cdot]_L:L\times L\to L$
has the following properties:
\begin{itemize}
  \item It satisfies the \emph{grading rule}
  $[L_i,L_j]_L\subseteq L_{i+j}$ when $i+j\le c$, and
  $[L_i,L_j]_L=0$ when $i+j>c$.
Here $[L_i,L_j]_L$ denotes the additive subgroup generated by these
brackets.
  \item It is additive in each variable: for any $x,x',y,y'\in L$,
  \[
   [x+x',y]_L=[x,y]_L+[x',y]_L,
   \qquad
   [x,y+y']_L=[x,y]_L+[x,y']_L.
  \]
\item It is alternating and antisymmetric: for any $x,y\in L$,
$[x,x]_L=0$ and $[x,y]_L=-[y,x]_L$.
\item It satisfies the \emph{Jacobi identity}: for any $x,y,z\in L$,
\begin{equation}\label{p12:eq:graded-jacobi-explicit}
 [[x,y]_L,z]_L+[[y,z]_L,x]_L+[[z,x]_L,y]_L=0.
\end{equation}
\end{itemize}
Thus, the abelian group $L$ with the Lie bracket $[\cdot,\cdot]_L$ is a graded Lie ring, and we shall call it the
\emph{associated graded Lie ring of $G$}.
\begin{remark}
 The alternating bilinear map $\beta_G:V\times V\to W$ with $V\cong (G_1/G_2)/\Phi(G_1/G_2)=L_1/pL_1$ and $W\cong (G_2/G_3)/\Phi(G_2/G_3)=L_2/pL_2$ is precisely induced by the
restriction $L_1\times L_1\to L_2$ of the Lie bracket $[\cdot,\cdot]_L$ modulo $p$. 
\end{remark}


\subsection{Sims presentation in the associated Lie ring}
Blackburn utilized Sims' enumeration framework to study the enumeration of graded Lie rings, leading to better enumeration bounds for $p$-groups of nilpotency class at most $3$~\cite{Blackburn} (see also~\cite[Sections~20.3 and~20.4]{BNV}). We record how Sims generating sets and Sims relation arrays are translated in the associated Lie ring.

\paragraph{The associated graded Lie ring generators from a Sims generating set.}
Let $(g_{i,j})_{i\in[c],j\in[r_i]}$ be a chosen Sims generating set with respect to a Sims core $H$. For $i\in[c]$ and $j\in[r_i]$, denote $\bar g_{i,j}=g_{i,j}G_{i+1}\in L_i$. Recall that the Sims generating set is chosen such that $L_i=\langle \bar g_{i,1},\dots,\bar g_{i,r_i}\rangle$. We call $\bar g_{i,j}$ the $j$th homogeneous generator of $L_i$. 

In $L_1$, recall that the first $s$ weight-$1$ generators generate the chosen Sims core $H=\langle g_{1,1},\ldots,g_{1,s}\rangle$. Denote $u_k:=\bar g_{1,k}=g_{1,k}G_2\in L_1$ for $k\in[s]$. Then denote $C_{\core}:=HG_2/G_2=\langle u_1,\ldots,u_s\rangle\le L_1$. 

In $L_2$, recall that each $g_{2,j}$, $j\in[r_2]$, is chosen as $[g_{1,f(j)},g_{1,k}]$, where $k$ is the unique index in $\{2,\dots,s\}$ satisfying $d_{k-1}<j\leq d_k$ and $f$ is the auxiliary function derived from the bilinear map $\beta_G$. Thus, we have $\bar g_{2,j}=[u_{f(j)},u_k]_L$ for all $j\in[r_2]$.  

In $L_i$ with $i\geq 3$, those weight-$i$ generators are chosen as the commutators of weight-$(i-1)$ generators and Sims core generators. Consequently, we have
\begin{equation}\label{p12:eq:sims-core-graded-generation}
 \begin{split}
  L_1&=\bigl\langle C_{\core},\bar g_{1,s+1},\dots,
  \bar g_{1,r_1}\bigr\rangle,\\
  L_2&=\bigl\langle[u_a,u_b]_L:1\le a<b\le s\bigr\rangle,\\
  L_{i+1}&=[L_i,C_{\core}]_L
  \qquad(2\le i\le c-1).
 \end{split}
 \end{equation}
 In particular, for $2\leq i\leq c$, each $L_i$ is generated additively by
 \emph{left-normed Lie words} in $(u_k)_{k\in[s]}$.


\paragraph{Sims relation arrays in the associated graded Lie ring.} We now examine how the Sims relation arrays determine the graded Lie ring structure.

Given a homogeneous element $z\in L_i=G_i/G_{i+1}$ of degree $i$, note that the weight-$i$ coordinates of a lift $g\in G_i$ of $z$ are independent of the chosen lift. Take any such lift $g$ such that $z=gG_{i+1}$. Denote the weight-$i$ block of $g$ by $\prod_{j=1}^{r_i}g_{i,j}^{e(i,j)}$, where $0\le e(i,j)<p^{a(i,j)}$ for $j\in[r_i]$. These weight-$i$ coordinates \emph{uniquely} determine $z$ in $L_i$ as
 \begin{equation}\label{eq: homogeneous element decomposition}
 z=\sum_{j=1}^{r_i}e(i,j)\bar g_{i,j}\in L_i.
 \end{equation}
Based on this, define the \emph{Sims lift} of $z$ by
 \begin{equation}\label{p12:eq:sims-lift}
  g_z:=\prod_{j=1}^{r_i}g_{i,j}^{e(i,j)}\in G_i.
 \end{equation}

Passing the power relation~\eqref{p12:eq:power_word_normal_form}
 to $L_i=G_i/G_{i+1}$ kills its blocks of weight greater than $i$
 and gives the additive relation
 \begin{equation}\label{p12:eq:fibre-power-to-layer}
  p^{a(i,j)}\bar g_{i,j}
  =\sum_{v=1}^{j-1}b(i,j,i,v)\bar g_{i,v}\in L_i.
 \end{equation}
 Consequently, the relative order array $(a(i,j))$ and the partial power word array $(b(i,j,i,v))$ determine the
 abelian groups $(L_i)_{i\in[c]}$ and their direct sum $L=\bigoplus_{i\in[c]}L_i$. 
 
 With the graded abelian group $L$ and its ordered homogeneous generators fixed, we now explain how the commutator coefficient array determines the Lie bracket. 
 For $i\in[c-1]$, consider a group commutator of the form $[g_{i,j},g_{1,k}]$ for some $j\in[r_i]$ and $k\in[r_1]$. Grouping the Sims normal form from~\eqref{p12:eq:relation_normal_form}
 into its weight blocks, we have
\begin{equation}\label{p12:eq:commutator-weight-blocks}
 [g_{i,j},g_{1,k}]
 =\prod_{t=i+1}^{c}\omega_t(i,j,k)\in G_{i+1},~\text{where}~\omega_t(i,j,k):=\prod_{v=1}^{r_t}
 g_{t,v}^{c(i,j,k,t,v)}\in G_t.
\end{equation}
For $i+1\le u\le c$, define the \emph{$L_u$-component} of $[g_{i,j},g_{1,k}]$ as
\begin{equation}\label{p12:eq:L_u_component_relation}
\omega_u(i,j,k)G_{u+1}
 =\left(\prod_{v=1}^{r_u}g_{u,v}^{c(i,j,k,u,v)}\right)G_{u+1}
 =\sum_{v=1}^{r_u}c(i,j,k,u,v)\bar g_{u,v}\in L_u.  
\end{equation}
We call $\sum_{t=i+1}^{c}\omega_t(i,j,k)G_{t+1}\in L$
the \emph{Sims normal-form layer decomposition} of $[g_{i,j},g_{1,k}]$, where the summand $\omega_t(i,j,k)G_{t+1}$ lies in
$L_t=G_t/G_{t+1}$. 
In particular, the $L_{i+1}$-component of $[g_{i,j},g_{1,k}]$ is precisely the corresponding Lie bracket value:
\begin{equation}\label{p12:eq:fibre-commutator-to-bracket}
  \omega_{i+1}(i,j,k)G_{i+2}=\sum_{v=1}^{r_{i+1}}c(i,j,k,i+1,v)\bar g_{i+1,v}=[\bar g_{i,j},\bar g_{1,k}]_L\in L_{i+1}
\end{equation}
We note that the $L_u$-components for $i+2\le u\le c$ encode the deeper
commutator coefficients.

For $i=1$, the commutator coefficients $(c(1,j,k,2,v))$ determine the bracket on $L_1\times L_1$ by additivity and alternation. If $c\ge3$, the coefficients $(c(2,j,k,3,v))$ determine $[\bar g_{2,j},u_k]_L$ for $k\in[s]$ and $d_{k-1}<j\le r_2$; the remaining values $[\bar g_{2,j},u_k]_L$ are recovered inductively from the Jacobi identity and the defining commutators for $g_{2,j}$. For $3\le i\le c-1$, the corresponding partial commutator coefficient array determines the bracket on $L_i\times C_{\mathrm{core}}$. Together, these restrictions determine the entire graded Lie bracket by~\cref{p12:lem:relative-graded}\textup{(iii)}.


\subsection{Improving Sims' enumeration framework}
\cref{thm:sims_presentation_determines_isomorphism_classes_} can be used to count nonabelian finite $p$-groups of order $p^m$; the abelian groups contribute at most $p^m$ isomorphism classes. Following~\cite[Sections~5.4--5.5]{BNV}, the number of choices of
$c$, $s$, $(r_i)_{i\in[c]}$, $(d_k)_{k\in[s]}$ and $(a(i,j))$ is bounded by $2^{O(m)}$, and the number of choices of $(b(i,j,u,v))$ is bounded naively by $p^{O(m^2)}$.

To derive Sims' bound for $f(p^m)$, we enumerate the choices of $(c(i,j,k,u,v))$ after fixing the other integers.
The residues $\{c(1,j,k,2,v)\bmod p:~1\leq j<k\leq r_1,~v\in[r_2]\}$ determine the alternating bilinear map $\beta_G$. Since $s$, $r_1$ and $r_2$ are fixed, there are at most $p^{N_p(r_1,r_2,s)}$ such maps. This count may include maps incompatible with the fixed flag dimensions $(d_k)$, which only increases the upper bound. Bounding the lifts of these residues and the remaining commutator coefficients naively gives the following upper bound for the base-$p$ logarithm of the number of choices of $(c(i,j,k,u,v))$:
\begin{equation}\label{p12:eq:book-master}
  N_p(r_1,r_2,s)
   +\left(\binom{r_1}{2}+(s-1)r_2-\frac{(s-1)^2}{2}\right)(m-r_1-r_2)+\frac{s-1}{2}(m-r_1-r_2)^2+O(m^2).
\end{equation}
See the derivation for Eq.~(5.14) in~\cite{BNV} for details. Substituting the bounds of $N_p(r_1,r_2,s)$ in~\eqref{eq:BNV's Nprs} and optimizing over the parameters gives Sims' and Newman-Seeley's bounds on $f(p^m)$.

Our technical contributions are two improvements within the Sims enumeration framework. 
\begin{itemize}
  \item In part I, we derive more detailed estimates of the number of choices of the power word array $(b(i,j,u,v))$ and of the number of choices of the commutator coefficient array $(c(i,j,k,u,v))$. The latter can be achieved by utilizing the structure of the associated graded Lie ring. Combining these new estimates, we establish a more flexible optimization framework to derive the upper bound on $f(p^m)$. 
  \item In part II, we improve the estimate of $N_p(r_1,r_2,s)$ when $r_2\leq\binom{s-1}{2}$. This implies that the number of $p$-groups satisfying $r_2\leq\binom{s-1}{2}$ is much smaller than the total number of $p$-groups. Thus, we may focus on the range $\binom{s-1}{2}<r_2\leq\binom{s}{2}$.
\end{itemize}

\subsection{Uniform counting tools}\label{p12:sec:uniformity}

We collect some useful Gaussian, rank, and
basis estimates. 

For a prime power $q\ge2$, put
\[
 \pi_0(q)=1,\qquad
 \pi_n(q)=\prod_{j=1}^n(1-q^{-j})\quad(n\ge1),\qquad
 \pi_\infty(q)=\prod_{j=1}^{\infty}(1-q^{-j}).
\]
The elementary inequality
$\prod_j(1-x_j)\ge1-\sum_j x_j$ for $0\le x_j\le1$ gives
\begin{equation}\label{p12:eq:pi-infinity-two-lower}
 \begin{aligned}
 \pi_\infty(2)
 &=\frac12\cdot\frac34\prod_{j\ge3}(1-2^{-j})\ge\frac12\cdot\frac34
       \left(1-\sum_{j\ge3}2^{-j}\right)
  =\frac12\cdot\frac34\cdot\frac34
  =\frac9{32}>\frac14.
 \end{aligned}
\end{equation}
Since each factor increases with $q$ and
each finite product decreases to its infinite counterpart,
\begin{equation}\label{p12:eq:uniform-pi}
 \pi_\infty(2)\le \pi_n(q)\le1
 \qquad(q\ge2,\ n\ge0).
\end{equation}


\begin{lemma}[Universal finite-field estimates]\label{p12:lem:universal-field}
The following bounds hold with absolute constants, simultaneously for every prime power $q\ge2$.
\begin{enumerate}[label=\textup{(\roman*)}]
 \item For $0\le k\le n$,
 \begin{equation}\label{p12:eq:uniform-gaussian}
  q^{k(n-k)}\le \qbinom nkq
  \le \pi_\infty(2)^{-1}q^{k(n-k)}.
 \end{equation}
 \item For integers $R\ge0$ and $0\le a\le\lfloor R/2\rfloor$, the number of alternating
 forms on a fixed $R$-dimensional $\F_q$-space having rank at most $2a$ is at most $2\pi_\infty(2)^{-2}q^{a(2R-2a-1)}$.
 \item For every integer $d\ge0$, one has $|\GL_d(q)|\le q^{d^2}$.
 If $p$ is prime, $G$ is an abelian $p$-group of order $p^e$, and
 $0\le b\le e$, then the number of its subgroups of order $p^b$ is at most $p^{b(e-b+1)}$.
 \item For every prime $p$ and integer $e\ge1$, the number of
 isomorphism types of abelian groups of order $p^e$ is the partition
 number of $e$, and is at most $2^{e-1}\le p^e$.
\end{enumerate}
\end{lemma}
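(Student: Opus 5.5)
The four parts are independent, and I would prove each by a short direct argument, tracking that every constant is uniform in $q$ through \eqref{p12:eq:uniform-pi}.

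For (i), I write
\[
\qbinom nkq=\prod_{j=0}^{k-1}\frac{q^{n-j}-1}{q^{k-j}-1}=q^{k(n-k)}\frac{\pi_n(q)}{\pi_k(q)\pi_{n-k}(q)}.
\]
The lower bound follows factorwise, since $(q^{n-j}-1)/(q^{k-j}-1)\ge q^{n-k}$. For the upper bound I would use $\pi_n(q)\le\pi_k(q)$ to get $\pi_n/(\pi_k\pi_{n-k})\le 1/\pi_{n-k}(q)\le\pi_\infty(2)^{-1}$ by \eqref{p12:eq:uniform-pi}.

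For (iii), $|\GL_d(q)|\le q^{d^2}$ is immediate from counting matrices. For subgroups of order $p^b$ in an abelian $G$ of order $p^e$, I would induct on $b$. Each subgroup $K$ of order $p^b$ with $b\ge1$ contains a subgroup $K'$ of order $p^{b-1}$, so it is determined by a pair $(K',xK')$ with $x\in G$, giving at most $p^e$ choices of $x$. Double counting over the at least one such $K'$ gives $N_b\le N_{b-1}p^{e}$, which is too weak.

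I would refine this by counting chains $K_0<K_1<\dots<K_b$. Given $K_{i-1}$, the order-$p$ subgroups of $G/K_{i-1}$ number at most $(p^{e-i+1}-1)/(p-1)$. Each $K$ contains at least $\prod_{i=1}^{b}(p^{i}-1)/(p-1)$ chains, namely the number of complete flags in $K$; this holds because $K$ has at least as many order-$p$ subgroups as the elementary case only when $K$ is elementary abelian. Since that lower bound fails in general, I would instead simply require at least one chain. This gives
\[
N_b\le\prod_{i=1}^{b}\frac{p^{e-i+1}-1}{p-1}\le \prod_{i=1}^b 2p^{e-i}\le p^{\sum_i(e-i+1)}.
\]
That bound is $p^{be-b(b-1)/2}$, not $p^{b(e-b+1)}$, and it does not suffice when $b$ is large.

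To get the stated bound I would use duality. Subgroups of order $p^b$ correspond to subgroups of order $p^{e-b}$ via $K\mapsto K^\perp$ in $\hat G\cong G$. So I may assume $b\le e-b$ and bound the count by $p^{b(e-b+1)}$, using a generator count: $K$ is generated by at most $\min(b,d(G))$ elements of $K$. Choosing $K$ through a subgroup $K_1=\Omega_1$-type elementary part and quotients is where I expect the main obstacle. My first attempt would be to choose generators $x_1,\dots,x_t$ with $t=d(K)\le b$, lying in $\Omega_{\ell}(G)$ where $p^\ell\le p^{b-t+1}$, and to bound the choices by $|\Omega_{\ell}(G)|$. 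This needs care to reach the exponent $b(e-b+1)$, and I would fall back on the cited standard bound if the bookkeeping resists.

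For (ii), the rank-$2a$ alternating forms on $\F_q^R$ are counted by choosing a radical of dimension $R-2a$, which gives $\qbinom R{2a}q$ choices, times the nondegenerate forms on a $2a$-space, at most $q^{a(2a-1)}$. Using (i), this is at most $\pi_\infty(2)^{-1}q^{2a(R-2a)+a(2a-1)}=\pi_\infty(2)^{-1}q^{a(2R-2a-1)}$. Summing over ranks $2a'\le 2a$ is a geometric series in $q^{-1}$ with ratio at most $q^{-(2R-4a'+1)}\le1/2$, which yields the factor $2$; the extra $\pi_\infty(2)^{-1}$ absorbs slack.

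For (iv), the isomorphism types are the partitions of $e$ by the structure theorem. Partitions are at most compositions, which number $2^{e-1}$, and $2^{e-1}\le p^e$.
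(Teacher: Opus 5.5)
Parts (i) and (iv), and the bound $|\GL_d(q)|\le q^{d^2}$ in (iii), are essentially the paper's arguments. Your ``partitions are at most compositions'' is the paper's partial-sum encoding.

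For (ii) you take a slightly different and more elementary route. The paper uses the exact number $\qbinom{R}{2j}{q}|\GL_{2j}(q)|/|\Sp_{2j}(q)|$ of rank-$2j$ forms, quoted from the literature, and bounds its product factor by $\pi_\infty(2)^{-2}$. You instead choose the radical and then an arbitrary alternating form on the $2j$-dimensional quotient. This overcounts the nondegenerate forms but costs nothing in the exponent, since $2j(R-2j)+\binom{2j}{2}=j(2R-2j-1)$. It needs no external formula and gives the slightly better constant $2\pi_\infty(2)^{-1}$. The summation over $j\le a$, using $E(j)-E(j-1)=2R-4j+1\ge1$, is then identical to the paper's. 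The exact formula would matter only for a matching lower bound, which is not needed here.

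For the subgroup bound in (iii), none of your own attempts reaches $p^{b(e-b+1)}$. The chain count loses a factor $p^{\binom b2}$. The generator count as sketched (each generator chosen in $\Omega_\ell(G)$) gives only about $p^{et}$, because it never divides by the number of tuples generating the same subgroup. Your fallback to \cite[Lemma~20.15]{BNV} is exactly what the paper does, so this is not a gap relative to the paper. If you want a self-contained proof, the missing idea is precisely that division.

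By the Burnside basis theorem, a subgroup $K$ of order $p^b$ with $d(K)=t$ has exactly $|\GL_t(p)|\,p^{t(b-t)}=\pi_t(p)\,p^{bt}\ge\pi_\infty(2)\,p^{bt}$ generating $t$-tuples. Distinct $K$ have disjoint sets of such tuples inside $G^t$, so
\[
 \#\{K\le G:\ |K|=p^b,\ d(K)=t\}\le\pi_\infty(2)^{-1}p^{t(e-b)}.
\]
For $e>b$, summing over $1\le t\le b$ gives at most $2\pi_\infty(2)^{-1}p^{b(e-b)}<8p^{b(e-b)}$ subgroups. This is at most $p^{b(e-b+1)}$ whenever $p^b\ge 8$. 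The remaining cases are easy:
\begin{itemize}
\item For $e=b$ there is exactly one subgroup.
\item For $b=1$ the count is $(p^{d(G)}-1)/(p-1)<p^e$.
\item For $p^b=4$ with $e\ge3$, the same estimate with the exact constants $\pi_1(2)=1/2$ and $\pi_2(2)=3/8$ gives at most $2^{e-1}+2^{2e-1}/3\le 2^{2e-2}$.
\end{itemize}
Your duality reduction to $b\le e-b$ is valid but unnecessary.
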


\begin{proof}
The Gaussian binomial coefficient formula gives
\[
\qbinom nkq=q^{k(n-k)}
 \frac{\pi_n(q)}{\pi_k(q)\pi_{n-k}(q)}.
\]
The ratio is at least one term by term and is at most
$\pi_\infty(2)^{-1}$ by \eqref{p12:eq:uniform-pi}, proving (i).

For each $0\le j\le a$, the number of alternating forms of rank exactly $2j$ is
\[
 \qbinom R{2j}q\frac{|\GL_{2j}(q)|}{|\Sp_{2j}(q)|}
 =
  \frac{\pi_R(q)}{\pi_{R-2j}(q)\pi_j(q^2)}\cdot q^{j(2R-2j-1)}.
\]
See~\cite[Section~5, equation~(24)]{FulmanGoldstein} for this formula.
The product factor is at most $\pi_\infty(2)^{-2}$.  The exponent
$E(j)=j(2R-2j-1)$ satisfies
$E(j)-E(j-1)=2R-4j+1\ge1$ for $j\le R/2$.  Thus the largest
power-of-$q$ bound in the range $0\le j\le a$ occurs at $j=a$.
Let $N_j$ be the number of alternating forms of rank exactly
$2j$.  For $0\le s\le a$, the consecutive-exponent inequality gives
$E(a-s)\le E(a)-s$ and $N_{a-s}\le \pi_\infty(2)^{-2}q^{E(a)-s}$.
Consequently,
\[
 \sum_{j=0}^{a}N_j
 =\sum_{s=0}^{a}N_{a-s}
 \le \pi_\infty(2)^{-2}q^{E(a)}
     \sum_{s=0}^{a}q^{-s}
 \le 2\pi_\infty(2)^{-2}q^{E(a)},
\]
where the last inequality follows from $\sum_{s\ge0}q^{-s}=(1-q^{-1})^{-1}\le2$ for $q\ge2$.  This proves (ii).

Part (iii) is immediate for the general linear group from
$|\GL_d(q)|=\prod_{i=0}^{d-1}(q^d-q^i)\le q^{d^2}$.  The subgroup
estimate is from \cite[Lemma~20.15]{BNV}.  For the classical exact enumeration of
subgroups of prescribed type, see~\cite{Birkhoff}.

Part (iv) follows by encoding a partition by its partial sums.
\end{proof}


\clearpage
\part{Counting Sims relation arrays}\label{p12:part:valid-completions}


We count Sims relation arrays arising from nonabelian finite $p$-groups. Based on~\cref{thm:sims_presentation_determines_isomorphism_classes_}, fix the integers $p$, $c$, $s$, $(r_i)_{i\in[c]}$, $(d_k)_{k\in[s]}$ and $(a(i,j))$, together with ordered formal generator symbols $(g_{i,j})_{i\in[c],j\in[r_i]}$. Also fix the auxiliary choices defining the Sims generators, including the auxiliary function $f$ in \cref{p12:par:fixed-sims-data} and the commutators defining the higher-weight generators. We consider arrays realized by groups admitting ordered Sims presentations with these data. Our goal is to derive better bounds for the choices of the power word array $(b(i,j,u,v))$ and the commutator coefficient array $(c(i,j,k,u,v))$.

We first derive a more detailed estimate for $(b(i,j,u,v))$ in \cref{sec:bijuv}. Then, conditioned on a fixed choice of $(b(i,j,u,v))$, we count the number of $(c(i,j,k,u,v))$ using the structure of the associated graded Lie ring. For the latter, we first count the possible graded Lie brackets, which determine the choices of the partial array $(c(i,j,k,i+1,v))$, in \cref{p12:subsec:lie-bracket-count}. Then we bound the possible choices for the other commutator coefficients based on the commutator relations of the Sims generators, in \cref{p12:sec:filtered-HW}.

The estimates in \cref{sec:bijuv} refine Sims's elementary count
\cite{Sims}, while \cref{p12:subsec:lie-bracket-count} adapts
Blackburn's method for counting graded Lie rings \cite{Blackburn}.
The main technical improvement in our count of Sims presentations
is developed in \cref{p12:sec:filtered-HW}.


\section{A more detailed estimate for the power word array}
\label{sec:bijuv}
Recall that Sims bounds the number of power word arrays $(b(i,j,u,v))$ by the naive bound $p^{O(m^2)}$  \cite[p.~164]{Sims} (see also \cite[Section~5.5, p.~41]{BNV}).
This is harmless due to his bound on $N_p(r_1,r_2,s)$ (cf.~\eqref{eq:BNV's Nprs}). However, to prove the estimate in~\cref{intro:sharp}, the quadratic term $O(m^2)$ in the exponent is an issue.

We derive the following more detailed bound for the choices of power word arrays. 
\begin{proposition}\label{p12:lem:power-ledger}
Fix integers $p$, $c$, $s$, $(r_i)_{i\in[c]}$, $(d_k)_{k\in[s]}$ and $(a(i,j))$ that occur in an ordered Sims presentation of a nonabelian finite $p$-group, together with ordered formal generator symbols $(g_{i,j})_{i\in[c],j\in[r_i]}$. Put $s_i=\sum_{j=1}^{r_i}a(i,j)$ for $i\in[c]$.
The base-$p$ logarithm of the number of power word arrays arising from nonabelian finite $p$-groups admitting ordered Sims presentations with these data is at most
\begin{align}\label{p12:eq:power-ledger}
 \sum_{i=1}^{c}\left[
       r_i\sum_{j>i}s_j+
       \sum_{v=1}^{r_i}(r_i-v)(a(i,v)-1)\right]\le\sum_{i=1}^{c}\left[
       r_i\sum_{j>i}s_j+(r_i-1)(s_i-r_i)\right].
\end{align}

\end{proposition}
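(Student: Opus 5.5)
The plan is to count the entries of the power word array position by position, using the range constraints recorded after~\eqref{p12:eq:power_word_normal_form}, and to save one factor of $p$ per same-layer entry by a divisibility argument. Fix $i\in[c]$ and $j\in[r_i]$. By~\eqref{p12:eq:sims_normal_form}, the entries $b(i,j,u,v)$ vanish when $u<i$, or when $u=i$ and $v\ge j$. Otherwise they satisfy $0\le b(i,j,u,v)<p^{a(u,v)}$. We split the surviving entries into a deeper part ($u>i$) and a same-layer part ($u=i$, $v<j$).

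For the deeper part we use the naive count. For fixed $(i,j)$, the entries with $u>i$ contribute at most $\sum_{u>i}\sum_{v=1}^{r_u}a(u,v)=\sum_{u>i}s_u$ to the base-$p$ logarithm. Summing over $j\in[r_i]$ gives the term $r_i\sum_{u>i}s_u$ in~\eqref{p12:eq:power-ledger}.

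For the same-layer part, the key step is to show $p\mid b(i,j,i,v)$ for all $v<j$. Consider the surjection $\phi:\Z^{r_i}\to L_i$ sending the $v$th standard basis vector to $\bar g_{i,v}$. The Sims generating set is chosen so that $(\bar g_{i,v})_{v\in[r_i]}$ is a minimum generating set of $L_i$, so $r_i=d(L_i)=\dim_{\F_p}L_i/pL_i$. Hence the induced map $\F_p^{r_i}\to L_i/pL_i$ is a surjection between spaces of equal dimension, and so an isomorphism. This forces $\ker\phi\subseteq p\Z^{r_i}$.

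By~\eqref{p12:eq:fibre-power-to-layer}, the vector with $j$th entry $p^{a(i,j)}$ and $v$th entry $-b(i,j,i,v)$ for $v<j$ lies in $\ker\phi$. Therefore every $b(i,j,i,v)$ is divisible by $p$. The same containment $\ker\phi\subseteq p\Z^{r_i}$ also shows $a(i,v)\ge1$ for every $v$: if $a(i,v)=0$, then $\bar g_{i,v}$ would be a combination of the earlier generators, giving a kernel vector with $v$th entry $1$.

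Writing $b(i,j,i,v)=p\,b'$ with $0\le b'<p^{a(i,v)-1}$, each such entry contributes at most $a(i,v)-1$ to the logarithm. For fixed $i$ and $v$, the index $j$ ranges over $v<j\le r_i$, which is $r_i-v$ values. This gives the term $\sum_{v=1}^{r_i}(r_i-v)(a(i,v)-1)$. Adding the two parts and summing over $i$ yields the left-hand side of~\eqref{p12:eq:power-ledger}.

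For the final inequality, $a(i,v)-1\ge0$ and $r_i-v\le r_i-1$, so
\[
\sum_{v=1}^{r_i}(r_i-v)(a(i,v)-1)\le(r_i-1)\sum_{v=1}^{r_i}(a(i,v)-1)=(r_i-1)(s_i-r_i).
\]
The only nontrivial step is the divisibility $p\mid b(i,j,i,v)$. Its crux is that the Sims generators of each layer form a minimum generating set of $L_i$, which is guaranteed by the construction in \cref{p12:par:fixed-sims-data}. Everything else is bookkeeping of the index ranges.
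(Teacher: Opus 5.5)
Your proposal is correct and follows the paper's proof. You count the deeper coordinates naively, and you show the same-layer coefficients are divisible by $p$ because the layer generators form a basis modulo $pL_i$; this gives $p^{a(i,v)-1}$ choices in each of the $r_i-v$ relations where the coefficient occurs. Your kernel formulation $\ker\phi\subseteq p\Z^{r_i}$ makes explicit the reduction modulo $pL_i$, and the positivity $a(i,v)\ge1$ needed in the final inequality, both of which the paper uses without spelling out.
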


\begin{proof}
The power relation for $g_{i,j}$ has a right side involving earlier
generators in its own layer and arbitrary coordinates in deeper
layers.  The latter have at most $p^{\sum_{k>i}s_k}$ choices.
Modulo $pL_i$, the images of the chosen $r_i$ generators form a
basis.  Reducing the power relation modulo $pL_i$ therefore shows
that each same-layer coefficient is divisible by $p$, as already observed by Sims \cite[p.~160]{Sims}.  For $v<j$,
the coefficient of $g_{i,v}$ has at most $p^{a(i,v)-1}$ choices.
There are $r_i-v$ power relations in which this coefficient can
occur.  Multiplying these bounds proves the first inequality of
\eqref{p12:eq:power-ledger}.  For the second inequality, use
$r_i-v\le r_i-1$ and
$\sum_{v\in[r_i]}(a(i,v)-1)=s_i-r_i$.
\end{proof}

\section{Counting leading commutator coefficients}
\label{p12:subsec:lie-bracket-count}
Now we also fix one choice of the power word array $(b(i,j,u,v))$. The remaining goal is to count the choices of the commutator coefficient array $(c(i,j,k,u,v))$. We do so by extending Blackburn's enumeration strategy for graded Lie rings~\cite{Blackburn,BNV}. 

Note that the relative order array and the power word array together determine the graded abelian group $L$.
We first bound the number of graded Lie brackets on $L$ arising from groups with these fixed presentation data. These Lie brackets determine the 
leading commutator coefficient subarray
$(c(i,j,k,i+1,v))$ using~\eqref{p12:eq:fibre-commutator-to-bracket}. 

The arguments in this section follow Blackburn's treatment of graded Lie rings \cite[Chapter~6]{Blackburn}, as presented in \cite[Sections~20.3--20.4]{BNV}, with the relevant bracket maps restricted to the Sims core. We include the proofs for completeness.

By
\eqref{p12:eq:sims-core-graded-generation}, every such bracket $[\cdot,\cdot]_L$ satisfies $L_2=\bigl\langle[u,v]_L:u,v\in C_{\core}\bigr\rangle$ and
$L_{i+1}=[L_i,C_{\core}]_L$ for $2\le i\le c-1$.
The following lemma shows that the bracket $L_1\times L_1\to L_2$
and the core bracket maps
$L_i\times C_{\core}\to L_{i+1}$ for $2\le i\le c-1$
determine the whole Lie bracket. A similar result with
$C_{\core}$ replaced by $L_1$ was proved
in~\cite[Lemma~20.13]{BNV}.
\begin{lemma}
\label{p12:lem:relative-graded}
Let $c\ge2$ and let $L=\bigoplus_{i=1}^{c}L_i$ be a graded abelian group, with $L_i=0$ for $i>c$. Let
$C_{\core}\le L_1$.  For $\nu\in\{0,1\}$, let
$[-,-]_\nu$ be a graded Lie bracket on $L$ satisfying $L_2=\bigl\langle[u,v]_\nu:u,v\in C_{\core}\bigr\rangle$ and $L_{i+1}=[L_i,C_{\core}]_\nu$ for $2\leq i\leq c-1$.
We have the following:
\begin{enumerate}[label=\textup{(\roman*)}]
\item If the two brackets agree on
$L_1\times C_{\core}$ and
$L_2\times C_{\core}$, then they agree on
$L_2\times L_1$.
\item More generally, if they agree on
$L_i\times C_{\core}$ for every $i\ge1$, then they agree on
$L_a\times L_b$ whenever $\max\{a,b\}\ge2$.
\item If, in addition to the hypothesis of~\textup{(ii)}, they agree
on $L_1\times L_1$, then they agree on $L\times L$.
\end{enumerate}
\end{lemma}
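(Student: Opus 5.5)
The plan is to compare the two brackets through the difference of their values, inducting on degree with the Jacobi identity. The key point is that both brackets generate the higher layers from the same core, so every higher-degree element is a sum of brackets that the two brackets evaluate identically.

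\textbf{Part (i).} Since the brackets agree on $L_1\times C_{\core}$, they in particular agree on $C_{\core}\times C_{\core}$. Hence the set $\{[u,v]_0:u,v\in C_{\core}\}$ equals $\{[u,v]_1:u,v\in C_{\core}\}$, and $L_2$ is additively generated by the elements $[u,v]_0$ with $u,v\in C_{\core}$. By additivity it therefore suffices to show
\[
[[u,v]_\nu,x]_0=[[u,v]_\nu,x]_1\qquad\text{for } u,v\in C_{\core},\ x\in L_1 .
\]
Write $w=[u,v]_0=[u,v]_1$ and apply the Jacobi identity for $\nu\in\{0,1\}$:
\[
[w,x]_\nu=-[[v,x]_\nu,u]_\nu-[[x,u]_\nu,v]_\nu .
\]
The inner brackets $[v,x]_\nu$ and $[x,u]_\nu$ lie in $L_1\times C_{\core}$ up to sign, so they are independent of $\nu$ and lie in $L_2$. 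The outer brackets then pair an element of $L_2$ with $u$ or $v$ in $C_{\core}$, where the brackets agree by hypothesis. Hence $[w,x]_0=[w,x]_1$, and antisymmetry also gives agreement on $L_1\times L_2$.

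\textbf{Part (ii).} Prove by induction on $a\ge2$ that the brackets agree on $L_a\times L_b$ for every $b\ge1$; antisymmetry then covers the case where only $b\ge2$.
\begin{itemize}
\item For $a\ge3$, we have $L_a=[L_{a-1},C_{\core}]_\nu$, and on $L_{a-1}\times C_{\core}$ the two brackets agree. So $L_a$ is additively generated by common elements $[y,u]$ with $y\in L_{a-1}$ and $u\in C_{\core}$.
\item For $a=2$, use the generators $[u,v]$ from part (i).
\item Given a generator $[y,u]$ and $z\in L_b$, Jacobi gives
\[
[[y,u],z]_\nu=-[[u,z]_\nu,y]_\nu-[[z,y]_\nu,u]_\nu .
\]
\item The inner bracket $[u,z]_\nu$ is $\nu$-independent: for $b\ge2$ this is the hypothesis on $L_b\times C_{\core}$, and for $b=1$ it is the hypothesis on $L_1\times C_{\core}$. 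Its outer pairing is $L_{b+1}\times L_{a-1}$, which has $\max\ge2$ and total degree $a+b$.
\item The inner bracket $[z,y]_\nu$ lies in $L_b\times L_{a-1}$, with total degree $a+b-1$. Its outer pairing lands in $L_{a+b-1}\times C_{\core}$, where the brackets agree by hypothesis.
\end{itemize}

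This shows the right organisation of the induction: induct on the \emph{total degree} $a+b$. Within a fixed total degree, induct downward in $a$, or equivalently handle $L_{b+1}\times L_{a-1}$ as a smaller first index. The term $[[u,z],y]$ has first factor of degree $a-1<a$ in a pair of the same total degree, so a nested induction on $\min(a,b)$ terminates. The base cases are pairs involving an $L_1$ factor, where we need agreement on $L_k\times L_1$ for $k\ge2$. That case follows from the same Jacobi step with $b=1$, whose terms reduce to $L_2\times L_{a-1}$ and $L_a\times C_{\core}$. Carrying out this double induction carefully is the main obstacle. It requires choosing the right well-founded order, namely first total degree, then the degree of the smaller factor, and checking that every Jacobi term strictly decreases or is directly covered by the hypothesis. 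Degrees above $c$ are trivial, since both brackets vanish there.

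\textbf{Part (iii).} This is immediate: every homogeneous pair lies either in $L_1\times L_1$ or has $\max\ge2$, so the two brackets agree on all homogeneous pairs. Biadditivity extends the agreement to $L\times L$.
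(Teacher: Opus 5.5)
Your proposal is correct and is essentially the paper's proof: part (i) is the same Jacobi computation, and your first formulation of (ii) is exactly the paper's induction on $j$ for agreement on $L\times L_j$. That formulation is induction on $a$, with agreement on $L_a\times L_b$ for every $b$, and the case $a=2$ is immediate because every bracket on the right of the Jacobi identity then has an argument in $C_{\core}$. The double induction you flag as the main obstacle is unnecessary: the same-total-degree term $[[u,z],y]\in L_{b+1}\times L_{a-1}$ is already covered by the hypothesis at $a-1$ (which holds for all second degrees) together with antisymmetry. Note also that when you always decompose the first factor, the quantity that decreases is $a$, not $\min(a,b)$ (e.g.\ $(7,3)\mapsto(6,4)$), so the $\min(a,b)$ measure should be dropped.
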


\begin{proof}
For~\textup{(i)}, let $w\in L_2$ and $x\in L_1$.  By
the generation hypothesis, and because the two brackets agree on
$L_1\times C_{\core}$, we may choose
$u_\alpha,v_\alpha\in C_{\core}$ such that $w=\sum_\alpha[u_\alpha,v_\alpha]_0
  =\sum_\alpha[u_\alpha,v_\alpha]_1$.
For each $\nu\in\{0,1\}$, the Jacobi
identity~\eqref{p12:eq:graded-jacobi-explicit} gives
\[
 [[u_\alpha,v_\alpha]_\nu,x]_\nu
 =-[[v_\alpha,x]_\nu,u_\alpha]_\nu
  -[[x,u_\alpha]_\nu,v_\alpha]_\nu.
\]
The inner brackets on the right are equal for $\nu=0,1$ by the first
common restriction, using alternation, and the outer brackets are equal because the two brackets also agree on $L_2\times C_{\core}$.  Hence
$[w,x]_0=[w,x]_1$, proving~\textup{(i)}.

We follow the induction of
\cite[Lemma~20.13]{BNV} to prove~\textup{(ii)}.  The hypothesis and additivity imply that
the two brackets agree on $L\times C_{\core}$.  We prove by
induction on $j\ge2$ that they agree on $L\times L_j$.  For $j=2$,
let $y\in L_2$ and choose a common core-commutator decomposition of
$y$ as above.  If $x\in L_i$, the same Jacobi calculation uses only
the common restrictions on $L_i\times C_{\core}$ and
$L_{i+1}\times C_{\core}$, so $[x,y]_0=[x,y]_1$.

Now let $j\ge3$ and assume the conclusion for $L\times L_{j-1}$.
The common restriction on $L_{j-1}\times C_{\core}$ and the
generation hypothesis give a common expression
\[
 y=\sum_\alpha[z_\alpha,u_\alpha]_0
  =\sum_\alpha[z_\alpha,u_\alpha]_1,
 \qquad z_\alpha\in L_{j-1},\quad
 u_\alpha\in C_{\core},
\]
and, for $\nu\in\{0,1\}$, the Jacobi identity
\eqref{p12:eq:graded-jacobi-explicit} gives
\[
 [x,[z_\alpha,u_\alpha]_\nu]_\nu
 =[[x,z_\alpha]_\nu,u_\alpha]_\nu
  -[[x,u_\alpha]_\nu,z_\alpha]_\nu.
\]
The first term on the right is equal for $\nu=0,1$ by induction and
then the common core restriction.  The second is equal first by the
common core restriction and then by induction.  Thus the two
brackets agree on $L\times L_j$.  Alternation gives agreement when
the higher-layer input occurs first, proving~\textup{(ii)}.  The only
remaining domain is $L_1\times L_1$, so~\textup{(iii)} follows.
\end{proof}
\cref{p12:lem:relative-graded} suggests the following way to bound the number of Lie brackets: We first enumerate all possible core bracket maps $L_i\times C_{\core}\to L_{i+1}$ for $2\le i\le c-1$. Fixing these maps, we then count the possible maps $L_1\times L_1\to L_2$. Multiplying the two bounds gives a bound for the number of Lie brackets. We shall do this with respect to a fixed \emph{breadth sequence} $\boldsymbol{\mathfrak b}=(\mathfrak b_i)_{i=2}^{c}$, defined as follows:

For $1\le i\le c$, the core bracket map $L_i\times C_{\core}\longrightarrow L$ evaluates $[z,u]_L$ for all $z\in L_i$ and $u\in C_{\core}$. Fix $u\in C_{\core}$ and denote its slice by $\ad(u):L_i\longrightarrow L$, $z\mapsto[z,u]_L$. For $i<c$, its image lies in $L_{i+1}$, and we also regard it as a map into that layer.
Thus $\im\ad(u)=[L_i,u]_L$.  The \emph{breadth} of
the slice $\ad(u)$ is defined as $\log_p|\im\ad(u)|$, and we write $\ker\ad(u)=\{z\in L_i:[z,u]_L=0\}$.

For $2\le i\le c$, define the maximum breadth among the slices of the core
bracket map on $L_i$, and choose a maximizing element, by
\begin{equation}\label{p12:eq:core-breadth}
 \mathfrak b_i=\max_{u\in C_{\core}}\log_p|\im\ad(u)|,
 \qquad
 h_i^\ast\in\operatorname*{arg\,max}_{u\in C_{\core}}
 \log_p|\im\ad(u)|.
\end{equation}
For $2\le i\le c-1$, we have
$0\le \mathfrak b_i\le\min\{s_i,s_{i+1}\}$; also $\mathfrak b_c=0$, since
$[L_c,L_1]_L=0$.
For $2\le i\le c$, we have
\begin{equation}\label{p12:eq:adjoint-kernel-size}
 |\ker\ad(h_i^\ast)|
 =\frac{|L_i|}{|[L_i,h_i^\ast]_L|}=p^{s_i-\mathfrak b_i}.
\end{equation}
Any nonempty set of values in $L_i$ whose pairwise differences lie
in this kernel is contained in a single coset, and therefore has at
most $p^{s_i-\mathfrak b_i}$ elements.

We apply, layer by layer and with the second argument restricted to the Sims core, the counting argument from the proof of \cite[Proposition~20.16, pp.~217--218]{BNV}.
\begin{proposition}\label{p12:prop:core_map_fix_breadth}
  Fix a breadth sequence $\boldsymbol{\mathfrak b}=(\mathfrak b_i)_{i=2}^{c}$. The base-$p$ logarithm of the number of possible core bracket maps $L_i\times C_{\core}\to L_{i+1}$ for $2\le i\le c-1$ is at most
  $\sum_{i=2}^{c-1}s \mathfrak b_i(r_i+s_{i+1}-\mathfrak b_i+1)$.
\end{proposition}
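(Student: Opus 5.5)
We bound the choices layer by layer: for each $2\le i\le c-1$ we count the admissible core bracket maps $L_i\times C_{\core}\to L_{i+1}$ with breadth $\mathfrak b_i$, then multiply over $i$. Since the maps for different $i$ are independent data, it suffices to show that for each fixed $i$ the base-$p$ logarithm of the count is at most $s\,\mathfrak b_i(r_i+s_{i+1}-\mathfrak b_i+1)$. Such a map is determined by the $s$ slices $\ad(u_k):L_i\to L_{i+1}$ for $k\in[s]$, by biadditivity in the second argument. Each slice is a homomorphism out of $L_i$, so it is determined by its values on the $r_i$ homogeneous generators $\bar g_{i,1},\dots,\bar g_{i,r_i}$. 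By the definition~\eqref{p12:eq:core-breadth} of $\mathfrak b_i$, each image $\im\ad(u_k)$ is a subgroup of $L_{i+1}$ of order at most $p^{\mathfrak b_i}$.

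For a single slice, following the argument of \cite[Proposition~20.16]{BNV}, I would count two pieces of data. First comes the image subgroup $M=\im\ad(u_k)\le L_{i+1}$, of order $p^{b}$ with $b\le\mathfrak b_i$. By \cref{p12:lem:universal-field}(iii) there are at most $p^{b(s_{i+1}-b+1)}$ choices of $M$. Second come the $r_i$ generator values, which lie in $M$ and so give at most $p^{b r_i}$ choices. Hence a slice of breadth $b$ has at most $p^{b(r_i+s_{i+1}-b+1)}$ choices. Summing over $0\le b\le\mathfrak b_i$ costs only a bounded factor, since the exponent is increasing in $b$ for $b\le(s_{i+1}+1)/2$, and the terms can be absorbed as in the proof of \cref{p12:lem:universal-field}(ii).

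The main obstacle is the range $\mathfrak b_i>(s_{i+1}+1)/2$, where the function $b(r_i+s_{i+1}-b+1)$ is no longer monotone. There I would instead use $r_i\ge b$: the exponent's derivative in $b$ is $r_i+s_{i+1}+1-2b\ge s_{i+1}+1-b\ge1$, using $b\le s_{i+1}$. So the exponent is increasing on the whole range $0\le b\le\min\{s_i,s_{i+1}\}$, which holds because $b\le\mathfrak b_i\le\min\{s_i,s_{i+1}\}$. The geometric-sum absorption then works throughout, with the constant factor either swallowed into a harmless $+O(1)$ or avoided by using the slack in the subgroup bound.

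Raising the single-slice bound to the $s$-th power, for the $s$ slices $u_1,\dots,u_s$, gives at most $p^{s\mathfrak b_i(r_i+s_{i+1}-\mathfrak b_i+1)}$ maps for layer $i$. Multiplying over $2\le i\le c-1$ gives the stated bound.
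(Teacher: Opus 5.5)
Your argument breaks at the step where you stratify slices by their exact breadth $b=\log_p|\im\ad(u_k)|$ and sum over $0\le b\le\mathfrak b_i$. For this sum to reduce to the $b=\mathfrak b_i$ term, you need $E(b)=b(r_i+s_{i+1}-b+1)$ to be increasing up to $\mathfrak b_i$. For $b>(s_{i+1}+1)/2$ you justify this by $r_i\ge b$, but that inequality is false. Here $r_i=d(L_i)$ is a minimum number of generators, whereas you only know $b\le s_i=\log_p|L_i|$.

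For example, take $L_i\cong L_{i+1}\cong\Z/p^a\Z$ and $\mathfrak b_i=a$. Then $r_i=1$ and $E(b)=b(a+2-b)$. Near $b=a/2$ this is about $a^2/4$, far above the target $E(a)=2a$. So your per-slice bound $\sum_{b\le\mathfrak b_i}p^{E(b)}$ can exceed $p^{\mathfrak b_i(r_i+s_{i+1}-\mathfrak b_i+1)}$, and the argument does not prove the stated estimate. Separately, even where $E$ is increasing, the geometric sum leaves a constant factor per slice. The statement has no $+O(1)$ to absorb it, and ``slack in the subgroup bound'' is not made precise.

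The fix, which is exactly what the paper does, is not to stratify by $b$ at all. Since $|\im\ad(u_k)|\le p^{\mathfrak b_i}$ and $\mathfrak b_i\le s_{i+1}$, a subgroup chain in the finite abelian $p$-group $L_{i+1}$ gives a subgroup $M\ge\im\ad(u_k)$ of order exactly $p^{\mathfrak b_i}$. By \cref{p12:lem:universal-field}(iii), you can choose $M$ in at most $p^{\mathfrak b_i(s_{i+1}-\mathfrak b_i+1)}$ ways. Then the images of the $r_i$ generators of $L_i$ in $M$ have at most $p^{\mathfrak b_i r_i}$ possibilities. A slice may lie in several such $M$, but that overcounting is harmless for an upper bound. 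This gives the single-slice bound directly, with no sum over $b$ and no constant factor. The rest of your proposal is fine: the map is determined by the $s$ slices by additivity in the second variable, and you then multiply over $k\in[s]$ and $2\le i\le c-1$.
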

\begin{proof}
  Fix $2\le i\le c-1$ and a chosen core generator
$u_k=\bar g_{1,k}=g_{1,k}G_2$, where $k\in[s]$.  The slice
$\ad(u_k):L_i\to L_{i+1}$ has image of order at most $p^{\mathfrak b_i}$.
Since $\mathfrak b_i\le s_{i+1}$, a subgroup chain in the finite abelian
$p$-group $L_{i+1}$ extends this image to a subgroup of order exactly
$p^{\mathfrak b_i}$.  By \cref{p12:lem:universal-field}\textup{(iii)}
(cf.~\cite[Lemma~20.15]{BNV}), there are at most
$p^{\mathfrak b_i(s_{i+1}-\mathfrak b_i+1)}$ choices for such a subgroup.  Once it is
chosen, the images of the $r_i$ chosen generators of $L_i$ have at
most $p^{\mathfrak b_i r_i}$ possibilities.  Thus the $u_k$-slice has at most
$p^{\mathfrak b_i(r_i+s_{i+1}-\mathfrak b_i+1)}$ possibilities.  The $s$ core generators
determine the whole core bracket map by additivity in the second
variable.  Applying these bounds over all $k\in[s]$ and
$2\le i\le c-1$
gives at most
\begin{equation}\label{p12:eq:core-bracket-count}
 p^{\sum_{i=2}^{c-1}s \mathfrak b_i(r_i+s_{i+1}-\mathfrak b_i+1)}
\end{equation}
possible families of higher core bracket maps.
\end{proof}

The generation property~\eqref{p12:eq:sims-core-graded-generation} also gives a useful restriction on the
breadths.  Since
\[
 L_{i+1}=[L_i,C_{\core}]_L
 =\sum_{k=1}^s[L_i,u_k]_L,
\]
the addition map from $\prod_{k=1}^s[L_i,u_k]_L$ onto $L_{i+1}$ is
surjective.  Hence, for $2\le i\le c-1$,
\begin{equation}\label{p12:eq:e-breadth}
 p^{s_{i+1}}=|L_{i+1}|
 \le \prod_{k=1}^{s}|[L_i,u_k]_L|
 \le p^{s \mathfrak b_i}
 \qquad\Rightarrow\qquad
 s_{i+1}\le s \mathfrak b_i.
\end{equation}

It remains to count the bracket $L_1\times L_1\to L_2$ after one
family of higher core bracket maps has been fixed. This is the
Sims-core form of the Jacobi--kernel argument in
\cite[Proposition~20.16]{BNV}.  
\begin{proposition}\label{p12:prop:count_leading_maps}
Fix the integers $p$, $c$, $s$, $(r_i)_{i\in[c]}$, $(d_k)_{k\in[s]}$, $(a(i,j))$ and $(b(i,j,u,v))$ with $c\ge3$. Also fix a core bracket map
$L_2\times C_{\core}\to L_3$ with breadth $\mathfrak b_2$ as in
\eqref{p12:eq:core-breadth}. Among associated graded Lie brackets arising from groups admitting Sims presentations with these fixed data, the number of possible restrictions
$L_1\times L_1\to L_2$ of the associated graded Lie bracket is at most $p^{\binom{r_1}{2}s_2-\binom{r_1-s}{2}\mathfrak b_2}$.
\end{proposition}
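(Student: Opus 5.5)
We follow the Jacobi--kernel argument of \cite[Proposition~20.16]{BNV}, with the second argument restricted to the Sims core. The bracket $L_1\times L_1\to L_2$ is determined by the $\binom{r_1}{2}$ values $[\bar g_{1,j},\bar g_{1,k}]_L\in L_2$ for $1\le j<k\le r_1$. The naive bound gives at most $p^{s_2}$ choices for each value, hence $p^{\binom{r_1}{2}s_2}$ in total. We will save a factor $p^{\mathfrak b_2}$ for each pair $s<j<k\le r_1$, that is, for pairs of non-core generators. The brackets involving at least one core generator $u_k$, $k\le s$, are counted naively.

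For the saving, fix a maximizing element $h:=h_2^\ast\in C_{\core}$ as in \eqref{p12:eq:core-breadth}. Write $h=\sum_{k\le s}\lambda_k u_k$; this is determined by the fixed core map. For non-core generators $x=\bar g_{1,j}$ and $y=\bar g_{1,k}$ with $s<j<k$, the Jacobi identity \eqref{p12:eq:graded-jacobi-explicit} gives
\[
 [[x,y]_L,h]_L=-[[y,h]_L,x]_L-[[h,x]_L,y]_L .
\]
The inner brackets $[y,h]_L$ and $[h,x]_L$ lie in $L_2$. By additivity, each is a combination of $[\bar g_{1,j},u_k]_L$ with $k\le s$, which are already among the naively counted values. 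We order the counting so that all values $[\bar g_{1,a},u_k]_L$ with $a\in[r_1]$ and $k\in[s]$ are chosen first; this costs at most $p^{(\binom{r_1}{2}-\binom{r_1-s}{2})s_2}$. Once they are fixed, the outer brackets $[\,\cdot\,,x]_L$ and $[\,\cdot\,,y]_L$ are maps $L_2\times L_1\to L_3$. By \cref{p12:lem:relative-graded}\textup{(i)}--\textup{(ii)}, these are determined by the fixed core map $L_2\times C_{\core}\to L_3$ together with the bracket on $L_1\times C_{\core}$, which has now been fixed.

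The delicate point is that \cref{p12:lem:relative-graded}\textup{(i)} is a uniqueness statement about two brackets. To use it here we will argue as follows. Any two admissible full brackets that agree on the already-chosen data agree on $L_2\times L_1$. Hence the right-hand side above is a fixed element of $L_3$ depending only on the chosen data. Consequently $[x,y]_L$ ranges over a set on which $\ad(h)$ is constant. Any two such values differ by an element of $\ker\ad(h)$, so by \eqref{p12:eq:adjoint-kernel-size} there are at most $p^{s_2-\mathfrak b_2}$ of them. Multiplying over the $\binom{r_1-s}{2}$ non-core pairs with the naive count for the remaining pairs gives
\[
 p^{\binom{r_1}{2}s_2-\binom{r_1-s}{2}\mathfrak b_2}.
\]

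The main obstacle is justifying that the outer brackets on $L_2\times L_1$ are genuinely determined before $[x,y]_L$ is chosen. The Jacobi expansion in \cref{p12:lem:relative-graded}\textup{(i)} writes $w\in L_2$ as a sum of core commutators $[u_\alpha,v_\alpha]$. It then needs only brackets of the form $[C_{\core},L_1]$ in $L_2$ and the core map $L_2\times C_{\core}\to L_3$, never non-core pairs. We will check this carefully: the expansion of $[[u,v],x]$ with $u,v$ core and $x$ arbitrary uses only $[v,x],[x,u]\in L_2$ (core$\times L_1$ values) and the core map. This confirms that the ordering is valid and that the realizability restriction only shrinks the count.
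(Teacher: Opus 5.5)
Your proposal is correct and follows essentially the paper's proof. You count the $\binom{r_1}{2}-\binom{r_1-s}{2}$ core-involving pairs naively, then use \cref{p12:lem:relative-graded}\textup{(i)} to fix $L_2\times L_1\to L_3$ from those values together with the given core map, and then apply the Jacobi identity with the maximizing element $h_2^\ast$ to confine each outside--outside value to a coset of $\ker\ad(h_2^\ast)$ of size $p^{s_2-\mathfrak b_2}$. Your reading of the lemma's uniqueness statement (two admissible brackets agreeing on the chosen data agree on $L_2\times L_1$) is exactly what the paper's word ``determines'' means there.
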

\begin{proof}
We first count the restrictions $L_1\times C_{\core}\to L_2$.
It suffices to choose the values on generator pairs
$(\bar g_{1,j},\bar g_{1,k})$ with $1\le j<k\le r_1$ and $j\le s$.
There are $\binom{r_1}{2}-\binom{r_1-s}{2}$ such pairs, and each
value has at most $|L_2|=p^{s_2}$ choices. These values determine the restriction
$L_1\times C_{\core}\to L_2$ by alternation and additivity.

Fix one such restriction arising from a group with the fixed presentation data.
Together with the fixed map $L_2\times C_{\core}\to L_3$,
it determines $L_2\times L_1\to L_3$ by
\cref{p12:lem:relative-graded}\textup{(i)}.
It remains to choose the values on
$(\bar g_{1,j},\bar g_{1,k})$ with $s<j<k\le r_1$;
we call these \emph{outside--outside} pairs.
Choose a maximizing element $h_2^\ast\in C_{\core}$ as in
\eqref{p12:eq:core-breadth}. For each outside--outside pair, the Jacobi
identity~\eqref{p12:eq:graded-jacobi-explicit} gives
\begin{equation}\label{p12:eq:elementary-jacobi-model}
 \ad(h_2^\ast)([\bar g_{1,j},\bar g_{1,k}]_L)
 =-[[\bar g_{1,k},h_2^\ast]_L,\bar g_{1,j}]_L-[[h_2^\ast,\bar g_{1,j}]_L,\bar g_{1,k}]_L.
\end{equation}
The inner brackets on the right are fixed by
$L_1\times C_{\core}\to L_2$, and the outer brackets are
fixed by $L_2\times L_1\to L_3$. Thus the right-hand side is fixed.
When nonempty, the set of attainable values of
$[\bar g_{1,j},\bar g_{1,k}]_L$ is therefore contained in a single
coset of $\ker\ad(h_2^\ast)$. By~\eqref{p12:eq:adjoint-kernel-size},
each outside--outside pair has at most $p^{s_2-\mathfrak b_2}$ possible values.
Multiplying this conditional bound over the $\binom{r_1-s}{2}$
outside--outside pairs and then by the number of core restrictions
gives at most
\[
 p^{(\binom{r_1}{2}-\binom{r_1-s}{2})s_2}
 p^{\binom{r_1-s}{2}(s_2-\mathfrak b_2)}
 =p^{\binom{r_1}{2}s_2-\binom{r_1-s}{2}\mathfrak b_2}
\]
possible restrictions $L_1\times L_1\to L_2$.
Further compatibility conditions can only reduce this count.
\end{proof}

When $c=2$, only $L_1\times L_1\to L_2$ remains to be chosen, and
the trivial bound $p^{\binom{r_1}{2}s_2}$ gives the same estimate
since $\mathfrak b_2=0$. For $c\ge3$, by \cref{p12:lem:relative-graded}\textup{(iii)}, the higher core bracket
maps together with $L_1\times L_1\to L_2$ determine the entire
graded Lie bracket. Multiplying the two bounds in~\cref{p12:prop:core_map_fix_breadth,p12:prop:count_leading_maps} therefore gives, in both cases,
\begin{equation}\label{p12:eq:lie-bracket-count}
 \#\{\text{possible graded Lie brackets with breadth }\boldsymbol{\mathfrak b}\}
 \le
 p^{\sum_{i=2}^{c-1}s \mathfrak b_i(r_i+s_{i+1}-\mathfrak b_i+1)
      +\binom{r_1}{2}s_2-\binom{r_1-s}{2}\mathfrak b_2}.
\end{equation}

\section{Counting deeper commutator coefficients}
\label{p12:sec:filtered-HW}

The preceding section counts the graded Lie brackets, which determine
the leading commutator coefficients $(c(i,j,k,i+1,v))$. We now count
the deeper coefficients $(c(i,j,k,u,v))$ with $i+2\le u\le c$ for
each fixed graded Lie bracket counted in
\cref{p12:subsec:lie-bracket-count}.

Throughout this section, keep the integers $p$, $c$, $s$,
$(r_i)_{i\in[c]}$, $(d_k)_{k\in[s]}$, the relative order array
$(a(i,j))$, and the power word array $(b(i,j,u,v))$ fixed.
Also fix the auxiliary function $f$ from \cref{p12:par:fixed-sims-data} and the pairs of generators whose
commutators define the higher-weight Sims generators, as described
in~\cref{p12:par:fixed-sims-data}. We count commutator coefficient
arrays arising from groups equipped with ordered Sims generating
sets having these data. The relative order array and the same-layer
power coefficients identify their layers with common abelian groups
$L_i$, with the same labeled homogeneous generators and the same
subgroup $C_{\core}\le L_1$. By a \emph{selected relation}, we mean
a commutator relation whose coefficients are recorded in the index
ranges of~\cref{thm:sims_presentation_determines_isomorphism_classes_}.



The main result of this section is the following:

\begin{proposition}[Fixed-breadth relation bound]
\label{p12:prop:total-relation-bound}
Keep the presentation data fixed as above for groups of order $p^m$
with nilpotency class $c$ and Sims rank $s$, and fix a breadth sequence
$\boldsymbol{\mathfrak b}=(\mathfrak b_i)_{i=2}^{c}$ (as defined in \cref{p12:eq:core-breadth}) that occurs for the associated graded Lie
bracket of a group with these data.
Then the base-$p$ logarithm of the number of possible commutator
coefficient arrays with this breadth sequence is at most
\begin{equation}\label{p12:eq:total-relation-bound}
 \left(\binom{r_1}{2}-r_2\right)\sum_{i=2}^c s_i
   +s\sum_{i=2}^{c-2}r_i\sum_{t=i+2}^c s_t+\sum_{i=2}^{c-1}\mathfrak b_i
  \left(s(r_i+s_{i+1}-\mathfrak b_i+1)-\binom{r_1-s}{2}\right).
\end{equation}
\end{proposition}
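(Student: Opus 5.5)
The plan is to split the commutator coefficient array into three groups and bound each separately. Group (A) is the weight-$1$ relations $c(1,j,k,u,v)$. Group (B) is the leading coefficients $c(i,j,k,i+1,v)$ for $i\ge2$. Group (C) is the deeper coefficients $c(i,j,k,u,v)$ for $i\ge2$ and $u\ge i+2$.

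Group (C) will be counted naively. For $i=2$ the selected pairs $(j,k)$ have $k\in[s]$ and $d_{k-1}<j\le r_2$, so there are at most $sr_2$ of them. For $3\le i\le c-1$ there are $sr_i$ selected pairs. Each such relation has at most $p^{s_t}$ choices for its $L_t$-block, for each $t\ge i+2$. This gives the middle term $s\sum_{i=2}^{c-2}r_i\sum_{t=i+2}^cs_t$ of \eqref{p12:eq:total-relation-bound}.

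Group (B) is determined by the graded Lie bracket via \eqref{p12:eq:fibre-commutator-to-bracket}. I will count it together with the $L_2$-components of group (A) using \eqref{p12:eq:lie-bracket-count}, with one improvement. The $r_2$ pairs $(f(j),k)$ defining $g_{2,j}=[g_{1,f(j)},g_{1,k}]$ have their whole relation fixed: the normal form of $[g_{1,f(j)},g_{1,k}]$ is $g_{2,j}$ itself. So in the proof of \cref{p12:prop:count_leading_maps}, $r_2$ of the core pairs contribute nothing, replacing $\binom{r_1}{2}s_2$ by $(\binom{r_1}{2}-r_2)s_2$. The same observation removes these $r_2$ pairs from the deeper layers $u\ge3$ of group (A) as well. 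This accounts for the factor $\binom{r_1}{2}-r_2$ multiplying $\sum_{i\ge2}s_i$, and for the terms $s\mathfrak b_i(r_i+s_{i+1}-\mathfrak b_i+1)$ and $-\binom{r_1-s}{2}\mathfrak b_2$.

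What remains is the deeper part of group (A): for each non-defining pair $j<k$ and each layer $u\ge3$, the $L_u$-block $\omega_u(1,j,k)$. Naively this costs $s_u$ per pair, and I must save $\mathfrak b_u$ on each of the $\binom{r_1-s}{2}$ outside--outside pairs, for every $3\le u\le c-1$. This is the filtered analogue of the Jacobi--kernel argument, and I expect it to be the main obstacle. I will proceed by induction on $u$, fixing first every relation whose blocks lie in layers $<u$, together with all core relations $[g_{i,j},g_{1,k}]$ with $k\le s$ up to layer $u+1$. The latter must be counted earlier in the order, since the core pairs are charged the full $s_u$.

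For an outside--outside pair $s<j<k$, I will apply the Hall--Witt identity in $G$ to $g_{1,j}$, $g_{1,k}$ and a Sims lift $h$ of $h_u^\ast$ (a word in the core generators). Write $[g_{1,j},g_{1,k}]=x\,\omega_u\,y$, where $x$ is already fixed and $y\in G_{u+1}$. Then $[[g_{1,j},g_{1,k}],h]$ modulo $G_{u+2}$ equals $[\omega_u,h]G_{u+2}$ plus a term determined by the already-fixed data. The latter comes from $x$ via collection and the fixed core relations. The other two Hall--Witt terms $[[g_{1,k},h],g_{1,j}]$ and $[[h,g_{1,j}],g_{1,k}]$ are likewise fixed modulo $G_{u+2}$. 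Their inner commutators are core relations fixed through layer $u$. The outer brackets with outside generators are then reduced, by \cref{p12:lem:relative-graded}-style rewriting and induction, to fixed data in layers $\le u+1$.

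Hence $\ad(h_u^\ast)(\omega_uG_{u+1})\in L_{u+1}$ is fixed, so $\omega_uG_{u+1}$ lies in a single coset of $\ker\ad(h_u^\ast)$. By \eqref{p12:eq:adjoint-kernel-size} it has at most $p^{s_u-\mathfrak b_u}$ values. The delicate points are these:
\begin{enumerate}
\item verifying that the outer-bracket terms really involve only lower-layer components of outside--outside relations, so the induction closes;
\item handling the error terms of Hall--Witt and of collection inside $G_{u+2}$.
\end{enumerate}
The second point is harmless because we only work modulo $G_{u+2}$. Summing the savings over $u=2,\dots,c-1$ and adding the three counts yields \eqref{p12:eq:total-relation-bound}.
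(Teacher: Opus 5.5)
Your bookkeeping is right and agrees with the paper. The bracket count \eqref{p12:eq:lie-bracket-count} with the $r_2$ defining pairs omitted, the naive count $s\sum_{i=2}^{c-2}r_i\sum_{t\ge i+2}s_t$ for deeper core relations of source weight at least two, and a saving of $\mathfrak b_u$ per outside--outside pair in each layer $3\le u\le c-1$ do add up to \eqref{p12:eq:total-relation-bound}.

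The gap is in the step that carries the content: that every term of your Hall--Witt identity other than $[\omega_u,h]$ takes the same value for all groups realizing the fixed data. Already for $u=3$, computing $[[g_{1,k},h],g_{1,j}]$ modulo $G_5$ needs the $L_4$-component of $[g_{2,v},g_{1,j}]$ with $j>s$. In general, modulo $G_{u+2}$ you need two further families of commutators:
\begin{enumerate}
\item the components through layer $u+1$ of $[g_{t,v},g_{1,j}]$ with $t\ge2$ and $j>s$, used to bracket or conjugate elements of $G_2$ by outside generators;
\item the deeper components of $[g_{t,v},g_{t',v'}]$ with $t,t'\ge2$, used for collection in $G_2/G_{u+2}$.
\end{enumerate}
None of these are in the selected array. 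The bound \eqref{p12:eq:total-relation-bound} has no room to count the roughly $r_2(r_1-s)$ relations of the first kind freely, so you must prove they are determined by the selected data. \cref{p12:lem:relative-graded} does this only for leading (graded) components, so ``\cref{p12:lem:relative-graded}-style rewriting and induction'' does not justify it. Your second delicate point is also misplaced. Terms inside $G_{u+2}$ are indeed harmless. The collection terms in layers $3,\dots,u+1$ are not: they occur in $[x,h]$ and in the conjugations and inverses in Hall--Witt, and showing they agree across groups is the same problem.

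The paper supplies the missing idea by linearizing. It compares two groups $G^{(0)},G^{(1)}$ with the fixed data. The coordinate-preserving isomorphisms of \cref{p12:prop:sims-collection-rules} show that matched pairs form an $N$-series $(\cF_i)$ with $\cF_i/\cF_{i+1}\cong L_i\oplus L_{i+\xi}$. The Jacobi identity in $\gr\cF$ then says that the first commutator differences form an alternating biadditive map $\delta_\xi$ satisfying the adjoint cocycle identity \eqref{p12:eq:cocycle} (\cref{p12:thm:first-difference}). This step absorbs the collection bookkeeping once and for all. Purely Lie-ring arguments (\cref{p12:lem:triangular-second-layer-recovery} and induction on left-normed core Lie words in \cref{p12:cor:selected-complete}) then give $\delta_\xi(L,C_{\core})=0$ and $\delta_\xi(L_{\ge2},L)=0$. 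Hence $[\delta_\xi(x,y),h]_L=0$ for $x,y\in L_1$ and $h\in C_{\core}$, which is exactly your coset statement for $\omega_u$ with $u=\xi+2$.

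A direct single-group Hall--Witt induction could in principle be completed. It would have to reprove this determination of the non-selected deeper commutators layer by layer, with the unknown component entering linearly at each step. Your proposal asserts this rather than proving it.

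A smaller point: the non-selected core relations $[g_{2,j},g_{1,k}]$ with $j\le d_{k-1}$, which you treat as fixed, are not in the array either. Either recover them as in \cref{p12:lem:triangular-second-layer-recovery}, or count them, which still fits in your $sr_2$ budget.
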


The proof combines the graded Lie bracket count in
\cref{p12:subsec:lie-bracket-count} with a count of the deeper
coefficients for each fixed bracket. The bracket fixes the leading
coefficients, and the deeper components are counted in increasing
excess, the target weight minus the sum of the source weights.
After fixing the components at smaller excesses, we compare two
groups realizing those choices. \Cref{p12:prop:selected-agreement}
ensures agreement of all generator commutators at smaller excesses.
\Cref{p12:thm:first-difference} then shows that their first possible
commutator differences form an alternating biadditive map satisfying
a linear identity inherited from the Jacobi identity. This allows
the kernel argument used for graded brackets to be applied to the
deeper group coefficients.

At each positive excess $\xi$, we first fix the components of all
selected relations having a core input. By
\cref{p12:cor:selected-complete}, the comparison identity then forces
the differences for the remaining first-layer pairs, whose two
generators are outside the core, to commute with the core.
Their components in $L_{\xi+2}$ therefore lie in cosets of a kernel
of order $p^{s_{\xi+2}-\mathfrak b_{\xi+2}}$, giving a saving of
$\mathfrak b_{\xi+2}$ in the exponent for each of the
$\binom{r_1-s}{2}$ remaining pairs, as quantified in
\cref{p12:prop:one-excess}. Multiplying these conditional bounds gives
\cref{p12:prop:filtered-fibre}; combining this with the graded count
gives the breadth terms in~\eqref{p12:eq:total-relation-bound}.
Finally, the $r_2$
commutators chosen to define the weight-two generators have fixed
words, so omitting their coefficient positions saves
$r_2\sum_{u=2}^{c}s_u$, giving the stated bound.

\subsection{Comparison of commutators in two groups}
\label{p12:subsec:comparison_of_commutators_across_valid_completions}
Assume $c\ge3$ and fix one of the graded Lie brackets counted in
\cref{p12:subsec:lie-bracket-count}. To prepare for counting the deeper
components of the selected relations, we compare two groups with the
fixed presentation data whose associated graded brackets agree under
the identification of their labeled homogeneous generators.
Equation~\eqref{p12:eq:commutator-weight-blocks} writes a selected relation
$[g_{i,j},g_{1,k}]$ as the product of its blocks
$\omega_u(i,j,k)$. The range of $u$ is $i+1\le u\le c$. Since $[g_{i,j},g_{1,k}]\in G_{i+1}$, it has no nontrivial blocks of weight less than $i+1$. We call $L_{i+1}$ the \emph{designated leading layer} of $[g_{i,j},g_{1,k}]$, and the $L_{i+1+\xi}$-component of $[g_{i,j},g_{1,k}]$ the \emph{excess $\xi$} component for $0\le \xi\le c-i-1$. The component in the designated leading layer has excess $0$, and the positive-excess components form the \emph{tail}.  

For a selected relation $[g_{i,j},g_{1,k}]$ with indices as
in~\cref{thm:sims_presentation_determines_isomorphism_classes_} and any
$0\le \xi\le c-i-1$, its excess-$\xi$ component, that is, its
$L_{i+1+\xi}$-component, is
\begin{equation}\label{p12:eq:selected-excess-component}
 \omega_{i+1+\xi}(i,j,k)G_{i+2+\xi}
 =\sum_{v=1}^{r_{i+1+\xi}}
   c(i,j,k,i+1+\xi,v)\bar g_{i+1+\xi,v}
 \in L_{i+1+\xi}.
\end{equation}
Note that the excess-$0$ component (i.e., the $L_{i+1}$-component) $\omega_{i+1}(i,j,k)G_{i+2}=[\bar g_{i,j},\bar g_{1,k}]_L$ is fixed by the chosen Lie bracket. The positive-excess components may still vary among groups with the fixed presentation data and this graded bracket. Our goal is to bound the number of choices for the
excess-$\xi$ components of the selected relations, after fixing all
their components of smaller excess. For this, we establish comparison
results assuming agreement below excess $\xi$ for the commutators of
every pair of corresponding Sims generators.

Let $G^{(0)}$ and $G^{(1)}$ be two such groups. For
$\nu\in\{0,1\}$, denote the lower central series of $G^{(\nu)}$ by
$(G_i^{(\nu)})_{i\ge1}$. Their ordered Sims generating sets have the
same numerical parameters, relative order array, complete power word
array, and defining commutator choices fixed at the beginning of
this section. In particular, the prime $p$, class $c$, layer ranks
$r_i$, ordered generator labels $(i,j)$, relative orders
$p^{a(i,j)}$, and all power coefficients $b(i,j,u,v)$ are fixed. Write $g_{i,j}^{(\nu)}$ for the
corresponding Sims generators in $G^{(\nu)}$. The Sims normal forms of elements in $G^{(\nu)}$ use the same increasing index order and the
same numerical-coordinate ranges $0\le e(i,j)<p^{a(i,j)}$. For $i\in[c]$, $j\in[r_i]$, and $\nu\in\{0,1\}$,
the fixed power words~\eqref{p12:eq:power_word_normal_form} give
\[
 (g_{i,j}^{(\nu)})^{p^{a(i,j)}}
 =\left(\prod_{v=1}^{j-1}
   (g_{i,v}^{(\nu)})^{b(i,j,i,v)}\right)
  \left(\prod_{u=i+1}^{c}\prod_{v=1}^{r_u}
   (g_{u,v}^{(\nu)})^{b(i,j,u,v)}\right).
\]
Thus both the same-layer and the deeper power coefficients are
fixed, not just the relative orders.

For each $i\in[c]$, the Sims normal form~\eqref{p12:eq:sims_normal_form} gives unique weight-$i$ coordinates $0\le e(i,j)<p^{a(i,j)}$ for every coset in
$G_i^{(\nu)}/G_{i+1}^{(\nu)}$. For $\nu\in\{0,1\}$, we identify these quotients with the
common abelian group $L_i$ by
\begin{equation}\label{p12:eq:common-layer-identification}
 \left(\prod_{j=1}^{r_i}(g_{i,j}^{(\nu)})^{e(i,j)}\right)
 G_{i+1}^{(\nu)}
 \longmapsto\sum_{j=1}^{r_i}e(i,j)\bar g_{i,j}.
\end{equation}
Uniqueness of weight-$i$ coordinates makes these maps bijective.
They also preserve addition. Indeed, writing each quotient additively,
the fixed power words give
\[
 p^{a(i,j)}\bigl(g_{i,j}^{(\nu)}G_{i+1}^{(\nu)}\bigr)
 =\sum_{v=1}^{j-1}b(i,j,i,v)
   \bigl(g_{i,v}^{(\nu)}G_{i+1}^{(\nu)}\bigr).
\]
To add two cosets, add their coordinates and use these
relations, in decreasing order of $j$, to reduce each coefficient
to the range $0\le e(i,j)<p^{a(i,j)}$. Reducing the coefficient at
position $j$ changes only coefficients at positions $v<j$.
The relative orders and the coefficients $b(i,j,i,v)$ are common
to both groups, so this procedure gives the same reduced
coordinates in both quotients and in $L_i$, where the same relations
hold by~\eqref{p12:eq:fibre-power-to-layer}. Thus
\eqref{p12:eq:common-layer-identification} gives isomorphisms of abelian
groups, sending $g_{i,j}^{(\nu)}G_{i+1}^{(\nu)}$ to $\bar g_{i,j}$.
Under these identifications, both groups also induce the chosen
graded Lie bracket.

The positive-excess commutator coefficients in the recorded array
may still differ between $G^{(0)}$ and $G^{(1)}$.
Fix $\nu\in\{0,1\}$ and let
$g=\prod_{i=1}^c\prod_{j=1}^{r_i}(g_{i,j}^{(\nu)})^{e(i,j)}$
be the Sims normal form of $g\in G^{(\nu)}$ as in
\eqref{p12:eq:sims_normal_form}, where its coordinates satisfy
$0\le e(i,j)<p^{a(i,j)}$.
For $u\in[c]$, we use $\pi_u(g)$ to denote the $L_u$-component of
$g$. Thus
\begin{equation}\label{p12:eq:pi_u_g}
 \pi_u(g)
 :=\left(\prod_{j=1}^{r_u}(g_{u,j}^{(\nu)})^{e(u,j)}\right)
   G_{u+1}^{(\nu)}=\sum_{j=1}^{r_u}e(u,j)\bar g_{u,j}\in L_u.  
\end{equation}

For $\xi\ge1$ and $i\in[c]$, call a pair 
$(g^{(0)},g^{(1)})$
\emph{matched below excess $\xi$ in degree $i$} if $g^{(0)}\in G_i^{(0)}$, $g^{(1)}\in G_i^{(1)}$, and  $\pi_{i+e}(g^{(0)})
 =\pi_{i+e}(g^{(1)})$ for every $0\le e<\xi$ with $i+e\le c$.
When $i+\xi\le c$, the first component in which a matched pair is
allowed to differ is its $L_{i+\xi}$-component. We call
\begin{equation}\label{p12:eq:first-possible-difference}
 \partial_{i,\xi}(g^{(0)},g^{(1)})
 :=\pi_{i+\xi}(g^{(1)})
   -\pi_{i+\xi}(g^{(0)})\in L_{i+\xi}
\end{equation}
the \emph{first possible difference} of the matched pair
$(g^{(0)},g^{(1)})$.

The goal of this subsection is to estimate the difference of commutators of matched pairs
using the Sims lifts.
For this, we need to prove that taking products, inverses and commutators
preserves the required matching. The following result establishes quotient
isomorphisms that preserve coordinates and are compatible
with conjugation.
\begin{proposition}
\label{p12:prop:sims-collection-rules}
Let $G^{(0)}$ and $G^{(1)}$ be groups equipped with ordered Sims
generating sets having the fixed presentation data specified above,
with corresponding generators $g_{h,a}^{(0)}$ and $g_{h,a}^{(1)}$ for $h\in[c]$ and $a\in[r_h]$.
Fix $\xi\in[c-1]$ and assume that the commutators of corresponding
pairs of Sims generators agree at every excess less than $\xi$.
Explicitly, for all $h_1,h_2\in[c]$, $a\in[r_{h_1}]$,
$b\in[r_{h_2}]$, and $0\le e<\xi$ with $h_1+h_2+e\le c$,
we have
\[
\pi_{h_1+h_2+e}
\bigl([g_{h_1,a}^{(0)},g_{h_2,b}^{(0)}]\bigr)
=
\pi_{h_1+h_2+e}
\bigl([g_{h_1,a}^{(1)},g_{h_2,b}^{(1)}]\bigr).
\]
\medskip

Then for any $k,u$ with $1\le k\le u\le c$ and $u\le k+\xi$,
the following hold.
\begin{enumerate}[label=\textup{(\roman*)},leftmargin=*]
 \item There is a unique coordinate-preserving isomorphism $\varphi_{k,u}:G_k^{(0)}/G_{u+1}^{(0)}
  \rightarrow G_k^{(1)}/G_{u+1}^{(1)}$.
 Explicitly, for any integer exponents $0\le e(t,v)<p^{a(t,v)}$ with
 $k\le t\le u$ and $v\in[r_t]$,
 \begin{equation}\label{p12:eq:varphi_action}
  \varphi_{k,u}\left(
    \left(\prod_{t=k}^{u}\prod_{v=1}^{r_t}
      (g_{t,v}^{(0)})^{e(t,v)}\right)G_{u+1}^{(0)}\right)
  =\left(\prod_{t=k}^{u}\prod_{v=1}^{r_t}
      (g_{t,v}^{(1)})^{e(t,v)}\right)G_{u+1}^{(1)}.
 \end{equation}

 \item For any $t\in[c]$, $v\in[r_t]$, and elements
 $g^{(0)}\in G_k^{(0)}$ and $g^{(1)}\in G_k^{(1)}$, suppose that
 \[
  \varphi_{k,u}\bigl(g^{(0)}G_{u+1}^{(0)}\bigr)
  =g^{(1)}G_{u+1}^{(1)}.
 \]
 Then
 \begin{equation}\label{p12:eq:sims-conjugation-compatibility}
  \varphi_{k,u}\bigl((g_{t,v}^{(0)})^{-1}g^{(0)}g_{t,v}^{(0)}
  G_{u+1}^{(0)}\bigr)
  =(g_{t,v}^{(1)})^{-1}g^{(1)}g_{t,v}^{(1)}G_{u+1}^{(1)}.
 \end{equation}
\end{enumerate}
\end{proposition}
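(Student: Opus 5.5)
The plan is a simultaneous induction on the window length $u-k\in\{0,\dots,\xi\}$. The statement to carry through the induction is this: for every pair $k\le u$ with $u-k$ at most the current length, the map $\varphi_{k,u}$ defined by \eqref{p12:eq:varphi_action} is a well-defined group isomorphism, and it satisfies the conjugation compatibility \eqref{p12:eq:sims-conjugation-compatibility}.

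Uniqueness and bijectivity are cheap. The Sims normal form gives unique coordinates $0\le e(t,v)<p^{a(t,v)}$ for every coset in $G_k^{(\nu)}/G_{u+1}^{(\nu)}$, since the products $\prod_{t=k}^{u}\prod_v (g^{(\nu)}_{t,v})^{e(t,v)}$ form a transversal. Hence \eqref{p12:eq:varphi_action} defines a bijection, and any coordinate-preserving map must equal it. The only real content in (i) is that this bijection is a homomorphism.

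Base case $u=k$: here $\varphi_{k,k}$ is the identification \eqref{p12:eq:common-layer-identification} of both layers with $L_k$. This has already been shown additive using the fixed same-layer power coefficients. Conjugation by $g_{t,v}$ acts trivially on $G_k/G_{k+1}$, because $[G_k,G]\le G_{k+1}$, so (ii) holds trivially.

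Inductive step: assume the claims for all shorter windows and fix $k<u\le k+\xi$.

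For (ii), write $g^{(0)}$ in normal form. Conjugation by $g_{t,v}$ sends each factor $g_{h,a}^{e}$ to $(g_{h,a}[g_{h,a},g_{t,v}])^{e}$. The commutator $[g^{(\nu)}_{h,a},g^{(\nu)}_{t,v}]$ lies in $G_{h+t}$, and modulo $G_{u+1}$ only its components in degrees $h+t,\dots,u$ matter. These are components of excess $u-h-t\le u-k-1<\xi$, since $t\ge1$ and $h\ge k$. By hypothesis they agree in the two groups. So the conjugated element is the image of a fixed formal word in the generators, with exponents that are the same for both groups. It then remains to see that collecting this word back into normal form produces the same coordinates in both groups.

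That collection step is exactly what (i) on the window $[k,u]$ provides. To obtain (i), I will prove that the product of two normal words collects identically in the two groups, by induction on the position of the leftmost generator. When a generator $g_{h,a}$ is moved past $g_{h',b}$ with $h'>h$, the correction is $[g_{h',b},g_{h,a}]$. Its relevant components have degree at least $h+h'>k$ and excess less than $\xi$, so they match. The correction terms live in $G_{k+1}/G_{u+1}$, a strictly shorter window. There the inductive hypothesis says that products and conjugations are transported by $\varphi_{k+1,u}$.

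Within a single layer, powers $g_{h,a}^{p^{a(h,a)}}$ are rewritten by the fixed power words, which are common to both groups. Same-layer reordering produces commutators of two degree-$h$ generators, which lie in $G_{2h}$. If $h\ge k+1$, or if these land in degree at least $k+1$ at excess below $\xi$, they are again handled in the shorter window. Every rewriting step therefore has an identical counterpart in $G^{(1)}$, both processes terminate in normal form, and the coordinates obtained are equal. This shows that $\varphi_{k,u}$ is multiplicative. Inverses follow from multiplicativity and bijectivity.

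The main obstacle is this collection argument: making the bookkeeping rigorous so that every commutator invoked has excess strictly less than $\xi$ and every recursive call lands in a window already covered. I would organise it by a secondary induction on the pair consisting of the weight of the leftmost uncollected generator and the window length, pushed down via $G_{k+1}/G_{u+1}$. At each rewrite I would check that the required degree is at most $u\le k+\xi$, and that the sum of the source degrees is at least $k+1$. This guarantees that the excess is at most $\xi-1$, which is precisely where the standing hypothesis applies.
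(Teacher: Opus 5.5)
Your proposal is correct, but you prove (i) by a different route from the paper.

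\textbf{The paper's route.} The paper fixes the window $[k,u]$ and does not induct on $u-k$. It builds $\varphi_{k,u}$ as a tower of cyclic extensions, adjoining generator cosets in decreasing weight $h=u,\ldots,k$, with increasing index within a weight. At each step it checks three things on the already-built subgroup $K^{(\nu)}$:
\begin{itemize}
\item $K^{(\nu)}$ is normalized by the new coset $g_{h,j}^{(\nu)}G_{u+1}^{(\nu)}$;
\item this coset has exact order $p^{a(h,j)}$ modulo $K^{(\nu)}$, with matching $p^{a(h,j)}$-th power, because the power words are common;
\item conjugation by it agrees on the generators of $K^{(\nu)}$, by the commutator hypothesis, since the excess is at most $u-2k<\xi$.
\end{itemize}
The extension $(g_{h,j}^{(0)})^e z\mapsto (g_{h,j}^{(1)})^e\varphi(z)$ is then shown to be multiplicative by a direct computation. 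No rewriting process or termination argument ever appears.

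\textbf{Your route.} You induct on the window length and prove multiplicativity on $[k,u]$ by collection. Every correction is pushed into $G_{k+1}/G_{u+1}$, where $\varphi_{k+1,u}$ is available because $u\le(k+1)+\xi$. Your excess checks are exactly the right ones. This is the classical consistency argument for polycyclic presentations, and it delivers all shorter windows along the way. The cost is the bookkeeping you flag.

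\textbf{Simplifying the bookkeeping.} It is lighter than your plan suggests. Write $w_i=x_iy_i$ with $x_i$ the weight-$k$ block, so that $w_1w_2=x_1x_2\cdot(x_2^{-1}y_1x_2)\cdot y_2$.
\begin{itemize}
\item The middle factor is transported by (ii) on $[k+1,u]$.
\item Collecting $x_1x_2$ only requires swapping weight-$k$ generators. The corrections lie in $G_{2k}/G_{u+1}\le G_{k+1}/G_{u+1}$ at excess at most $u-2k<\xi$, and are moved right by (ii) on $[k+1,u]$. So your case distinction on $h$ is unnecessary.
\item Beyond the swaps, you only need the carry procedure behind \eqref{p12:eq:common-layer-identification}, run from the highest index downward. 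It terminates because power words feed only lower indices.
\end{itemize}
In particular, you never need to collect generators of weight at least $k+1$ individually.

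Your argument for (ii) is the same as the paper's: reduce to the homomorphism property and the agreement of the commutator cosets through weight $u$.
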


\begin{proof}
Fix $k,u$ in the stated range. Throughout this proof, we suppress the indices $k,u$ and write
$\varphi$ for the map being constructed.

We first prove \textup{(i)}. Recall that
if $g\in G_k^{(\nu)}$, $\nu=0,1$, has coordinates
$0\le e(t,v)<p^{a(t,v)}$ as in~\eqref{p12:eq:sims_normal_form}, then the coset $gG_{u+1}^{(\nu)}$ has a unique
Sims normal form
\[
  gG_{u+1}^{(\nu)}=\left(\prod_{t=k}^{u}\prod_{v=1}^{r_t}(g_{t,v}^{(\nu)})^{e(t,v)}\right)G_{u+1}^{(\nu)}.
\]
For $h_1,h_2\ge k$, $a\in[r_{h_1}]$, $b\in[r_{h_2}]$, and
$h_1+h_2\le t\le u$, we have $\pi_t([g_{h_1,a}^{(0)},g_{h_2,b}^{(0)}])
 =\pi_t([g_{h_1,a}^{(1)},g_{h_2,b}^{(1)}])$ since $0\le t-h_1-h_2\le u-2k\le\xi-k<\xi$.

We construct $\varphi=\varphi_{k,u}$ by adding the generator cosets in decreasing
weight $h=u,\ldots,k$ and, within each weight, in increasing second
index $j=1,\ldots,r_h$.
Before adding $g_{h,j}^{(\nu)}G_{u+1}^{(\nu)}$, the cosets
already added generate 
\[
 K^{(\nu)}
 =\bigl\langle
 g_{t,v}^{(\nu)}G_{u+1}^{(\nu)}:h<t\le u,\ v\in[r_t];\
 g_{h,v}^{(\nu)}G_{u+1}^{(\nu)}:1\le v<j
 \bigr\rangle.
\]
Initially $h=u$, $j=1$, and both $K^{(0)}$ and $K^{(1)}$ are trivial.
Suppose that an isomorphism $\varphi:K^{(0)}\to K^{(1)}$
already sends every previously added generator coset to its
 counterpart as in~\eqref{p12:eq:varphi_action}. We extend it to an
 isomorphism
 \[
  \left\langle K^{(0)},g_{h,j}^{(0)}G_{u+1}^{(0)}\right\rangle
  \longrightarrow
  \left\langle K^{(1)},g_{h,j}^{(1)}G_{u+1}^{(1)}\right\rangle.
 \]

Note that all generators of $G_{h+1}^{(\nu)}/G_{u+1}^{(\nu)}$
have already been added, and all generators of $K^{(\nu)}$
have weight at least $h$. Hence
\[
 [K^{(\nu)},g_{h,j}^{(\nu)}G_{u+1}^{(\nu)}]
 \le G_{h+1}^{(\nu)}/G_{u+1}^{(\nu)}
 \le K^{(\nu)}.
\]
Thus conjugation by $g_{h,j}^{(\nu)}G_{u+1}^{(\nu)}$ maps
$K^{(\nu)}$ injectively into itself. Since $K^{(\nu)}$ is finite, its
image equals $K^{(\nu)}$, so $K^{(\nu)}$ is normal in
$\langle K^{(\nu)},g_{h,j}^{(\nu)}G_{u+1}^{(\nu)}\rangle$ for $\nu=0,1$.
By the relative order definition~\eqref{eq: relative order of gij}
and the uniqueness of Sims normal form,
$p^{a(h,j)}$ is the exact order of $g_{h,j}^{(\nu)}G_{u+1}^{(\nu)}$
modulo $K^{(\nu)}$. Consequently,
\[
 \langle K^{(\nu)},g_{h,j}^{(\nu)}G_{u+1}^{(\nu)}\rangle
 =\bigsqcup_{e=0}^{p^{a(h,j)}-1}
       (g_{h,j}^{(\nu)}G_{u+1}^{(\nu)})^eK^{(\nu)}.
\]
Every element therefore has a unique expression
$(g_{h,j}^{(\nu)}G_{u+1}^{(\nu)})^e z$, with
$0\le e<p^{a(h,j)}$ and $z\in K^{(\nu)}$.

Meanwhile, the fixed power word~\eqref{p12:eq:power_word_normal_form} gives
\begin{equation}\label{p12:eq:power_coset}
  (g_{h,j}^{(\nu)}G_{u+1}^{(\nu)})^{p^{a(h,j)}}
  =\left(\prod_{v=1}^{j-1}
       (g_{h,v}^{(\nu)})^{b(h,j,h,v)}\right)
   \left(\prod_{t=h+1}^{u}\prod_{v=1}^{r_t}
       (g_{t,v}^{(\nu)})^{b(h,j,t,v)}\right)G_{u+1}^{(\nu)}
       \in K^{(\nu)}.
\end{equation}
All generator cosets on the right have already been added, and
the coefficients $b$ are the same in both groups. Hence
\[
 \varphi\bigl((g_{h,j}^{(0)}G_{u+1}^{(0)})^{p^{a(h,j)}}\bigr)
 =(g_{h,j}^{(1)}G_{u+1}^{(1)})^{p^{a(h,j)}}.
\]

For conjugation, let $g_{t,\ell}^{(\nu)}G_{u+1}^{(\nu)}$
be a generator of $K^{(\nu)}$ already added.
If $t+h>u$, the coset of $[g_{t,\ell}^{(\nu)},g_{h,j}^{(\nu)}]$
is trivial. Otherwise, write its Sims normal form as
\[
 [g_{t,\ell}^{(\nu)},g_{h,j}^{(\nu)}]G_{u+1}^{(\nu)}
 =\left(\prod_{w=t+h}^{u}\prod_{v=1}^{r_w}
       (g_{w,v}^{(\nu)})^{e^{(\nu)}(w,v)}\right)G_{u+1}^{(\nu)}.
\]
The commutator components agree through weight $u$, so uniqueness
of the coordinates gives $e^{(0)}(w,v)=e^{(1)}(w,v)$.
Since $w\ge t+h>h$, every generator coset in this word has
already been added. Because these coordinates agree, the induction
hypothesis for the already constructed map gives, in either case,
\[
 \varphi\bigl([g_{t,\ell}^{(0)},g_{h,j}^{(0)}]G_{u+1}^{(0)}\bigr)
 =[g_{t,\ell}^{(1)},g_{h,j}^{(1)}]G_{u+1}^{(1)}.
\]
Using $g^{-1}hg=h[h,g]$, we obtain
\[
\varphi\bigl((g_{h,j}^{(0)})^{-1}g_{t,\ell}^{(0)}g_{h,j}^{(0)}G_{u+1}^{(0)}\bigr)=\varphi\bigl(g_{t,\ell}^{(0)}[g_{t,\ell}^{(0)},g_{h,j}^{(0)}]G_{u+1}^{(0)}\bigr)=(g_{h,j}^{(1)})^{-1}g_{t,\ell}^{(1)}g_{h,j}^{(1)}G_{u+1}^{(1)},
\]
where the homomorphism property of $\varphi$ applies because the
generator coset and the commutator coset both belong to $K^{(0)}$.
The two conjugation maps are homomorphisms and agree on every
generating coset of $K^{(0)}$. Hence, for any $z\in K^{(0)}$,
\[
 \varphi\bigl((g_{h,j}^{(0)}G_{u+1}^{(0)})^{-1}
       z(g_{h,j}^{(0)}G_{u+1}^{(0)})\bigr)
 =(g_{h,j}^{(1)}G_{u+1}^{(1)})^{-1}
       \varphi(z)(g_{h,j}^{(1)}G_{u+1}^{(1)}).
\]
Iterating this identity gives, for $z\in K^{(0)}$ and any integer
$e_2\ge0$,
\[
 \varphi\bigl((g_{h,j}^{(0)}G_{u+1}^{(0)})^{-e_2}
       z(g_{h,j}^{(0)}G_{u+1}^{(0)})^{e_2}\bigr)
 =(g_{h,j}^{(1)}G_{u+1}^{(1)})^{-e_2}
       \varphi(z)(g_{h,j}^{(1)}G_{u+1}^{(1)})^{e_2}.
\]

Define the extension by
\[
 \widehat\varphi\bigl((g_{h,j}^{(0)}G_{u+1}^{(0)})^e z\bigr)
 =(g_{h,j}^{(1)}G_{u+1}^{(1)})^e\varphi(z)
\]
for any $z\in K^{(0)}$ and $0\le e<p^{a(h,j)}$.
The unique expressions of elements in $\langle K^{(\nu)},g_{h,j}^{(\nu)}G_{u+1}^{(\nu)}\rangle$ show that $\widehat\varphi$ is well defined
and bijective. Taking $e=0$ shows that it extends $\varphi$;
taking $e=1$, $z=1$ shows that it sends the new generator coset
to its counterpart.

It remains to verify that $\widehat\varphi$ preserves multiplication. Choose
$0\le e_1,e_2<p^{a(h,j)}$ and $z_1^{(0)},z_2^{(0)}\in K^{(0)}$,
and denote $z_a^{(1)}=\varphi(z_a^{(0)})\in K^{(1)}$ for $a=1,2$.
Write $e_1+e_2=dp^{a(h,j)}+e$, where $0\le e<p^{a(h,j)}$.
For $\nu=0,1$,
\begin{equation}\label{p12:eq:multiplication_hat_varphi}
 \begin{split}
 &\bigl((g_{h,j}^{(\nu)}G_{u+1}^{(\nu)})^{e_1}z_1^{(\nu)}\bigr)
  \bigl((g_{h,j}^{(\nu)}G_{u+1}^{(\nu)})^{e_2}z_2^{(\nu)}\bigr)\\
 ={}&(g_{h,j}^{(\nu)}G_{u+1}^{(\nu)})^{e_1+e_2}
       \bigl((g_{h,j}^{(\nu)}G_{u+1}^{(\nu)})^{-e_2}
             z_1^{(\nu)}(g_{h,j}^{(\nu)}G_{u+1}^{(\nu)})^{e_2}\bigr)z_2^{(\nu)}\\
 ={}&(g_{h,j}^{(\nu)}G_{u+1}^{(\nu)})^e
       \Bigl((g_{h,j}^{(\nu)}G_{u+1}^{(\nu)})^{dp^{a(h,j)}}
       \bigl((g_{h,j}^{(\nu)}G_{u+1}^{(\nu)})^{-e_2}
             z_1^{(\nu)}(g_{h,j}^{(\nu)}G_{u+1}^{(\nu)})^{e_2}\bigr)z_2^{(\nu)}\Bigr).
 \end{split}
\end{equation}
Note that 
\[
  (g_{h,j}^{(\nu)}G_{u+1}^{(\nu)})^{dp^{a(h,j)}}
       \bigl((g_{h,j}^{(\nu)}G_{u+1}^{(\nu)})^{-e_2}
             z_1^{(\nu)}(g_{h,j}^{(\nu)}G_{u+1}^{(\nu)})^{e_2}\bigr)z_2^{(\nu)}\in K^{(\nu)},
\]
since the power term is in $K^{(\nu)}$ by~\eqref{p12:eq:power_coset} and conjugation by $g_{h,j}^{(\nu)}G_{u+1}^{(\nu)}$ preserves $K^{(\nu)}$.
Meanwhile, $\varphi$ sends the displayed $\nu=0$ element to its
$\nu=1$ counterpart. Applying the definition of
$\widehat\varphi$ therefore gives
\[
\widehat\varphi\Bigl(
       \bigl((g_{h,j}^{(0)}G_{u+1}^{(0)})^{e_1}z_1^{(0)}\bigr)
       \bigl((g_{h,j}^{(0)}G_{u+1}^{(0)})^{e_2}z_2^{(0)}\bigr)\Bigr)=\bigl((g_{h,j}^{(1)}G_{u+1}^{(1)})^{e_1}z_1^{(1)}\bigr)
        \bigl((g_{h,j}^{(1)}G_{u+1}^{(1)})^{e_2}z_2^{(1)}\bigr).
\]
Thus, $\widehat\varphi$ is an isomorphism from
$\langle K^{(0)},g_{h,j}^{(0)}G_{u+1}^{(0)}\rangle$ to
$\langle K^{(1)},g_{h,j}^{(1)}G_{u+1}^{(1)}\rangle$. After all
generator cosets of weights $k,\ldots,u$ have been added,
the resulting subgroups are
$G_k^{(\nu)}/G_{u+1}^{(\nu)}$. Hence the constructed map is an
isomorphism $\varphi_{k,u}$ between the required quotients. It sends
each generator coset to its counterpart, so applying it to the unique
Sims normal form gives~\eqref{p12:eq:varphi_action}. Conversely,
\eqref{p12:eq:varphi_action} determines the map on every coset and hence
proves uniqueness. This proves~\textup{(i)}.

To prove \textup{(ii)}, fix $t\in[c]$ and $v\in[r_t]$.
Both $G_k^{(\nu)}$ and $G_{u+1}^{(\nu)}$ are normal in $G^{(\nu)}$,
so conjugation by $g_{t,v}^{(\nu)}$ induces an automorphism of
$G_k^{(\nu)}/G_{u+1}^{(\nu)}$.
It is enough to show that applying $\varphi$ after conjugation in the
first quotient agrees on every generating coset with applying
conjugation after $\varphi$ in the second quotient.

Let $k\le h\le u$ and $j\in[r_h]$. If $h+t>u$, then the coset of
$[g_{h,j}^{(\nu)},g_{t,v}^{(\nu)}]$ is trivial. If $h+t\le u$, then
for $h+t\le\ell\le u$, $0\le\ell-h-t\le u-k-1<\xi$.
The hypothesis therefore gives equality of all components of the two
commutators through weight $u$. Hence, in either case,
coordinate preservation in~\textup{(i)} gives
\[
 \varphi\bigl([g_{h,j}^{(0)},g_{t,v}^{(0)}]G_{u+1}^{(0)}\bigr)
 =[g_{h,j}^{(1)},g_{t,v}^{(1)}]G_{u+1}^{(1)}.
\]
Since $g^{-1}hg=h[h,g]$ and $\varphi$ sends each generating coset
to its counterpart, it follows that
\[
\varphi\bigl((g_{t,v}^{(0)})^{-1}g_{h,j}^{(0)}
                 g_{t,v}^{(0)}G_{u+1}^{(0)}\bigr)=(g_{t,v}^{(1)})^{-1}g_{h,j}^{(1)}
                 g_{t,v}^{(1)}G_{u+1}^{(1)}.
\]
The two compositions are homomorphisms and agree on every generating
coset, so they agree on the whole quotient. This proves
\eqref{p12:eq:sims-conjugation-compatibility}.
\end{proof}

\paragraph{Corresponding Sims words and deeper remainders.}
Under the hypotheses of \cref{p12:prop:sims-collection-rules}, consider
$k\in[c]$ and a pair $(g^{(0)},g^{(1)})$ matched below excess
$\xi$ in degree $k$.
Let $e(t,v)$ be the coordinates of $g^{(0)}$ in its Sims normal form, and define
\[
 z:=\left(\prod_{t=k}^{c}\prod_{v=1}^{r_t}
       (g_{t,v}^{(1)})^{e(t,v)}\right)^{-1}g^{(1)}\in G^{(1)}.
\]
Then
\begin{equation}\label{p12:eq:matched-sims-word-decomposition}
 g^{(0)}=\prod_{t=k}^{c}\prod_{v=1}^{r_t}
       (g_{t,v}^{(0)})^{e(t,v)}~\text{and}~g^{(1)}=\left(\prod_{t=k}^{c}\prod_{v=1}^{r_t}
       (g_{t,v}^{(1)})^{e(t,v)}\right)z.
\end{equation}
Fix $u$ with $k\le u\le c$ and $u<k+\xi$.
By~\cref{p12:prop:sims-collection-rules} \textup{(i)}, there is a
coordinate-preserving isomorphism
$\varphi_{k,u}:G_k^{(0)}/G_{u+1}^{(0)}\to G_k^{(1)}/G_{u+1}^{(1)}$.
Factors of weight greater than $u$ vanish in these quotients.
By~\eqref{p12:eq:varphi_action} and the matching assumption on
$(g^{(0)},g^{(1)})$, we have
\[
 \left(\prod_{t=k}^{c}\prod_{v=1}^{r_t}(g_{t,v}^{(1)})^{e(t,v)}\right)G_{u+1}^{(1)}=\varphi_{k,u}\bigl(g^{(0)}G_{u+1}^{(0)}\bigr)=g^{(1)}G_{u+1}^{(1)}.
\]
The definition of $z$ therefore gives $z\in G_{u+1}^{(1)}$ for every
$k\le u\le c$ with $u<k+\xi$. If $k+\xi\le c$, take
$u=k+\xi-1$; if $k+\xi>c$, take $u=c$ and use
$G_{c+1}^{(1)}=1$. In either case, $z\in G_{k+\xi}^{(1)}$.

\Cref{p12:prop:sims-collection-rules} shows that products and inverses of
matched pairs remain matched and determines their first possible differences.
\begin{corollary}\label{p12:cor:sims_collection_multiplication_inverse}
Let $G^{(0)}$, $G^{(1)}$ and $\xi\in[c-1]$ satisfy the hypotheses
of~\cref{p12:prop:sims-collection-rules}; in particular, the commutators
of corresponding Sims generators agree below excess $\xi$.
For $i\in[c]$, $a=1,2$ and $\nu\in\{0,1\}$, let
$g_a^{(\nu)}\in G_i^{(\nu)}$. Suppose that each pair
$(g_a^{(0)},g_a^{(1)})$ is matched below excess $\xi$ in degree $i$.
Then the componentwise product and inverse pairs, respectively,
\[
 \bigl(g_1^{(0)}g_2^{(0)},g_1^{(1)}g_2^{(1)}\bigr)
 \quad\text{and}\quad
 \bigl((g_1^{(0)})^{-1},(g_1^{(1)})^{-1}\bigr),
\]
are matched below excess $\xi$ in degree $i$. More precisely,
\[
 \pi_u(g_1^{(0)}g_2^{(0)})=\pi_u(g_1^{(1)}g_2^{(1)}),\qquad
 \pi_u((g_1^{(0)})^{-1})=\pi_u((g_1^{(1)})^{-1})
\]
for any $i\le u\le c$ with $u<i+\xi$. Moreover, when $i+\xi\le c$,
\[
\begin{aligned}
 \partial_{i,\xi}\bigl(g_1^{(0)}g_2^{(0)},g_1^{(1)}g_2^{(1)}\bigr)
 &=\partial_{i,\xi}\bigl(g_1^{(0)},g_1^{(1)}\bigr)
     +\partial_{i,\xi}\bigl(g_2^{(0)},g_2^{(1)}\bigr),\\
 \partial_{i,\xi}\bigl((g_1^{(0)})^{-1},(g_1^{(1)})^{-1}\bigr)
 &=-\partial_{i,\xi}\bigl(g_1^{(0)},g_1^{(1)}\bigr).
\end{aligned}
\]
\end{corollary}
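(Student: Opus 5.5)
The plan is to reduce everything to the coordinate-preserving isomorphisms $\varphi_{i,u}$ of \cref{p12:prop:sims-collection-rules}(i), taking $k=i$ and $i\le u\le\min\{c,i+\xi\}$. Note that $u\le i+\xi$ is allowed there, so $u=i+\xi$ is in range whenever $i+\xi\le c$. Write each matched element through the decomposition \eqref{p12:eq:matched-sims-word-decomposition}: $g_a^{(0)}=w_a^{(0)}$ and $g_a^{(1)}=w_a^{(1)}z_a$. Here $w_a^{(\nu)}$ is the Sims word with the common coordinates $e_a(t,v)$ of $g_a^{(0)}$, evaluated in $G^{(\nu)}$, and $z_a\in G_{i+\xi}^{(1)}$.

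The first step is matching below excess $\xi$. Fix $u$ with $i\le u<i+\xi$ and $u\le c$. Since $z_a\in G_{i+\xi}^{(1)}\le G_{u+1}^{(1)}$, we get $\varphi_{i,u}(g_a^{(0)}G_{u+1}^{(0)})=g_a^{(1)}G_{u+1}^{(1)}$. Because $\varphi_{i,u}$ is a group isomorphism, it sends the coset of $g_1^{(0)}g_2^{(0)}$ to that of $g_1^{(1)}g_2^{(1)}$, and the coset of $(g_1^{(0)})^{-1}$ to that of $(g_1^{(1)})^{-1}$. Coordinate preservation \eqref{p12:eq:varphi_action}, together with uniqueness of Sims normal forms in the quotient, then shows that the weight-$t$ coordinates agree for $i\le t\le u$. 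Taking $t=u$ gives $\pi_u$ equality. This settles matching, including the case $i+\xi>c$, where all $u\le c$ are covered.

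For the first possible difference, assume $i+\xi\le c$ and use $\varphi:=\varphi_{i,i+\xi}$, working in $G_i^{(1)}/G_{i+\xi+1}^{(1)}$. There $\varphi(g_a^{(0)})=w_a^{(1)}$ modulo $G_{i+\xi+1}^{(1)}$. The coset of $z_a$ is central modulo $G_{i+\xi+1}^{(1)}$, because $[G_{i+\xi},G_i]\le G_{2i+\xi}\le G_{i+\xi+1}$. Its image in $L_{i+\xi}$ is $\pi_{i+\xi}(z_a)$.

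The key claim is $\partial_{i,\xi}(g_a^{(0)},g_a^{(1)})=\pi_{i+\xi}(z_a)$. To see this, write $w_a^{(1)}=y\,x$, where $x$ is the weight-$(i+\xi)$ block and $y$ collects the blocks of weight $<i+\xi$. Then $g_a^{(1)}\equiv y\,(xz_a)$ modulo $G_{i+\xi+1}^{(1)}$. The factor $xz_a$ lies in $G_{i+\xi}^{(1)}$, and its weight-$(i+\xi)$ coordinates are those of $\pi_{i+\xi}(x)+\pi_{i+\xi}(z_a)$, since $L_{i+\xi}$ is abelian and the coordinate addition rule is common to both groups by \eqref{p12:eq:common-layer-identification}. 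By uniqueness of the normal form, $\pi_{i+\xi}(g_a^{(1)})=\pi_{i+\xi}(g_a^{(0)})+\pi_{i+\xi}(z_a)$.

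The additivity formulas then follow. For the product,
\[
g_1^{(1)}g_2^{(1)}\equiv w_1^{(1)}w_2^{(1)}\,z_1z_2 \pmod{G_{i+\xi+1}^{(1)}},
\]
using centrality of the $z_a$. Moreover, $w_1^{(1)}w_2^{(1)}\equiv\varphi(g_1^{(0)}g_2^{(0)})$, which is the Sims word with the coordinates of $g_1^{(0)}g_2^{(0)}$. Applying the key claim to the pair $(g_1^{(0)}g_2^{(0)},g_1^{(1)}g_2^{(1)})$ with remainder $z_1z_2$, whose $\pi_{i+\xi}$ is additive because $z_1z_2\in G_{i+\xi}^{(1)}$ and $L_{i+\xi}$ is a group, gives the sum formula. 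For the inverse,
\[
(g_1^{(1)})^{-1}\equiv (w_1^{(1)})^{-1}z_1^{-1},
\]
with $(w_1^{(1)})^{-1}\equiv\varphi((g_1^{(0)})^{-1})$, which gives the negation formula.

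The main obstacle is the claim that the coset $w^{(1)}z$, with $z$ central of weight $i+\xi$, has top coordinate $\pi_{i+\xi}(w^{(1)})+\pi_{i+\xi}(z)$. The point to check is that the remainder of re-normalizing a product lands in $G_{i+\xi+1}$ and does not disturb lower-weight coordinates; this holds because $z$ is central modulo $G_{i+\xi+1}$ and only the weight-$(i+\xi)$ block absorbs it.
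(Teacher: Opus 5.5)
Your proposal is correct and follows essentially the same route as the paper. You get matching from the coordinate-preserving isomorphisms $\varphi_{i,u}$ for $u<i+\xi$, and the additivity formulas from $\varphi_{i,i+\xi}$ together with the remainders $z_a$, which are central modulo $G_{i+\xi+1}^{(1)}$; your ``key claim'' spells out the prefix-cancellation step that the paper states in a single sentence.
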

\begin{proof}
Fix $i\in[c]$.  We prove that the product
and inverse pairs are matched below excess $\xi$.
Fix $u$ with $i\le u\le c$ and $u<i+\xi$.
\Cref{p12:prop:sims-collection-rules}\textup{(i)} gives an isomorphism
$\varphi_{i,u}:G_i^{(0)}/G_{u+1}^{(0)}\to
 G_i^{(1)}/G_{u+1}^{(1)}$
that preserves coordinates as in~\eqref{p12:eq:varphi_action}.

The layer matching assumption gives agreement through
weight $u$, so
\[
 \varphi_{i,u}\bigl(g_a^{(0)}G_{u+1}^{(0)}\bigr)
 =g_a^{(1)}G_{u+1}^{(1)}
 \qquad(a=1,2).
\]
Since $\varphi_{i,u}$ preserves products and inverses, we have
\[
 \varphi_{i,u}\bigl(g_1^{(0)}g_2^{(0)}G_{u+1}^{(0)}\bigr)
 =g_1^{(1)}g_2^{(1)}G_{u+1}^{(1)}~\text{and}~
 \varphi_{i,u}\bigl((g_1^{(0)})^{-1}G_{u+1}^{(0)}\bigr)
 =(g_1^{(1)})^{-1}G_{u+1}^{(1)}.
\]
By coordinate preservation of $\varphi_{i,u}$, the product and inverse pairs agree
in their $L_u$-components for every $i\le u\le c$ with $u<i+\xi$.

Suppose $i+\xi\le c$. We compute the first possible differences of the product and inverse pairs
at weight $i+\xi$. Applying~\cref{p12:prop:sims-collection-rules}\textup{(i)}
at this cutoff gives an isomorphism
$\varphi_{i,i+\xi}:G_i^{(0)}/G_{i+\xi+1}^{(0)}\to
 G_i^{(1)}/G_{i+\xi+1}^{(1)}$ that again preserves coordinates.

For $a=1,2$, define the following elements of
$G_i^{(1)}/G_{i+\xi+1}^{(1)}$:
\[
 h_a:=\varphi_{i,i+\xi}\bigl(g_a^{(0)}G_{i+\xi+1}^{(0)}\bigr),
 \qquad
 z_a:=h_a^{-1}\bigl(g_a^{(1)}G_{i+\xi+1}^{(1)}\bigr).
\]
By~\eqref{p12:eq:varphi_action}, $h_a$ is the coset of the corresponding
Sims word in~\eqref{p12:eq:matched-sims-word-decomposition}.
The preceding remainder argument with $k=i$ gives
$z_a\in G_{i+\xi}^{(1)}/G_{i+\xi+1}^{(1)}$.
Canceling the common Sims prefix below weight $i+\xi$ leaves
only the quotient of their weight-$(i+\xi)$ blocks, which may differ.

Under the identification of
$G_{i+\xi}^{(1)}/G_{i+\xi+1}^{(1)}$ with the additive group
$L_{i+\xi}$, the image of $z_a$ is
\[
 \pi_{i+\xi}(g_a^{(1)})-\pi_{i+\xi}(g_a^{(0)})
 =\partial_{i,\xi}\bigl(g_a^{(0)},g_a^{(1)}\bigr).
\]

Note that $G_{i+\xi}^{(1)}/G_{i+\xi+1}^{(1)}$ is central in $G_{i}^{(1)}/G_{i+\xi+1}^{(1)}$, since
$[G_{i+\xi}^{(1)},G^{(1)}]\le G_{i+\xi+1}^{(1)}$. Thus products of elements in $G_{i+\xi}^{(1)}/G_{i+\xi+1}^{(1)}$ and $G_{i}^{(1)}/G_{i+\xi+1}^{(1)}$ commute.
We obtain
\[
 \begin{aligned}
 g_1^{(1)}g_2^{(1)}G_{i+\xi+1}^{(1)}&=h_1z_1h_2z_2
  =h_1h_2z_1z_2=\varphi_{i,i+\xi}(g_1^{(0)}g_2^{(0)}G_{i+\xi+1}^{(0)})z_1z_2,\\
 (g_1^{(1)})^{-1}G_{i+\xi+1}^{(1)}
 &=h_1^{-1}z_1^{-1}=\varphi_{i,i+\xi}((g_1^{(0)})^{-1}G_{i+\xi+1}^{(0)})z_1^{-1}.
 \end{aligned}
\]
The factors $z_1z_2$ and $z_1^{-1}$ change only the weight-$(i+\xi)$
component of the product and the inverse, respectively. Under the
additive identification with $L_{i+\xi}$, the images of $z_1z_2$ and
$z_1^{-1}$ are, respectively, the sum of the images of $z_1,z_2$ and
the negative of the image of $z_1$. Substituting these identified
images proves
the two identities for $\partial_{i,\xi}$.
\end{proof}

We next show that commutators preserve matching: the commutator of a
pair matched below excess $\xi$ in degree $i$ and a pair matched below
excess $\xi$ in degree $j$ is matched below excess $\xi$ in degree
$i+j$ when $i+j\le c$; when $i+j>c$, both commutators are trivial.
\begin{proposition}[Commutators of matched pairs]
\label{p12:prop:collection-monotonicity}
Let $G^{(0)}$, $G^{(1)}$ and $\xi\in[c-1]$ satisfy the hypotheses
of~\cref{p12:prop:sims-collection-rules}; in particular, the commutators
of corresponding Sims generators agree below excess $\xi$.
For any $i,j\in[c]$, let $g_1^{(\nu)}\in G_i^{(\nu)}$
and $g_2^{(\nu)}\in G_j^{(\nu)}$ for $\nu=0,1$.
Suppose that $(g_1^{(0)},g_1^{(1)})$ is matched below excess $\xi$
in degree $i$ and $(g_2^{(0)},g_2^{(1)})$ is matched below excess
$\xi$ in degree $j$.
Then, when $i+j\le c$,
$\bigl([g_1^{(0)},g_2^{(0)}],[g_1^{(1)},g_2^{(1)}]\bigr)$ is matched below excess
$\xi$ in degree $i+j$. More precisely,
\[
 \pi_u([g_1^{(0)},g_2^{(0)}])
 =\pi_u([g_1^{(1)},g_2^{(1)}])
\]
for any $i+j\le u\le c$ with $u<i+j+\xi$. If $i+j>c$, then $[g_1^{(\nu)},g_2^{(\nu)}]=1$ for $\nu=0,1$.
\end{proposition}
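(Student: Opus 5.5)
The case $i+j>c$ is immediate: $[G_i^{(\nu)},G_j^{(\nu)}]\le G_{i+j}^{(\nu)}=1$. So assume $i+j\le c$, and fix $u$ with $i+j\le u\le c$ and $u<i+j+\xi$. The aim is to produce a single coordinate-preserving isomorphism under which both commutators correspond.

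We use $\varphi_{k,u}$ with $k=\min\{i,j\}$. Say $i\le j$, so $k=i$. Since $u<i+j+\xi$, the constraint $u\le k+\xi$ of \cref{p12:prop:sims-collection-rules} may fail, so we cannot apply it to $G_i^{(\nu)}/G_{u+1}^{(\nu)}$ directly. Instead we only work modulo the cutoffs where the matching of $g_1$ and $g_2$ is actually available. The plan is to expand each element by \eqref{p12:eq:matched-sims-word-decomposition}:
\[
g_1^{(1)}=w_1^{(1)}z_1,\qquad g_2^{(1)}=w_2^{(1)}z_2,
\]
where $w_a^{(1)}$ is the Sims word with the same coordinates as $g_a^{(0)}$, and $z_1\in G_{i+\xi}^{(1)}$, $z_2\in G_{j+\xi}^{(1)}$. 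Commutator identities give $[w_1z_1,w_2z_2]\equiv[w_1,w_2]$ modulo $[G_{i+\xi},G_j][G_i,G_{j+\xi}]\le G_{i+j+\xi}$. Since $u<i+j+\xi$, it therefore suffices to compare $[w_1^{(0)},w_2^{(0)}]$ and $[w_1^{(1)},w_2^{(1)}]$, which are the commutators of corresponding Sims words with identical exponents, modulo $G_{u+1}$.

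For corresponding Sims words, we expand the commutator by repeated use of $[xy,z]=[x,z]^y[y,z]$ and $[x,yz]=[x,z][x,y]^z$. This writes $[w_1,w_2]$ as a fixed formal product, the same for $\nu=0,1$, of conjugates of commutators $[g_{h_1,a},g_{h_2,b}]^{\pm1}$ by generators, with $h_1\ge i$ and $h_2\ge j$. Each such generator commutator lies in $G_{h_1+h_2}$. By hypothesis its components agree at excesses $<\xi$, that is, in layers $<h_1+h_2+\xi$, and this exceeds $u$. So each such pair is matched below excess $\xi' $ with $h_1+h_2+\xi'>u$. We then transport these through conjugations and products within $G_{i+j}/G_{u+1}$. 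To do this we invoke \cref{p12:prop:sims-collection-rules} with $k=i+j$, which is legitimate because $u<i+j+\xi$ means $u\le k+\xi$. Part (ii) handles conjugation by generators, and part (i), via \cref{p12:cor:sims_collection_multiplication_inverse}, handles products and inverses. Coordinate preservation then yields equality of the $\pi_u$ components.

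One technical point needs care. Generator commutators with $h_1+h_2>i+j$ lie in $G_{i+j}$ but in a deeper layer. Their matching must be read in degree $i+j$: they are matched below excess $\xi$ in degree $i+j$ because agreement holds through layer $h_1+h_2+\xi-1\ge u$. To keep the expansion finite and uniform in $\nu$, I would induct on word length, peeling one generator power at a time from $w_1$ and then from $w_2$.

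The main obstacle is setting up this expansion cleanly. Conjugating elements $y$ that are partial words in $G_i$ or $G_j$ must be reduced to iterated conjugation by single generators, and at every stage we must check that all intermediate factors lie in $G_{i+j}$, so that \cref{p12:prop:sims-collection-rules}(ii) applies with $k=i+j$.
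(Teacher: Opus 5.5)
Your proposal is correct and is essentially the paper's argument. You strip the remainders $z_1\in G^{(1)}_{i+\xi}$ and $z_2\in G^{(1)}_{j+\xi}$ modulo $G^{(1)}_{u+1}$, expand $[w_1,w_2]$ into conjugates of generator commutators by generator words, and transport everything through $\varphi_{i+j,u}$ using parts (i) and (ii) of \cref{p12:prop:sims-collection-rules}; the opening idea of using $\varphi_{\min\{i,j\},u}$ is unnecessary (as you yourself note, it is not available) and can simply be dropped.
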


\begin{proof}
If $i+j>c$, then
$[g_1^{(\nu)},g_2^{(\nu)}]\in G_{i+j}^{(\nu)}=1$ for $\nu=0,1$.
Suppose that $i+j\le c$.

Apply~\eqref{p12:eq:matched-sims-word-decomposition} first to $(g_1^{(0)},g_1^{(1)})$ with
$k=i$, and then to $(g_2^{(0)},g_2^{(1)})$ with $k=j$.
Denote the corresponding Sims words by $w_a^{(\nu)}$ and the
remainders by $z_a$ as in~\eqref{p12:eq:matched-sims-word-decomposition}, so
that $g_a^{(0)}=w_a^{(0)}$ and $g_a^{(1)}=w_a^{(1)}z_a$ for $a=1,2$.
Here $w_a^{(0)}$ and $w_a^{(1)}$ use the same generator labels
and coordinates, namely those of $g_a^{(0)}$.

Fix a required weight $u$, so $i+j\le u\le c$ and $u<i+j+\xi$.
Since $z_1\in G_{i+\xi}^{(1)}$ and $z_2\in G_{j+\xi}^{(1)}$, we have 
$[z_1,G_j^{(1)}]\le G_{i+j+\xi}^{(1)}\le G_{u+1}^{(1)}$ and $[G_i^{(1)},z_2]\le G_{i+j+\xi}^{(1)}\le G_{u+1}^{(1)}$.
Thus, modulo $G_{u+1}^{(1)}$, $z_1$ commutes with every element
of $G_j^{(1)}$, and $z_2$ commutes with every element of
$G_i^{(1)}$.  Consequently,
\[
 [g_1^{(1)},g_2^{(1)}]G_{u+1}^{(1)}
 =[w_1^{(1)}z_1,w_2^{(1)}z_2]G_{u+1}^{(1)}
 =[w_1^{(1)},w_2^{(1)}]G_{u+1}^{(1)}.
\]

Now, it remains to compare $[w_1^{(0)},w_2^{(0)}]$ and
$[w_1^{(1)},w_2^{(1)}]$ through weight $u$.
Expand each positive power in $w_1^{(\nu)}$ and $w_2^{(\nu)}$
into individual generator factors, for $\nu=0,1$.  Apply
\[
 \begin{aligned}
 [g_1g_2,g_3]=g_2^{-1}[g_1,g_3]g_2\,[g_2,g_3],~\text{and}~[g_1,g_2g_3]=[g_1,g_3]g_3^{-1}[g_1,g_2]g_3
 \end{aligned}
\]
in the same order in both groups. Then $[w_1^{(0)},w_2^{(0)}]$ and
$[w_1^{(1)},w_2^{(1)}]$ are each written as a product of conjugates of generator commutators.
The conjugating words use the same generator labels in the same order in both groups.
We therefore only need to compare the generator commutators and
preserve this comparison under conjugation and multiplication.

For this fixed $u$, apply~\cref{p12:prop:sims-collection-rules}\textup{(i)}
with $k=i+j$
to obtain a coordinate-preserving isomorphism
$\varphi_{i+j,u}:G_{i+j}^{(0)}/G_{u+1}^{(0)}\to
G_{i+j}^{(1)}/G_{u+1}^{(1)}$.
For $h_1\ge i$, $h_2\ge j$ and indices
$a\in[r_{h_1}]$, $b\in[r_{h_2}]$, whenever $h_1+h_2\le t\le u$,
we have
$0\le t-h_1-h_2\le u-i-j<\xi$.
The components below weight $h_1+h_2$ vanish.
The generator-commutator hypothesis and coordinate preservation give
\[
 \varphi_{i+j,u}\bigl([g_{h_1,a}^{(0)},g_{h_2,b}^{(0)}]
                       G_{u+1}^{(0)}\bigr)
 =[g_{h_1,a}^{(1)},g_{h_2,b}^{(1)}]G_{u+1}^{(1)}.
\]
If $h_1+h_2>u$, both commutator cosets are trivial, so the same
equality holds.

By~\cref{p12:prop:sims-collection-rules}\textup{(ii)},
$\varphi_{i+j,u}$ respects
conjugation by corresponding Sims generators, including those
of weight below $i+j$, and hence by the corresponding words
in the expansion.  It therefore carries each conjugated
generator commutator to its counterpart.  Preservation of
products gives
\[
 \varphi_{i+j,u}\bigl([w_1^{(0)},w_2^{(0)}]G_{u+1}^{(0)}\bigr)
 =[w_1^{(1)},w_2^{(1)}]G_{u+1}^{(1)}.
\]
Coordinate preservation and the removal of the remainders now give
\[
 \pi_u([g_1^{(0)},g_2^{(0)}])=\pi_u([g_1^{(1)},g_2^{(1)}])
\]
for any $u$ with $i+j\le u\le c$ and $u<i+j+\xi$, which proves the proposition.
\end{proof}

Under the hypotheses of \cref{p12:prop:collection-monotonicity}, let
$\cF_i\subseteq G_i^{(0)}\times G_i^{(1)}$ be the set of pairs matched below excess $\xi$ in degree $i$,
for $i\in[c]$, and let $\cF_i=\{(1,1)\}$ for $i\ge c+1$. By~\cref{p12:cor:sims_collection_multiplication_inverse}, these are subgroups of the direct products. 
For $i<c$, a pair in $\cF_{i+1}$ is matched below excess $\xi$
in degree $i+1$, and both entries have zero $L_i$-component. Hence it
is also matched below excess $\xi$ in degree $i$. For $i=c$, the
inclusion $\cF_{c+1}\le\cF_c$ is immediate from
$\cF_{c+1}=\{(1,1)\}$. Thus
$\cF_{i+1}\le\cF_i$ for every $i\ge1$.

If $(g^{(0)},g^{(1)})\in\cF_i$ and
$(h^{(0)},h^{(1)})\in\cF_j$, their commutator $([g^{(0)},h^{(0)}],[g^{(1)},h^{(1)}])$ is matched below
excess $\xi$ in degree $i+j$ when $i+j\le c$, and it is trivial when
$i+j>c$, by \cref{p12:prop:collection-monotonicity}. Therefore, $[\cF_i,\cF_j]\le\cF_{i+j}$ for any $i,j\ge1$. 

The sequence $(\cF_i)$ is therefore a \emph{terminating $N$-series}:
a descending sequence satisfying the commutator condition proved
above and eventually equal to the trivial subgroup.
In particular, $\cF_{i+1}$ is normal in $\cF_i$ because
$[\cF_i,\cF_{i+1}]\le\cF_{2i+1}
\le\cF_{i+1}$. Moreover,
$[\cF_i,\cF_i]\le\cF_{2i}
\le\cF_{i+1}$, so each quotient
$\cF_i/\cF_{i+1}$ is abelian. This gives the associated graded Lie
ring $\gr\cF
 =\bigoplus_{i=1}^{c}\cF_i/\cF_{i+1}$ with Lie bracket $[\cdot,\cdot]_{\cF}$ defined by
\[
\bigl[(a^{(0)},a^{(1)})\cF_{i+1},
       (b^{(0)},b^{(1)})\cF_{j+1}\bigr]_{\cF}:=
 \bigl([a^{(0)},b^{(0)}],[a^{(1)},b^{(1)}]\bigr)
 \cF_{i+j+1}
\]
for any $(a^{(0)},a^{(1)})\cF_{i+1}\in\cF_{i}/\cF_{i+1}$ and $(b^{(0)},b^{(1)})\cF_{j+1}\in\cF_{j}/\cF_{j+1}$.

Recall that when $i+\xi\le c$, 
\cref{p12:cor:sims_collection_multiplication_inverse} proves that
\[
\begin{aligned}
 \partial_{i,\xi}\bigl(g_1^{(0)}g_2^{(0)},g_1^{(1)}g_2^{(1)}\bigr)
 &=\partial_{i,\xi}\bigl(g_1^{(0)},g_1^{(1)}\bigr)
   +\partial_{i,\xi}\bigl(g_2^{(0)},g_2^{(1)}\bigr),\\
 \partial_{i,\xi}\bigl((g_1^{(0)})^{-1},(g_1^{(1)})^{-1}\bigr)
 &=-\partial_{i,\xi}\bigl(g_1^{(0)},g_1^{(1)}\bigr).
\end{aligned}
\]
Thus, the first possible difference map
$\partial_{i,\xi}:\cF_i\to L_{i+\xi}$ as
in~\eqref{p12:eq:first-possible-difference} is a homomorphism.

If $x\in L_u$ has unique coordinates $e(u,v)$, with
$0\le e(u,v)<p^{a(u,v)}$, in the fixed Sims coordinate system, let $\widetilde{x}^{(\nu)}
 :=\prod_{v=1}^{r_u}(g_{u,v}^{(\nu)})^{e(u,v)}$
be its Sims lift in $G^{(\nu)}$, as in~\eqref{p12:eq:sims-lift}. We now use specific Sims lifts to compute the first possible
difference of two matched commutators.

Under the hypotheses of \cref{p12:prop:collection-monotonicity}, define
the Sims-lift commutator difference for homogeneous $x\in L_i$ and $y\in L_j$
by
\begin{equation}\label{p12:eq:sims_lift_commutator_difference}
 \delta_\xi(x,y)
 :=\pi_{i+j+\xi}\bigl([\widetilde{x}^{(1)},\widetilde{y}^{(1)}]\bigr)
   -\pi_{i+j+\xi}\bigl([\widetilde{x}^{(0)},\widetilde{y}^{(0)}]\bigr)
 \in L_{i+j+\xi}
\end{equation}
when $i+j+\xi\le c$, and set $\delta_\xi(x,y)=0\in L$ otherwise.

\begin{proposition}[Commutator comparison using Sims lifts]
\label{p12:prop:sims-lift-comparison}
Assume the hypotheses of \cref{p12:prop:collection-monotonicity} and
use the matched-pair subgroups $\cF_i$ defined above.
For $1\le i\le c-\xi$, there is an isomorphism of abelian groups
\begin{equation}\label{p12:eq:comparison-graded-identification}
 \cF_i/\cF_{i+1}\cong L_i\oplus L_{i+\xi},
\end{equation}
which sends the class of $(g^{(0)},g^{(1)})$ to
$\bigl(\pi_i(g^{(0)}),\partial_{i,\xi}(g^{(0)},g^{(1)})\bigr)$.

For $1\le i,j\le c$ with $i+j+\xi\le c$, let
$(g_1^{(0)},g_1^{(1)})\in\cF_i$ and
$(g_2^{(0)},g_2^{(1)})\in\cF_j$. Denote
\begin{equation}\label{p12:eq:defxyalphaeta}
\begin{aligned}
 x&=\pi_i(g_1^{(0)})=\pi_i(g_1^{(1)}),
 &\alpha&=\partial_{i,\xi}(g_1^{(0)},g_1^{(1)}),\\
 y&=\pi_j(g_2^{(0)})=\pi_j(g_2^{(1)}),
 &\eta&=\partial_{j,\xi}(g_2^{(0)},g_2^{(1)}).
\end{aligned}
\end{equation}
Then
\[
\partial_{i+j,\xi}\bigl([g_1^{(0)},g_2^{(0)}],
                           [g_1^{(1)},g_2^{(1)}]\bigr)=
 \delta_\xi(x,y)+[x,\eta]_L+[\alpha,y]_L.
\]
\end{proposition}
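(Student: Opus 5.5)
The proof has two parts: first the graded isomorphism \eqref{p12:eq:comparison-graded-identification}, then the commutator formula. Throughout I use the matched Sims-word decomposition \eqref{p12:eq:matched-sims-word-decomposition}, the remainder bound $z\in G_{k+\xi}^{(1)}$, and the homomorphism property of $\partial_{i,\xi}$ from \cref{p12:cor:sims_collection_multiplication_inverse}.

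\textbf{The graded isomorphism.} Define $\Psi:\cF_i\to L_i\oplus L_{i+\xi}$ by $(g^{(0)},g^{(1)})\mapsto(\pi_i(g^{(0)}),\partial_{i,\xi}(g^{(0)},g^{(1)}))$. The first coordinate is a homomorphism because $\pi_i$ is additive on $G_i^{(0)}/G_{i+1}^{(0)}$. The second is a homomorphism by \cref{p12:cor:sims_collection_multiplication_inverse}.

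For surjectivity, take $x\in L_i$ and $\alpha\in L_{i+\xi}$. The pair $(\widetilde x^{(0)},\widetilde x^{(1)}\widetilde\alpha^{(1)})$ lies in $\cF_i$ and maps to $(x,\alpha)$. When $\xi\ge1$, appending $\widetilde\alpha^{(1)}$ alters only the $L_{i+\xi}$-block of the Sims normal form, since $G_{i+\xi}/G_{i+\xi+1}$ is central in $G_i/G_{i+\xi+1}$.

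For the kernel, I must show $\ker\Psi=\cF_{i+1}$.
\begin{itemize}
\item A pair in $\cF_{i+1}$ has $\pi_i=0$. It is matched in degree $i+1$ through weight $i+\xi$, so its $\partial_{i,\xi}$ vanishes.
\item Conversely, a pair in $\ker\Psi$ has trivial $L_i$-components and matches in weights $i+1,\dots,i+\xi$. This is exactly matching below excess $\xi$ in degree $i+1$, provided one checks the edge case $i+1+\xi>c$.
\end{itemize}

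\textbf{The commutator formula.} Use the decompositions $g_1^{(0)}=w_1^{(0)}$, $g_1^{(1)}=w_1^{(1)}z_1$, and similarly for $g_2$. Here $z_1\in G_{i+\xi}^{(1)}$ and $z_2\in G_{j+\xi}^{(1)}$. Work modulo $G_{i+j+\xi+1}^{(1)}$ and expand with the commutator identities.
\begin{itemize}
\item Modulo this subgroup, $[w_1^{(1)}z_1,w_2^{(1)}z_2]=[w_1^{(1)},w_2^{(1)}]\,[z_1,w_2^{(1)}]\,[w_1^{(1)},z_2]$.
\item The cross terms are central, and $[z_1,z_2]$ lies in weight $\ge i+j+2\xi>i+j+\xi$.
\item The images of $z_1,z_2$ in $L_{i+\xi}$ and $L_{j+\xi}$ are $\alpha$ and $\eta$.
\item By the graded bracket identification, which is common to both groups, the correction terms contribute $[\alpha,y]_L+[x,\eta]_L$ in $L_{i+j+\xi}$.
\end{itemize}

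It remains to compare $[w_1^{(0)},w_2^{(0)}]$ with $[w_1^{(1)},w_2^{(1)}]$ at weight $i+j+\xi$. The words $w_a$ are Sims words with the full coordinates of $g_a^{(0)}$, not merely the Sims lifts of $x$ and $y$. Write $w_1^{(\nu)}=\widetilde x^{(\nu)}r_1^{(\nu)}$, where $r_1^{(\nu)}$ is the common-coordinate tail in $G_{i+1}^{(\nu)}$; treat $w_2$ the same way.

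I plan to argue by downward induction on the degree $i+j$, or via bilinearity of $\partial$ on $\gr\cF$. The pairs $(r_1^{(0)},r_1^{(1)})$ lie in $\cF_{i+1}$, and their $\partial_{i+1,\xi}$ is zero since the coordinates are identical. This should lead to
\[\partial_{i+j,\xi}\bigl([w_1^{(0)},w_2^{(0)}],[w_1^{(1)},w_2^{(1)}]\bigr)=\delta_\xi(x,y)+(\text{terms from higher-degree pairs with zero first differences}).\]

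\textbf{Main obstacle.} The hardest step is showing that the extra terms in this last comparison vanish. I would set up the expansion
\[[\widetilde x r_1,\widetilde y r_2]=[\widetilde x,\widetilde y]\cdot(\text{commutators involving }r_a)\]
modulo $G_{i+j+\xi+1}$. Each correction is a commutator of degree $>i+j$ among generator-word pairs that share coordinates. I would then prove that $\partial$ vanishes on commutators of such identical-coordinate words. The route is induction on $\xi$ and degree, reducing to generator commutators whose agreement below excess $\xi$ is assumed. The difference at excess exactly $\xi$ for a higher-degree commutator $[u,v]$ with $\deg u+\deg v>i+j$ sits at excess below $\xi$ relative to its own degree, so it vanishes. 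Bookkeeping the conjugations, via \cref{p12:prop:sims-collection-rules}(ii) applied with cutoff $u=i+j+\xi$ where it applies, is the delicate part.
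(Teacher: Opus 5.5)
Your proposal is correct in substance. The first half (the map $\Psi_i$, its kernel $\cF_{i+1}$, surjectivity via Sims lifts) is the paper's argument.

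For the commutator formula you take a more hands-on, group-level route. You first expand $[w_1^{(1)}z_1,w_2^{(1)}z_2]$ modulo $G^{(1)}_{i+j+\xi+1}$ to extract $[x,\eta]_L+[\alpha,y]_L$ from the remainders. You then separately reduce the full Sims words $w_a$ to the Sims lifts.

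The paper does both at once inside $\gr\cF$. Injectivity of $\Psi_i$ gives $(g_1^{(0)},g_1^{(1)})\cF_{i+1}=(\widetilde x^{(0)},\widetilde x^{(1)})(1,\widetilde\alpha^{(1)})\cF_{i+1}$, and likewise for $g_2$. Biadditivity of the bracket $[\cdot,\cdot]_{\cF}$ then splits the left side into four brackets, whose second components are $\delta_\xi(x,y)$, $[x,\eta]_L$, $[\alpha,y]_L$ and $0$.

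The step you flag as the main obstacle needs no new induction on $\xi$ and no appeal to \cref{p12:prop:sims-collection-rules}(ii). Note that $(w_1^{(0)},w_1^{(1)})\in(\widetilde x^{(0)},\widetilde x^{(1)})\cF_{i+1}$ and $(w_2^{(0)},w_2^{(1)})\in(\widetilde y^{(0)},\widetilde y^{(1)})\cF_{j+1}$. The $N$-series property $[\cF_a,\cF_b]\le\cF_{a+b}$, already derived from \cref{p12:prop:collection-monotonicity}, then shows the two commutator pairs differ by an element of $\cF_{i+j+1}$, conjugations included. Finally, $\partial_{i+j,\xi}$ is a homomorphism on $\cF_{i+j}$ that vanishes on $\cF_{i+j+1}$. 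This is exactly the well-definedness of the bracket on $\gr\cF$, which you mention only in passing.

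Your route makes explicit where the cross terms come from, but it needs two verifications you should state:
\begin{itemize}
\item $z_1$ and $z_2$ map to $\alpha$ and $\eta$ in $L_{i+\xi}$ and $L_{j+\xi}$, by the prefix-cancellation argument in the proof of \cref{p12:cor:sims_collection_multiplication_inverse};
\item $\pi_{i+j+\xi}(ac)=\pi_{i+j+\xi}(a)+\bar c$ for $a\in G^{(1)}_{i+j}$ and $c\in G^{(1)}_{i+j+\xi}$.
\end{itemize}
The paper's route needs no remainders at all, because the isomorphism of the first half already records $\alpha$ and $\eta$ as second components.
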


\begin{proof}
For $1\le i\le c-\xi$, the restriction of $\pi_i$ to $G_i^{(0)}$
is the quotient homomorphism to $L_i$, and
$\partial_{i,\xi}$ is a homomorphism by
\cref{p12:cor:sims_collection_multiplication_inverse}.
Define
\[
 \Psi_i:\cF_i\longrightarrow L_i\oplus L_{i+\xi},
 \qquad
 \Psi_i\bigl(g^{(0)},g^{(1)}\bigr)
 :=\bigl(\pi_i(g^{(0)}),
          \partial_{i,\xi}(g^{(0)},g^{(1)})\bigr).
\]
If a pair $(g^{(0)},g^{(1)})\in\cF_i$ lies in
$\ker\Psi_i$, then $\pi_i(g^{(0)})=0$, so
$g^{(0)}\in G_{i+1}^{(0)}$.  Since the pair is matched at weight
$i$, also $\pi_i(g^{(1)})=0$, and hence
$g^{(1)}\in G_{i+1}^{(1)}$.  The pair already agrees at every
weight $t$ with $i<t<i+\xi$, while
$\partial_{i,\xi}(g^{(0)},g^{(1)})=0$ adds agreement at weight
$i+\xi$.  It therefore belongs to $\cF_{i+1}$.
Conversely, every pair in $\cF_{i+1}$ has zero weight-$i$
components and agrees at weight $i+\xi$, so it lies in
$\ker\Psi_i$. Thus $\ker\Psi_i=\cF_{i+1}$.

The map $\Psi_i$ is also surjective. Indeed, for $x\in L_i$ and
$\alpha\in L_{i+\xi}$, the pairs
$(\widetilde x^{(0)},\widetilde x^{(1)})$ and
$(1,\widetilde\alpha^{(1)})$ belong to $\cF_i$. Since
$\Psi_i$ is a homomorphism, their product has image $(x,\alpha)$.
The first isomorphism theorem now proves
\eqref{p12:eq:comparison-graded-identification}.

Since $i+j+\xi\le c$, the identification
\eqref{p12:eq:comparison-graded-identification} applies in degrees $i$,
$j$ and $i+j$.  The two input classes satisfy
\begin{equation}\label{p12:eq:decomposition_g1_g2}
\begin{aligned}
 (g_1^{(0)},g_1^{(1)})\cF_{i+1}
 &=(\widetilde x^{(0)},\widetilde x^{(1)})
   (1,\widetilde\alpha^{(1)})\cF_{i+1},\\
 (g_2^{(0)},g_2^{(1)})\cF_{j+1}
 &=(\widetilde y^{(0)},\widetilde y^{(1)})
   (1,\widetilde\eta^{(1)})\cF_{j+1},
\end{aligned}
\end{equation}
because the two sides have the same images under
\eqref{p12:eq:comparison-graded-identification}.

Under~\eqref{p12:eq:comparison-graded-identification} in degree $i+j$,
the first possible difference on the left-hand side of the asserted formula is
the second component of the bracket class below. By biadditivity of
$[\cdot,\cdot]_{\cF}$ and~\eqref{p12:eq:decomposition_g1_g2},
\[
\begin{aligned}
&\bigl[(g_1^{(0)},g_1^{(1)})\cF_{i+1},
       (g_2^{(0)},g_2^{(1)})\cF_{j+1}\bigr]_{\cF}\\
={}&\bigl[(\widetilde x^{(0)},\widetilde x^{(1)})\cF_{i+1},
          (\widetilde y^{(0)},\widetilde y^{(1)})\cF_{j+1}\bigr]_{\cF}+\bigl[(\widetilde x^{(0)},\widetilde x^{(1)})\cF_{i+1},
          (1,\widetilde\eta^{(1)})\cF_{j+1}\bigr]_{\cF}\\
 &+\bigl[(1,\widetilde\alpha^{(1)})\cF_{i+1},
          (\widetilde y^{(0)},\widetilde y^{(1)})\cF_{j+1}\bigr]_{\cF}+\bigl[(1,\widetilde\alpha^{(1)})\cF_{i+1},
          (1,\widetilde\eta^{(1)})\cF_{j+1}\bigr]_{\cF}.
\end{aligned}
\]

By the definition of $[\cdot,\cdot]_{\cF}$, we have 
\[
 \bigl[(\widetilde x^{(0)},\widetilde x^{(1)})\cF_{i+1},
          (\widetilde y^{(0)},\widetilde y^{(1)})\cF_{j+1}\bigr]_{\cF}=\bigl([\widetilde x^{(0)},\widetilde y^{(0)}],
       [\widetilde x^{(1)},\widetilde y^{(1)}]\bigr)
 \cF_{i+j+1},
\]
whose second component in $L_{i+j}\oplus L_{i+j+\xi}$ is
\[
\partial_{i+j,\xi}
 \bigl([\widetilde x^{(0)},\widetilde y^{(0)}],
       [\widetilde x^{(1)},\widetilde y^{(1)}]\bigr)
 =
 \pi_{i+j+\xi}\bigl([\widetilde x^{(1)},\widetilde y^{(1)}]\bigr)
 -\pi_{i+j+\xi}\bigl([\widetilde x^{(0)},\widetilde y^{(0)}]\bigr).
\]

Meanwhile, we have
\[
\begin{aligned}
 \bigl[(\widetilde x^{(0)},\widetilde x^{(1)})\cF_{i+1},
          (1,\widetilde\eta^{(1)})\cF_{j+1}\bigr]_{\cF}
          &=\bigl(1,[\widetilde x^{(1)},\widetilde\eta^{(1)}]\bigr)\cF_{i+j+1}\\
          \bigl[(1,\widetilde\alpha^{(1)})\cF_{i+1},
                    (\widetilde y^{(0)},\widetilde y^{(1)})\cF_{j+1}\bigr]_{\cF}
                    &=\bigl(1,[\widetilde\alpha^{(1)},\widetilde y^{(1)}]\bigr)\cF_{i+j+1}, 
\end{aligned}
\]
whose second components in $L_{i+j}\oplus L_{i+j+\xi}$ are
\[
\begin{aligned}
 \partial_{i+j,\xi}\bigl(1,[\widetilde x^{(1)},\widetilde\eta^{(1)}]\bigr)
 &=\pi_{i+j+\xi}\bigl([\widetilde x^{(1)},\widetilde\eta^{(1)}]\bigr)
   =[x,\eta]_L,~\text{and}\\
 \partial_{i+j,\xi}\bigl(1,[\widetilde\alpha^{(1)},\widetilde y^{(1)}]\bigr)
 &=\pi_{i+j+\xi}\bigl([\widetilde\alpha^{(1)},\widetilde y^{(1)}]\bigr)
   =[\alpha,y]_L,
\end{aligned}
\]
respectively. 

Finally,
\[
 \bigl[(1,\widetilde\alpha^{(1)})\cF_{i+1},
       (1,\widetilde\eta^{(1)})\cF_{j+1}\bigr]_{\cF}
 =\bigl(1,[\widetilde\alpha^{(1)},\widetilde\eta^{(1)}]\bigr)
  \cF_{i+j+1}.
\]
Since $[\widetilde\alpha^{(1)},\widetilde\eta^{(1)}]
\in G_{i+j+2\xi}^{(1)}\le G_{i+j+\xi+1}^{(1)}$, the pair belongs to
$\cF_{i+j+1}$, and this bracket is zero in
$\cF_{i+j}/\cF_{i+j+1}$. Adding the four second
components and using~\eqref{p12:eq:sims_lift_commutator_difference}
proves the asserted formula.
\end{proof}
\begin{theorem}[Commutator differences as adjoint cocycles]
\label{p12:thm:first-difference}
Fix $1\le\xi\le c-2$, and let $G^{(0)}$ and $G^{(1)}$ be groups
equipped with ordered Sims generating sets having the fixed
presentation data and inducing the same graded bracket $[-,-]_L$. Assume that 
$[g_{h_1,a}^{(\nu)},g_{h_2,b}^{(\nu)}]$, for $h_1,h_2\in[c]$,
$a\in[r_{h_1}]$ and $b\in[r_{h_2}]$, have the same
$L_{h_1+h_2+e}$-components for $\nu=0,1$ and for every
$1\le e<\xi$ with $h_1+h_2+e\le c$.
Then the Sims-lift commutator difference defined
in~\eqref{p12:eq:sims_lift_commutator_difference} extends to a well-defined
alternating biadditive map $\delta_\xi:L\times L\rightarrow L$
with $\delta_\xi(L_i,L_j)\subseteq L_{i+j+\xi}$ whenever
$i+j+\xi\le c$.
Moreover, for all homogeneous $x,y,z\in L$, it satisfies the adjoint cocycle identity
\begin{equation}\label{p12:eq:cocycle}
[x,\delta_\xi(y,z)]_L+[y,\delta_\xi(z,x)]_L
      +[z,\delta_\xi(x,y)]_L=\delta_\xi([x,y]_L,z)+\delta_\xi([y,z]_L,x)+\delta_\xi([z,x]_L,y).
\end{equation}
We call $\delta_\xi$ an \emph{adjoint cocycle} for $[-,-]_L$ when
it satisfies this identity.
\end{theorem}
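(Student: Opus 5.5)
The hypotheses of \cref{p12:thm:first-difference} are exactly those of \cref{p12:prop:sims-collection-rules} for this $\xi$. The excess-$0$ agreement is supplied by the common graded bracket, and excess $1\le e<\xi$ is assumed. So the matched-pair subgroups $\cF_i$, the graded Lie ring $\gr\cF$ and the identification \eqref{p12:eq:comparison-graded-identification} are all available. The plan is to read $\delta_\xi$ off the bracket of $\gr\cF$ in the form given by \cref{p12:prop:sims-lift-comparison}. Alternation, biadditivity and the cocycle identity then become consequences of the Lie-ring axioms of $\gr\cF$.

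\emph{Biadditivity and well-definedness.} For $x,x'\in L_i$, the pair $(\widetilde{x+x'}^{(0)},\widetilde{x+x'}^{(1)})$ and the product pair $(\widetilde x^{(0)}\widetilde{x'}^{(0)},\widetilde x^{(1)}\widetilde{x'}^{(1)})$ both lie in $\cF_i$. By \eqref{p12:eq:comparison-graded-identification} they differ in $\cF_i/\cF_{i+1}$ only by a class $(1,\widetilde\gamma^{(1)})$ with $\gamma\in L_{i+\xi}$. Bracket each with $(\widetilde y^{(0)},\widetilde y^{(1)})$ and take the second component in degree $i+j$. By \cref{p12:prop:sims-lift-comparison}, the $\delta_\xi$ terms agree up to $[\gamma,y]_L$ on one side and $[\alpha,y]_L$ on the other, and these correction terms must cancel. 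The cleaner route is this: the second component of $[(\widetilde x^{(0)},\widetilde x^{(1)}),(\widetilde y^{(0)},\widetilde y^{(1)})]_\cF$ equals $\delta_\xi(x,y)$. So define the map $D\colon L_i\oplus L_{i+\xi}$ by $D((x,\alpha),(y,\eta))=\delta_\xi(x,y)+[x,\eta]_L+[\alpha,y]_L$. This $D$ is the second component of a biadditive bracket, and the last two terms are visibly biadditive. Restricting to $\alpha=\eta=0$ therefore gives biadditivity of $\delta_\xi$ on $L_i\times L_j$, and we extend to $L\times L$ by additivity over degrees. Since $\delta_\xi$ is defined as the value on the canonical Sims lifts, it is well defined. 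Alternation comes from $[a,a]_\cF=0$ applied to the lift pair of $x$: we get $D((x,0),(x,0))=\delta_\xi(x,x)=0$, and with biadditivity this gives antisymmetry.

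\emph{Cocycle identity.} Apply the Jacobi identity of $\gr\cF$ to the three classes $X=(x,0)$, $Y=(y,0)$, $Z=(z,0)$ of degrees $i,j,k$, with $i+j+k+\xi\le c$. Otherwise both sides vanish by the grading conventions. In the identification, $[X,Y]_\cF=([x,y]_L,\delta_\xi(x,y))$ lies in degree $i+j$. Bracketing with $Z$ and applying the formula of \cref{p12:prop:sims-lift-comparison} once more gives the second component
\[
\delta_\xi([x,y]_L,z)+[\delta_\xi(x,y),z]_L+[[x,y]_L,0]_L .
\]
Summing cyclically and using Jacobi in $\gr\cF$ yields $\sum_{\mathrm{cyc}}\delta_\xi([x,y]_L,z)+\sum_{\mathrm{cyc}}[\delta_\xi(x,y),z]_L=0$. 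Antisymmetry of $[-,-]_L$ then turns this into \eqref{p12:eq:cocycle}.

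The main obstacle is the bookkeeping in the second application of \cref{p12:prop:sims-lift-comparison}. That proposition is stated for actual pairs in $\cF_{i+j}$ and $\cF_k$, not for abstract classes. We therefore have to choose representatives: the commutator pair $([\widetilde x^{(0)},\widetilde y^{(0)}],[\widetilde x^{(1)},\widetilde y^{(1)}])\in\cF_{i+j}$, whose $\partial_{i+j,\xi}$ is $\delta_\xi(x,y)$ by definition. We must also check that the Jacobi identity in $\gr\cF$, which holds for the graded Lie ring of any $N$-series by the Hall--Witt identity, is compatible with the identification \eqref{p12:eq:comparison-graded-identification} in degree $i+j+k$.
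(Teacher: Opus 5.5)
Your proposal is correct and follows essentially the same route as the paper: check the hypotheses of \cref{p12:prop:sims-collection-rules}, transport $[-,-]_{\cF}$ through \eqref{p12:eq:comparison-graded-identification} to get $[(x,\alpha),(y,\eta)]_{\cF}=([x,y]_L,\delta_\xi(x,y)+[x,\eta]_L+[\alpha,y]_L)$, read off biadditivity and alternation at $\alpha=\eta=0$, and obtain \eqref{p12:eq:cocycle} from the second component of the Jacobi identity in $\gr\cF$ applied to $(x,0),(y,0),(z,0)$. (Your abandoned first attempt at biadditivity is unnecessary: by \cref{p12:cor:sims_collection_multiplication_inverse}, the product of the two Sims-lift pairs has zero first possible difference, so it already lies in the class of $(\widetilde{x+x'}^{(0)},\widetilde{x+x'}^{(1)})$, hence $\gamma=0$ and no correction terms arise.)
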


\begin{proof}
The common graded bracket identifies the excess-zero components of
every pair of corresponding Sims-generator commutators.  Together
with the assumed agreement at excesses $1\le e<\xi$, this verifies
the generator-commutator hypothesis of
\cref{p12:prop:sims-collection-rules}, and hence of
\cref{p12:prop:collection-monotonicity}. We may therefore use the
comparison $N$-series $(\cF_i)$ constructed above.
For each $x\in L_i$,
the Sims lifts $\widetilde x^{(0)}$ and $\widetilde x^{(1)}$ have the
same weight-$i$ coordinates and trivial blocks in all
other weights.  Consequently,
$(\widetilde x^{(0)},\widetilde x^{(1)})\in\cF_i$.

For $i+j+\xi\le c$, apply
\cref{p12:prop:sims-lift-comparison} in degrees $i$, $j$ and $i+j$.
Transporting $[-,-]_{\cF}$ through the identifications
\eqref{p12:eq:comparison-graded-identification} gives
\begin{equation}\label{p12:eq:comparison-bracket-formula}
 \bigl[(x,\alpha),(y,\eta)\bigr]_{\cF}
 =\bigl([x,y]_L,
        \delta_\xi(x,y)+[x,\eta]_L+[\alpha,y]_L\bigr),
\end{equation}
where $x\in L_i$, $\alpha\in L_{i+\xi}$, $y\in L_j$ and
$\eta\in L_{j+\xi}$ are defined as in~\eqref{p12:eq:defxyalphaeta}. In particular, $\delta_\xi(x,y)$ is the second
component of $[(x,0),(y,0)]_{\cF}$. The Sims lifts are uniquely
determined, so this map is well-defined. Since $x\mapsto(x,0)$ is
additive, alternation and biadditivity follow from the corresponding properties of
$[-,-]_{\cF}$. Extending over the direct sum, and using the
declared zero value when $i+j+\xi>c$, gives an alternating
biadditive map $\delta_\xi:L\times L\to L$.

Finally, let $x\in L_i$, $y\in L_j$ and $z\in L_k$.
Put $t=i+j+k+\xi$. If $t>c$, the grading rule and the definition
of $\delta_\xi$ make every term in~\eqref{p12:eq:cocycle} zero.
Suppose that $t\le c$, so every term belongs to $L_t$. Apply the Jacobi identity
in $\gr\cF$, in the form of
\eqref{p12:eq:graded-jacobi-explicit}, to $(x,0)$, $(y,0)$ and $(z,0)$.
Formula~\eqref{p12:eq:comparison-bracket-formula} gives
\[
 \bigl[(x,0),(y,0)\bigr]_{\cF}
 =\bigl([x,y]_L,\delta_\xi(x,y)\bigr).
\]
Apply the formula again with inputs
$\bigl([x,y]_L,\delta_\xi(x,y)\bigr)$ and $(z,0)$.  The second
component of their bracket, in $L_t$, is
\[
\delta_\xi([x,y]_L,z)+[[x,y]_L,0]_L
      +[\delta_\xi(x,y),z]_L=\delta_\xi([x,y]_L,z)+[\delta_\xi(x,y),z]_L.
\]
The two cyclic permutations of $(x,y,z)$ give the other two terms.
Consequently, the second component of Jacobi is
\[
0=\delta_\xi([x,y]_L,z)+[\delta_\xi(x,y),z]_L+\delta_\xi([y,z]_L,x)+[\delta_\xi(y,z),x]_L+\delta_\xi([z,x]_L,y)+[\delta_\xi(z,x),y]_L.
\]
Skew-symmetry of $[-,-]_L$ now gives~\eqref{p12:eq:cocycle}.
\end{proof}


\subsection{Deeper commutator coefficients and the fixed-breadth relation bound}
\label{p12:subsec:positive-excess-comparison}

We are now ready to count the deeper components of the selected
relations, keeping the presentation data and one graded Lie bracket
from~\cref{p12:subsec:lie-bracket-count} fixed. The positive-excess components in
\eqref{p12:eq:selected-excess-component}, encoded by the coefficients
$c(i,j,k,u,v)$, have the following ranges:
\[
\begin{array}{c|c|c|c}
 \text{source weight }i&\text{selected relation indices}&
 \text{target weight }u&\text{coordinate index }v\\ \hline
 1&1\le j<k\le r_1&3\le u\le c&v\in[r_u]\\
 2&1\le k\le s,\ d_{k-1}<j\le r_2&4\le u\le c&v\in[r_u]\\
 3\le i\le c-2&1\le j\le r_i,\ 1\le k\le s&i+2\le u\le c&v\in[r_u]
\end{array}
\]

We first treat the selected source-weight-two relations separately.
For $j\in[r_2]$, write $y_j:=\bar g_{2,j}$.

\begin{lemma}[Recovery in the second layer]
\label{p12:lem:triangular-second-layer-recovery}
Assume the hypotheses of~\cref{p12:thm:first-difference}. Suppose that
\[
 \delta_\xi(C_{\core},C_{\core})=0,
 \qquad
 \delta_\xi(y_j,u_k)=0
 \quad(k\in[s],\ d_{k-1}<j\le r_2).
\]
Then $\delta_\xi(L_2,C_{\core})=0$.
\end{lemma}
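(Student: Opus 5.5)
The plan is to prove, by induction on $k\in[s]$, the stronger statement
\[
 \delta_\xi(L_2,u_k)=0 .
\]
Since $C_{\core}=\langle u_1,\dots,u_s\rangle$ and $\delta_\xi$ is biadditive by \cref{p12:thm:first-difference}, this gives $\delta_\xi(L_2,C_{\core})=0$. For fixed $k$, the set $S_k=\{y\in L_2:\delta_\xi(y,u_k)=0\}$ is a subgroup of $L_2$, by additivity of $\delta_\xi$ in the first variable. The elements $y_1,\dots,y_{r_2}$ generate $L_2$, because the $g_{2,j}G_3$ form a generating set of $G_2/G_3$. So it suffices to show $y_j\in S_k$ for every $j\in[r_2]$.

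For the base case $k=1$ we have $d_0=0$. Every $j\in[r_2]$ then satisfies $d_0<j$, so the hypothesis gives $\delta_\xi(y_j,u_1)=0$ directly. For the inductive step, fix $k\ge2$ and assume $\delta_\xi(L_2,u_a)=0$ for all $a<k$. The indices $j>d_{k-1}$ are covered by the hypothesis. It remains to treat $j\le d_{k-1}$.

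For such $j$, let $k'$ be the unique index with $d_{k'-1}<j\le d_{k'}$. Then $2\le k'\le k-1$, and by construction $y_j=\bar g_{2,j}=[u_a,u_{k'}]_L$ with $a=f(j)<k'$. This is an exact identity in $L_2$, not merely modulo $pL_2$, because $g_{2,j}$ is defined as the group commutator $[g_{1,f(j)},g_{1,k'}]$. I will apply the adjoint cocycle identity \eqref{p12:eq:cocycle} to $x=u_a$, $y=u_{k'}$, $z=u_k$. Every term on the left-hand side involves $\delta_\xi$ of two core elements, so it vanishes by $\delta_\xi(C_{\core},C_{\core})=0$. The identity then reads
\[
 \delta_\xi(y_j,u_k)=-\delta_\xi([u_{k'},u_k]_L,u_a)-\delta_\xi([u_k,u_a]_L,u_{k'}).
\]
Both brackets $[u_{k'},u_k]_L$ and $[u_k,u_a]_L$ lie in $L_2$, and both $a$ and $k'$ are less than $k$. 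The inductive hypothesis $\delta_\xi(L_2,u_a)=\delta_\xi(L_2,u_{k'})=0$ therefore kills both terms on the right, so $\delta_\xi(y_j,u_k)=0$ and the induction closes.

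The main obstacle I anticipated is that $[u_{k'},u_k]_L$ is controlled by the flag $W_k$ only modulo $pL_2$. A naive induction on $j$ would have to handle $p$-multiples and deeper terms. The plan sidesteps this by making the inductive statement about all of $L_2$ for each earlier core generator, rather than about individual $y_j$. The only other point to check is that \eqref{p12:eq:cocycle} is applicable to these homogeneous degree-one inputs. If $2+1+\xi>c$, every term in the identity is zero by the convention for $\delta_\xi$ and the claim is trivial.
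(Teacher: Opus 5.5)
Your proposal is correct and follows the paper's proof essentially step for step. Both argue by induction on $k$ with the inductive claim $\delta_\xi(L_2,u_k)=0$, take the base case from $d_0=0$, and handle $j\le d_{k-1}$ by applying the cocycle identity to $u_a,u_{k'},u_k$, killing both remaining terms by the induction hypothesis since $a,k'<k$; your treatment of the trivial case $3+\xi>c$ also matches the paper's.
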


\begin{proof}
If $\xi=c-2$, the conclusion holds by definition, since $3+\xi>c$.
Suppose that $\xi\le c-3$.
We prove $\delta_\xi(L_2,u_k)=0$ by induction on $k\in[s]$.
The elements $y_j$ generate $L_2$. For $j>d_{k-1}$,
$\delta_\xi(y_j,u_k)=0$ by assumption; this covers all $j$ when $k=1$, since
$d_0=0$. For $k>1$ and $j\le d_{k-1}$, choose $\ell<k$ with
$d_{\ell-1}<j\le d_\ell$ and put $a=f(j)<\ell$.
By the choice of the weight-two generators
in~\cref{p12:par:fixed-sims-data}, $y_j=[u_a,u_\ell]_L$.
Since $u_a,u_\ell,u_k\in C_{\core}$ and
$\delta_\xi(C_{\core},C_{\core})=0$,
\eqref{p12:eq:cocycle} gives
\[
 \delta_\xi(y_j,u_k)
 =-\delta_\xi([u_\ell,u_k]_L,u_a)
  -\delta_\xi([u_k,u_a]_L,u_\ell)=0.
\]
Here both terms vanish by the induction hypothesis, since
$a,\ell<k$ and $[u_\ell,u_k]_L,[u_k,u_a]_L\in L_2$.
Since the $u_k$ generate $C_{\core}$, biadditivity gives
$\delta_\xi(L_2,C_{\core})=0$.
\end{proof}

Denote $L_{\ge2}:=\bigoplus_{i=2}^{c}L_i$.

\begin{corollary}[Recovery at the current excess]
\label{p12:cor:selected-complete}
Suppose that $G^{(0)}$ and $G^{(1)}$ satisfy the hypotheses
of~\cref{p12:thm:first-difference}. Assume further that their
excess-$\xi$ components agree for every selected relation having a
core input. Then the commutator difference map $\delta_\xi$ defined
in~\eqref{p12:eq:sims_lift_commutator_difference} satisfies $\delta_\xi(L_{\ge2},L)=0$ and $\delta_\xi(L,C_{\core})=0$.
Moreover, $[\delta_\xi(x,y),h]_L=0$ for every
$x,y\in L_1$ and $h\in C_{\core}$.

In particular, among the selected excess-$\xi$ components, the two
groups can differ only in the $L_{\xi+2}$-components of the
first-layer relations $[g_{1,j}^{(\nu)},g_{1,k}^{(\nu)}]$, where $s<j<k\le r_1$ and $\nu=0,1$.
\end{corollary}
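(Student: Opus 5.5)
The plan is to translate the agreement hypotheses into vanishing statements for $\delta_\xi$ on generator pairs. Biadditivity and the cocycle identity \eqref{p12:eq:cocycle} then propagate the vanishing, following the pattern of \cref{p12:lem:triangular-second-layer-recovery} and \cref{p12:lem:relative-graded}.

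\textbf{Step 1: vanishing on selected core relations.} First I relate $\delta_\xi$ on generators to the recorded coefficients. For a selected relation $[g_{i,j},g_{1,k}]$ with $k\le s$ (a core input), the Sims lifts of $\bar g_{i,j}$ and $u_k$ are $g_{i,j}^{(\nu)}$ and $g_{1,k}^{(\nu)}$ themselves. Hence $\delta_\xi(\bar g_{i,j},u_k)$ is exactly the difference of the excess-$\xi$ components, and it vanishes by assumption. For first-layer pairs $j<k$ with $j\le s$, alternation gives the same conclusion, so $\delta_\xi(L_1,C_{\core})=0$ and in particular $\delta_\xi(C_{\core},C_{\core})=0$. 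For source weight two, the hypothesis covers exactly the pairs $(y_j,u_k)$ with $d_{k-1}<j$, and \cref{p12:lem:triangular-second-layer-recovery} upgrades this to $\delta_\xi(L_2,C_{\core})=0$. For $i\ge3$, every pair $(\bar g_{i,j},u_k)$ is selected, so $\delta_\xi(L_i,C_{\core})=0$. Together, $\delta_\xi(L,C_{\core})=0$.

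\textbf{Step 2: vanishing on $L_{\ge2}\times L$.} I argue by induction on the degree of the higher-layer input, mirroring \cref{p12:lem:relative-graded}(ii). Take $w\in L_j$ with $j\ge2$. Since $L_2=\langle[u_a,u_b]_L\rangle$ and $L_{j}=[L_{j-1},C_{\core}]_L$, I write $w=\sum[z_\alpha,h_\alpha]_L$ with $h_\alpha\in C_{\core}$, where $z_\alpha\in C_{\core}$ if $j=2$ and $z_\alpha\in L_{j-1}$ otherwise. For any homogeneous $x$, I apply \eqref{p12:eq:cocycle} to $(z_\alpha,h_\alpha,x)$ to express $\delta_\xi([z_\alpha,h_\alpha]_L,x)$. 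The right side contains $\delta_\xi([h_\alpha,x]_L,z_\alpha)$ and $\delta_\xi([x,z_\alpha]_L,h_\alpha)$; the latter vanishes by Step 1. The left side has terms $[z_\alpha,\delta_\xi(h_\alpha,x)]_L$, $[h_\alpha,\delta_\xi(x,z_\alpha)]_L$ and $[x,\delta_\xi(z_\alpha,h_\alpha)]_L$; the first and third vanish by Step 1.

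The remaining terms are $\delta_\xi([h_\alpha,x]_L,z_\alpha)$ and $[h_\alpha,\delta_\xi(x,z_\alpha)]_L$. They are controlled by an inner induction: I induct on $j$ and, for fixed $j$, on the degree of $x$, proving $\delta_\xi(L_j,L)=0$. When $j=2$, $z_\alpha\in C_{\core}$, so $\delta_\xi(x,z_\alpha)=0$ and $\delta_\xi([h_\alpha,x]_L,z_\alpha)=0$ by Step 1; hence $\delta_\xi(L_2,L)=0$. For $j\ge3$, we have $z_\alpha\in L_{j-1}$, and both $\delta_\xi(x,z_\alpha)$ and $\delta_\xi([h_\alpha,x]_L,z_\alpha)$ vanish by the induction hypothesis $\delta_\xi(L_{j-1},L)=0$. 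This gives $\delta_\xi(L_{\ge2},L)=0$.

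\textbf{Step 3: $[\delta_\xi(x,y),h]_L=0$ and the final claim.} For $x,y\in L_1$ and $h\in C_{\core}$, I apply \eqref{p12:eq:cocycle} to $(x,y,h)$. All $\delta_\xi$ terms with an $L_{\ge2}$ argument or a core argument vanish, leaving $[h,\delta_\xi(x,y)]_L=0$. The final claim follows from these vanishings. On generators, the remaining possibly nonzero values of $\delta_\xi$ are at pairs $(\bar g_{1,j},\bar g_{1,k})$ with $s<j<k$; each such value lies in $L_{2+\xi}$ and equals the difference of the recorded excess-$\xi$ components. All other selected components agree.

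The main obstacle is the bookkeeping in Step 2: keeping the generation decomposition of $L_j$ common to both groups (it is, since both induce the same bracket) and ordering the induction so that every cocycle term is covered.
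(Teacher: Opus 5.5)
Your proposal is correct and follows the paper's proof essentially step for step. The paper also uses generator-level vanishing together with \cref{p12:lem:triangular-second-layer-recovery} to get $\delta_\xi(L,C_{\core})=0$, then the same cocycle computation to reach $\delta_\xi(L_{\ge2},L)=0$; it inducts on the length of left-normed core Lie words, which is your induction on $j$ via $L_j=[L_{j-1},C_{\core}]_L$. It finishes with the same application of \eqref{p12:eq:cocycle} to $(x,y,h)$. Your inner induction on the degree of $x$ is never used and can be dropped, since the outer hypothesis $\delta_\xi(L_{j-1},L)=0$ already kills both remaining terms.
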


\begin{proof}
For every selected relation having a core input with $i+1+\xi\le c$,
the Sims-lift definition and the assumed component agreement give
\[
\delta_\xi(\bar g_{i,j},\bar g_{1,k})=\pi_{i+1+\xi}\bigl([g_{i,j}^{(1)},g_{1,k}^{(1)}]\bigr)-\pi_{i+1+\xi}\bigl([g_{i,j}^{(0)},g_{1,k}^{(0)}]\bigr)=0.
\]

For $j\in[r_1]$ and $k\in[s]$ with $j\ne k$, the first-layer
relation on these generators is selected after ordering their
indices. Alternation therefore gives
$\delta_\xi(\bar g_{1,j},u_k)=0$, including the case $j=k$.
Since the $\bar g_{1,j}$ generate $L_1$ and the $u_k$ generate
$C_{\core}$, biadditivity gives
$\delta_\xi(L_1,C_{\core})=0$.
For $3\le i\le c-\xi-1$, all relations corresponding to
$(\bar g_{i,j},u_k)$, with $j\in[r_i]$ and $k\in[s]$, are selected.
The same argument gives $\delta_\xi(L_i,C_{\core})=0$.
For $i+1+\xi>c$, the same vanishing holds by definition.

If $\xi\le c-3$, the selected source-weight-two relations give $\delta_\xi(y_j,u_k)=0$ for $k\in[s]$ and $d_{k-1}<j\le r_2$.
Together with $\delta_\xi(C_{\core},C_{\core})=0$,
\cref{p12:lem:triangular-second-layer-recovery} gives
$\delta_\xi(L_2,C_{\core})=0$.
If $\xi=c-2$, this vanishing holds by definition, since $3+\xi>c$.
Thus $\delta_\xi(L_i,C_{\core})=0$ for every $i\in[c]$.
Biadditivity and $L=\bigoplus_{i=1}^{c}L_i$ give
$\delta_\xi(L,C_{\core})=0$.

We next prove $\delta_\xi(w,z)=0$ for every left-normed core Lie word $w$
and homogeneous $z\in L$, by induction on the length of $w$.
For a single core letter, this follows from
$\delta_\xi(L,C_{\core})=0$ and alternation.
Write a longer word as $w=[w',h]_L$, with $h\in C_{\core}$.
Applying~\eqref{p12:eq:cocycle} to $w',h,z$ gives
\[
[w',\delta_\xi(h,z)]_L+[h,\delta_\xi(z,w')]_L
       +[z,\delta_\xi(w',h)]_L=\delta_\xi([w',h]_L,z)+\delta_\xi([h,z]_L,w')
       +\delta_\xi([z,w']_L,h).
\]
Since $h\in C_{\core}$ and
$\delta_\xi(L,C_{\core})=0$, we have $\delta_\xi(h,z)=-\delta_\xi(z,h)=0$, $\delta_\xi(w',h)=0$, and $\delta_\xi([z,w']_L,h)=0$.
Thus,
\[
 \delta_\xi([w',h]_L,z)
 =[h,\delta_\xi(z,w')]_L-\delta_\xi([h,z]_L,w').
\]
Now $w'$ is a shorter core Lie word. The induction hypothesis applies
to every homogeneous second argument, including $z$ and $[h,z]_L$.
Hence $\delta_\xi(w',z)=0$ and $\delta_\xi(w',[h,z]_L)=0$.
Therefore, we have $\delta_\xi([w',h]_L,z)=0$.

By~\eqref{p12:eq:sims-core-graded-generation}, left-normed core Lie words generate
every $L_i$ with $i\ge2$. Hence
$\delta_\xi(L_i,L)=0$ for $2\le i\le c$.

For any $x,y\in L_1$ and $h\in C_{\core}$,
applying~\eqref{p12:eq:cocycle} to $x,y,h$ gives
\[
[x,\delta_\xi(y,h)]_L
  +[y,\delta_\xi(h,x)]_L
  +[h,\delta_\xi(x,y)]_L= \delta_\xi([x,y]_L,h)
  +\delta_\xi([y,h]_L,x)
  +\delta_\xi([h,x]_L,y).
\]
Since $\delta_\xi(L_1,C_{\core})=0$, alternation gives
$\delta_\xi(y,h)=\delta_\xi(h,x)=0$, so the
first two terms on the left vanish. Moreover,
$x,y,h\in L_1$ implies
$[x,y]_L,[y,h]_L,
[h,x]_L\in L_2$. All three terms on the right
therefore vanish by $\delta_\xi(L_{\ge2},L)=0$.
Thus for any $x,y\in L_1$ and $h\in C_{\core}$,
\begin{equation}\label{p12:eq:first_layer_pairs}
 [h,\delta_\xi(x,y)]_L=[\delta_\xi(x,y),h]_L=0.
\end{equation}
Finally, every selected relation other than a first-layer pair with
$s<j<k\le r_1$ has a core input, so its excess-$\xi$ component agrees
by assumption. This proves the final assertion.
\end{proof}

We perform the count by increasing excess. Fix $1\le\xi\le c-2$, and
suppose that two groups $G^{(0)}$ and $G^{(1)}$ with the fixed
presentation data induce the fixed graded bracket and agree on all selected
components at positive excesses $1\le e<\xi$. Explicitly, assume that
\[
 \pi_{i+1+e}\bigl([g_{i,j}^{(0)},g_{1,k}^{(0)}]\bigr)
 =\pi_{i+1+e}\bigl([g_{i,j}^{(1)},g_{1,k}^{(1)}]\bigr)
\]
for every selected relation $[g_{i,j},g_{1,k}]$, with indices as in
the preceding table, and every $1\le e<\xi$ with $i+1+e\le c$. 
We first show that these agreements imply agreement of all corresponding
Sims-generator commutators of source weights $h_1,h_2$ at excess $e$
with $0\le e<\xi$ and $h_1+h_2+e\le c$.

\begin{proposition}[Agreement from selected relations]
\label{p12:prop:selected-agreement}
Fix $\xi\in[c-2]$. Let $G^{(0)}$ and $G^{(1)}$ be groups equipped
with ordered Sims generating sets having the fixed presentation data
and inducing the same graded bracket. Assume that the excess-$e$ components agree
for every selected relation and every $1\le e<\xi$. Then the differences
$\delta_e$ defined in~\eqref{p12:eq:sims_lift_commutator_difference} satisfy $\delta_e(L_i,C_{\core})=0$ for any $i\in[c]$ and $1\leq e<\xi$.
Furthermore, for any
$x\in L_i$ and $y\in L_j$, the commutators of their corresponding Sims
lifts have identical $L_{i+j+e}$-components whenever
$0\le e<\xi$ and $i+j+e\le c$.
\end{proposition}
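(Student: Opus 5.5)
We argue by induction on $\xi$, proving simultaneously the two assertions for all excesses $e<\xi$. For $\xi=1$ there is nothing to prove for $\delta_e$ (no $e$ with $1\le e<1$), and the second assertion at $e=0$ is exactly the statement that both groups induce the same graded bracket, since the $L_{i+j}$-component of $[\widetilde x^{(\nu)},\widetilde y^{(\nu)}]$ equals $[x,y]_L$. Suppose now $\xi\ge2$ and that the proposition holds with $\xi$ replaced by $\xi-1$; that is, $\delta_{e}(L_i,C_{\core})=0$ for $e<\xi-1$ and all Sims-lift commutators agree at excesses $0\le e<\xi-1$. In particular all Sims-generator commutators agree below excess $\xi-1$, which is the hypothesis of \cref{p12:thm:first-difference} with $\xi-1$ in place of $\xi$ (when $\xi-1\le c-2$). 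Hence $\delta_{\xi-1}$ is a well-defined alternating biadditive adjoint cocycle.

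Next we apply \cref{p12:cor:selected-complete} at excess $\xi-1$. By assumption the excess-$(\xi-1)$ components of \emph{every} selected relation agree, in particular those with a core input, so the corollary gives $\delta_{\xi-1}(L_{\ge2},L)=0$ and $\delta_{\xi-1}(L,C_{\core})=0$, which is the first assertion for $e=\xi-1$. For the second assertion at $e=\xi-1$ we still need $\delta_{\xi-1}(L_1,L_1)=0$. The corollary says the only possible remaining differences are the $L_{\xi+1}$-components of first-layer relations $[g_{1,j},g_{1,k}]$ with $s<j<k\le r_1$; but these are also selected relations (source weight $1$, $1\le j<k\le r_1$), so their excess-$(\xi-1)$ components agree by hypothesis. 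Thus $\delta_{\xi-1}(\bar g_{1,j},\bar g_{1,k})=0$ for all generator pairs, and biadditivity and alternation give $\delta_{\xi-1}(L_1,L_1)=0$. Hence $\delta_{\xi-1}\equiv0$ on $L\times L$, which by definition \eqref{p12:eq:sims_lift_commutator_difference} says that for homogeneous $x\in L_i$, $y\in L_j$ with $i+j+\xi-1\le c$ the Sims-lift commutators have equal $L_{i+j+\xi-1}$-components. Combined with the induction hypothesis this completes the induction.

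The main point to check carefully is the bookkeeping of hypotheses: \cref{p12:thm:first-difference} requires agreement of \emph{all} Sims-generator commutators (not just selected ones) below the current excess, and this is exactly what the second assertion of the inductive hypothesis supplies, since Sims generators are Sims lifts of the $\bar g_{h,a}$. One must also handle the boundary cases where $\xi-1>c-2$ or $i+j+e>c$, where the relevant differences vanish by convention. I expect the only real obstacle to be making sure \cref{p12:cor:selected-complete} is invoked with the right excess and that its proof does not itself rely on the present proposition (it relies only on \cref{p12:thm:first-difference} and \cref{p12:lem:triangular-second-layer-recovery}), so there is no circularity.
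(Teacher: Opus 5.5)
Your proof is correct and matches the paper's argument. The paper also inducts on the excess; it fixes $\xi$ and inducts on $e<\xi$ rather than on $\xi$ itself, which is only a reindexing. It then obtains the hypotheses of the adjoint-cocycle theorem from agreement at smaller excesses, invokes the recovery corollary to get $\delta_e(L,C_{\core})=0$ and $\delta_e(L_{\ge2},L)=0$, and concludes $\delta_e(L_1,L_1)=0$, hence $\delta_e\equiv0$, because every first-layer relation (including the outside--outside ones) is selected.
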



\begin{proof}
We prove both conclusions by induction on the excess $e$. Agreement at
excess zero follows from the common graded bracket. Let $1\le e<\xi$,
and suppose that all Sims-lift commutator components agree at every excess smaller than $e$. In particular, they satisfy the hypotheses of~\cref{p12:thm:first-difference}
at excess $e$. Let $\delta_e$ be the difference defined
in~\eqref{p12:eq:sims_lift_commutator_difference}.

By~\cref{p12:cor:selected-complete} applied at excess $e$,
$\delta_e(L,C_{\core})=0$ and $\delta_e(L_{\ge2},L)=0$.

By~\eqref{p12:eq:sims-lift}, each Sims generator is the Sims lift of its
class in the corresponding layer. Agreement of the selected components
at excess $e$ gives $\delta_e(\bar g_{i,j},\bar g_{1,k})=0$
for every selected relation with $i+1+e\le c$.
Since every first-layer relation is selected, alternation and
biadditivity give $\delta_e(L_1,L_1)=0$.
Together with $\delta_e(L_{\ge2},L)=0$, this gives
$\delta_e(L,L)=0$. By its definition, all Sims-lift
commutators therefore agree at excess $e$, completing the induction.
\end{proof}

With all selected components at smaller positive excesses fixed, we
count and fix the excess-$\xi$ components of all selected relations
having a core input. For any two groups whose coefficient arrays satisfy these
choices, \cref{p12:prop:selected-agreement} gives agreement of all
corresponding Sims-generator commutators below excess $\xi$.
Hence \cref{p12:cor:selected-complete} gives
$\delta_\xi(L_1,C_{\core})=0$ and
$\delta_\xi(L_{\ge2},L)=0$. The remaining selected components are
the $L_{\xi+2}$-components of $[g_{1,j},g_{1,k}]$ with
$s<j<k\le r_1$.

We then have the following.

\begin{proposition}[One-excess estimate]
\label{p12:prop:one-excess}
Keep the presentation data fixed as above, fix one graded Lie bracket
with breadth sequence $\boldsymbol{\mathfrak b}=(\mathfrak b_i)_{i=2}^{c}$, and fix $1\le\xi\le c-2$. 

Suppose that the layer components of all selected relations have been
fixed at every positive excess $1\le e<\xi$. Among the coefficient
arrays arising from groups with these fixed data, components, and bracket,
fix also the excess-$\xi$ components of the first-layer relations
having a core input and of all selected relations of source weight
$2\le i\le c-\xi-1$. Then, for each $[g_{1,j},g_{1,k}]$ with $s<j<k\le r_1$, its
$L_{\xi+2}$-component takes at most
$p^{s_{\xi+2}-\mathfrak b_{\xi+2}}$ values among these arrays.
Consequently, before fixing any of the current excess-$\xi$
components, all selected excess-$\xi$ components have at most
\[
 p^{\binom{r_1}{2}s_{\xi+2}
    +s\sum_{i=2}^{c-\xi-1}r_i s_{i+\xi+1}
    -\binom{r_1-s}{2}\mathfrak b_{\xi+2}}
\]
possible choices.
\end{proposition}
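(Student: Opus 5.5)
The plan is to prove the conditional statement first by a kernel argument modelled on \cref{p12:prop:count_leading_maps}, and then to obtain the unconditional count by multiplying the number of choices for the core-input components by this conditional bound.

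\emph{Conditional bound.} Fix all the data described in the statement. Let $G^{(0)}$ and $G^{(1)}$ be two groups realizing it. By \cref{p12:prop:selected-agreement}, all corresponding Sims-generator commutators agree below excess $\xi$. Hence the hypotheses of \cref{p12:thm:first-difference} hold, and $\delta_\xi$ is defined.

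Since the excess-$\xi$ components of all selected relations with a core input agree, \cref{p12:cor:selected-complete} gives $[\delta_\xi(x,y),h]_L=0$ for all $x,y\in L_1$ and $h\in C_{\core}$.

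Now take an outside pair $s<j<k\le r_1$. Its Sims lifts are the generators themselves, so
\[
\delta_\xi(\bar g_{1,j},\bar g_{1,k})=\pi_{\xi+2}\bigl([g_{1,j}^{(1)},g_{1,k}^{(1)}]\bigr)-\pi_{\xi+2}\bigl([g_{1,j}^{(0)},g_{1,k}^{(0)}]\bigr)\in L_{\xi+2}.
\]
Apply the vanishing above with $h=h_{\xi+2}^\ast$. It shows this difference lies in $\ker\ad(h_{\xi+2}^\ast)\le L_{\xi+2}$.

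If $\xi+2=c$, the kernel is all of $L_c$, and $\mathfrak b_c=0$ makes the bound trivial. Otherwise, \eqref{p12:eq:adjoint-kernel-size} gives $|\ker\ad(h_{\xi+2}^\ast)|=p^{s_{\xi+2}-\mathfrak b_{\xi+2}}$. So all attainable values of this component lie in a single coset of the kernel, which has at most $p^{s_{\xi+2}-\mathfrak b_{\xi+2}}$ elements. This proves the conditional bound.

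\emph{Unconditional count.} I would count the excess-$\xi$ components in three groups and multiply.

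\begin{enumerate}[label=\textup{(\alph*)}]
\item First-layer relations with a core input: there are $\binom{r_1}{2}-\binom{r_1-s}{2}$ of them, each with a component in $L_{\xi+2}$, giving at most $p^{s_{\xi+2}}$ choices each.
\item Selected relations of source weight $i$ with $2\le i\le c-\xi-1$: their components lie in $L_{i+1+\xi}$, and there are at most $sr_i$ of them. For $i=2$ the selected indices form a subset of $[s]\times[r_2]$, so $sr_2$ still overcounts harmlessly. This gives at most $p^{s\sum_i r_i s_{i+\xi+1}}$ choices.
\item Once (a) and (b) are fixed, the $\binom{r_1-s}{2}$ outside pairs contribute at most $p^{s_{\xi+2}-\mathfrak b_{\xi+2}}$ each, by the conditional bound.
\end{enumerate}

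The exponents add to
\[
\binom{r_1}{2}s_{\xi+2}+s\sum_{i=2}^{c-\xi-1}r_is_{i+\xi+1}-\binom{r_1-s}{2}\mathfrak b_{\xi+2}.
\]

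\emph{Main obstacle.} The delicate point is the hypothesis of \cref{p12:cor:selected-complete}. It requires agreement for every selected relation having a core input, including those of source weight $i\ge c-\xi$. For such $i$ we have $i+1+\xi>c$, so there is no excess-$\xi$ component to agree on, and the corresponding $\delta_\xi$ values vanish by definition. I would check explicitly that these are exactly the relations not listed in the statement, so that fixing the listed components suffices.

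The use of a single maximizer $h^\ast_{\xi+2}\in C_{\core}$ matches the definition of breadth in \eqref{p12:eq:core-breadth}, and $\xi+2\ge3$ lies in the range covered by that definition.
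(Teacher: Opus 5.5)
Your proposal is correct and follows the paper's proof essentially step by step. Both arguments use \cref{p12:prop:selected-agreement}, \cref{p12:thm:first-difference} and \cref{p12:cor:selected-complete} to place each outside--outside difference $\delta_\xi(\bar g_{1,j},\bar g_{1,k})$ in $\ker\ad(h_{\xi+2}^\ast)$ relative to a fixed reference group, then multiply by the free counts for the core-input first-layer components and the source-weight $2\le i\le c-\xi-1$ components; your remark that relations with $i+1+\xi>c$ impose nothing matches how \cref{p12:cor:selected-complete} treats that range, namely vanishing by definition.
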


\begin{proof}
If no group with the fixed data realises the conditional choices, the
assertion is immediate. Otherwise, let $G^{(0)}$ and $G^{(1)}$ be any
two groups whose Sims coefficient arrays realise these choices. By~\cref{p12:prop:selected-agreement}, all corresponding
Sims-generator commutators agree below excess $\xi$, so
\cref{p12:thm:first-difference} applies. By~\cref{p12:cor:selected-complete},
their commutator difference map satisfies $\delta_\xi(L_{\ge2},L)=0$ and $\delta_\xi(L_1,C_{\core})=0$.

Choose a maximizing element $h_{\xi+2}^\ast\in C_{\core}$ as
in~\eqref{p12:eq:core-breadth}.  The map
$\ad(h_{\xi+2}^\ast):L_{\xi+2}\to L$ has image of order
$p^{\mathfrak b_{\xi+2}}$; when $\xi=c-2$, it is the zero map as $\mathfrak b_c=0$.
For $x,y\in L_1$,
\eqref{p12:eq:first_layer_pairs} gives
\[
 [h_{\xi+2}^\ast,\delta_\xi(x,y)]_L=[\delta_\xi(x,y),h_{\xi+2}^\ast]_L=0.
\]
Thus, we have $\delta_\xi(x,y)\in\ker\ad(h_{\xi+2}^\ast)$, where,
by~\eqref{p12:eq:adjoint-kernel-size},
 $|\ker\ad(h_{\xi+2}^\ast)|=p^{s_{\xi+2}-\mathfrak b_{\xi+2}}$.

Fix one group and its ordered Sims generating set as a reference. For each outside--outside
first-layer relation $[g_{1,j},g_{1,k}]$, with $s<j<k\le r_1$,
$\delta_\xi(\bar g_{1,j},\bar g_{1,k})$ is the difference between
its $L_{\xi+2}$-components in another group and in the reference
group. Thus its component lies in a coset of
$\ker\ad(h_{\xi+2}^\ast)$ determined by the reference component for
that pair, and has at most $p^{s_{\xi+2}-\mathfrak b_{\xi+2}}$ possible values.
There are $\binom{r_1-s}{2}$ such relations. Multiplying these
per-pair bounds shows that all remaining outside--outside first-layer components
together have at most
$p^{\binom{r_1-s}{2}(s_{\xi+2}-\mathfrak b_{\xi+2})}$ possible assignments.

The core-involving first-layer components have at most
$p^{(\binom{r_1}{2}-\binom{r_1-s}{2})s_{\xi+2}}$ choices.  The selected
components of the relations of source weight $2\le i\le c-\xi-1$
have at most $p^{s\sum_{i=2}^{c-\xi-1}r_i s_{i+\xi+1}}$
choices; for $i=2$ the exact number of selected source-weight-two
relations is
$\sum_{k=1}^{s}(r_2-d_{k-1})\le sr_2$.  Multiplying these three bounds
gives the stated exponent.
\end{proof}

Now we are ready to count all possible deeper commutator data for a fixed breadth sequence.
\begin{corollary}[Positive-excess relation bound]
\label{p12:prop:filtered-fibre}
Keep the presentation data fixed as above, and fix one graded Lie
bracket with breadth sequence $\boldsymbol{\mathfrak b}=(\mathfrak b_i)_{i=2}^{c}$. The number
of possible assignments of all positive-excess components of the
selected relations arising from groups with these data and this
graded bracket is at most
\begin{equation}\label{p12:eq:deeper-commutator-count}
p^{\sum_{\xi=1}^{c-2}\left(
      \binom{r_1}{2}s_{\xi+2}
      +s\sum_{i=2}^{c-\xi-1}r_i s_{i+\xi+1}
      -\binom{r_1-s}{2}\mathfrak b_{\xi+2}\right)}.
\end{equation}
Write $m_{\deep}=\sum_{i=2}^c s_i$ and $T_{\core}=\sum_{i=2}^{c-2}r_i\sum_{t=i+2}^{c}s_t$.
The bound in~\eqref{p12:eq:deeper-commutator-count} can also be written as
\[
 p^{\binom{r_1}{2}(m_{\deep}-s_2)
    +sT_{\core}
    -\binom{r_1-s}{2}\sum_{i=3}^{c-1}\mathfrak b_i}.
\]
\end{corollary}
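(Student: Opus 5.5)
The argument is an induction on the excess $\xi=1,\dots,c-2$, multiplying the conditional bounds of \cref{p12:prop:one-excess}. The positive-excess components of the selected relations are partitioned by excess. A selected relation $[g_{i,j},g_{1,k}]$ has excess-$\xi$ component in $L_{i+1+\xi}$, which exists only when $i+1+\xi\le c$. From the table of index ranges, the excess-$\xi$ components are therefore exactly:
\begin{itemize}
\item the first-layer relations, whose components lie in $L_{\xi+2}$;
\item the selected relations of source weight $2\le i\le c-\xi-1$.
\end{itemize}
The whole assignment is thus the tuple (excess-$1$ data, $\dots$, excess-$(c-2)$ data).

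The count proceeds by successive conditioning. Fix the excess-$e$ data for all $e<\xi$, taking only assignments realized by some group with the fixed presentation data and bracket. \Cref{p12:prop:one-excess} then says the excess-$\xi$ data take at most
\[
p^{\binom{r_1}{2}s_{\xi+2}+s\sum_{i=2}^{c-\xi-1}r_is_{i+\xi+1}-\binom{r_1-s}{2}\mathfrak b_{\xi+2}}
\]
values. This bound is uniform in the earlier choices. A standard chain-rule count (the number of tuples is at most the product of the maximal conditional fibre sizes) gives \eqref{p12:eq:deeper-commutator-count}. The only point needing care is the order within a single excess. \Cref{p12:prop:one-excess} already folds in fixing the core-input components first and then bounding the outside--outside pairs via the kernel of $\ad(h^\ast_{\xi+2})$. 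Its stated "before fixing any current components" total is exactly the factor we need, so nothing further is required here.

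For the rewritten form, sum the three pieces over $\xi$:
\begin{itemize}
\item $\sum_{\xi=1}^{c-2}s_{\xi+2}=\sum_{i=3}^{c}s_i=m_{\deep}-s_2$.
\item Substitute $t=i+\xi+1$ in the double sum. The constraints $1\le\xi$ and $i\le c-\xi-1$ become $2\le i$ and $i+2\le t\le c$, which forces $i\le c-2$. So $\sum_\xi\sum_{i=2}^{c-\xi-1}r_is_{i+\xi+1}=\sum_{i=2}^{c-2}r_i\sum_{t=i+2}^cs_t=T_{\core}$.
\item $\sum_{\xi=1}^{c-2}\mathfrak b_{\xi+2}=\sum_{i=3}^{c}\mathfrak b_i=\sum_{i=3}^{c-1}\mathfrak b_i$, since $\mathfrak b_c=0$.
\end{itemize}

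The only mild obstacle is the edge case $c=2$, where there are no positive excesses. There the bound is $p^0=1$, consistent with $m_{\deep}=s_2$, $T_{\core}=0$ and an empty $\mathfrak b$-sum.
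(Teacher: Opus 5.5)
Your proposal is correct and is essentially the paper's proof: you apply \cref{p12:prop:one-excess} successively for $\xi=1,\dots,c-2$ in increasing excess, multiply its bounds (which do not depend on the earlier choices), and then reindex the three sums using $\mathfrak b_c=0$. Your explicit check of the substitution $t=i+\xi+1$ and of the degenerate case $c=2$ fills in details the paper leaves implicit, and both agree with its argument.
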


\begin{proof}
Apply \cref{p12:prop:one-excess} successively for
$1\le\xi\le c-2$, each time conditional on all smaller positive
excesses. Multiplication of these conditional bounds gives the
exponent in
\eqref{p12:eq:deeper-commutator-count}.  Using $\mathfrak b_c=0$, the identities
\[
 \sum_{\xi=1}^{c-2}s_{\xi+2}=m_{\deep}-s_2,
 \qquad
 \sum_{\xi=1}^{c-2}\mathfrak b_{\xi+2}=\sum_{i=3}^{c-1}\mathfrak b_i,\qquad \sum_{\xi=1}^{c-2}\sum_{i=2}^{c-\xi-1}r_i s_{i+\xi+1}
 =\sum_{i=2}^{c-2}r_i\sum_{t=i+2}^{c}s_t
 =T_{\core}
\]
give the second exponent and prove the corollary.
\end{proof}

We now allow the graded Lie bracket to vary while keeping its breadth
sequence fixed, and combine the bracket count from
\cref{p12:subsec:lie-bracket-count} with the positive-excess count from
\cref{p12:prop:filtered-fibre}.

\begin{proof}[Proof of \cref{p12:prop:total-relation-bound}]
We first combine the graded and deeper counts without yet omitting
the coefficient positions fixed by the defining commutator identities.
For each fixed graded bracket,
\cref{p12:prop:filtered-fibre} gives the same uniform bound
\eqref{p12:eq:deeper-commutator-count} for its positive-excess data.
Summing this bound over the brackets counted in
\eqref{p12:eq:lie-bracket-count} therefore multiplies the two bounds.  The
resulting base-$p$ exponent, before omitting those fixed positions, is
\[
\begin{aligned}
 &\sum_{i=2}^{c-1}s \mathfrak b_i(r_i+s_{i+1}-\mathfrak b_i+1)
   +\binom{r_1}{2}s_2-\binom{r_1-s}{2}\mathfrak b_2+\binom{r_1}{2}(m_{\deep}-s_2)+sT_{\core}
   -\binom{r_1-s}{2}\sum_{i=3}^{c-1}\mathfrak b_i\\
 =&\binom{r_1}{2}m_{\deep}+sT_{\core}
   +\sum_{i=2}^{c-1}\mathfrak b_i
    \left(s(r_i+s_{i+1}-\mathfrak b_i+1)-\binom{r_1-s}{2}\right).
\end{aligned}
\]

We now omit the coefficient positions fixed by the defining
commutator identities.
For each $j\in[r_2]$, the fixed Sims data include the defining
identity $[g_{1,f(j)},g_{1,k}]=g_{2,j}$, where
$d_{k-1}<j\le d_k$.  These are $r_2$ distinct core--core pairs:
the function $f$ is injective within each block, and different
blocks have different second indices $k$.  Their components in
$L_2$ are fixed, and all their deeper components are zero.

In the proof of \cref{p12:prop:count_leading_maps}, these pairs were
included among the freely counted core-involving pairs.  Omitting
them subtracts $r_2s_2$ from the graded exponent.  When $c=2$, the
same omission applies directly to the count of
$L_1\times L_1\to L_2$.  At each positive excess $\xi$, the
core-involving first-layer count in the proof of
\cref{p12:prop:one-excess} similarly improves by $r_2s_{\xi+2}$.
All these coefficient positions are disjoint from the outside--outside positions to
which the breadth saving is applied.  Thus both the graded and
positive-excess proofs remain valid, with total saving
\[
 r_2s_2+\sum_{\xi=1}^{c-2}r_2s_{\xi+2}
 =r_2m_{\deep}.
\]
Subtracting this saving from the preceding combined exponent
proves~\eqref{p12:eq:total-relation-bound}.  This is an omission of fixed entries in the counting
argument, not a division by a presentation multiplicity.
\end{proof}

\section{Joint counting bounds and optimization objectives}
\label{p12:sec:saturated-optimisation}

We combine the power word count of
\cref{p12:lem:power-ledger} with the commutator count of
\cref{p12:prop:total-relation-bound}. This section collects the
notation and numerical constraints which will also be used in \cref{part:sharp-enumeration}, and specifies
the expressions to be optimized there. The counting bounds and
structural constraints in the first two subsections hold without a saturation assumption.

\subsection{Notation and structural constraints}

For a nonabelian group $G$ of order $p^m$, recall that
$L_i=G_i/G_{i+1}$, $s_i=\log_p|L_i|$, and $r_i=d(L_i)$. Define
\begin{equation}\label{p12:eq:optimization-parameters}
 \begin{gathered}
 m_{\deep}=\sum_{i=2}^c s_i=\log_p|G_2|,\qquad
 m_{\ge3}=\sum_{i=3}^c s_i=\log_p|G_3|,\\
 \delta=s_1-r_1,\qquad e=s_2-r_2,\qquad
 z_{\mathrm{bal}}=r_1-2m_{\deep}.
 \end{gathered}
\end{equation}
Thus
\begin{equation}\label{p12:eq:optimization-identities}
 m=r_1+m_{\deep}+\delta,\qquad
 m_{\deep}=r_2+e+m_{\ge3},\qquad
 s_2=m_{\deep}-m_{\ge3}.
\end{equation}
The nonnegative integers $\delta$ and $e$ measure the differences
between the logarithmic orders and the ranks of $L_1$ and $L_2$.
The integer $z_{\mathrm{bal}}$ measures the deviation from
$r_1=2m_{\deep}$, where the main cubic term is maximized.

Retain the core breadths $\mathfrak b_i$ from~\eqref{p12:eq:core-breadth}, with
$\mathfrak b_c=0$, and define
\begin{equation}\label{p12:eq:optimization-breadth-parameters}
 \mathfrak b_{\mathrm{tot}}=\sum_{i=2}^{c-1}\mathfrak b_i,\qquad
 T_{\core}=\sum_{i=2}^{c-2}r_i\sum_{j=i+2}^c s_j.
\end{equation}
The remaining part of the commutator exponent is
\begin{equation}\label{p12:eq:optimization-X}
 \cX:=sT_{\core}
 +\sum_{i=2}^{c-1}\mathfrak b_i
   \left(s(r_i+s_{i+1}-\mathfrak b_i+1)-\binom{r_1-s}{2}\right).
\end{equation}
All empty sums are zero. In particular, when $m_{\ge3}=0$, we have
$c=2$ and $T_{\core}=\mathfrak b_{\mathrm{tot}}=\cX=0$.

\begin{lemma}[Second-layer defect]\label{p12:lem:second-layer-defect}
For every prime $p$, we have
\[
 e\le\binom{\delta}{2}.
\]
\end{lemma}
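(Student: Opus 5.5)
The plan is to read $e$ as a $p$-power index and to bound it by comparing $L_2$ with the exterior square of $L_1$ over $\Z$. Since $s_2=\log_p|L_2|$ and $r_2=d(L_2)=\dim_{\F_p}L_2/pL_2$, we have
\[
e=s_2-r_2=\log_p|L_2|-\log_p|L_2/pL_2|=\log_p|pL_2|.
\]
So it suffices to show $|pL_2|\le p^{\binom{\delta}{2}}$.

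\emph{Step 1 (reduction to the exterior square).} The restriction $[\cdot,\cdot]_L:L_1\times L_1\to L_2$ of the associated graded Lie bracket is biadditive and alternating, i.e.\ $[x,x]_L=0$. It therefore factors through a homomorphism of abelian groups $\bigwedge^2_{\Z}L_1\to L_2$. By \eqref{p12:eq:sims-core-graded-generation}, $L_2$ is generated by $[L_1,L_1]_L$, so this homomorphism is surjective. A surjection carries $p\bigwedge^2_{\Z}L_1$ onto $pL_2$, hence $|pL_2|\le|p\bigwedge^2_{\Z}L_1|$. This step is uniform in $p$, including $p=2$, because we work with the exterior square over $\Z$ of an alternating map.

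\emph{Step 2 (computing the exterior square).} Decompose $L_1\cong\bigoplus_{i=1}^{r_1}\Z/p^{a_i}$ with all $a_i\ge1$, so that $\sum_i a_i=s_1$. The standard computation gives
\[
\textstyle\bigwedge^2_{\Z}L_1\cong\bigoplus_{i<j}\Z/p^{\min(a_i,a_j)}.
\]
Multiplying by $p$ lowers each exponent by one, so $\log_p|p\bigwedge^2L_1|=\sum_{i<j}(\min(a_i,a_j)-1)$.

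\emph{Step 3 (the inequality).} Put $b_i=a_i-1\ge0$, so that $\sum_i b_i=s_1-r_1=\delta$. For nonnegative integers we have $\min(b_i,b_j)\le b_ib_j$: if either one is $0$ both sides vanish, and otherwise the product dominates the minimum. Hence
\[
\sum_{i<j}\min(b_i,b_j)\le\sum_{i<j}b_ib_j=\frac{\delta^2-\sum_i b_i^2}{2}\le\frac{\delta^2-\delta}{2}=\binom{\delta}{2},
\]
where the last inequality uses $b_i^2\ge b_i$ for integers. This gives $e\le\binom{\delta}{2}$.

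The only step needing some care is Step 2, the exterior-square decomposition of a finite abelian $p$-group. It follows from $\bigwedge^2(A\oplus B)\cong\bigwedge^2A\oplus(A\otimes B)\oplus\bigwedge^2B$, together with $\bigwedge^2$ of a cyclic group being $0$ and $\Z/p^a\otimes\Z/p^b\cong\Z/p^{\min(a,b)}$. Everything else is elementary.
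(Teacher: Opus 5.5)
Your proof is correct and follows the paper's argument: you write $e=\log_p|pL_2|$, map $p\bigwedge^2L_1$ onto $pL_2$, and compute $\log_p|p\bigwedge^2L_1|=\sum_{i<j}(\min(a_i,a_j)-1)$. The only difference is the final elementary estimate. The paper rewrites this sum as $\sum_{a\ge2}\binom{k_a}{2}$ with $k_a=|\{j:a_j\ge a\}|$ and uses superadditivity of $\binom{\cdot}{2}$, whereas you use $\min(b_i,b_j)\le b_ib_j$ and $\sum_i b_i^2\ge\sum_i b_i$; both are valid.
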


\begin{proof}
Commutation induces an epimorphism of abelian groups
$\wedge^2L_1\twoheadrightarrow L_2$.  Write
$L_1\cong\bigoplus_{j=1}^{r_1}\Z/p^{\alpha_j}\Z$,
where $\alpha_j\ge1$, and put
$k_a=|\{j:\alpha_j\ge a\}|$ for $a\ge2$.  Then
\[
 \delta=\sum_{j=1}^{r_1}(\alpha_j-1)
        =\sum_{a\ge2}k_a.
\]
The epimorphism on exterior squares induces an epimorphism on their
$p$-multiples.  Since the exterior square of $L_1$ is the direct
sum, over pairs $j<k$, of cyclic groups of order
$p^{\min\{\alpha_j,\alpha_k\}}$, we obtain
\[
 e=\log_p|pL_2|
 \le\log_p|p(\wedge^2L_1)|
 =\sum_{a\ge2}\binom{k_a}{2}
 \le\binom{\sum_{a\ge2}k_a}{2}
 =\binom{\delta}{2}.
\]
This argument uses no restriction on the characteristic.
\end{proof}

The breadth relations~\eqref{p12:eq:e-breadth} give two bounds for
$T_{\core}$:
\begin{equation}\label{p12:eq:core-tail-two-bounds}
 T_{\core}\le\frac{m_{\deep}^2-s_2^2}{2},
 \qquad
 T_{\core}\le s m_{\deep}(\mathfrak b_{\mathrm{tot}}-\mathfrak b_2).
\end{equation}
Indeed, $r_i\le s_i$ gives
\[
 T_{\core}\le\sum_{2\le i<j\le c}s_i s_j
 =\frac{m_{\deep}^2-\sum_{i=2}^{c}s_i^2}{2}
 \le\frac{m_{\deep}^2-s_2^2}{2}.
\]
Using $s_j\le s \mathfrak b_{j-1}$ from~\eqref{p12:eq:e-breadth} instead gives
\begin{align*}
 T_{\core}&\le s\sum_{i=2}^{c-2}r_i
                  \sum_{j=i+2}^{c}\mathfrak b_{j-1}
 =s\sum_{k=3}^{c-1}\mathfrak b_k\sum_{i=2}^{k-1}r_i\\
 &\le s m_{\deep}\sum_{k=3}^{c-1}\mathfrak b_k
 =s m_{\deep}(\mathfrak b_{\mathrm{tot}}-\mathfrak b_2).
\end{align*}
Here $\mathfrak b_2$ is absent because $j\ge i+2\ge4$. Summing
$s_{i+1}\le s\mathfrak b_i$ and $\mathfrak b_i\le s_{i+1}$ also gives
\begin{equation}\label{p12:eq:breadth-total-le-deep}
 \frac{m_{\ge3}}s\le \mathfrak b_{\mathrm{tot}}
 \le m_{\ge3}\le m_{\deep}.
\end{equation}
When $m_{\ge3}>0$, we have $s_3>0$ and $s_3\le s\mathfrak b_2$, so
$\mathfrak b_2\ge1$. When $m_{\ge3}=0$, we have $\mathfrak b_2=\mathfrak b_c=0$.

Since $r_i+s_{i+1}-\mathfrak b_i+1\le2m_{\deep}+1$, these bounds imply
\begin{equation}\label{p12:eq:X-breadth-total}
 \begin{split}
 \cX\le{}&
 s\min\left\{\frac{m_{\deep}^2-s_2^2}{2},
                s m_{\deep}(\mathfrak b_{\mathrm{tot}}-\mathfrak b_2)\right\}+\left(s(2m_{\deep}+1)-\binom{r_1-s}{2}\right)
       \mathfrak b_{\mathrm{tot}}.
 \end{split}
\end{equation}
In particular, the rough bound used in Part~III is
\begin{equation}\label{p12:eq:X-rough}
 \cX\le\frac{s m_{\deep}^2}{2}
 +\left(s(2m_{\deep}+1)-\binom{r_1-s}{2}\right)\mathfrak b_{\mathrm{tot}}.
\end{equation}

\subsection{The joint counting exponent}

Fix $p,c,s,(r_i),(d_k),(a(i,j))$, the auxiliary choices defining
the Sims generators, and a breadth sequence that occurs for a group
with these numerical data. Let $\cE$ be the base-$p$ logarithm of
the number of possible power word and commutator coefficient arrays
together that arise from groups with these fixed data and breadths.
The full power word array is fixed when applying the
commutator count, and is then allowed to vary when summing those
conditional bounds.

For the power word count in \cref{p12:lem:power-ledger}, the first
layer contributes at most $r_1m_{\deep}+r_1\delta$, and the second
contributes at most
\[
 r_2m_{\ge3}+(r_2-1)e
 \le m_{\deep}(m_{\ge3}+e).
\]
There are at most $m_{\ge3}$ power relations of weight at least
three, and each has at most $p^{m_{\ge3}}$ possible right sides.
Their total contribution to the exponent is at most $m_{\ge3}^2$.

The commutator bound in \cref{p12:prop:total-relation-bound} is
uniform over the fixed full power word arrays. Summing over these
arrays and inserting the preceding power word estimates gives
\begin{equation}\label{p12:eq:joint-relation-power}
\begin{split}
 \cE
 &\le\left(\binom{r_1}{2}-r_2\right)m_{\deep}+\cX
       +r_1m_{\deep}+r_1\delta +m_{\deep}(e+m_{\ge3})+m_{\ge3}^2\\
 &=\frac12r_1^2m_{\deep}+\cX
       +\frac12m_{\deep}z_{\mathrm{bal}}
       +2m_{\deep}(m_{\ge3}+e)+r_1\delta+m_{\ge3}^2\\
 &\le\frac12r_1^2m_{\deep}+\cX
       +\frac12m_{\deep}z_{\mathrm{bal}}
       +3m_{\deep}m_{\ge3}+2m_{\deep}e+r_1\delta.
\end{split}
\end{equation}
The equality uses $r_2=m_{\deep}-m_{\ge3}-e$ and
$z_{\mathrm{bal}}=r_1-2m_{\deep}$; the final inequality uses
$m_{\ge3}^2\le m_{\deep}m_{\ge3}$.
The quadratic terms in the power
and commutator counts have been combined before optimization.
In particular, when $\delta=e=m_{\ge3}=0$, their remaining quadratic
contribution is $m_{\deep}z_{\mathrm{bal}}/2$, which vanishes at
$r_1=2m_{\deep}$.

The count in~\eqref{p12:eq:joint-relation-power} is for fixed
numerical and auxiliary data. Summing over these data contributes
$O(m\log m)$ to the exponent, by
\cref{p12:prop:parameter-ledger}. The passage from coefficient arrays
to isomorphism classes, including the number of arrays representing
each group, is treated in Part~III.

\begin{remark}[Additional linear constraints]
\label{p12:rem:conditional-leading-constraints}
The conditional counts also allow a saving from prescribed affine
linear equations on the core--outside leading coefficients
\[
 c(1,j,k,2,v)\bmod p,
 \qquad 1\le j\le s<k\le r_1,\quad v\in[r_2].
\]
After fixing the full power word array, the higher core bracket
maps, and the core--core leading values, suppose such equations
have rank at least $q$, uniformly over these conditional choices.
Then $q$ may be subtracted from the commutator exponent
and from the joint exponent in~\eqref{p12:eq:joint-relation-power}.
Indeed, the core--outside values are counted freely in $L_2$ in
\cref{p12:prop:count_leading_maps}, and reduction to $L_2/pL_2$ has
constant inverse-image size $p^{s_2-r_2}$. The defining core--core
positions and the outside--outside positions used for breadth
savings are disjoint from these values, and the deeper count is
uniform for every fixed graded bracket. Any choices involved in
specifying the affine equations must be counted separately. This
observation is used when prescribing the core centralizer
in~\cref{p3:lem:kernel}.
\end{remark}

\subsection{The optimization objectives for Part III}

Part~II bounds the nonsaturated contribution with an arbitrarily
large fixed quadratic saving. Part~III therefore applies the joint
count in the saturated range
\[
 \binom{s-1}{2}<r_2\le\binom{s}{2},\qquad 2\le s\le r_1.
\]
Only at this point do we use saturation. Since $r_2\le s_2$, it gives
\begin{equation}\label{p12:eq:saturation}
 s^2\le2s_2+3s
      =2(m_{\deep}-m_{\ge3})+3s,
\end{equation}
and hence $s=O(\sqrt{m_{\deep}})$.

For the numerical optimization, write
$n:=r_1+m_{\deep}=m-\delta$. For a fixed real number $\eta$, define
\begin{equation}\label{p12:eq:balance-objective}
 \mathcal F_\eta
 :=\frac12r_1^2m_{\deep}+\cX+3m_{\deep}m_{\ge3}
      +\eta m_{\deep}z_{\mathrm{bal}}.
\end{equation}
Then~\eqref{p12:eq:joint-relation-power} reads
\begin{equation}\label{p12:eq:joint-balance-objective}
 \cE\le\mathcal F_{1/2}+2m_{\deep}e+r_1\delta.
\end{equation}
The exact deficit from the cubic maximum is
\begin{equation}\label{p12:eq:cubic-deficit}
 \frac{2}{27}n^3-\frac12r_1^2m_{\deep}
 =\frac{z_{\mathrm{bal}}^2(4r_1+m_{\deep})}{54}.
\end{equation}
Thus the numerical task is to control $\cX$, the positive term
$3m_{\deep}m_{\ge3}$, and the balance term using this deficit and
the breadth constraints above.

Part~III proves the following two estimates for parameters arising
in the saturated range:
\begin{align}
 \mathcal F_\eta
 &\le\frac{2}{27}n^3+O_\eta(n),
 \label{p12:eq:balance-optimization-objective}\\
 \mathcal F_\eta
 &\le\frac{2}{27}n^3-\kappa n^2
 \qquad(m_{\ge3}>0,\ n\text{ sufficiently large}),
 \label{p12:eq:positive-depth-objective}
\end{align}
where $\kappa>0$ is absolute and the threshold depends only on
$\eta$. Their proofs are given
in~\cref{p3:lem:eta,p3:lem:positive-depth}, respectively. All constants
and thresholds are independent of the prime.
The coefficient $\eta=1/2$ controls the joint array count. Using
$r_1,m_{\deep},e,\delta\le m$, this yields a coarse bound
$\cE\le2(m-\delta)^3/27+O(m^2)$. The first-layer defect can then
be controlled using
\begin{equation}\label{p12:eq:first-layer-defect-deficit}
 m^3-(m-\delta)^3
 =\delta(3m^2-3m\delta+\delta^2)\ge\delta m^2.
\end{equation}
When $\delta$ is bounded, the second-layer defect bound gives
$e=O(1)$, so the remaining terms $2m_{\deep}e+r_1\delta$ are
only $O(m)$.

Other values of $\eta$ are needed when Part~III passes from arrays
to groups and includes extension costs. In the saturated range with
bounded $\delta$ and $sr_2\ge2r_1$, its source-basis argument gives
the exponent $\cE-r_1^2+O(m^{3/2})$ for group isomorphism classes.
For a fixed nonnegative integer $h$ in the extension argument, the
remaining range is $sr_2\ge(h+3)r_1$, and the additional quadratic
cost is $h r_1(m-r_1)=h r_1(m_{\deep}+\delta)$. Since $\delta$ is
bounded, the term $h r_1\delta$ is $O_h(m)$. The numerical expression
to control is therefore $\cE-r_1^2+h r_1m_{\deep}$. The identity
\begin{equation}\label{p12:eq:quadratic-correction-identity}
 -r_1^2+h r_1m_{\deep}
 =\frac{2h-4}{9}n^2-\frac{h+4}{3}m_{\deep}z_{\mathrm{bal}}
       -\frac{2h+5}{9}z_{\mathrm{bal}}^2
\end{equation}
gives the corresponding bound
\begin{equation}\label{p12:eq:corrected-count-objective}
 \begin{split}
 \cE-r_1^2+h r_1m_{\deep}
 \le{}&\mathcal F_{-(2h+5)/6}
       +\frac{2h-4}{9}n^2-\frac{2h+5}{9}z_{\mathrm{bal}}^2\\
 &+2m_{\deep}e+r_1\delta.
 \end{split}
\end{equation}
For $h=0$, this explains the coefficient $\eta=-5/6$ and the
quadratic term $-4n^2/9$ in the sharp enumeration argument. 
The stronger bound
\eqref{p12:eq:positive-depth-objective} supplies the additional
quadratic saving when $G_3\ne1$. For fixed quotients in the cumulative
count, the required coefficient is $\eta=-(2h+5)/6$; the meaning of
$h$ and the accompanying extension and source-basis estimates are
given in Section~\ref{p4:sec:extensions}.

\clearpage
\part{Counting leading alternating maps}\label{p12:part:leading-maps}

Part~I established a bound on valid commutator data and used
saturation to optimize it.  We now treat the complementary range
$r_2\le\binom{s-1}{2}$ by improving $N_p(r_1,r_2,s)$, the
base-$p$ logarithm of the number of surjective alternating maps
$\beta:\wedge^2\F_p^{r_1}\to\F_p^{r_2}$ with $s(\beta)=s$.
We insert this improved count into \eqref{p12:eq:book-master}, retaining
the classical bound for the remaining relations.

\section{Sims rank and hollowing properties of spaces of alternating forms}\label{p12:sec:hollowing}

Fix a prime power $q$, an
$R$-dimensional $\F_q$-space $X$, an $M$-dimensional $\F_q$-space
$W$, and a surjective alternating map $\beta:\wedge^2X\longrightarrow W$.
The \emph{Sims rank} of $\beta$ is defined as
\begin{equation}\label{p12:eq:linear-Sims-rank}
 s(\beta)=
 \min\{\dim U:U\le X,\ \beta(\wedge^2U)=W\}.
\end{equation}

For $0\le h<R$, let $\Gr(h,X)$ be the Grassmannian of all $h$-dimensional subspaces of $X$. We have
\begin{equation}\label{p12:eq:exact-linear-Sims-rank}
 s(\beta)=h+1
 \quad\Longleftrightarrow\quad
 \left\{
 \begin{aligned}
  &\beta(\wedge^2H)\ne W
    &&\text{for every }H\in\Gr(h,X),\\
  &\beta(\wedge^2U)=W
    &&\text{for some }U\in\Gr(h+1,X).
 \end{aligned}
 \right.
\end{equation}

Let $\cA_\beta=\beta^*(W^*)\le\Alt(X)$. More generally, for $\cA\le\Alt(X)$ and $U\le X$, let
\[
 \res^{\cA}_U:\cA\longrightarrow\Alt(U),
 \qquad A\longmapsto A|_{U\times U}
\]
be the restriction map of $\cA$ on $U$.
We say that $\cA$ is \emph{faithful on $U$} when the restriction map $\res^{\cA}_U$
is injective. We also say $\cA$ is $(h+1)$-faithful if $\cA$ is faithful on some $(h+1)$-dimensional subspace $U\leq X$.

For $\cA\le\Alt(X)$, we say $\cA$ is \emph{$h$-hollowing} if, for every
$H\in\Gr(h,X)$, there is a nonzero $A\in\cA$ such that
\[
 A|_{H\times H}=0.
\]
Thus every $h$-space is made totally isotropic by at least one
nonzero form from $\cA$.  The space $\cA$ is called \emph{minimally
$h$-hollowing} if no proper subspace of $\cA$ is $h$-hollowing.

The faithful and hollowing properties of spaces of alternating forms characterize the Sims rank of alternating bilinear maps as follows:
\begin{proposition}[Sims rank versus faithfulness and hollowing]
\label{p12:prop:sims-hollowing-dictionary}
Let $\beta:\wedge^2X\to W$ be a surjective alternating map over
$\F_q$, and let $\cA_\beta=\beta^*(W^*)$.
\begin{enumerate}[label=\textup{(\roman*)}]
 \item For every subspace $U\le X$, one has
 \[
  \beta(\wedge^2U)=W
  \quad\Longleftrightarrow\quad
  \res^{\cA_\beta}_U\text{ is injective}
  \quad\Longleftrightarrow\quad
  \cA_\beta\text{ is faithful on }U.
 \]
 \item For $0\le h<\dim X$,
 \[
  \cA_\beta\text{ is $h$-hollowing}
  \quad\Longleftrightarrow\quad
  s(\beta)>h.
 \]
 \item For $1\le s\leq \dim X$, one has $s(\beta)=s$ precisely when
 $\cA_\beta$ is $(s-1)$-hollowing and $s$-faithful.
\end{enumerate}
\end{proposition}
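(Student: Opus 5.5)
The whole proposition reduces to one duality statement about $\cA_\beta=\beta^*(W^*)$, from which (ii) and (iii) follow by rewriting \eqref{p12:eq:exact-linear-Sims-rank}. I would prove (i) first and then derive the other two parts.

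\emph{Part (i).} Fix $U\le X$ and let $\iota:\wedge^2U\to\wedge^2X$ be the map induced by inclusion. Then $\beta(\wedge^2U)$ is the image of $\beta\circ\iota$, and this image equals $W$ if and only if the dual map $(\beta\circ\iota)^*=\iota^*\circ\beta^*:W^*\to(\wedge^2U)^*=\Alt(U)$ is injective. Since $\beta$ is surjective, $\beta^*:W^*\to\cA_\beta$ is an isomorphism. Moreover, $\iota^*$ sends a form $A\in\Alt(X)$ to $A|_{U\times U}$. Therefore the injectivity of $\iota^*\circ\beta^*$ is the same as the injectivity of $\res^{\cA_\beta}_U$, which is faithfulness on $U$ by definition. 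The only care needed is the identification of $(\wedge^2U)^*$ with $\Alt(U)$ in every characteristic, including $2$. This is standard because alternating forms, rather than merely skew forms, are exactly the linear functionals on $\wedge^2$.

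\emph{Part (ii).} The space $\cA_\beta$ is $h$-hollowing exactly when, for every $H\in\Gr(h,X)$, the restriction $\res^{\cA_\beta}_H$ has a nonzero kernel element. By (i), this holds exactly when $\beta(\wedge^2H)\ne W$ for every $h$-dimensional $H$. It remains to see that this last condition is equivalent to $s(\beta)>h$. If some $U$ with $\dim U\le h$ satisfied $\beta(\wedge^2U)=W$, extend it to any $H\supseteq U$ of dimension exactly $h$; this is possible since $h<\dim X$. Then $\beta(\wedge^2H)\supseteq\beta(\wedge^2U)=W$, a contradiction. So no subspace of dimension at most $h$ works, which is $s(\beta)>h$. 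The converse direction is immediate from the definition of $s(\beta)$.

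\emph{Part (iii).} Combine (ii) with $h=s-1$, which gives $s(\beta)>s-1$, with (i), which gives the existence of some $s$-dimensional $U$ on which $\cA_\beta$ is faithful. Together these say $s(\beta)\le s$ and $s(\beta)>s-1$, hence $s(\beta)=s$. This is exactly the characterization \eqref{p12:eq:exact-linear-Sims-rank} with $h=s-1$. The edge case $s=1$ gives $h=0$. Here the $0$-hollowing condition asks for a nonzero form in $\cA_\beta$ vanishing on $0\times 0$, which holds iff $\cA_\beta\ne0$, i.e.\ iff $W\neq0$. This is consistent with $s(\beta)\ge1$.

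There is no real obstacle here. The only points needing care are the monotonicity step in (ii), namely enlarging $U$ to dimension exactly $h$, and the duality between $\wedge^2U$ and $\Alt(U)$ in (i).
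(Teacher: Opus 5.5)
Your proposal is correct and follows essentially the same route as the paper: your dualization of $\beta\circ\iota$ is the paper's annihilator computation of $\ker\res^{\cA_\beta}_U$, and (ii) and (iii) are then read off from \eqref{p12:eq:exact-linear-Sims-rank} exactly as in the paper. The one addition is that you make explicit the step the paper leaves implicit in (ii): a generating subspace of dimension at most $h$ can be enlarged to one of dimension exactly $h$, which is what turns ``no $h$-space generates $W$'' into $s(\beta)>h$.
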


\begin{proof}
For \textup{(i)}, the kernel of $\res^{\cA_\beta}_U$ consists of the
forms $\beta^*(\lambda)$ for $\lambda\in
 \Ann_{W^*}\bigl(\beta(\wedge^2U)\bigr)$.
This annihilator is zero exactly when
$\beta(\wedge^2U)=W$.  Part \textup{(ii)} follows immediately:
$h$-hollowing says that the restriction map is noninjective for
every $h$-space, which by \textup{(i)} says that no $h$-space
generates $W$.  Part \textup{(iii)} is now exactly
\eqref{p12:eq:exact-linear-Sims-rank}, translated by \textup{(i)} and
\textup{(ii)}.
\end{proof}

\section{Counting totally isotropic subspaces}\label{p12:sec:isotropic-count}
To count hollowing subspaces of $\Alt(X)$, we first study a single alternating
form.  In this section, we relate the rank of $A\in\Alt(X)$ to the
number of $h$-spaces $H\le X$ that are totally isotropic for $A$, that
is, that satisfy $A|_{H\times H}=0$.


Let $X$ be an $R$-dimensional $\F_q$-space and let $A\in\Alt(X)$.
The \emph{radical} of $A$ is
\[
 \rad A=\{x\in X:A(x,y)=0\text{ for every }y\in X\}.
\]
For $0\le h\le R$, define the $h$-dimensional isotropic locus of
$A$ by
\[
 \cZ_h(A)
 =\{H\in\Gr(h,X):A|_{H\times H}=0\},
 \qquad
 I_h(A)=|\cZ_h(A)|.
\]
It is straightforward to see that $\cZ_h(\lambda A)=\cZ_h(A)$ for every
$\lambda\in\F_q^\times$.
The following count of the isotropic locus of $A$ can be obtained from the totally isotropic specialization of Wan's
subspace-counting formula~\cite[Theorem~3.27]{Wan}.
We give a direct proof via the radical.

\begin{lemma}[Isotropic count via the radical]\label{p12:lem:isotropic-exact}
Let $X$ be an $R$-dimensional $\F_q$-space, let
$A\in\Alt(X)$ have rank $2a$, and let $0\le h\le R$.  Let
$r_0=R-2a$.  Then
\begin{equation}\label{p12:eq:isotropic-exact}
 I_h(A)=
 \sum_{\ell=\max\{0,h-r_0\}}^{\min\{h,a\}}
 \qbinom{r_0}{h-\ell}{q}\cdot
 q^{\ell(r_0-h+\ell)}\cdot
 \prod_{i=0}^{\ell-1}
 \frac{q^{2a-2i}-1}{q^{\ell-i}-1}.
\end{equation}
\end{lemma}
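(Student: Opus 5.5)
Write $Z=\rad A$, so $\dim Z=r_0$, and let $\bar X=X/Z$, a $2a$-dimensional space on which $A$ induces a nondegenerate symplectic form $\bar A$. Every $h$-space $H$ determines $\ell:=\dim(H+Z)/Z=h-\dim(H\cap Z)$. The plan is to stratify $\cZ_h(A)$ by $\ell$ and count each stratum in three independent choices.

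First, $A|_{H\times H}=0$ if and only if the image $\bar H=(H+Z)/Z$ is totally isotropic for $\bar A$. This holds because $A(x+z,y+z')=A(x,y)$ for $z,z'\in Z$. Hence $\ell\le a$, and also $\ell\ge h-r_0$ and $\ell\le h$, which gives the summation range. For fixed $\ell$, an $H$ in the stratum is determined by the following data.
\begin{enumerate}[label=\textup{(\alph*)}]
\item A totally isotropic $\ell$-space $\bar H\le\bar X$.
\item The subspace $K=H\cap Z$ of dimension $h-\ell$ in $Z$. There are $\qbinom{r_0}{h-\ell}{q}$ choices.
\item Given $\bar H$ and $K$, the subspace $H$ with $K\le H\le \pi^{-1}(\bar H)$, $H\cap Z=K$ and $\pi(H)=\bar H$. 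These are complements to $Z/K$ inside $\pi^{-1}(\bar H)/K$, which has dimension $\ell+r_0-(h-\ell)$. They correspond to linear maps $\bar H\to Z/K$, so there are $q^{\ell(r_0-h+\ell)}$ of them.
\end{enumerate}
Each $H$ arises exactly once from this data, since $K$ and $\bar H$ are recovered from $H$.

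It remains to count totally isotropic $\ell$-spaces in a nondegenerate symplectic space of dimension $2a$. This is the standard count by ordered bases. Choose $v_1,\dots,v_\ell$ successively with $v_{i+1}\in\langle v_1,\dots,v_i\rangle^{\perp}\setminus\langle v_1,\dots,v_i\rangle$. The perp has dimension $2a-i$, so there are $q^{2a-i}-q^i=q^i(q^{2a-2i}-1)$ choices at step $i+1$. Dividing by $|\GL_\ell(q)|=\prod_{i=0}^{\ell-1}(q^\ell-q^i)=\prod_{i=0}^{\ell-1}q^i(q^{\ell-i}-1)$ gives $\prod_{i=0}^{\ell-1}\frac{q^{2a-2i}-1}{q^{\ell-i}-1}$.

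Multiplying (a), (b) and (c) and summing over $\ell$ yields \eqref{p12:eq:isotropic-exact}.

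The only step needing care is (c): the bijection of lifts with $\mathrm{Hom}(\bar H,Z/K)$. Fix one complement $H_0$ with $\pi^{-1}(\bar H)/K=H_0/K\oplus Z/K$. Every other complement is the graph of a unique homomorphism $H_0/K\cong\bar H\to Z/K$, which gives the factor $q^{\ell(r_0-h+\ell)}$ exactly.
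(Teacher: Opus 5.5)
Your proposal is correct and follows the paper's proof essentially step for step. You stratify by $\ell=h-\dim(H\cap\rad A)$, choose $H\cap\rad A$ (a Gaussian binomial), choose the totally isotropic image $\ell$-space in the $2a$-dimensional symplectic part, and parametrize the remaining lifts by linear maps into $\rad A/(H\cap\rad A)$. The only differences are cosmetic: you work in the quotient $X/\rad A$ where the paper uses a chosen complement $Y$, and you derive the isotropic-subspace count by counting ordered bases where the paper cites it.
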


\begin{proof}
Let $K=\rad A$, so $\dim K=r_0$.  Choose a complementary subspace
$Y$ such that $X=K\oplus Y$.
The restriction of $A$ to $Y$ is nondegenerate: if $y\in Y$ is
orthogonal to $Y$, then it is also orthogonal to $K$, since $K$ is
the radical, and hence $y\in K\cap Y=0$.  Thus $Y$ is a
$2a$-dimensional symplectic space.  Let $p:X\to Y$ be the projection
along $K$.

Fix an isotropic $h$-space $H\le X$. Let $J=H\cap K$ and $L=p(H)$. Put $\ell=\dim L=h-\dim J$.
The space $L$ is isotropic in $Y$: removing the $K$-components of
two vectors does not change their value under $A$.

We reconstruct $H$ in three elementary choices.  First choose
$J$, an $(h-\ell)$-space in $K$.  This gives $\qbinom{r_0}{h-\ell}{q}$ possibilities.  Second choose $L$, an isotropic $\ell$-space in the
$2a$-dimensional symplectic space $Y$.  The standard symplectic
count (see~\cite[Lemma~1(a)]{KiermaierSchmidtWassermann} and
\cite[Section~8, Exercise~8.1(ii)]{Taylor}) gives
\[
 \prod_{i=0}^{\ell-1}
 \frac{q^{2a-2i}-1}{q^{\ell-i}-1}
\]
possibilities.

It remains to count the spaces $H$ having these fixed $J$ and $L$.
For every linear map
\[
 f:L\longrightarrow K/J,
\]
choose a basis of $L$, choose in $K$ a representative of the coset
$f(u)$ for each basis vector $u$, and extend these choices linearly
to a map $\widetilde f:L\to K$.  Define
\[
 H_f=J+\{u+\widetilde f(u):u\in L\}.
\]
Changing $\widetilde f$ by a map from $L$ to $J$ does not change
$H_f$, so $H_f$ depends only on $f$.  Conversely, a space $H$ with
$H\cap K=J$ and $p(H)=L$ determines such an $f$: for $u\in L$,
choose $u+k\in H$ and set $f(u)=k+J$.  This is well-defined because
two choices of $k$ differ by an element of $H\cap K=J$.  Therefore
$f\mapsto H_f$ is a bijection.

There are consequently
\[
 q^{\dim L\,\dim(K/J)}
 =q^{\ell(r_0-h+\ell)}
\]
possible spaces $H$.  Every $H_f$ is isotropic, since $K$ is the
radical and hence, for $u,v\in L$,
\[
 A\bigl(u+\widetilde f(u),v+\widetilde f(v)\bigr)=A(u,v)=0.
\]
Multiplying the three counts and
summing over the feasible values of $\ell$ proves
\eqref{p12:eq:isotropic-exact}.
\end{proof}

We next derive uniform estimates for $I_h(A)$. For $a,h\ge0$, let $a_h=\min\{a,\max\{h-1,0\}\}$ and
\begin{equation}\label{p12:eq:kappa-definition}
 \kappa_h(a)=\frac{a_h(2h-a_h-1)}2.
\end{equation}
Note that $\kappa_0(a)=\kappa_1(a)=0$ and
$\kappa_h(a)$ increases up to $\binom h2$ and then remains constant. We shall prove that the proportion $\frac{I_h(A)}{\qbinom{R}{h}{q}}$ of the isotropic locus of $A$ relative to the whole Grassmannian $\Gr(h,X)$ is uniformly bounded by $q^{-\kappa_h(a)+O(1)}$.

\begin{lemma}[Uniform and saturated isotropic estimates]\label{p12:lem:isotropic-uniform}
Let $X$ be an $R$-dimensional $\F_q$-space and
let $A\in\Alt(X)$ have rank $2a$, where $q\ge2$ is a prime power and $0\le a\le\lfloor R/2\rfloor$.  Then
for $0\le h\le R$, the size $I_h(A)$ of the $h$-dimensional isotropic locus of $A$ satisfies
\begin{equation}\label{p12:eq:isotropic-uniform}
 \frac{I_h(A)}{\qbinom{R}{h}{q}}
 \le \frac43\,\pi_\infty(2)^{-4}q^{-\kappa_h(a)}.
\end{equation}
In particular, if
$a\ge h-1$, then we have
\begin{equation}\label{p12:eq:gamma-bound}
 \frac{I_h(A)}{\qbinom{R}{h}{q}}
 \le \gamma_q q^{-\binom h2},
\end{equation}
where
\begin{align}
 \gamma_q^-&=
 \sum_{j\ge1}q^{-3j(j-1)/2}
 \cdot
 \frac{\prod_{u=j}^{\infty}(1-q^{-2u})}
      {\pi_j(q)},\label{p12:eq:gamma-minus}\\
 \gamma_q^+&=
 1+\sum_{j\ge1}q^{-j(3j+1)/2}\cdot\frac1{\pi_j(q)},
 \label{p12:eq:gamma-plus}\\
 \gamma_q&=\max\{\gamma_q^-,\gamma_q^+\}.\label{p12:eq:gamma}
\end{align}
These constants satisfy $\gamma_2<1.72$, $\gamma_q<1.563$ and $\frac{\gamma_q}{q-1}<\frac{391}{500}=0.782$ for $q\geq 3$.
\end{lemma}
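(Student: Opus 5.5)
We start from the exact formula of \cref{p12:lem:isotropic-exact} and divide each summand by $\qbinom Rh q$. Write the $\ell$-th summand as $T_\ell$. Using \cref{p12:lem:universal-field}(i) and the product formula for Gaussian binomials, we have
\[
\qbinom{r_0}{h-\ell}{q}\le \pi_\infty(2)^{-1}q^{(h-\ell)(r_0-h+\ell)},
\qquad
\prod_{i=0}^{\ell-1}\frac{q^{2a-2i}-1}{q^{\ell-i}-1}\le \pi_\ell(q)^{-1}q^{\ell(2a-\ell)-\binom{\ell}{2}+\binom{\ell}{2}}\cdot(\text{const}),
\]
where the symplectic factor behaves like $q^{2a\ell-\ell^2-\binom\ell2}\cdot\prod_{i<\ell}(1-q^{-2a+2i})/\pi_\ell(q)$. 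Against $\qbinom Rh q\ge q^{h(R-h)}$ with $R=r_0+2a$, the exponent of $T_\ell/\qbinom Rhq$ becomes a quadratic in $\ell$:
\[
E(\ell)=(h-\ell)(r_0-h+\ell)+\ell(r_0-h+\ell)+2a\ell-\tfrac{3\ell^2-\ell}{2}-h(r_0+2a-h).
\]
This simplifies to $E(\ell)=-2a(h-\ell)-\tfrac{\ell(3\ell-1)}2+\ell^2+\dots$; in any case it is a concave-type quadratic whose maximum over the feasible range $\max\{0,h-r_0\}\le\ell\le\min\{h,a\}$ I will compute explicitly. The expected outcome is that the maximum is $-\kappa_h(a)$, attained near $\ell=h$ (or $\ell=a$ when $a<h$). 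This is $-\binom h2$ when $a\ge h-1$ and $-a_h(2h-a_h-1)/2$ otherwise. Moving away from the maximizer, the exponent drops at least linearly, in fact quadratically.

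For \eqref{p12:eq:isotropic-uniform}, I bound $\sum_\ell q^{E(\ell)}$ by $q^{-\kappa_h(a)}$ times a geometric series. The factor $\frac43$ should come from summing over the two sides of the maximizer with ratio at most $q^{-2}\le\frac14$, namely $\sum q^{-2j}\le 4/3$. The $\pi_\infty(2)^{-4}$ absorbs the Gaussian ratio losses: one from $\qbinom{r_0}{h-\ell}q$, one from the symplectic product, and up to two from normalizing $\qbinom Rhq$ exactly. When $a\ge h-1$ there are two cases. For $a\ge h$ the dominant term is $\ell=h$; set $j=h-\ell$, and the terms $j\ge1$ give $\gamma_q^+$'s tail $q^{-j(3j+1)/2}/\pi_j(q)$. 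For $a=h-1$, the maximizer $\ell=h-1$ and its neighbours give the series $\gamma_q^-$, with the infinite product $\prod_{u\ge j}(1-q^{-2u})$ coming from the exact symplectic factor. In this case I keep the exact ratio $\qbinom{r_0}{h-\ell}q/\qbinom Rhq$ instead of crude bounds, which is what yields the sharp constants.

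The main obstacle is the bookkeeping needed to get the exact constants $\gamma_q^\pm$. This requires writing $T_\ell/\qbinom Rhq$ as $q^{-\binom h2-\text{(gap)}}$ times an explicit ratio of $\pi$-products, and showing that the ratio is at most $\prod_{u\ge j}(1-q^{-2u})/\pi_j(q)$, respectively $1/\pi_j(q)$, uniformly in $R$, $r_0$ and $a$. Monotonicity of partial products (\eqref{p12:eq:uniform-pi}) should handle this. The numerical claims $\gamma_2<1.72$, $\gamma_q<1.563$ and $\gamma_q/(q-1)<0.782$ for $q\ge3$ then follow by evaluating the first few terms at $q=2,3$ and bounding the tails geometrically, using monotonic decrease in $q$.
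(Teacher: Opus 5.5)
Your outline follows the paper's route. You normalize the summands of \cref{p12:lem:isotropic-exact}, locate the minimal loss, sum a geometric series, and then redo $a=h-1$ and $a\ge h$ with exact products. The uniform bound \eqref{p12:eq:isotropic-uniform} goes through once the algebra is finished. With $j=h-\ell$, your exponent is exactly $-\binom h2-\tfrac{j(4a-4h+3j+1)}{2}$. It is minimized at the single endpoint $j=\max\{0,h-a\}$, and consecutive increments are at least $2$. So the sum is one-sided, which is where $\sum_{m\ge0}q^{-2m}\le\frac43$ comes from; a two-sided sum would give $\frac53$. Normalizing by $\qbinom{R}{h}{q}\ge q^{h(R-h)}$ even gives a better power of $\pi_\infty(2)$.

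The gap is the step you flag as the main obstacle and then defer to ``monotonicity of partial products''. You must prove that the exact product factor is at most $1/\pi_j(q)$, respectively $\prod_{u\ge j}(1-q^{-2u})/\pi_j(q)$, with no constant loss. But \eqref{p12:eq:uniform-pi} only gives $\pi_\infty(2)\le\pi_n(q)\le1$. The normalization also leaves a factor $\pi_{R-h}/\pi_R>1$, which must be beaten exactly by the other factors.

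For $a\ge h$, already at the $j=0$ term, this requires an explicit pairing against the $h$ factors of $\pi_R/\pi_{R-h}$. The $j$ factors of $\pi_h/\pi_{h-j}$ pair with $1-q^{-(R-h+1+t)}$, $t<j$, using $R\ge2h$. The $h-j$ symplectic factors $1-q^{-2(a-h+j+1+t)}$ pair with $1-q^{-(R-h+j+1+t)}$, using $r_0+h-j-1-t\ge0$.

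For $a=h-1$, the symplectic factor supplies only $\prod_{u=j}^{h-1}(1-q^{-2u})$. The tail $u\ge h$ must come from
\[
 \frac{\pi_{r_0}}{\pi_{r_0-j}}\cdot\frac{\pi_h}{\pi_{h-j}}\cdot\frac{\pi_{R-h}}{\pi_R}\le\prod_{u\ge h}(1-q^{-2u}),\qquad r_0=R-2h+2 .
\]
Here no one-to-one pairing exists. For $j=1$ the left side has two numerator factors against the $h$ factors of $\pi_R/\pi_{R-h}$ and an infinite product. The paper instead compares the expansions of $-\log$ coefficientwise. For $x=q^{-m}\le\frac12$ it shows $\sum_{n=r_0-j+1}^{r_0}x^n\ge jx^{r_0}\ge hx^{r_0+h-1}\ge\sum_{n=R-h+1}^{R}x^n$, since $hx^{h-1}\le1$. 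It also shows $\sum_{n=h-j+1}^{h}x^n\ge x^h\ge x^{2h}/(1-x^2)$.

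Some such argument is unavoidable. Since $\gamma_2^-\approx1.689$, a constant loss of more than about $1.8\%$ would already break the claim $\gamma_2<1.72$. That bound is exactly what \cref{p12:prop:exponent-away} needs.

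A minor point: for $q\ge3$, do not appeal to monotonicity in $q$. The $j=1$ term $(1+q^{-1})\prod_{u\ge2}(1-q^{-2u})$ of $\gamma_q^-$ mixes increasing and decreasing factors. Uniform bounds such as $1/(1-q^{-1})\le\frac32$ suffice.
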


\begin{proof}
If $h=0$ or $h=1$, every $h$-space is isotropic.  The first assertion
is then immediate from \eqref{p12:eq:kappa-definition} and
$\frac43\pi_\infty(2)^{-4}>1$, while
\eqref{p12:eq:gamma-bound} follows from $\gamma_q\ge1$.  For the remainder of the proof, we assume
$h\ge2$. For brevity throughout this proof, write $\pi_n=\pi_n(q)$.

From \cref{p12:lem:isotropic-exact}, let
\[
 T_{a,\ell}
 =\qbinom{r_0}{j}{q}\cdot
  q^{\ell(r_0-j)}\cdot
  \prod_{i=0}^{\ell-1}
  \frac{q^{2a-2i}-1}{q^{\ell-i}-1}
\]
be the summand in
\eqref{p12:eq:isotropic-exact} whose image in the symplectic quotient has
dimension $\ell$, where we substitute
$j=h-\ell$ and $r_0=R-2a$.

We first separate the powers of $q$ from the finite products.  The
Gaussian factor is
\[
 \qbinom{r_0}{j}{q}=\frac{\pi_{r_0}}{\pi_j\pi_{r_0-j}}\cdot q^{j(r_0-j)}.
\]
For the symplectic factor, extracting the leading power of $q$ from
the numerator gives the exponent
$2a\ell-\ell(\ell-1)$, whereas the denominator gives
$\ell(\ell+1)/2$. Indeed,
$\sum_{i=0}^{\ell-1}2i=\ell(\ell-1)$ and
$\sum_{k=1}^{\ell}k=\ell(\ell+1)/2$.  Therefore
\[
\begin{aligned}
\prod_{i=0}^{\ell-1}
 \frac{q^{2a-2i}-1}{q^{\ell-i}-1}
 &=\frac{\prod_{u=a-\ell+1}^{a}(1-q^{-2u})}{\pi_\ell}
   \cdot q^{(2a\ell-\ell(\ell-1))-\ell(\ell+1)/2}\\
 &=\frac{\prod_{u=a-\ell+1}^{a}(1-q^{-2u})}{\pi_\ell}
   \cdot q^{\ell(4a-3\ell+1)/2}.
\end{aligned}
\]

Since $j+\ell=h$, multiplication of the three factors yields
\[
 T_{a,\ell}
 =\frac{\pi_{r_0}}
       {\pi_j\pi_{r_0-j}\pi_\ell}
  \prod_{u=a-\ell+1}^{a}(1-q^{-2u})\cdot q^{E_{a,\ell}},
 \qquad\text{where}~
 E_{a,\ell}
 =h(r_0-j)+\frac{\ell(4a-3\ell+1)}2.
\]
On the other hand, the ambient Grassmannian has size $\qbinom Rhq
 =\frac{\pi_R}{\pi_h\pi_{R-h}}\cdot q^{h(R-h)}$. Therefore, the exponent of $q$ in
$q^{\binom h2}T_{a,\ell}/\qbinom Rhq$ is
 $\binom h2+E_{a,\ell}-h(R-h)$.
Using $R=r_0+2a$ and $h=j+\ell$, we have 
\[
\begin{split}
 2(\binom h2+E_{a,\ell}-h(R-h))&=h(h-1)+2h(r_0-j)+\ell(4a-3\ell+1)-2h(R-h)\\
 &=3h^2-h-2hj-4ah+4a\ell-3\ell^2+\ell\\
 &=j^2+4j\ell-j-4aj\\
 &=-j(4a-h-3\ell+1)=-j(4a-4h+3j+1).
\end{split}
\]
Combining this exponent calculation with the finite products gives
the exact normalised identity
\begin{equation}\label{p12:eq:isotropic-normalised-term}
 \frac{T_{a,\ell}}{\qbinom Rhq}
 =\frac{\pi_{r_0}\pi_h\pi_{R-h}}
 {\pi_j\pi_{r_0-j}\pi_\ell\pi_R}
 \prod_{u=a-\ell+1}^{a}(1-q^{-2u})\cdot q^{-(\binom{h}{2}+j(4a-4h+3j+1)/2)}.
\end{equation}

Thus the loss relative to the ambient Grassmannian exponent
$h(R-h)$ is
\begin{equation}\label{p12:eq:isotropic-full-loss}
 L(a,j)=\binom h2+\frac{j(4a-4h+3j+1)}2.
\end{equation}

Recall that we are summing $\ell$ from $\max\{0,h-r_0\}$ to $\min\{h,a\}$. Substituting $j=h-\ell$ gives the range 
\[
 \max\{0,h-a\}\le j\le\min\{h,r_0\}.
\]
If $h>r_0+a$, this interval is empty.  Hence there is no isotropic $h$-space and
$I_h(A)=0$, so the desired upper bound is automatic. 

Assume now that $h\leq r_0+a$.  Note that
\[
 L(a,j+1)-L(a,j)=3j+2a-2h+2.
\]
If $a\le h-1$, the left endpoint is $j=h-a$.  At this point the
first difference is $h-a+2>0$, and it increases with $j$.
Consequently, $L(a,j)$ is increasing throughout the feasible range,
so its minimum is attained at $j=h-a$.  Substitution in
\eqref{p12:eq:isotropic-full-loss} gives
\[
 L(a,h-a)=\frac{a(2h-a-1)}2.
\]
If $a\ge h$, the left endpoint is $j=0$, and the first difference is $2a-2h+2>0$.
Thus the minimum is
\[
 L(a,0)=\binom h2.
\]
These two values are exactly $\kappa_h(a)$, so every feasible
summand has loss at least $\kappa_h(a)$.  We now sum the losses more
carefully.  The finite-product factor in
\eqref{p12:eq:isotropic-normalised-term} is at most
$\pi_\infty(2)^{-4}$.

If $a\le h-1$, put $j_0=h-a$.  For every $m\ge0$ for which
$j_0+m$ is feasible, the first-difference formula gives
\[
 L(a,j_0+m)-\kappa_h(a)\ge3m.
\]
If $a\ge h$, the left endpoint is $j_0=0$, and similarly
\[
 L(a,m)-\kappa_h(a)\ge2m.
\]
Consequently, in both cases,
\[
 \sum_{j\text{ feasible}}q^{-L(a,j)}
 \le q^{-\kappa_h(a)}\sum_{m\ge0}q^{-2m}
 \le \frac43q^{-\kappa_h(a)}.
\]
Combining this with \eqref{p12:eq:isotropic-normalised-term} proves
\eqref{p12:eq:isotropic-uniform}. 

\medskip

We now obtain a sharper constant in the high-rank regime $a\ge h-1$, where the exponent loss has saturated at $\binom h2$. We separate the threshold rank $a=h-1$ from the higher ranks $a\ge h$.
\paragraph{Rank $a=h-1$.}
Here the feasible range begins at $j=1$.  Formula
\eqref{p12:eq:isotropic-normalised-term} becomes
\[
 \frac1{\pi_j}
 \left(
 \frac{\pi_{R-2h+2}}{\pi_{R-2h+2-j}}
 \frac{\pi_h}{\pi_{h-j}}
 \frac{\pi_{R-h}}{\pi_R}
 \prod_{u=j}^{h-1}(1-q^{-2u})
 \right)\cdot q^{-3j(j-1)/2}.
\]
We claim that the parenthesised factor is at most $\prod_{u=j}^{\infty}(1-q^{-2u})$.
It is
enough to show
\begin{equation}\label{p12:eq:finite-product-comparison}
 \frac{\pi_{R-2h+2}}{\pi_{R-2h+2-j}}
 \frac{\pi_h}{\pi_{h-j}}
 \frac{\pi_{R-h}}{\pi_R}
 \le \prod_{u=h}^{\infty}(1-q^{-2u}).
\end{equation}
Since $a=h-1$, the radical has dimension
$r_0=R-2h+2$. Also note that $j\le r_0$.  
Using $\log(1-q^{-n})=-\sum_{m\ge1}\frac{q^{-mn}}m$, the logarithm of the left-hand side
of~\eqref{p12:eq:finite-product-comparison} is
\[
 -\sum_{m\ge1}\frac1m\left(
   \sum_{n=r_0-j+1}^{r_0}q^{-mn}
   +\sum_{n=h-j+1}^{h}q^{-mn}
   -\sum_{n=R-h+1}^{R}q^{-mn}
 \right),
\]
whereas the logarithm of the right-hand side is
\[
 -\sum_{m\ge1}\frac1m\sum_{u=h}^{\infty}q^{-2um}.
\]
For every integer $m\ge1$, put $x=q^{-m}\le1/2$. We have
\[
 \sum_{n=r_0-j+1}^{r_0}x^n
 \ge jx^{r_0}
 \ge h x^{r_0+h-1}
 \ge \sum_{n=R-h+1}^{R}x^n~~\text{and}~~
 \sum_{n=h-j+1}^{h}x^n\ge x^h
 \ge \frac{x^{2h}}{1-x^2}
 =\sum_{u=h}^{\infty}x^{2u}
\]
because $h x^{h-1}\le1$ for $x\le1/2$ and $h\ge2$. Thus, for every $m\ge1$,
\[
 \sum_{n=r_0-j+1}^{r_0}q^{-mn}
 +\sum_{n=h-j+1}^{h}q^{-mn}
 -\sum_{n=R-h+1}^{R}q^{-mn}
 \ge \sum_{u=h}^{\infty}q^{-2um}.
\]
This proves~\eqref{p12:eq:finite-product-comparison}. Extending the
feasible sum to all $j\ge1$ then proves the bound by $\gamma_q^-$. 

\paragraph{Ranks $a\ge h$.}
Here the feasible range begins at $j=0$, and $\ell=h-j$.
Formula~\eqref{p12:eq:isotropic-normalised-term} shows that the
contribution of the $j$th summand to
$q^{\binom h2}I_h(A)/\qbinom Rhq$ is
\[
 \frac1{\pi_j}
 \left(
  \frac{\pi_{R-2a}}{\pi_{R-2a-j}}
  \frac{\pi_h}{\pi_{h-j}}
  \frac{\pi_{R-h}}{\pi_R}
  \prod_{u=a-h+j+1}^{a}(1-q^{-2u})
 \right)
 \cdot q^{-j(4a-4h+3j+1)/2}.
\]
We claim that the parenthesised factor is at most one.  Note that
$r_0=R-2a$.  Since $j\le r_0$, its first ratio is
\[
 \frac{\pi_{r_0}}{\pi_{r_0-j}}
 =\prod_{n=r_0-j+1}^{r_0}(1-q^{-n})\le1.
\]
It remains to compare the other numerator factors with the $h$
denominator factors in $\pi_R/\pi_{R-h}$.  Note that
\[
 \frac{\pi_h}{\pi_{h-j}}
 \prod_{u=a-h+j+1}^{a}(1-q^{-2u})
 =\prod_{t=0}^{j-1}(1-q^{-(h-j+1+t)})
  \prod_{t=0}^{h-j-1}(1-q^{-2(a-h+j+1+t)}),
\]
whereas
\[
 \frac{\pi_R}{\pi_{R-h}}
 =\prod_{t=0}^{j-1}(1-q^{-(R-h+1+t)})
  \prod_{t=0}^{h-j-1}(1-q^{-(R-h+j+1+t)}).
\]
For the first pair of blocks, since $R\ge2a\geq 2h$, we have
\[
 h-j+1+t\le R-h+1+t,
 \qquad 0\le t<j.
\]
For the second pair, we have
\[
 2(a-h+j+1+t)\le R-h+j+1+t,
 \qquad 0\le t<h-j,
\]
because the right-hand side minus the left-hand side is
$r_0+h-j-1-t\ge0$.  Since $1-q^{-n}$ increases with $n$, each
numerator factor is at most the corresponding denominator factor.
Thus the parenthesised factor is at most one, as claimed.

Finally, $a\ge h$ gives
\[
 \frac{j(4a-4h+3j+1)}2\ge\frac{j(3j+1)}2.
\]
Consequently, the $j$th normalized contribution is at most
\[
 \frac1{\pi_j}\cdot q^{-j(3j+1)/2}.
\]
This also includes $j=0$, with the usual convention that an empty
product is one.  Extending the feasible sum to every $j\ge0$ proves
the bound by $\gamma_q^+$.

\paragraph{Explicit numerical estimates.}  For $q=2$, the
$j=1$ term of \eqref{p12:eq:gamma-minus} is less than
\[
 2(1-2^{-2})(1-2^{-4})(1-2^{-6})<1.4.
\]
The $j=2$ term is less than
\[
 \frac13(1-2^{-4})(1-2^{-6})<0.308.
\]
By \eqref{p12:eq:pi-infinity-two-lower} and \eqref{p12:eq:uniform-pi},
$\pi_j(2)\ge\pi_\infty(2)>1/4$.  Thus the tail $j\ge3$ is less than
\[
 4\sum_{j\ge3}2^{-3j(j-1)/2}<0.008.
\]
Therefore $\gamma_2^-<1.716$.  Similarly,
\[
 \gamma_2^+
 \le1+\frac12+4\sum_{j\ge2}2^{-j(3j+1)/2}<1.533.
\]
Hence $\gamma_2<1.72$.

For $q\ge3$ we give a completely explicit estimate, since this
is the only numerical input in the boundary argument.  The $j=1$ and
$j=2$ terms of $\gamma_q^-$ are bounded by
\[
 \frac1{1-q^{-1}}\le\frac32,
 \qquad
 \frac{q^{-3}}{(1-q^{-1})(1-q^{-2})}\le\frac1{16}.
\]
For every $j\ge3$ the numerator in \eqref{p12:eq:gamma-minus} is at most
one, whereas
\[
 \pi_j(q)
 \ge 1-\sum_{i\ge1}q^{-i}\ge\frac12.
\]
Consequently
\[
 \sum_{j\ge3}
 q^{-3j(j-1)/2}
 \cdot
 \frac{\prod_{u=j}^{\infty}(1-q^{-2u})}
      {\pi_j(q)}
 \le2\sum_{j\ge3}3^{-3j(j-1)/2}<\frac1{9000}.
\]
Thus
\[
 \gamma_q^-<\frac32+\frac1{16}+\frac1{9000}<1.563.
\]
Similarly, the $j=1$ term of $\gamma_q^+$ is at most $1/6$, and
\[
 \gamma_q^+
 <1+\frac16+2\sum_{j\ge2}3^{-j(3j+1)/2}<1.169.
\]
Therefore
\[
 \gamma_q<1.563,
 \qquad
 \frac{\gamma_q}{q-1}
 <\frac{1.563}{2}<\frac{391}{500}=0.782
 \qquad(q\ge3).
\]
This proves the stated bounds.  
\end{proof}

\section{Counting exponent for \texorpdfstring{$h$}{h}-hollowing spaces of alternating forms away from the binary endpoint}
\label{p12:sec:counting-exponent}

The main result of this section counts $M$-dimensional $h$-hollowing
spaces in $\Alt(X)$, where $\dim X=R$, throughout the range
$M\le\binom h2$ except at the binary endpoint
$(q,M)=(2,\binom h2)$.  Thus it covers $q\ge3$ with
$M\le\binom h2$, and $q=2$ with $M\le\binom h2-1$.  We first record
a dimension lower bound
that is valid over every finite field, translated from~\cite[Proposition~5.2]{BNV} through the alternating-form formulation given in~\cref{p12:prop:sims-hollowing-dictionary}.  


\begin{lemma}[Minimum dimension]\label{p12:lem:hollowing-min-dim}
Let $X$ be a finite-dimensional $\F_q$-space, let
$0\le h<\dim X$, and let $\cA\le\Alt(X)$ be $h$-hollowing.  Then
$\dim\cA\ge h$.
\end{lemma}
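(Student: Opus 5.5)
We plan to argue by contradiction, building an $(h)$-dimensional subspace greedily, in the spirit of \cite[Proposition~5.2]{BNV}. Suppose $\cA\le\Alt(X)$ is $h$-hollowing with $M=\dim\cA\le h-1$. The case $\cA=0$ is immediate: $0$-hollowing fails because no nonzero form exists, and for $h\ge1$ the same holds. So assume $M\ge1$. Choose a surjection $\beta:\wedge^2X\to W$ with $\cA=\cA_\beta=\beta^*(W^*)$ and $\dim W=M$; concretely, take $W=\cA^*$ and let $\beta(x\wedge y)$ be the evaluation functional $A\mapsto A(x,y)$. Since $\cA$ consists of alternating forms and is nonzero, this map is surjective onto $\cA^*$, because its dual is the inclusion $\cA\hookrightarrow\Alt(X)$.

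By \cref{p12:prop:sims-hollowing-dictionary}\textup{(ii)}, being $h$-hollowing means $s(\beta)>h$. It therefore suffices to show $s(\beta)\le M+1$, since $M+1\le h$ then gives a contradiction. This is the classical bound $s-1\le r_2$ of \cref{lem:relation between s(G) and r_2}, now phrased for a linear map. We prove it directly with a chain of subspaces.

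Pick $x_1,x_2$ with $\beta(x_1\wedge x_2)\ne0$, which is possible because $\beta$ is surjective and $W\neq0$. Set $U_2=\langle x_1,x_2\rangle$, so $\dim\beta(\wedge^2U_2)\ge1$. Inductively, if $W_k:=\beta(\wedge^2U_k)\ne W$ with $\dim U_k=k$, we claim some $x\in X$ satisfies $\beta(\wedge^2(U_k+\langle x\rangle))\supsetneq W_k$. Otherwise $\beta(u\wedge x)\in W_k$ for all $u\in U_k$ and $x\in X$. Extending a basis of $U_k$, the value $\beta(x\wedge y)$ for $x,y$ outside would still need to be handled.

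That case is the main obstacle, and we handle it as follows. If $\beta(U_k\wedge X)\subseteq W_k$, pick any $x,y$ with $\beta(x\wedge y)\notin W_k$; such a pair exists because $\beta$ is surjective. Then adjoin $x$ first. Because $x\notin U_k$ (otherwise $\beta(x\wedge y)\in\beta(U_k\wedge X)\subseteq W_k$), the new space $U_{k+1}=U_k+\langle x\rangle$ has dimension $k+1$. If $\beta(\wedge^2 U_{k+1})$ is still $W_k$, adjoin $y$ as well: $U_{k+2}$ has dimension at most $k+2$ and its image grows by at least one, namely through $\beta(x\wedge y)$.

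This two-step move could break the ``one dimension per step'' accounting. We repair it by choosing the starting pair adaptively, exactly as in \cite[Proposition~5.2]{BNV}. The two-step case can only occur at the very start, where it is absorbed into $U_2$, because for $k\ge2$ we may instead argue via forms. Suppose $W_k\ne W$. Then there is a nonzero $A\in\cA$ vanishing on $U_k$ by \cref{p12:prop:sims-hollowing-dictionary}\textup{(i)}. If moreover $A(U_k,X)=0$ for every such $A$, replace $U_k$ by a $k$-space containing $x$ and a hyperplane of $U_k$ and compare images.

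We expect this bookkeeping to reduce to the statement that a dimension-$k$ space with $k\ge2$ always admits a one-vector extension increasing $\dim W_k$. Granting that, induction gives $\dim W_k\ge k-1$, so a $U$ with $\dim U\le M+1$ and $\beta(\wedge^2U)=W$ exists. This contradicts $h$-hollowing, since any $h$-space containing $U$ is then faithful.
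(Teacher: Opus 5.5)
\textbf{Verdict: genuine gap.} Your top-level reduction is the paper's. You pass to the surjection $\beta:\wedge^2X\to\cA^*$ whose dual is the inclusion, read off $s(\beta)>h$ from \cref{p12:prop:sims-hollowing-dictionary}\textup{(ii)}, and conclude from $s(\beta)\le\dim\cA+1$. The paper simply quotes that last inequality from \cite[Proposition~5.2]{BNV}; had you cited it as well, you would be done. Your direct proof of it does not close, as your own ``Granting that'' signals.

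\emph{The granted step is false.} You assume that every $k$-space $U_k$ with $k\ge2$ and $W_k\ne W$ admits a one-vector extension enlarging $W_k$. Take $X$ with basis $e_1,\dots,e_4$, set $\beta(e_1\wedge e_2)=w_1$ and $\beta(e_3\wedge e_4)=w_2$, and let all other basis wedges be zero. Then $U_2=\langle e_1,e_2\rangle$ satisfies $\beta(U_2\wedge X)\subseteq\langle w_1\rangle$, so no single vector helps. Yet $\langle e_1+e_3,e_2,e_4\rangle$ shows $s(\beta)=3$.

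\emph{The stuck case is not confined to the start.} Take $X=U\oplus\langle x,y\rangle$ with $\beta(U\wedge x)=\beta(U\wedge y)=0$ and $\beta(x\wedge y)$ a new basis vector of $W$. Then any greedy chain inside $U$ gets stuck at stage $k=\dim U$, for every $k\ge2$. You also give no rule for the ``adaptive'' choice of the starting pair.

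\emph{The proposed repair is unsupported.} Trading a vector of $U_k$ for $x$ and comparing images does not work without further argument, because deleting one vector of $U_k$ can destroy up to $k-1$ dimensions of $W_k$. The entire content of the classical bound lies in handling this stuck case by modifying the chain, not merely extending it.

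\textbf{How to close the gap.} Prove by induction that for $2\le k\le M+1$ some $k$-space $U$ has $\dim\beta(\wedge^2U)\ge k-1$. Here $M=\dim\cA$, and $k+1\le h<\dim X$ throughout.
\begin{enumerate}
\item If $\dim\beta(\wedge^2U)\ge k$, or some single vector enlarges $W':=\beta(\wedge^2U)$, extend $U$ by one vector.
\item Otherwise $\beta(U\wedge X)\subseteq W'$ with $\dim W'=k-1<M$. Pick $x,y$ with $\beta(x\wedge y)\notin W'$; these are independent modulo $U$. Set $Z=U\oplus\langle x,y\rangle$. Then $\beta(\wedge^2Z)=W'\oplus\langle\beta(x\wedge y)\rangle$, so the restrictions of $\cA$ to $Z$ form a $k$-dimensional space $\cA'$.
\item A nonzero alternating form vanishing on a hyperplane $Z_0\le Z$ has rank $2$ and radical contained in $Z_0$. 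Hence each projective point of $\cA'$ is isotropic on at most $q+1$ hyperplanes of $Z$.
\item Since $(q+1)(q^k-1)<q^{k+2}-1$, some hyperplane $Z_0$ of $Z$ is faithful for $\cA'$. That is, $\dim Z_0=k+1$ and $\dim\beta(\wedge^2Z_0)=k$.
\end{enumerate}
At $k=M+1$ this gives a generating $U$ with $\dim U\le h$. Your final sentence then yields the contradiction.
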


\begin{proof}
From $\cA$, define the dual alternating map $\beta_{\cA}:\wedge^2X\longrightarrow\cA^*$ by $\beta_{\cA}(x\wedge y)(A)=A(x,y)$ for any $A\in\cA$. 
The dual map $\beta_{\cA}^*:\cA\to\Alt(X)$ is just the inclusion of
$\cA$ into $\Alt(X)$.  It is injective, so $\beta_{\cA}$ is
surjective.  By \cref{p12:prop:sims-hollowing-dictionary}, we have $h<s(\beta_{\cA})$.
The original Sims rank bound \cite[Proposition~5.2]{BNV} then gives
\[
 h<s(\beta_{\cA})\le\dim\cA^*+1=\dim\cA+1.
  \]
\end{proof}

We now assume $h\le M\le\binom h2$ and either $q\ge3$, or $q=2$ and
$M\le\binom h2-1$.

\begin{proposition}[Counting hollowing spaces away from the binary endpoint]
\label{p12:prop:exponent-away}
Let $X$ be an $R$-dimensional
$\F_q$-space, and let
$\cH_q(R,M,h)$ be the set of $M$-dimensional
$h$-hollowing subspaces of $\Alt(X)$, where
$1\le h<R$ and $h\le M\le\binom h2$.  Assume either that $q\ge3$, or
that $q=2$ and $M\le\binom h2-1$.  Then
\begin{equation}\label{p12:eq:exponent-away}
 \log_q|\cH_q(R,M,h)|
 \le (M-h)\binom R2+O(R^2+M^2).
\end{equation}
Here $\log_q0=-\infty$, and the implied constant is absolute.
\end{proposition}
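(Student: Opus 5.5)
The plan is to reduce hollowing subspaces to minimally hollowing ones and to count those by a union/first-moment argument over the Grassmannian $\Gr(h,X)$, using the isotropic estimates of \cref{p12:lem:isotropic-uniform}. First, every $h$-hollowing $\cA$ of dimension $M$ contains a minimally $h$-hollowing subspace $\cA_0$ of some dimension $M_0$, with $h\le M_0\le M$ by \cref{p12:lem:hollowing-min-dim}. Given $\cA_0$, the number of $M$-dimensional $\cA\supseteq\cA_0$ is at most $q^{(M-M_0)(\binom R2-M_0)+O(1)}$ by \cref{p12:lem:universal-field}(i). Hence it suffices to show that the number of minimally $h$-hollowing $M_0$-spaces is at most $q^{(M_0-h)\binom R2+O(R^2+M^2)}$ and then sum over $M_0\le M$, which costs only a factor $M$.

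For a minimally $h$-hollowing $\cA_0$, minimality means that for every hyperplane $\cB<\cA_0$ there is some $H\in\Gr(h,X)$ isotropic for a form in $\cA_0\setminus\cB$ but for no nonzero form of $\cB$. I plan to encode $\cA_0$ by a sequence of witnesses. Choose $h$-spaces $H_1,\dots,H_{t}$ and forms $A_i\in\cA_0$ with $A_i|_{H_i}=0$, greedily so that the $A_i$ are linearly independent and span $\cA_0$, with $t=M_0$. Each $H_i$ costs $q^{h(R-h)}=q^{O(R^2)}$ in total over $O(M)$ steps only if the number of steps is $O(1)$, so instead I will use the alternative encoding: pick one form $A_i$ at a time, paying $q^{\binom R2}$ for a generic new form, but only $q^{\binom R2-\binom h2}\cdot q^{h(R-h)}$ (up to constants) for a form vanishing on a prescribed $H_i$. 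Minimality should force at least $h$ of the $M_0$ basis forms to be of the ``vanishing on a witness $h$-space'' type, with the witnesses' isotropy constraints independent. The target saving is $h\binom R2-O(R^2)$. I would get this from a double count: $\sum_{A\in\cA_0\setminus0} I_h(A)\ge\qbinom Rhq$, and by \cref{p12:lem:isotropic-uniform} each high-rank form contributes at most $\gamma_q q^{-\binom h2}\qbinom Rhq$. Since $M_0\le\binom h2$, the number of nonzero forms is $q^{M_0}-1$. The inequality $(q^{M_0}-1)\gamma_q/(q-1)\cdot q^{-\binom h2}<1$ on projective points holds exactly when $q\ge3$, using $\gamma_q/(q-1)<0.782$, or when $q=2$ and $M_0\le\binom h2-1$, using $\gamma_2<1.72<2$. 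This is why the binary endpoint is excluded. It forces $\cA_0$ to contain many low-rank forms, of rank $2a<2(h-1)$.

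The low-rank forms are then counted by \cref{p12:lem:universal-field}(ii): a form of rank $\le2a$ costs $q^{a(2R-2a-1)}$ instead of $q^{\binom R2}$, saving about $\binom{R-2a}{2}$. The main obstacle is turning ``many low-rank forms'' into a saving of $h\binom R2$ over a whole space. My first attempt would be an averaging/incidence argument. Weight the covering inequality by rank, using the uniform bound $q^{-\kappa_h(a)}$. Then pick a basis of $\cA_0$ consisting of forms of small rank, chosen iteratively so that the span of the earlier basis forms fails to hollow some $H$. The aim is to show that at least $h$ basis forms have rank $O(1)$ relative to... or more precisely, that the total rank-saving $\sum_i\binom{R-2a_i}{2}$ is at least $h\binom R2-O(R^2+M^2)$. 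If that inequality proves too weak, I would fall back on a Fourier/character-sum estimate for $\sum_{A\in\cA}I_h(A)$, as the abstract's ``Fourier estimates'' suggest. That would bound the number of $M_0$-spaces whose isotropic loci cover $\Gr(h,X)$ directly.
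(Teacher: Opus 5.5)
Your proposal has the right ingredients but leaves a genuine gap at exactly the step you call ``the main obstacle'', and neither of your fallbacks closes it. The correct ingredients are:
\begin{itemize}
\item the covering identity $\Gr(h,X)=\bigcup_{0\ne A\in\cA}\cZ_h(A)$;
\item the margins $\gamma_q/(q-1)<0.782$ for $q\ge3$ and $\gamma_2/2<0.86$, which explain why the binary endpoint is excluded;
\item the cheap count $q^{a(2R-2a-1)+O(1)}$ of forms of rank at most $2a$.
\end{itemize}

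Two ideas are missing.

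\emph{First}, ``many low-rank forms'' must be turned into a \emph{dimension} bound. Let $\cL_a(\cA)$ be the span of the forms in $\cA$ of rank at most $2a$, and let $k_a$ be its dimension. Every nonzero form has rank at least $2$. By the uniform estimate of \cref{p12:lem:isotropic-uniform} with $\kappa_h(1)=h-1$, each such form covers at most a $q^{-h+O(1)}$ fraction of $\Gr(h,X)$. Since $\cL_a(\cA)$ has fewer than $q^{k_a}$ elements and must cover a fixed positive fraction, you get $k_a\ge h-O(1)$.

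\emph{Second}, and more seriously, the rank cutoff must be adapted to $M$. With your fixed threshold $a=h-2$, each low-rank form costs $q^{(h-2)(2R-2h+3)+O(1)}$, so $h$ of them cost $q^{\Theta(h^2R)}$. For $M=h$ and $R=2h$ this is essentially the whole $q^{h\binom R2}$ you need to save, leaving an error of order $R^3$, far beyond $O(R^2+M^2)$.

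The paper instead chooses the cutoff as follows.
\begin{itemize}
\item It takes the least $a$ with $\kappa_h(a+1)-M\ge C$, for a large absolute $C$. Then the fewer than $q^M/(q-1)$ directions of rank above $2a$ cover at most half of $\Gr(h,X)$.
\item Only when $\binom h2-M<C$ does it fall back to $a=h-2$ and use your $\gamma_q$ argument, with margin $43/50$.
\item Minimality of $a$ gives $ha=O(M)$. Either $a=1$, or $\kappa_h(a)<M+C$; and $\kappa_h(a)=\Omega(ha)$ when $a\le h/2$, while $M=\Omega(h^2)$ otherwise.
\end{itemize}
One then takes an ordered basis of $\cA$ whose first $d=\min\{h,k_a\}\ge h-O(1)$ members lie in the rank-$\le2a$ variety and whose remaining $M-d$ members are arbitrary. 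This gives $(M-h)\binom R2+O(haR+R^2)=(M-h)\binom R2+O(MR+R^2)$.

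Your alternative routes would not supply this.
\begin{itemize}
\item The witness encoding saves $\binom h2$ per form but pays about $h(R-h)$ to name the witness $H_i$. The net saving is negative as soon as $R>(3h-1)/2$. So in regimes such as $M=h$, $R=2h$, where the statement has content, no choice of witnesses reaches $h\binom R2-O(R^2+M^2)$.
\item Hoping for $h$ forms of rank $O(1)$ is too optimistic. Near $M\approx\binom h2$ the guaranteed low-rank forms may have rank $\Theta(h)$, which is exactly what the budget $ha=O(M)$ absorbs.
\item The Fourier fallback is unnecessary here; the paper needs Fourier analysis only at the binary endpoint.
\item Your reduction to minimally hollowing subspaces is harmless but also unnecessary. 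The covering argument applies to any $h$-hollowing space, and minimality plays no role away from the endpoint.
\end{itemize}
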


\begin{proof}
The range $h\le M\le\binom h2$ is nonempty only when $h\ge3$.
Recall that an $h$-hollowing $M$-space $\cA\le\Alt(X)$ satisfies
\begin{equation}\label{p12:eq:covering-property}
\Gr(h,X)=\bigcup_{0\ne A\in\cA}\cZ_h(A),
\qquad
\cZ_h(A)=\{H\in\Gr(h,X):A|_{H\times H}=0\}.
\end{equation}
\cref{p12:lem:isotropic-uniform} shows that if $\rk(A)=2a$, then
\[
\frac{|\cZ_h(A)|}{|\Gr(h,X)|}
\le q^{-\kappa_h(a)+O(1)}.
\]
Since $\kappa_h(a)$ is nondecreasing in $a$, forms of higher rank have uniformly smaller isotropic loci, up to the point where the exponent loss saturates at $\binom h2$. Consequently, once the combined contribution of the high-rank forms is shown to be small, the low-rank forms must cover a positive proportion of $\Gr(h,X)$. This leads to a better upper bound on $\cH_q(R,M,h)$ if we can isolate sufficiently many low-rank basis elements in $\cA$ through the above discussion.


Fix an $h$-hollowing $M$-space $\cA\le \Alt(X)$.  For
$1\le a\le h-2$, let
\[
 \cL_a(\cA)=\linspan\{A\in\cA:\rk A\le 2a\},
 \qquad
 k_a=\dim\cL_a(\cA).
\]
For nonzero $A\in\cL_a(\cA)$, write $\rk(A)=2r$.  Then $r\ge1$, so
$\kappa_h(r)\ge\kappa_h(1)=h-1$.  Hence
\cref{p12:lem:isotropic-uniform} gives
\[
 \frac{|\cZ_h(A)|}{|\Gr(h,X)|}\le q^{-h+O(1)}.
\]
There are fewer than $q^{k_a}$ such forms, so their isotropic loci
cover at most $q^{k_a-h+O(1)}$ of the Grassmannian. 

On the other hand, every element of $\cA\setminus\cL_a(\cA)$ has rank at least $2a+2$.
There are fewer than $q^M/(q-1)$ projective directions in $\cA$.~\footnote{We count projective directions here since $\cZ_h(A)=\cZ_h(\lambda A)$ for every $\lambda\in\F_q^\times$.}
Thus, by the first estimate in
\cref{p12:lem:isotropic-uniform}, the directions outside
$\cL_a(\cA)$ cover at most
\[
 \frac{1}{q-1}\cdot q^{M-\kappa_h(a+1)+O(1)}
\]
of the Grassmannian.  The covering property
\eqref{p12:eq:covering-property} therefore gives
\begin{equation}\label{p12:eq:incidence-split}
 1\le q^{k_a-h+O(1)}
 +\frac{1}{q-1}\cdot q^{M-\kappa_h(a+1)+O(1)}.
\end{equation}

We now choose the cutoff $a$ based on $h$ and $M$. Fix a sufficiently large absolute
constant $C$ such that replacing $M-\kappa_h(a+1)$ by $-C$ in the second term of~\eqref{p12:eq:incidence-split} makes the second term less than $1/2$. If $\binom h2-M\ge C$, choose the least $a$ such that $\kappa_h(a+1)-M\ge C$.
Such an $a\le h-2$ exists because
$\kappa_h(h-1)=\binom h2$. In this setting, 
\[
 \frac{1}{2}\le q^{k_a-h+O(1)}~~\Rightarrow~~k_a\ge h-O(1).
\]

If instead $\binom h2-M<C$, choose $a=h-2$ and put
$\ell=\binom h2-M$.  The directions outside
$\cL_{h-2}(\cA)$ have rank at least $2h-2$, so the estimate
\eqref{p12:eq:gamma-bound} shows that they cover at most
\[
 \frac{\gamma_q}{q-1}q^{-\ell}
\]
of $\Gr(h,X)$.  If $q\ge3$, this is less than
$391/500<43/50$.  If $q=2$, the hypothesis gives $\ell\ge1$, so it
is at most $\gamma_2/2<43/50$.  Hence the low-rank directions cover
more than $7/50$, and the first-group estimate gives again
\[
 k_{h-2}\ge h-O(1).
\]

Note that the cutoff of $a$ chosen above satisfies
\begin{equation}\label{p12:eq:cutoff-cost}
 ha=O(M).
\end{equation}
Indeed, in the first case, the minimal choice of $a$ gives either
$a=1$, or
\[
 \kappa_h(a)<M+C.
\]
If $a\le h/2$, then
$\kappa_h(a)=a(2h-a-1)/2=\Omega(ha)$.  If $a>h/2$, then
$M=\Omega(h^2)$, and again $ha=O(M)$.  In the second case,
$a=h-2$ and $M>\binom h2-C$, so the same conclusion holds.  The
bounded values of $h$ are absorbed into the implied constant.

It remains to count the low-rank choices.  Put
$\rho=\min\{a,\lfloor R/2\rfloor\}$.  By
\cref{p12:lem:universal-field}, the number of alternating forms on $X$
of rank at most $2a$ is at most
\begin{equation}\label{p12:eq:rank-variety}
 q^{\rho(2R-2\rho-1)+O(1)}=q^{O(aR)}.
\end{equation}
When $a>R/2$, this also includes the trivial count of all alternating
forms on $X$.

There is an absolute constant $C_0$ such that
$k_a\ge h-C_0$.  Put $d=\min\{h,k_a\}$, so
$h-C_0\le d\le h$.  Every space $\cA$ has an ordered basis whose
first $d$ members are independent rank-$\le2a$ forms from
$\cL_a(\cA)$.  For an upper bound, choose these first $d$ forms from
the rank-$\le2a$ variety and choose the remaining $M-d$ forms freely
from $\Alt(X)$, allowing dependent choices.  Counting these ordered
lists only enlarges the number of possible spaces.  Therefore
\[
\begin{split}
 \log_q|\cH_q(R,M,h)|
 &\le d\,O(aR)+(M-d)\binom R2\\
 &\le (M-h)\binom R2+O(haR+R^2)\\
 &\le (M-h)\binom R2+O(MR+R^2),
\end{split}
\]
where the second inequality holds since $haR=O(MR)$. Moreover, since $MR\le\frac{R^2+M^2}{2}$, this proves \eqref{p12:eq:exponent-away}.
\end{proof}

\section{The binary endpoint}\label{p12:sec:binary-endpoint}

Throughout this section, we deal with the case when the matrix field
is $\F_2$, $1\le h<R$, and $M=\binom h2$.  By
\cref{p12:lem:hollowing-min-dim}, the endpoint family is empty unless
$M=\binom h2\ge h$, thus
$h\ge3$. 

The difficulty is precise.  In the proof of
\cref{p12:prop:exponent-away}, take the cutoff $a=h-2$.  The projective
directions of rank at least $2h-2$ cover at most $\gamma_2\cdot 2^{M-\binom h2}$
of $\Gr(h,X)$.  If $M\le\binom h2-1$, this is at most
$\gamma_2/2<43/50$, so the low-rank directions must cover a positive
proportion of the Grassmannian.  When $M=\binom h2$, however, the bound
becomes $\gamma_2>1$ and gives no information.  

We replace the failed covering estimate by the additional structure
provided by minimality.  Recall that a subspace
$\cA\leq\Alt(X)$ is called \emph{minimally $h$-hollowing} if $\cA$
is $h$-hollowing but no proper subspace of $\cA$ is $h$-hollowing.
Then the counting task can be divided into two parts: 
\begin{itemize}
  \item $M$-dimensional non-minimally $h$-hollowing subspaces; and 
  \item $M$-dimensional minimally $h$-hollowing subspaces. 
\end{itemize}
For the former, we can still use~\cref{p12:prop:exponent-away} to derive the desired bound.


\begin{proposition}[Nonminimal endpoint spaces]\label{p12:lem:minimal-reduction}
Let $X$ be an $R$-dimensional $\F_2$-space, let $3\le h<R$, and put
$M=\binom h2$.  Let $\mathfrak A_M^{\mathrm{nonmin}}$ be the family
of $M$-dimensional $h$-hollowing subspaces of $\Alt(X)$ that are not
minimally $h$-hollowing.  Then
\[
 \log_2|\mathfrak A_M^{\mathrm{nonmin}}|
 \le (M-h)\binom R2+O(R^2+M^2).
\]
The implied constants are absolute.
\end{proposition}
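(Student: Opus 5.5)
If $\cA$ is $M$-dimensional, $h$-hollowing and not minimally $h$-hollowing, then some proper subspace $\cB<\cA$ is still $h$-hollowing. Any $h$-hollowing space containing an $h$-hollowing subspace remains $h$-hollowing. So we may shrink $\cB$ to an $h$-hollowing subspace of dimension exactly $M-1$: take a maximal chain between $\cB$ and $\cA$ and keep its member of codimension one in $\cA$. The plan is to count pairs $(\cB,\cA)$, where $\cB$ is an $(M-1)$-dimensional $h$-hollowing subspace and $\cA=\cB+\linspan\{A\}$ for some form $A\in\Alt(X)$. This is a crude covering argument: the family $\mathfrak A_M^{\mathrm{nonmin}}$ injects into the set of such $\cA$, each obtained from at least one such pair.

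First step. If $M-1\ge h$, the pair $(q,M-1)=(2,\binom h2-1)$ lies outside the binary endpoint. Hence \cref{p12:prop:exponent-away} applies with $q=2$ and dimension $M-1\le\binom h2-1$, and gives
\[
 \log_2|\cH_2(R,M-1,h)|\le (M-1-h)\binom R2+O(R^2+M^2).
\]
If $M-1<h$, then by \cref{p12:lem:hollowing-min-dim} there is no $h$-hollowing subspace of dimension $M-1$, so the family is empty. This happens only for $h=3$, where $M=3$ and $M-1=2<3$.

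Second step. Given $\cB$, the extension $\cA$ is determined by choosing one additional form $A\in\Alt(X)$, which allows at most $2^{\binom R2}$ choices. Multiplying the two counts yields
\[
 \log_2|\mathfrak A_M^{\mathrm{nonmin}}|\le (M-1-h)\binom R2+\binom R2+O(R^2+M^2)=(M-h)\binom R2+O(R^2+M^2),
\]
as claimed.

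The only real point to check is the reduction to codimension one. In particular we need the hollowing property to be upward closed under inclusion: a nonzero form in $\cB$ killing $H$ also lies in $\cA$. We also need the hypotheses of \cref{p12:prop:exponent-away} to be met, namely $1\le h<R$ and $h\le M-1\le\binom h2$, the latter handled by the emptiness remark. Everything else is absorbed in the absolute implied constant.
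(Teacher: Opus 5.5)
Your proof is correct and follows the paper's route. You locate a proper $h$-hollowing subspace $\cB<\cA$, bound the number of such $\cB$ by \cref{p12:prop:exponent-away} with $q=2$ and dimension at most $\binom h2-1$, and then count the ways to enlarge $\cB$ to $\cA$. The only difference is cosmetic: the paper takes $\cB$ minimally $h$-hollowing of some dimension $h\le t<M$, counts its $M$-dimensional superspaces with a Gaussian binomial, and sums over $t$; your upward-closure observation lets you fix $t=M-1$ and add a single form, which removes both the sum and the Gaussian estimate.
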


\begin{proof}
Every $\cA\in\mathfrak A_M^{\mathrm{nonmin}}$ contains a proper
minimally $h$-hollowing subspace $\cB$.  Put $t=\dim\cB$.  By
\cref{p12:lem:hollowing-min-dim},
\[
 h\le t<M=\binom h2.
\]
For each fixed $t$, \cref{p12:prop:exponent-away} shows that the number of
possible $\cB$ is at most
\[
 2^{(t-h)\binom R2+O(R^2+t^2)}
 \le
 2^{(t-h)\binom R2+O(R^2+M^2)}.
\]
For a fixed $\cB$, the number of $M$-dimensional subspaces of
$\Alt(X)$ containing $\cB$ is
\[
 \qbinom{\binom R2-t}{M-t}{2}
 \le \pi_\infty(2)^{-1}
 2^{(M-t)(\binom R2-M)}
 \le 2^{(M-t)\binom R2+2}.
\]
Multiplying these estimates and summing over $h\le t<M$ gives
\[
\begin{split}
 |\mathfrak A_M^{\mathrm{nonmin}}|
 &\le
 \sum_{t=h}^{M-1}
 2^{(t-h)\binom R2+O(R^2+M^2)}
 \cdot 2^{(M-t)\binom R2+2}\\
 &\le
 2^{(M-h)\binom R2+O(R^2+M^2)}.
\end{split}
\]
\end{proof}

\subsection{Unique hollowing forms of minimally hollowing subspaces}
It remains to count $M$-dimensional minimally $h$-hollowing subspaces over $\F_2$, with $M=\binom h2$.  We
first observe that these spaces are spanned by specific alternating forms.
\begin{proposition}[Unique hollowing forms of minimally hollowing spaces]\label{p12:lem:private-span}
Let $X$ be a finite-dimensional $\F_q$-space, and let
$\cB\le\Alt(X)$ be minimally $h$-hollowing.  For $H\in\Gr(h,X)$, define 
\[
 \cB_H:=\{A\in\cB:A|_{H\times H}=0\}.
\]
Then those spaces $\cB_H$ for which $\dim\cB_H=1$ span $\cB$.
\end{proposition}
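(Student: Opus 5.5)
We argue by contradiction. Let $\cS\le\cB$ be the span of all the one-dimensional spaces $\cB_H$, and suppose $\cS\ne\cB$. Then $\cS$ lies in some hyperplane $\cB'<\cB$. By minimality, $\cB'$ is not $h$-hollowing. The plan is to contradict this by showing that every $H\in\Gr(h,X)$ is hollowed by some nonzero form of $\cB'$.

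Fix $H\in\Gr(h,X)$. Note that $\cB_H$ is the kernel of the restriction map $\res^{\cB}_H$ restricted to $\cB$, so it is a linear subspace of $\cB$. Since $\cB$ is $h$-hollowing, $\dim\cB_H\ge1$. We split into two cases according to this dimension.

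\emph{Case $\dim\cB_H=1$.} Then $\cB_H\le\cS\le\cB'$. Hence the nonzero form spanning $\cB_H$ lies in $\cB'$ and vanishes on $H\times H$.

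\emph{Case $\dim\cB_H\ge2$.} A subspace of dimension at least $2$ meets the hyperplane $\cB'$ nontrivially, since
\[
\dim(\cB_H\cap\cB')\ge\dim\cB_H+\dim\cB'-\dim\cB\ge1.
\]
So again some nonzero form of $\cB'$ vanishes on $H\times H$.

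In both cases $H$ is hollowed by a nonzero element of $\cB'$. Thus $\cB'$ is $h$-hollowing, contradicting the minimality of $\cB$. Therefore $\cS=\cB$.

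If $\cB=0$, there is nothing to prove, and indeed the zero space cannot be $h$-hollowing. The key step is the dimension count in the second case.
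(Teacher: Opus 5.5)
Your proof is correct and is essentially the paper's argument: take a hyperplane of $\cB$ containing the span of the one-dimensional $\cB_H$, and show it is still $h$-hollowing, handling $\dim\cB_H=1$ via containment and $\dim\cB_H\ge2$ via the dimension count against a hyperplane, which contradicts minimality. Your remark that the zero space cannot be $h$-hollowing relies on $\Gr(h,X)\ne\emptyset$ (true in the paper's range $h<\dim X$), but it is not needed anyway, since $\cS\ne\cB$ already forces $\cB\ne0$.
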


\begin{proof}
Let $\cB'$ be their span and suppose that $\cB'$ is a proper subspace of 
$\cB$.  Choose a hyperplane $\cC$ of $\cB$ which contains $\cB'$. 

Consider $H\in\Gr(h,X)$.  If $\dim\cB_H=1$, then $\cB_H\leq\cB'\leq\cC$ by the definition of $\cB'$. If $\dim\cB_H\ge2$, since $\cC$ is a hyperplane in $\cB$, it has codimension $1$ and thus
\[
 \dim(\cC\cap\cB_H)
 \ge 1.
\]
In either case $\cC\cap\cB_H$ contains a nonzero matrix for any $H\in\Gr(h,X)$.  Hence $\cC$
is $h$-hollowing, contradicting the minimal $h$-hollowing property
of $\cB$.
\end{proof}



The unique hollowing forms obtained from~\cref{p12:lem:private-span} have rank bounded above by a function of $h$: 
\begin{proposition}[Rank bound for unique hollowing forms]\label{p12:prop:private-rank}
Let $X$ be an $R$-dimensional $\F_2$-space, let
$\cB\le\Alt(X)$ be minimally $h$-hollowing with
$\dim\cB=\binom h2$, and let $H\in\Gr(h,X)$ be such that
$\cB_H=\linspan_{\F_2}\{A\}$ for some nonzero $A\in\cB$.  Then
\[
 \rk A\le \frac{10-\sqrt{10}}9 \cdot h^2+O(h),
\]
where the implied constant is absolute.
\end{proposition}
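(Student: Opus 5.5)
The plan is to exploit the fact that $H$ is covered by only one direction, $A$, and then to move $H$ slightly. Because $\cB_H=\linspan_{\F_2}\{A\}$, the restriction map $\res^{\cB}_H:\cB\to\Alt(H)$ has kernel exactly $\linspan\{A\}$. Its image is therefore a hyperplane of $\Alt(H)$, which has dimension $\binom h2$. Every other nonzero $C\in\cB$ satisfies $C|_{H\times H}\ne0$. Since $\cB$ is $h$-hollowing, every $h$-space $H'$ close to $H$ must still be made totally isotropic by some nonzero element of $\cB$. I will show that when $\rk A$ is too large, neither $A$ nor the forms nonzero on $H$ can cover all such $H'$, which gives the contradiction.

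\textbf{Step 1: a local family.} Fix a complement $Y$ of $H$ in $X$, and a parameter $t=\theta h$ with $0<\theta\le1$ to be optimised. Consider the graph spaces
\[
H_\varphi=\{x+\varphi(x):x\in H\},\qquad \varphi\in\operatorname{Hom}(H,Y),
\]
restricted to maps $\varphi$ supported on a fixed $t$-dimensional subspace $H_0\le H$ (so $\varphi$ vanishes on a fixed complement of $H_0$ in $H$) and taking values in a suitable subspace $Y_0\le Y$. Choose $Y_0$ adapted to $A$, for instance so that $A$ pairs $H_0$ nondegenerately with $Y_0$ as far as $\rk A$ permits. For a form $C$, expand
\[
C|_{H_\varphi}(x,y)=C(x,y)+C(x,\varphi y)+C(\varphi x,y)+C(\varphi x,\varphi y).
\]

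\textbf{Step 2: $A$ rarely covers.} For $C=A$ the constant term vanishes. The linear term $\varphi\mapsto A(x,\varphi y)+A(\varphi x,y)$ has rank controlled from below by the rank of the pairing between $H$ and $Y_0$ induced by $A$, which grows with $\rk A$. Using \cref{p12:lem:isotropic-exact} and the estimates in \cref{p12:lem:isotropic-uniform} applied inside $H_0+Y_0$, I will bound the proportion of $\varphi$ with $A|_{H_\varphi}=0$ by $2^{-g(\rk A,h,\theta)}$, where $g$ increases in $\rk A$ up to saturation. The quadratic term only matters through an additive Gaussian-sum correction, which I will control by a Fourier estimate over $\F_2$.

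\textbf{Step 3: other forms rarely cover.} For $C\ne0,A$, the condition $C|_{H_\varphi}=0$ forces the nonzero form $C|_{H\times H}$ to be cancelled by the linear and quadratic terms. When $\varphi$ has support of dimension $t$, this forces $C|_H$ to vanish on the $\binom{h-t}{2}$ coordinates on the fixed complement of $H_0$, together with further affine constraints. There are $2^{\binom h2}-2$ such $C$, but their restrictions to $H$ range over a hyperplane of $\Alt(H)$. Counting restrictions $C|_H$ that are supported on the pairs meeting $H_0$, and averaging over $\varphi$, gives a total covered proportion of at most $2^{-(\text{explicit quadratic in }h,\theta)+O(h)}$.

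\textbf{Step 4: combine and optimise.} The union over the family must be everything, so
\[
1\le 2^{-g(\rk A,h,\theta)}+2^{-\text{(Step 3 exponent)}+O(h)}.
\]
If Step 3 is arranged so that the second term is below $1/2$, this forces $g(\rk A,h,\theta)\le O(h)$, which is an upper bound on $\rk A$ in terms of $h^2$ and $\theta$. Optimising over $\theta$ should produce a quadratic equation whose root gives $\frac{10-\sqrt{10}}9$.

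The main obstacle is Step 3. Forms $C$ whose restriction $C|_H$ has low rank are only weakly constrained, and the naive union bound may not be strong enough to beat $1/2$. I would first try to split the $C$ by the rank of $C|_H$, using the rank-stratified counts in \cref{p12:lem:universal-field}(ii) with the isotropic loss $\kappa$ as in \cref{p12:prop:exponent-away}. The competition between these strata and the choice of $\theta$ is presumably where the irrational constant arises.
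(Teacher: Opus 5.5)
Your proposal has a genuine gap, starting with the mechanism in Step~2.

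The pairing between $H$ and $X$ induced by $A$ has rank $d=\dim H-\dim(H\cap\rad A)\le h$, however large $\rk A$ is. By \cref{p12:lem:rank-bound-private-span}, $\rk A=2d+\dim X_0$, and all the excess rank lives in $X_0\le H^{\perp_A}$, which is $A$-orthogonal to $H$. So the linear term $\varphi\mapsto A(x,\varphi y)+A(\varphi x,y)$ has rank bounded in terms of $h$ and $t$ alone. The rank of $A$ enters only through the quadratic term $A(\varphi x,\varphi y)$. When $\varphi$ maps into a large space on which $A$ is nondegenerate, the proportion of graphs covered by $A$ is of order $2^{-\binom t2}$, independently of $\dim X_0$. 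Proportions of this kind do not detect $\dim X_0$, so there is no exponent $g$ increasing in $\rk A$ to feed into Step~4. The paper perturbs inside $X_0$ for the opposite reason: there $A|_{H_{u,v}}$ collapses to the single scalar $b(u,v)=\trans uAv$.

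The decisive problem is Step~3. At the binary endpoint the count is exactly balanced, so no union bound can beat $1$.

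\emph{Why the family carries no usable information.} Since $\varphi$ vanishes on the fixed complement $H_0'$ of $H_0$, you have $H_0'\le H_\varphi$. Hence every form covering $H_\varphi$ lies in $\mathcal S=\{C\in\cB:C|_{H_0'}=0\}$. Restriction sends $\mathcal S$ into the space of forms on $H_\varphi$ vanishing on $H_0'$, which has dimension $N=\binom h2-\binom{h-t}2$. The image of $\cB$ in $\Alt(H)$ is a hyperplane with kernel $\linspan\{A\}$, so there are two cases.
\begin{itemize}
\item If $\dim\mathcal S=N+1$, every $H_\varphi$ is covered by linear algebra alone, and the family tells you nothing.
\item If $\dim\mathcal S=N$, then $\sum_{0\ne C\in\mathcal S}\Pr_\varphi[C|_{H_\varphi}=0]=\mathbb E_\varphi|\ker_\varphi|-1$, where $\ker_\varphi$ is the kernel of an $N\times N$ restriction matrix. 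Showing this sum is below $1$ already requires that this determinant be nonzero at some $\varphi$, i.e.\ that some $H_\varphi$ be uncovered. That is the contradiction itself, not a consequence of counting.
\end{itemize}
Per-form isotropic estimates, rank-stratified or not, only give a constant like $\gamma_2>1$ for about $2^N$ forms, each subject to $N$ conditions. This is exactly why \cref{p12:prop:exponent-away} fails at $(q,M)=(2,\binom h2)$.

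\emph{The missing idea} is to use hollowing pointwise and algebraically rather than by counting.
\begin{itemize}
\item For the perturbations $H_{u,v}$ with $u,v\in X_0$, the condition $b(u,v)=1$ excludes $A$ and forces the determinant constraint of \cref{p12:prop:private-determinant}.
\item Reed--Muller support bounds then give $\dim\ker R_u\le2(h-2)$.
\item Fourier coefficients of the indicator of $\GL_{h-2}(2)$ give $\dim R_u(X_0)\lesssim(1-x+x^2/2)(h-2)^2$, where $x(h-2)$ is the least rank of a trace representative of $R_u(v)\mapsto b(u,v)$.
\item Injectivity of $u\mapsto(C_u,\trans{G_u}C_u)$ on $S_0\setminus\{0\}$ gives $\dim X_0\lesssim(3x-x^2)(h-2)^2$.
\end{itemize}
The constant $\frac{10-\sqrt{10}}9$ is the value of $\min\{3x-x^2,\,1-x+x^2/2\}$ at the crossing $x=(4-\sqrt{10})/3$. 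Nothing in your $\theta$-optimisation produces these two curves, so the assertion in Step~4 about the constant is unsupported.
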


We shall prove this rank bound in the rest of this section.  Before
doing so, we record the counting estimate that follows from it and why we need that coefficient of $h^2$. This
is the analogue of \cref{p12:lem:minimal-reduction} for minimally
$h$-hollowing endpoint spaces.

\begin{proposition}[Minimal endpoint spaces]\label{p12:prop:endpoint-exponent}
There is an absolute integer $h_0$ such that the following holds.  Let
$X$ be an $R$-dimensional $\F_2$-space, let $h_0\le h<R$, and put
\[
 M=\binom h2,
 \qquad
 \kappa=\frac45,
 \qquad
 a_*=\min\left\{
       \left\lfloor\frac{\kappa h^2}{2}\right\rfloor,
       \left\lfloor\frac R2\right\rfloor
      \right\}.
\]
Let $\mathfrak A_M^{\mathrm{min}}$ be the family of $M$-dimensional
minimally $h$-hollowing subspaces of $\Alt(X)$.  Then
\begin{equation}\label{p12:eq:endpoint-exponent}
 \log_2|\mathfrak A_M^{\mathrm{min}}|
 \le
 (M-h)\binom R2
 +h a_*(2R-2a_*-1)
 +O(hR+h^4).
\end{equation}
The implied constants are absolute.
\end{proposition}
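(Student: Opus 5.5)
We will deduce \cref{p12:prop:endpoint-exponent} from \cref{p12:lem:private-span} and \cref{p12:prop:private-rank}, much as the proof of \cref{p12:prop:exponent-away} uses low-rank basis elements. Fix $\cB\in\mathfrak A_M^{\mathrm{min}}$. By \cref{p12:lem:private-span}, the unique hollowing forms $A_H$ (those with $\cB_H=\linspan\{A_H\}$) span $\cB$. By \cref{p12:prop:private-rank}, each has rank at most $\frac{10-\sqrt{10}}9h^2+O(h)$. Since $\frac{10-\sqrt{10}}{9}\approx0.7597<\kappa=4/5$, for $h\ge h_0$ this rank is at most $\kappa h^2$. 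Hence every unique hollowing form has rank at most $2a_*$: when $a_*=\lfloor R/2\rfloor$ this is automatic, and otherwise it holds because $2\lfloor\kappa h^2/2\rfloor\ge\kappa h^2-2$ and the slack $(\kappa-0.7597)h^2$ absorbs the $O(h)$ term. So $\cB$ has a basis consisting entirely of forms of rank at most $2a_*$.

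Only $h$ of these basis elements need to be charged at the low-rank price; the rest are charged as arbitrary forms. Choose an ordered basis of $\cB$ whose first $h$ members are linearly independent unique hollowing forms. This is possible because $M=\binom h2\ge h$ and the unique hollowing forms span $\cB$. The remaining $M-h$ members are arbitrary elements of $\Alt(X)$. By \cref{p12:lem:universal-field}\textup{(ii)} with $q=2$, the number of forms of rank at most $2a_*$ is at most $2\pi_\infty(2)^{-2}2^{a_*(2R-2a_*-1)}$. Counting ordered lists, which only overcounts spaces, gives
\[
 \log_2|\mathfrak A_M^{\mathrm{min}}|\le h\,a_*(2R-2a_*-1)+O(h)+(M-h)\binom R2 ,
\]
which is within the stated bound \eqref{p12:eq:endpoint-exponent}. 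The $O(hR+h^4)$ slack is not needed for the counting itself; it is there to absorb the small-$h$ cases and any lower-order losses.

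So once \cref{p12:prop:private-rank} is available, the proposition is bookkeeping. The substantive content lies entirely in that rank bound.

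For the rank bound itself, the plan is as follows. Let $A$ be the unique form vanishing on $H$, and let $\cB_0\le\cB$ be a complement of $\linspan\{A\}$, of dimension $\binom h2-1$. Restriction to $H$ is injective on $\cB_0$, so $\cB_0$ restricts to a hyperplane of $\Alt(H)$, which has dimension $\binom h2$. That hyperplane is the annihilator of a single nonzero alternating "dual" tensor $\theta$ on $H$.

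Next, perturb $H$ to nearby $h$-spaces $H'$ using $\rad A$ and the symplectic part of $A$. If $\rk A$ is large, then a typical perturbation $H'$ is not isotropic for $A$. Each such $H'$ must still be made isotropic by some nonzero form of $\cB$, and that form must be one that does not vanish on $H$, that is, a form from $\cB\setminus\linspan\{A\}$. Every such form has rank at least $2h-2$, and the $\gamma_2$-type incidence estimate of \cref{p12:lem:isotropic-uniform}, restricted to the family of perturbations, then forces a contradiction once $\rk A$ exceeds roughly $0.76h^2$.

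Optimising the perturbation dimension should produce the constant $\frac{10-\sqrt{10}}9$. This step is the main obstacle: it needs an incidence/Fourier count over the restricted family of perturbations of $H$, rather than over the whole Grassmannian.
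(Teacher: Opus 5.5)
Your deduction is correct and is exactly the paper's proof: granting \cref{p12:prop:private-rank}, for $h\ge h_0$ every unique hollowing form has rank at most $2a_*$. One then picks $h$ independent such forms (possible by \cref{p12:lem:private-span}), counts them with \cref{p12:lem:universal-field}\textup{(ii)}, and chooses the remaining $M-h$ basis forms freely, which gives an $O(h)$ error.

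Your closing sketch of the rank bound is not needed for this statement, and it would not work as written. Nothing forces the forms in $\cB\setminus\linspan\{A\}$ to have rank at least $2h-2$. Moreover, the whole-Grassmannian incidence bounds do not transfer to a restricted family of perturbations. The paper instead proves \cref{p12:prop:private-rank} via the block form of $A$, the determinant constraint on the perturbed spaces $H_{u,v}$, Reed--Muller support bounds, and the Fourier transform of the $\GL_{h-2}(2)$ indicator.
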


\begin{proof}
Assume \cref{p12:prop:private-rank}. 
Since $\frac{10-\sqrt{10}}9<\kappa=4/5$, we may choose an absolute $h_0\ge3$ such that
\cref{p12:prop:private-rank} gives
\[
 \rk A\le\kappa h^2
\]
for every unique hollowing form $A$ arising from
\cref{p12:lem:private-span}.  Since these forms span $\cB$, we may choose
$h$ linearly independent ones.  Their ranks are at most $2a_*$.

By \cref{p12:lem:universal-field}, the number of alternating forms on $X$
of rank at most $2a_*$ is at most
$2^{a_*(2R-2a_*-1)+O(1)}$.  Choose an ordered $h$-tuple of unique
hollowing forms from this set, and then choose the remaining $M-h$
ordered basis forms arbitrarily from the $\binom R2$-dimensional
space $\Alt(X)$.  Hence
\[
 \log_2|\mathfrak A_M^{\mathrm{min}}|
 \le
 (M-h)\binom R2
 +h a_*(2R-2a_*-1)+O(h).
\]
This proves the proposition.
\end{proof}

\subsection{Counting exponent for minimal endpoint spaces}

Now it is more convenient to work with a fixed basis of $X$, so that vectors are columns in $\F_2^R$
and elements of $\Alt(X)$ are alternating $R\times R$ matrices over $\F_2$.  
\subsubsection{Determinant constraints from the minimal hollowing property}

Fix an alternating matrix space $\cB\leq \F_2^{R\times R}$ and a vector space $H\leq\F_2^R$. Recall that
\[
  \cB_H=\{A\in\cB:~\trans{u}Av=0~\forall\ u,v\in H\}.
\]

\paragraph{Block-matrix structure of hollowing matrices.}
We first derive a block matrix structure for an alternating matrix $A\in\cB_H$ and obtain a rank bound with respect to such a structure.
\begin{proposition}
\label{p12:lem:rank-bound-private-span}
For every $A\in\cB_H$, let $J=H\cap\rad A$ and choose $H_1\le H$ such that $H=H_1\oplus J$. Let $d=\dim H-\dim J$. Define
\[
 H^{\perp_A}=\{x\in\F_2^R:\trans{x}Ah=0\text{ for every }h\in H\},
\]
and choose $X_0\le H^{\perp_A}$ such that
\[
 H^{\perp_A}=\rad A\oplus H_1\oplus X_0.
\]
Then there exist a $d$-dimensional complement $Y$ of $H^{\perp_A}$ in
$\F_2^R$ and bases adapted to
\[
 \F_2^R=\rad A\oplus H_1\oplus X_0\oplus Y
\]
for which
\[
 A=
 \bordermatrix{
          &\rad A&H_1&X_0&Y\cr
  \rad A  &0&0&0&0\cr
  H_1     &0&0&0&I_d\cr
  X_0     &0&0&B_0&D\cr
  Y       &0&I_d&\trans{D}&E\cr
 },
\]
where $B_0$ is the coefficient matrix of $A|_{X_0\times X_0}$ and is invertible, $D\in\F_2^{\dim X_0\times d}$ and $E\in\F_2^{d\times d}$. Consequently,
\begin{equation}\label{p12:eq:rank-A-X0}
 \rk A=2d+\dim X_0\le 2\dim H+\dim X_0.
\end{equation}
\end{proposition}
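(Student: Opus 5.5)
The plan is to build the decomposition directly from the linear algebra of $A$ relative to the totally isotropic space $H$. We then read off the block shape and compute the rank by Gaussian elimination. Throughout we write $A(x,y)=\trans{x}Ay$, and $A$ is alternating. The key facts are:
\begin{itemize}
\item[(a)] $\codim H^{\perp_A}=d$;
\item[(b)] $\rad A+H=\rad A\oplus H_1$ lies in $H^{\perp_A}$, and this subspace is exactly the radical of $A|_{H^{\perp_A}}$;
\item[(c)] $A$ pairs $H_1$ perfectly with $\F_2^R/H^{\perp_A}$.
\end{itemize}

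\emph{Steps (a) and (b).} Consider the linear map $H\to(\F_2^R)^*$ sending $h\mapsto A(\cdot,h)$. Its kernel is $H\cap\rad A=J$, so its image has dimension $\dim H-\dim J=d$. Since $H^{\perp_A}$ is the common kernel of this image, $\codim H^{\perp_A}=d$.

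Because $A|_{H\times H}=0$, we have $H\le H^{\perp_A}$, and trivially $\rad A\le H^{\perp_A}$. Also $H_1\cap\rad A\le H\cap\rad A\cap H_1=J\cap H_1=0$, so $\rad A+H=\rad A\oplus H_1$. This justifies the choice of $X_0$ in the statement.

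Next we identify the radical of $A|_{H^{\perp_A}}$, namely $H^{\perp_A}\cap(H^{\perp_A})^{\perp_A}$. Passing to the nondegenerate form induced on $\F_2^R/\rad A$, the standard double-perp identity gives $(H^{\perp_A})^{\perp_A}=H+\rad A$. Hence the radical of $A|_{H^{\perp_A}}$ is $\rad A\oplus H_1$. Since $X_0$ is a complement of this radical inside $H^{\perp_A}$, the restriction $A|_{X_0\times X_0}$ is nondegenerate, so $B_0$ is invertible. The same computation shows $A(H_1,H^{\perp_A})=0$, because every element of $H^{\perp_A}$ is $A$-orthogonal to $H\supseteq H_1$. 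Also $A(\rad A,\cdot)=0$.

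\emph{Step (c).} Pick a basis $h_1,\dots,h_d$ of $H_1$. The functionals $A(h_i,\cdot)$ vanish on $H^{\perp_A}$, and they are linearly independent on $\F_2^R/H^{\perp_A}$. Indeed, a dependence $\sum c_iA(h_i,\cdot)=0$ would put $\sum c_ih_i$ in $\rad A\cap H_1=0$. Since $\dim\F_2^R/H^{\perp_A}=d$ by step (a), we can choose $y_1,\dots,y_d$ with $A(h_i,y_j)=\delta_{ij}$.

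These $y_j$ are linearly independent modulo $H^{\perp_A}$. Otherwise some nontrivial combination $\sum c_jy_j$ would lie in $H^{\perp_A}$, and pairing it with the $h_i$ would force all $c_j=0$. So $Y=\linspan\{y_j\}$ is a $d$-dimensional complement of $H^{\perp_A}$.

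Filling in the blocks in the adapted basis, using alternation and symmetry:
\begin{itemize}
\item the $\rad A$ row and column are zero;
\item the $H_1$ row is zero against $\rad A$, $H_1$ and $X_0$, and equals $I_d$ against $Y$ (symmetric in characteristic $2$);
\item the $X_0\times X_0$ block is $B_0$, the $X_0\times Y$ block is some $D$, and its transpose block is $\trans{D}$;
\item the $Y\times Y$ block is some $E$.
\end{itemize}
This is exactly the displayed matrix.

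\emph{Rank.} The two identity blocks $I_d$ serve as pivots. Row operations using the $H_1$ rows clear the $\trans{D}$ and $E$ entries of the $Y$ rows. Symmetrically, column operations using the $H_1$ columns clear $D$ and $E$ in the $Y$ columns. The result is block diagonal with blocks $0$, $\left(\begin{smallmatrix}0&I_d\\ I_d&0\end{smallmatrix}\right)$ and $B_0$. Hence $\rk A=2d+\rk B_0=2d+\dim X_0$, and $d\le\dim H$ gives \eqref{p12:eq:rank-A-X0}.

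The only step needing care is the identification of the radical of $A|_{H^{\perp_A}}$, which is what guarantees that $B_0$ is invertible. The cleanest route is to quotient by $\rad A$ and apply $(U^\perp)^\perp=U$ for the nondegenerate induced form, with $U$ the image of $H$.
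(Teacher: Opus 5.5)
Your proof is correct and follows essentially the same route as the paper. You get the codimension count from $h\mapsto A(\cdot,h)$. You get invertibility of $B_0$ from the double-perp identity $(H^{\perp_A})^{\perp_A}=\rad A+H$, which you justify by passing to $\F_2^R/\rad A$ where the paper only calls it straightforward. You pair $H_1$ with $Y$ by dual bases, building $Y$ directly from a dual basis where the paper takes an arbitrary complement and checks the pairing is perfect.

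There is one small slip in the elimination. Row operations with the $H_1$ rows clear entries in the $Y$ columns, namely $D$ and $E$. Column operations with the $H_1$ columns clear entries in the $Y$ rows, namely $\trans{D}$ and $E$. You have these attributions swapped, but the resulting block-diagonal form and $\rk A=2d+\dim X_0$ are right.
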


\begin{proof}
Since $J\le\rad A$ and $H=H_1\oplus J$, we have $\rad A+H=\rad A\oplus H_1$,
and hence $H^{\perp_A}=\rad A\oplus H_1\oplus X_0$.
 
The restriction of $A$ to $X_0$ is nondegenerate.  Indeed, suppose
$u\in X_0$ satisfies
\[
 \trans{u}Av=0\qquad\text{for every }v\in X_0.
\]
Since $u\in H^{\perp_A}$, we also have $\trans{u}Ah'=0$ for every $h'\in H_1\leq H$.
Moreover, $\trans{u}Ar=0$ for every $r\in\rad A$.  Every
$w\in H^{\perp_A}$ can be written as $w=r+h'+v$ with
$r\in\rad A$, $h'\in H_1$, and $v\in X_0$.  Therefore
\[
 \trans{u}Aw=\trans{u}Ar+\trans{u}Ah'+\trans{u}Av=0.
\]
Thus $u\in(H^{\perp_A})^{\perp_A}$.  Since it is straightforward to verify that $(H^{\perp_A})^{\perp_A}=\rad A\oplus H_1$,
we have $u\in (\rad A\oplus H_1)\cap X_0$. The direct-sum choice of $X_0$ gives
$u=0$.  Therefore $B_0$ is invertible.

Consider the linear map $T:\F_2^R\longrightarrow H^*$ defined by mapping $x$ to $T(x):~H\to\F_2$, where $T(x)(h)=\trans{x}Ah$ for any $h\in H$.
Its kernel is precisely $H^{\perp_A}$. The transpose map of $T$ is $\trans{T}:H\longrightarrow(\F_2^R)^*$, defined by mapping $h$ to $\trans{T}(h):\F_2^R\to\F_2$, where $\trans{T}(h)(x)=\trans{h}\trans{A}x$ for any $x\in\F_2^R$. Since $A$ is alternating, $\trans{A}=A$ over $\F_2$. Thus, the kernel of $\trans{T}$ is $J=H\cap\rad A$.
Hence
\[
 \codim_{\F_2^R}H^{\perp_A}
 =\rk T =\rk \trans{T}
 =\dim H-\dim J=d.
\]
Choose a $d$-dimensional complement $Y$ of $H^{\perp_A}$ in $\F_2^R$.  The
pairing between $H_1$ and $Y$ is nondegenerate.  Indeed, if $y\in Y$
is orthogonal to $H_1$, then $\trans{y}Ah'=0$ for any $h'\in H_1$.
Since $J\le\rad A$, we also have $\trans{y}Ar=0$ for any $r\in J$.
Every $h\in H=H_1\oplus J$ can be written as $h=h'+r$, and hence
\[
 \trans{y}Ah=\trans{y}Ah'+\trans{y}Ar=0.
\]
Thus $y\in H^{\perp_A}$.  Since $Y\cap H^{\perp_A}=0$, we have $y=0$.
As $\dim H_1=\dim Y=d$, the pairing is nondegenerate, and we may
choose dual bases for which its coefficient matrix is $I_d$.

Choose bases of $\rad A,H_1,X_0,Y$ in this order.  We may write 
\[
 A=
 \bordermatrix{
          &\rad A&H_1&X_0&Y\cr
  \rad A  &0&0&0&0\cr
  H_1     &0&0&0&I_d\cr
  X_0     &0&0&B_0&D\cr
  Y       &0&I_d&\trans{D}&E\cr
 },
\]
where $B_0$ is the coefficient matrix of $A|_{X_0\times X_0}$ and is invertible, $D\in\F_2^{\dim X_0\times d}$ is responsible for the pairing between $X_0$ and $Y$ and $E\in\F_2^{d\times d}$ is the coefficient matrix of $A|_{Y\times Y}$. 

Applying Gaussian elimination to the block form then gives
\[
 \rk A
 =2d+\dim X_0
 \le2\dim H+\dim X_0,
\]
which proves the result.
\end{proof}
\begin{remark}
\cref{p12:lem:rank-bound-private-span} also holds over an arbitrary field. In this case, we replace the two lower-left blocks $I_d$ and $\trans{D}$ by $-I_d$ and $-\trans{D}$, respectively, and the proof proceeds similarly.
\end{remark}
\cref{p12:lem:rank-bound-private-span} suggests bounding $\rk A$ by bounding $\dim X_0$. For this, define the restricted alternating bilinear form $b:X_0\times X_0\to\F_2$ by $b(u,v)=\trans{u}Av$ for any $u,v\in X_0$. We first establish a connection between the set 
\[
  \{(u,v)\in X_0\times X_0:~b(u,v)=1\}
\] 
and the hollowing property of $\cB$ through the evaluation matrix.

\paragraph{Determinant constraint on the evaluation matrix.}

Fix a basis $A_1,A_2,\ldots,A_M$ of $\cB\leq\F_2^{R\times R}$ with $\dim(\cB)=M$. For $x,y\in X\cong\F_2^R$, define their evaluation as the (row) vector
\[
 \mathbf r(x,y)=
 \bigl(\trans{x}A_1y,\ldots,\trans{x}A_My\bigr)\in\F_2^{1\times M}.
\]
Clearly, $\mathbf r(\cdot,\cdot)$ is bilinear. Moreover,
since every $A_r$ is alternating over $\F_2$, we also have $\mathbf r(x,y)=\mathbf r(y,x)$ for any $x,y\in X$.

Fix an ordered basis $y_1,\ldots,y_h$ of $U\in\Gr(h,X)$, and define the evaluation matrix of $\cB$ on $U$ by
\[
 \cR_{\cB,U}
 =\bigl(\trans{y_i}A_ry_j\bigr)_{
      1\le i<j\le h,\ 1\le r\le M}\in\F_2^{\binom h2\times M}.
\]
In particular, it is straightforward to verify that
\[
 \left.\left(\sum_{r=1}^M a_rA_r\right)\right|_{U\times U}=0
 \quad\Longleftrightarrow\quad
 \cR_{\cB,U}\trans{(a_1,\ldots,a_M)}=0.
\]

\begin{lemma}[Row basis at a unique hollowing matrix]\label{p12:lem:private-row-basis}
Let $\cB$ be a minimally $h$-hollowing alternating matrix space in $\F_2^{R\times R}$ of
dimension $M=\binom h2$.  Fix an $H\in\Gr(h,X)$ satisfying
$\cB_H=\linspan_{\F_2}\{A\}$.
Let $A=A_1,A_2,\ldots,A_M$ be a basis of $\cB$. Then there exists an ordered basis $z_1,\ldots,z_h$ of $H$, such that the rows
\[
 \begin{aligned}
 &\mathbf r(z_i,z_j), &&1\le i<j\le h-2,\\
 &\mathbf r(z_i,z_{h-1}),\quad\mathbf r(z_i,z_h),
     &&1\le i\le h-2,
 \end{aligned}
\]
form a basis of
\[
 V_0=\{(c_1,\ldots,c_M)\in\F_2^{1\times M}:c_1=0\}.
\]
\end{lemma}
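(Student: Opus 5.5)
The proof is linear algebra on the evaluation map. The idea is to show that the rows $\mathbf r(y_i,y_j)$ satisfy exactly one linear relation, and then to reorder a basis of $H$ so that this relation involves the pair that is omitted from the list in the statement.

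\textbf{Step 1: the rows lie in $V_0$ and span it.} Start with an arbitrary ordered basis $y_1,\ldots,y_h$ of $H$. For $x,y\in H$, the first coordinate of $\mathbf r(x,y)$ is $\trans{x}A_1y=\trans{x}Ay=0$, because $A\in\cB_H$. So every row of $\cR_{\cB,H}$ lies in $V_0$.

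The kernel of $\cR_{\cB,H}$ consists of the coefficient vectors $\trans{(a_1,\ldots,a_M)}$ with $\sum_r a_rA_r\in\cB_H$. Since $\cB_H=\linspan_{\F_2}\{A_1\}$, this kernel is exactly $\linspan\{\trans{e_1}\}$, so $\cR_{\cB,H}$ has rank $M-1$. Its row space is contained in the annihilator of $\trans{e_1}$, which is $V_0$, and has dimension $M-1=\dim V_0$. Hence the row space equals $V_0$.

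\textbf{Step 2: exactly one row relation.} The matrix has $\binom h2=M$ rows and rank $M-1$, so the space of left relations among its rows is one-dimensional. To make this relation basis-free, use that $\mathbf r$ is bilinear and that $\mathbf r(x,x)=\trans{x}A_rx=0$ for alternating $A_r$ over $\F_2$. Then $\mathbf r$ induces a linear map $\bar{\mathbf r}:\wedge^2H\to\F_2^{1\times M}$ with $x\wedge y\mapsto\mathbf r(x,y)$. By Step 1 its image is $V_0$, so $\ker\bar{\mathbf r}=\linspan\{\omega\}$ for a single nonzero $\omega\in\wedge^2H$.

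\textbf{Step 3: choice of basis.} In the basis $\{y_i\wedge y_j\}_{i<j}$, write $\omega=\sum_{i<j}\lambda_{ij}\,y_i\wedge y_j$. Since $\omega\ne0$, some $\lambda_{ab}\ne0$. Reorder the $y_i$ so that $y_a$ and $y_b$ come last, and call the result $z_1,\ldots,z_h$. The coefficient of $\omega$ on $z_{h-1}\wedge z_h$ is then nonzero.

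The listed rows are exactly the $\mathbf r(z_i,z_j)$ for all pairs $i<j$ except $(h-1,h)$. Suppose some nontrivial combination of these rows vanishes. It gives a nonzero element of $\ker\bar{\mathbf r}$ whose coefficient on $z_{h-1}\wedge z_h$ is zero. Such an element cannot be a multiple of $\omega$, a contradiction. So the $\binom h2-1=M-1$ listed rows are linearly independent. They lie in $V_0$, which has dimension $M-1$, so they form a basis of $V_0$.

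No step here is a serious obstacle. The one point needing care is passing from row relations to $\wedge^2H$, which requires $\mathbf r(x,x)=0$; this holds because the $A_r$ are alternating, so it does not depend on characteristic two.
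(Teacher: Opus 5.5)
Your proof is correct and follows essentially the same route as the paper. The right kernel of $\cR_{\cB,H}$ is spanned by $\trans{e_1}$, so the left kernel is one-dimensional. One then reorders the basis of $H$ so that the unique row relation has nonzero coefficient on the omitted pair $(h-1,h)$, and the remaining $M-1$ rows are independent in $V_0$. Your basis-free phrasing via $\bar{\mathbf r}:\wedge^2H\to\F_2^{1\times M}$ makes explicit why reordering the basis carries the relation along, a point the paper leaves implicit.
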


\begin{proof}
Since $\cB_H=\linspan_{\F_2}\{A\}$, the right kernel of
$\cR_{\cB,H}$ is spanned by $(1,0,\ldots,0)$, and hence
$\rk\cR_{\cB,H}=M-1$.  Thus the left kernel is
one-dimensional.  For any ordered basis $z_1,\ldots,z_h$ of
$H$, there exists a unique nonzero solution $(\tau_{ij})_{1\le i<j\le h}$ satisfying
\[
 \sum_{1\le i<j\le h}\tau_{ij}\mathbf r(z_i,z_j)=0.
\]
After reordering the $z_i$, we may assume that $\tau_{h-1,h}=1$.
If the first $M-1$ rows were dependent, their relation would give
a second element of the left kernel whose $(h-1,h)$-coordinate is
zero, contradicting the uniqueness of $(\tau_{ij})$.  Hence, the first $M-1$ rows are linearly independent.  Since
$A=A_1$ vanishes on $H\times H$, these $M-1$ linearly independent rows lie in $V_0$ and thus also form a basis of $V_0$.
\end{proof}

Now we focus on a fixed minimally $h$-hollowing alternating matrix space $\cB\leq \F_2^{R\times R}$ of
dimension $M=\binom h2$ and a fixed $H\in\Gr(h,X)$ satisfying
$\cB_H=\linspan_{\F_2}\{A\}$.
Fix the ordered basis $z_1,\dots,z_h$ of $H$ as obtained in~\cref{p12:lem:private-row-basis}. 

From now on, denote
\[
 K=\linspan_{\F_2}\{z_1,\ldots,z_{h-2}\}<H,
 \qquad
 W=\linspan_{\F_2}\{\mathbf r(z_i,z_j):1\le i<j\le h-2\}<V_0.
\]
The chosen ordered basis of $H$ obtained in~\cref{p12:lem:private-row-basis} allows us to perturb $H$ with vectors in $X_0$.
\begin{lemma}[Perturbed $h$-spaces]\label{p12:lem:perturbed-test-spaces}
Let $H=\linspan_{\F_2}\{z_1,\dots,z_h\}$ with the ordered basis obtained from~\cref{p12:lem:private-row-basis}. Let $X_0\le H^{\perp_A}$ be the complement of $\rad A+H$ in $H^{\perp_A}$. For $u,v\in X_0$, define $H_{u,v}
 =K+\linspan_{\F_2}\{z_{h-1}+u\}
  +\linspan_{\F_2}\{z_h+v\}$.
Then the sum is direct and $\dim H_{u,v}=h$. 

Moreover, let $A=A_1,\ldots,A_M$ be the basis of $\cB$ fixed in
\cref{p12:lem:private-row-basis}. With respect to the displayed ordered basis, the first $M-1$ rows of
$\cR_{\cB,H_{u,v}}$ lie in $V_0$, while the remaining row has
first coordinate $b(u,v)=\trans{u}Av$.
\end{lemma}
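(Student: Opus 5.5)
The plan is to check the two assertions separately: first that $H_{u,v}$ is an $h$-dimensional direct sum, and then that the evaluation rows have the claimed form.

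\textbf{Directness and dimension.} By construction of \cref{p12:lem:rank-bound-private-span}, $H^{\perp_A}=\rad A\oplus H_1\oplus X_0$, and $\rad A+H=\rad A\oplus H_1$ contains $H$. Hence $X_0\cap(\rad A+H)=0$, so in particular $X_0\cap H=0$. Suppose a linear combination
\[
k+\alpha(z_{h-1}+u)+\beta(z_h+v)=0,\qquad k\in K,\ \alpha,\beta\in\F_2 .
\]
This rearranges to $k+\alpha z_{h-1}+\beta z_h=\alpha u+\beta v$. The left side lies in $H$ and the right side lies in $X_0$, so both sides vanish. Since $z_1,\ldots,z_h$ is a basis of $H$, this forces $k=0$ and $\alpha=\beta=0$. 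Thus the $h$ vectors $z_1,\ldots,z_{h-2},z_{h-1}+u,z_h+v$ are independent, and $\dim H_{u,v}=h$.

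\textbf{The evaluation rows.} Use bilinearity of $\mathbf r$. The rows $\mathbf r(z_i,z_j)$ with $i<j\le h-2$ are unchanged, so they lie in $V_0$ by \cref{p12:lem:private-row-basis}. For $i\le h-2$ we have
\[
\mathbf r(z_i,z_{h-1}+u)=\mathbf r(z_i,z_{h-1})+\mathbf r(z_i,u).
\]
The first summand lies in $V_0$. The first coordinate of the second summand is $\trans{z_i}Au=0$, since $u\in X_0\le H^{\perp_A}$ and $A$ is symmetric over $\F_2$. Hence this row lies in $V_0$, and the same argument applies to $\mathbf r(z_i,z_h+v)$. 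Together these account for $M-1$ rows, all in $V_0$.

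The remaining row is $\mathbf r(z_{h-1}+u,z_h+v)$. Its first coordinate is
\[
\trans{z_{h-1}}Az_h+\trans{z_{h-1}}Av+\trans{u}Az_h+\trans{u}Av .
\]
The first term is $0$ because $A$ vanishes on $H\times H$. The middle two terms are $0$ because $u,v\in H^{\perp_A}$, again using symmetry of $A$. What is left is $\trans{u}Av=b(u,v)$, as claimed.

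No step here is a genuine obstacle. The only point requiring care is directness, and it rests on the fact that $X_0$ meets $\rad A+H$ trivially.
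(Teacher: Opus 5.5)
Your proof is correct and follows the paper's argument essentially step for step. Directness comes from moving the $u,v$ terms to one side and using $H\cap X_0=0$. The row claims come from bilinearity of $\mathbf r$, $A|_{H\times H}=0$, $u,v\in H^{\perp_A}$, and symmetry of $A$ over $\F_2$; your four-term expansion of the last row's first coordinate just spells out what the paper states in one line.
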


\begin{proof}
We first verify the direct sum. Suppose that
\[
 k+a(z_{h-1}+u)+c(z_h+v)=0
\]
for some $k\in K$ and $a,c\in\F_2$.  Then
\[
 k+az_{h-1}+cz_h=au+cv.
\]
The left-hand side belongs to $H$, whereas the right-hand side belongs
to $X_0$.  Since $X_0\cap H=0$, both sides are zero.  The fact that
$z_1,\ldots,z_h$ is a basis of $H$ now gives $k=0$ and $a=c=0$.
Thus the displayed sum is direct, and
$\dim H_{u,v}=(h-2)+1+1=h$.

For the ``Moreover'' part, note that 
the first $M-1$ rows of $\cR_{\cB,H_{u,v}}$ are
\[
 \begin{aligned}
 &\mathbf r(z_i,z_j)\in W<V_0, &&1\le i<j\le h-2,\\
 &\mathbf r(z_i,z_{h-1}+u),\quad\mathbf r(z_i,z_h+v),
     &&1\le i\le h-2.
 \end{aligned}
\]
The first $\binom{h-2}{2}$ rows already lie in $V_0$. For $1\le i\le h-2$, since $A=A_1$ is alternating over $\F_2$, $A$ vanishes on $H$ and $u,v\in X_0\leq H^{\perp_A}$, we have
\[
 \trans{z_i}A(z_{h-1}+u)=\trans{u}Az_i=0,
 \qquad
 \trans{z_i}A(z_h+v)=\trans{v}Az_i=0.
\]
Thus all these rows lie in $V_0$.  The remaining row $\mathbf r(z_{h-1}+u,z_h+v)$ has first
coordinate
\[
 \trans{(z_{h-1}+u)}A(z_h+v)=\trans{u}Av=b(u,v),
\]
because $A|_{H\times H}=0$ and $u,v\in H^{\perp_A}$.
\end{proof}
\cref{p12:lem:perturbed-test-spaces} provides a constructive way to generate $h$-spaces depending on vectors $u,v\in X_0$. We shall use these $h$-spaces to place constraints on $\dim X_0$ through the minimal hollowing property of $\cB$. For this, we first see how $u\in X_0$ influences the other coordinates of $\mathbf r(z_i,u)$.

\begin{lemma}[Coefficient maps]\label{p12:lem:private-coefficient-maps}
Let $\cB=\linspan_{\F_2}\{A=A_1,\dots,A_M\}$, $H=\linspan_{\F_2}\{z_1,\dots,z_h\}$ and $\cB_H=\linspan_{\F_2}\{A\}$ be defined as above. Let $X_0\le H^{\perp_A}$ be the complement of $\rad A+H$ in $H^{\perp_A}$.
For every $u\in X_0$ and $1\le i, j\le h-2$, there are unique
$w_j(u)\in W$ and unique coefficients
$P(u)_{ij},Q(u)_{ij}\in\F_2$ such that
\[
 \mathbf r(z_j,u)
 =w_j(u)
  +\sum_{i=1}^{h-2}P(u)_{ij}\mathbf r(z_i,z_{h-1})
  +\sum_{i=1}^{h-2}Q(u)_{ij}\mathbf r(z_i,z_h).
\]
Writing $P(u)=(P(u)_{ij})$ and $Q(u)=(Q(u)_{ij})$, the maps
$P,Q:X_0\to\F_2^{(h-2)\times(h-2)}$ are linear and satisfy $P(0)=Q(0)=0$.
\end{lemma}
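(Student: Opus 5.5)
The plan is to read off the coefficient maps directly from the basis of $V_0$ supplied by \cref{p12:lem:private-row-basis}, and to get uniqueness and linearity from that basis property.

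First, I place $\mathbf r(z_j,u)$ inside $V_0$. Its first coordinate is $\trans{z_j}Au$. Since $u\in X_0\le H^{\perp_A}$ and $z_j\in H$, this equals $\trans{u}Az_j=0$; here we use that $A$ is alternating over $\F_2$, so $\trans{A}=A$. Hence $\mathbf r(z_j,u)\in V_0$.

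Next, \cref{p12:lem:private-row-basis} tells us that the rows
\[
\mathbf r(z_i,z_{i'})\ (1\le i<i'\le h-2),\qquad \mathbf r(z_i,z_{h-1}),\qquad \mathbf r(z_i,z_h)\ (1\le i\le h-2)
\]
form a basis of $V_0$. So every vector of $V_0$ has a unique expansion in this basis. I then group the terms of that expansion:

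\begin{itemize}
\item the part along the first family is an element of $W$, and I call it $w_j(u)$;
\item the coefficients along $\mathbf r(z_i,z_{h-1})$ are the entries $P(u)_{ij}$;
\item the coefficients along $\mathbf r(z_i,z_h)$ are the entries $Q(u)_{ij}$.
\end{itemize}

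Uniqueness of $w_j(u)$, $P(u)_{ij}$ and $Q(u)_{ij}$ follows because the decomposition
\[
V_0=W\oplus\linspan\{\mathbf r(z_i,z_{h-1})\}\oplus\linspan\{\mathbf r(z_i,z_h)\}
\]
is direct, and the last two families are linearly independent.

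For linearity, note that $u\mapsto\mathbf r(z_j,u)$ is linear because $\mathbf r$ is bilinear. Taking coordinates with respect to a fixed basis is also a linear map $V_0\to\F_2^{M-1}$. Composing these two linear maps shows that $P$, $Q$ and each $w_j$ are linear in $u$. In particular $P(0)=Q(0)=0$.

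There is no real obstacle here. The only point that needs care is the first step, checking that $\mathbf r(z_j,u)$ lies in $V_0$, and that uses only $u\in H^{\perp_A}$ together with the symmetry of $A$ over $\F_2$.
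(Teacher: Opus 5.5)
Your proposal is correct and follows essentially the same argument as the paper. You place $\mathbf r(z_j,u)$ in $V_0$ using $u\in H^{\perp_A}$ and the symmetry of $A$ over $\F_2$, read off $w_j(u)$, $P(u)_{ij}$ and $Q(u)_{ij}$ from the unique expansion in the basis of \cref{p12:lem:private-row-basis}, and obtain linearity (hence $P(0)=Q(0)=0$) from bilinearity of $\mathbf r$ together with linearity of coordinate maps.
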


\begin{proof}
Since $u\in H^{\perp_A}$ and $z_j\in H$, and $A$ is alternating over
$\F_2$, we have $\trans{z_j}Au=\trans{u}Az_j=0$, and hence
$\mathbf r(z_j,u)\in V_0$.  By
\cref{p12:lem:private-row-basis}, every vector of $V_0$ has a unique
expansion in the displayed basis.  This gives $w_j(u)$ and the
coefficients $P(u)_{ij},Q(u)_{ij}$.  Bilinearity of $\mathbf r$ and
uniqueness of the basis coefficients show that $P$ and $Q$ are
linear and satisfy $P(0)=Q(0)=0$.
\end{proof}
These coefficient maps are used to derive determinant constraints for vectors $u,v\in X_0$ satisfying $b(u,v)=1$.

\begin{proposition}[Determinant constraint]\label{p12:prop:private-determinant}
Let $P,Q:X_0\to\F_2^{(h-2)\times(h-2)}$ be the coefficient maps
defined in~\cref{p12:lem:private-coefficient-maps}.
For $u,v\in X_0$ such that $b(u,v)=1$, we have
\begin{equation}\label{p12:eq:compatible-minor}
 F(u,v)=
 \det\begin{pmatrix}
 I+P(u)&P(v)\\
 Q(u)&I+Q(v)
 \end{pmatrix}=0.
\end{equation}
\end{proposition}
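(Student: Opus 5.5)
The plan is to use the $h$-hollowing property of $\cB$ on the perturbed space $H_{u,v}$ from \cref{p12:lem:perturbed-test-spaces}. Its evaluation matrix is square, so hollowing forces its determinant to vanish. Writing its rows in a basis adapted to \cref{p12:lem:private-row-basis} should then reduce that determinant to exactly $F(u,v)$.

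\emph{Step 1 (singularity).} By \cref{p12:lem:perturbed-test-spaces}, $H_{u,v}\in\Gr(h,X)$. With respect to the displayed ordered basis $z_1,\dots,z_{h-2},z_{h-1}+u,z_h+v$, the evaluation matrix $\cR_{\cB,H_{u,v}}$ has $\binom h2=M$ rows and $M$ columns. Since $\cB$ is $h$-hollowing, some nonzero $\sum_r a_rA_r\in\cB$ vanishes on $H_{u,v}\times H_{u,v}$. Thus $\cR_{\cB,H_{u,v}}$ has a nonzero right kernel vector, and $\det\cR_{\cB,H_{u,v}}=0$.

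\emph{Step 2 (coordinates).} Take the basis $\mathcal E$ of $\F_2^{1\times M}$ consisting of the $M-1$ rows of \cref{p12:lem:private-row-basis}, which form a basis of $V_0$, together with $e_1=(1,0,\dots,0)$. Order it as follows: first the rows $\mathbf r(z_i,z_j)$ with $i<j\le h-2$ (a basis of $W$), then the rows $\mathbf r(z_k,z_{h-1})$, then $\mathbf r(z_k,z_h)$, then $e_1$.

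Now expand the rows of $\cR_{\cB,H_{u,v}}$ in $\mathcal E$. The rows $\mathbf r(z_i,z_j)$ with $i<j\le h-2$ are themselves the first basis vectors. By bilinearity and \cref{p12:lem:private-coefficient-maps}, for $j\le h-2$,
\[
 \mathbf r(z_j,z_{h-1}+u)=\mathbf r(z_j,z_{h-1})+w_j(u)+\sum_k P(u)_{kj}\mathbf r(z_k,z_{h-1})+\sum_k Q(u)_{kj}\mathbf r(z_k,z_h),
\]
and similarly for $\mathbf r(z_j,z_h+v)$, with $P(v),Q(v)$ and the identity contribution now sitting on the $\mathbf r(z_k,z_h)$ coordinates. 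The last row $\mathbf r(z_{h-1}+u,z_h+v)$ has $e_1$-coordinate $b(u,v)=1$ by \cref{p12:lem:perturbed-test-spaces}. All other rows lie in $V_0$, so their $e_1$-coordinate is $0$.

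\emph{Step 3 (determinant).} The coordinate matrix $C$ satisfies $\cR_{\cB,H_{u,v}}=C\cdot S$, where $S$ is the invertible change-of-basis matrix. Expand $\det C$ along the $e_1$ column: the only nonzero entry there is the $1$ in the last row, which removes that row and column. What remains is block lower triangular. The $W$ rows give an identity block on the $W$ columns with zeros elsewhere; the $w_j(\cdot)$ terms sit only in the $W$ columns and therefore do not affect the determinant. The remaining $2(h-2)\times 2(h-2)$ block is the transpose of
\[
 \begin{pmatrix} I+P(u)&P(v)\\ Q(u)&I+Q(v)\end{pmatrix}.
\]
Hence $\det C=F(u,v)$, and $0=\det\cR_{\cB,H_{u,v}}=F(u,v)\det S$ gives $F(u,v)=0$.

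The main obstacle is the bookkeeping in Step 3: keeping track of the index and transpose conventions in $P(u)_{kj}$ versus the block layout, and checking that the count $\binom{h-2}2+2(h-2)=M-1$ matches so the matrix is genuinely square. Transposition and the sign of the row/column permutations are harmless over $\F_2$.
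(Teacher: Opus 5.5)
Your proposal is correct and follows essentially the same route as the paper. Both use $h$-hollowing to make the square matrix $\cR_{\cB,H_{u,v}}$ singular. Both then rewrite its rows in the basis of $V_0$ from \cref{p12:lem:private-row-basis}, together with a vector whose first coordinate is $1$. The paper phrases the last step as a nontrivial row dependence whose last-row coefficient is forced to vanish, giving a nonzero kernel vector $(\beta,\gamma)$ of the block matrix. You instead compute the same reduction as a block-triangular determinant factorization, which is an equivalent packaging of the same linear algebra.
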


\begin{proof}
Fix $u,v\in X_0$ such that $b(u,v)=1$.  By
\cref{p12:lem:perturbed-test-spaces}, $H_{u,v}$ is an $h$-dimensional
subspace. Meanwhile, the first $M-1$ rows of $\cR_{\cB,H_{u,v}}$ lie in $V_0$ and the last row has first coordinate $1$.

Since $\cB$ is $h$-hollowing, there is a nonzero vector
$a=(a_1,\ldots,a_M)\in\F_2^M$ such that
$\cR_{\cB,H_{u,v}}\trans{a}=0$.  Thus the square matrix
$\cR_{\cB,H_{u,v}}$ is singular, and its rows satisfy a
nontrivial linear dependence.  The first $M-1$ rows lie in $V_0$,
whereas the last row has first coordinate one.  Comparing first
coordinates shows that the coefficient of the last row is zero.
Hence the first $M-1$ rows are linearly dependent.  Equivalently, there
are coefficients $\{\alpha_{ij}\}$, $\{\beta_i\}$, and $\{\gamma_i\}$,
not all zero, such that
\begin{equation}\label{p12:eq:linear-dependence-row}
 \sum_{1\le i<j\le h-2}\alpha_{ij}\mathbf r(z_i,z_j)
 +\sum_{i=1}^{h-2}\beta_i\mathbf r(z_i,z_{h-1}+u)
 +\sum_{i=1}^{h-2}\gamma_i\mathbf r(z_i,z_h+v)=0.
\end{equation}
Note that the coefficients $\beta_i,\gamma_i$ cannot all be zero, since the rows
$\mathbf r(z_i,z_j)$ are linearly independent by
\cref{p12:lem:private-row-basis}.  We now compute the coordinates of the
other two families of rows. Since
\[
\mathbf r(z_i,z_{h-1}+u)=\mathbf r(z_i,z_{h-1})+\mathbf r(z_i,u)~~\text{and}~~\mathbf r(z_i,z_h+v)=\mathbf r(z_i,z_h)+\mathbf r(z_i,v),
\]
substituting the expressions of $\mathbf r(z_i,u)$ and
$\mathbf r(z_i,v)$ in~\cref{p12:lem:private-coefficient-maps} gives
\[
 \begin{aligned}
 \mathbf r(z_i,z_{h-1}+u)
  &=w_i(u)
    +\sum_{j=1}^{h-2}\bigl(\delta_{ji}+P(u)_{ji}\bigr)
       \mathbf r(z_j,z_{h-1})
    +\sum_{j=1}^{h-2}Q(u)_{ji}\mathbf r(z_j,z_h),~~\text{and}\\
 \mathbf r(z_i,z_h+v)
  &=w_i(v)
    +\sum_{j=1}^{h-2}P(v)_{ji}\mathbf r(z_j,z_{h-1})
    +\sum_{j=1}^{h-2}\bigl(\delta_{ji}+Q(v)_{ji}\bigr)
       \mathbf r(z_j,z_h),
 \end{aligned}
\]
where $\delta_{ji}$ is the Kronecker delta and the vectors $w_i(u)$ and
$w_i(v)$ belong to $W$ and involve only the basis vectors
$\mathbf r(z_j,z_\ell)$ with $1\le j<\ell\le h-2$.  By
\cref{p12:lem:private-row-basis}, these basis vectors together with
\[
 \mathbf r(z_j,z_{h-1}),\quad\mathbf r(z_j,z_h),
 \qquad 1\le j\le h-2,
\]
are linearly independent. Substitute the above equations for $\mathbf r(z_i,z_{h-1}+u)$ and $\mathbf r(z_i,z_h+v)$ into~\eqref{p12:eq:linear-dependence-row}. For each $1\le j\le h-2$, the coefficients of
$\mathbf r(z_j,z_{h-1})$ and $\mathbf r(z_j,z_h)$ become
\[
 \sum_{i=1}^{h-2}\bigl(\delta_{ji}+P(u)_{ji}\bigr)\beta_i
 +\sum_{i=1}^{h-2}P(v)_{ji}\gamma_i~~\mathrm{and}~~ \sum_{i=1}^{h-2}Q(u)_{ji}\beta_i
 +\sum_{i=1}^{h-2}\bigl(\delta_{ji}+Q(v)_{ji}\bigr)\gamma_i
\]
respectively.
These coefficients must be zero by linear independence.
Writing
\[
 \beta=\trans{(\beta_1,\ldots,\beta_{h-2})},
 \qquad
 \gamma=\trans{(\gamma_1,\ldots,\gamma_{h-2})},
\]
these equations are equivalent to
\[
 \begin{pmatrix}
 I+P(u)&P(v)\\
 Q(u)&I+Q(v)
 \end{pmatrix}
 \binom{\beta}{\gamma}=0.
\]
Since $(\beta,\gamma)\ne0$ when $b(u,v)=1$, the block matrix is singular, and hence
$F(u,v)=0$.
\end{proof}

\subsubsection{\texorpdfstring{Upper bound for $\dim X_0$ using low-degree polynomials and Fourier analysis}{Upper bound for dim X0 using low-degree polynomials and Fourier analysis}}

\paragraph{Determinant constraints and low-rank polynomials.} Let $\cB\leq\Alt(X)\leq\F_2^{R\times R}$ be a minimally $h$-hollowing subspace. For a fixed $H\in\Gr(h,X)$ satisfying $\cB_H=\linspan_{\F_2}\{A\}$, let $X_0$ be the complement space of $H+\rad A$ in $H^{\perp_A}$. We have shown that for any $u,v\in X_0$ such that $b(u,v)=\trans{u}Av=1$, the determinant 
\[
  F(u,v)=
 \det\begin{pmatrix}
 I+P(u)&P(v)\\
 Q(u)&I+Q(v)
 \end{pmatrix}=0,
\]
where $P,Q:X_0\to\F_2^{(h-2)\times(h-2)}$ are linear as defined in~\cref{p12:lem:private-coefficient-maps}.

Note that the upper-left block $I+P(u)$ in $F(u,v)$ depends only on $u$.  For those $u\in X_0$ such that $I+P(u)$ is invertible, the Schur complement rewrites $F(u,v)$ as 
\[
  F(u,v)=\det(I+P(u))\det\bigl(I+Q(v)-Q(u)(I+P(u))^{-1}P(v)\bigr),
\]
which is a polynomial of degree at most $h-2$ evaluated at a linear map
\begin{equation}\label{p12:eq:R_uv}
  R_u(v)=Q(v)-G_uP(v)
\end{equation}
of $v$, where we denote $G_u:=Q(u)(I+P(u))^{-1}$.  We use
the following support bound to derive a quantitative relation between $\dim X_0$ and $\dim R_u(X_0)$.

\begin{lemma}[Reed--Muller support]\label{p12:lem:RM-support}
Let $n,d\ge0$.  If a nonzero polynomial $f:\F_2^n\to\F_2$ has degree at most $d$, then
\[
 |\supp f|\ge2^{n-d}.
\]
If, in addition, $f(0)=1$, then $\codim\linspan(\supp f)\le d$.
\end{lemma}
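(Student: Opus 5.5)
The first claim will be proved by induction on $n$, using the classical Reed--Muller minimum-distance argument. The second claim will then follow from an incidence count over the subspace $S=\linspan(\supp f)$.

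\emph{First claim.} The base cases are $n=0$, or $d\ge n$. In both, a nonzero $f$ has at least one point of support, and $2^{n-d}\le1$, so there is nothing to prove. For the inductive step, write
\[
 f(x_1,\dots,x_n)=g(x_1,\dots,x_{n-1})+x_n\,k(x_1,\dots,x_{n-1}),
\]
where $\deg g\le d$ and $\deg k\le d-1$. The two restrictions are $f|_{x_n=0}=g$ and $f|_{x_n=1}=g+k$, and both have degree at most $d$. There are three cases.
\begin{itemize}
 \item If both $g$ and $g+k$ are nonzero, induction on each hyperplane gives $|\supp f|\ge2\cdot2^{n-1-d}$.
 \item If $g=0$, then $k\ne0$ has degree at most $d-1$, so induction gives $|\supp f|=|\supp k|\ge2^{n-1-(d-1)}=2^{n-d}$.
 \item If $g+k=0$, then $k=g$, so $\deg g\le d-1$, and the same bound holds.
\end{itemize}
This completes the induction. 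Equivalently, one could cite the minimum distance $2^{n-d}$ of the Reed--Muller code $\mathrm{RM}(d,n)$.

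\emph{Second claim.} Let $c=\codim S$. Since $f(0)=1$, we have $0\in\supp f\subseteq S$.

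Pick any complement $T$ of $S$. Then $\dim T=c$, and $S\cap T=0$. Consider, for each $t\in T$, the restriction $x\mapsto f(t+x)$ for $x\in S$; every such restriction vanishes on $S$ except the one with $t=0$. Equivalently, restricting $f$ to the $c$-dimensional subspace $T$ gives a function $f|_T$ whose support is exactly $\{0\}$, since $\supp f\cap T\subseteq S\cap T=0$ and $f(0)=1$. Hence $f|_T$ is the indicator function $\prod_{i=1}^c(1+y_i)$ in coordinates $y_1,\dots,y_c$ on $T$.

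This indicator has degree exactly $c$. On the other hand, restricting to a linear subspace and composing with linear coordinates does not raise degree, so $\deg f|_T\le d$. Therefore $c\le d$.

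The expected main obstacle is the observation that restricting $f$ to a complement of $S$ yields the delta function, whose degree is forced to be $c$. Everything else in the argument is routine.
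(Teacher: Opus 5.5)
Your proof is correct and is essentially the paper's. For the first claim the paper simply cites the minimum distance $2^{n-d}$ of $\mathrm{RM}(d,n)$, which your $(g,\,g+k)$ induction proves directly. For the second claim the paper changes coordinates so that $S=\{x_1=\cdots=x_c=0\}$, factors $f=(1+x_1)\cdots(1+x_c)\,g(x_{c+1},\dots,x_n)$, and reads off the monomial $x_1\cdots x_c$; your restriction to a complement $T$ is this factorization evaluated at $x_{c+1}=\cdots=x_n=0$ (where $g$ becomes $g(0)=1$), so it needs only that restriction to a subspace does not raise degree.
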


\begin{proof}
If $d\ge n$, the first assertion follows immediately from
$|\supp f|\ge1$.  If $0\le d\le n$, the evaluation vector of $f$ is a
nonzero codeword in the binary Reed--Muller code $\operatorname{RM}(d,n)$,
whose minimum distance is $2^{n-d}$; see
\cite{Reed,DelsarteGoethalsMacWilliams} and
\cite[Chapter~13]{MacWilliamsSloane}.

For the second assertion, let $S=\linspan(\supp f)$ and $c=\codim S$. 
After an invertible linear change of coordinates, which preserves the
algebraic degree and fixes the origin, we may assume that
\[
 S=\{x\in\F_2^n:x_1=\cdots=x_c=0\}.
\]
Thus $f(x)=0$ whenever at least one of $x_1,\ldots,x_c$ is equal to
$1$.  In the Boolean polynomial ring
\[
 \F_2[x_1,\ldots,x_n]/(x_1^2-x_1,\ldots,x_n^2-x_n),
\]
the factor $1+x_i$ is the indicator of the condition $x_i=0$.
Define
\[
 g(x_{c+1},\ldots,x_n)
 =f(0,\ldots,0,x_{c+1},\ldots,x_n).
\]
It follows that, as functions on $\F_2^n$,
\[
 f(x_1,\ldots,x_n)
 =(1+x_1)\cdots(1+x_c)g(x_{c+1},\ldots,x_n).
\]
Indeed, when $x_1=\cdots=x_c=0$, the product of the first $c$
factors is $1$ and the right-hand side equals $f(x)$.  If some
$x_i=1$, then the product and $f(x)$ are both zero.  Finally,
$f(0)=1$ gives $g(0)=1$, so $g$ has constant term $1$.  The displayed
product therefore contains the monomial $x_1\cdots x_c$ with
coefficient $1$.  Hence $\deg f\ge c$, and the assumption
$\deg f\le d$ gives $\codim\linspan(\supp f)=c\le d$.
\end{proof}

Consider the polynomial
$D(u)=\det(I+P(u))$ for $u\in X_0$.
Since $P$ is linear, $D$ has degree at most $h-2$.  Moreover,
$D(0)=\det(I)=1$, so $D$ is nonzero. Therefore, the support 
\begin{equation}\label{p12:eq:S_0}
 S_0:=\{u\in X_0:\det(I+P(u))=1\}
\end{equation}
of $D$ satisfies, by \cref{p12:lem:RM-support},
\begin{equation}\label{p12:eq:S0-lower}
 |S_0|\ge 2^{\dim X_0-(h-2)}.
\end{equation}

Fix $u\in S_0$.  Since $\det(I+P(u))=1$, the Schur-complement formula
gives $F(u,v)=\det(I+R_u(v))$, where the map
$R_u:X_0\to\F_2^{(h-2)\times(h-2)}$ given in~\eqref{p12:eq:R_uv} is linear.
Recall that \cref{p12:prop:private-determinant} gives
\begin{equation}\label{p12:eq:Riccati-constraint}
 b(u,v)=1\quad\Longrightarrow\quad \det(I+R_u(v))=0.
\end{equation}
Equivalently, since both sides take values in $\F_2$,
\begin{equation}\label{p12:eq:support-orthogonality}
 \det(I+R_u(v))=1\quad\Longrightarrow\quad b(u,v)=0.
\end{equation}
Indeed, if $R_u(v)=0$, then $\det(I+R_u(v))=1$, so
\eqref{p12:eq:support-orthogonality} gives $b(u,v)=0$.  Thus $b(u,-)$
vanishes on $\ker R_u$, and the rule
\[
 \lambda_u(R_u(v))=b(u,v)
\]
defines a linear functional on $R_u(X_0)$.  By
\eqref{p12:eq:support-orthogonality}, this functional vanishes on the
support of $M\mapsto\det(I+M)$ in $R_u(X_0)$.  This is the form of the
determinant constraint used below.

\begin{proposition}[Fibre kernel bound]\label{p12:lem:fibre-kernel}
For every $u\in S_0$, the linear map $R_u:X_0\longrightarrow\F_2^{(h-2)\times (h-2)}$ defined in~\eqref{p12:eq:R_uv}
satisfies $\dim X_0\leq \dim R_u(X_0)+2(h-2)$.
\end{proposition}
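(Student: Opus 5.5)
We need to show $\dim\ker R_u\le 2(h-2)$, since $\dim X_0=\dim R_u(X_0)+\dim\ker R_u$. The tool is the determinant constraint \eqref{p12:eq:support-orthogonality}: $b(u,\cdot)$ vanishes on $\ker R_u$, because $R_u(v)=0$ forces $\det(I+R_u(v))=1$. That alone bounds nothing about $\ker R_u$ itself, so we must exploit the structure of $R_u(v)=Q(v)-G_uP(v)$ together with the nondegeneracy of $b$ on $X_0$ (\cref{p12:lem:rank-bound-private-span}).

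\textbf{Reduction to $\ker P\cap\ker Q$.} Since $R_u=Q-G_uP$, we have
\[
\ker P\cap\ker Q\le\ker R_u,
\]
and $\ker R_u/(\ker P\cap\ker Q)$ embeds via $P$ into a space of dimension at most $(h-2)^2$. This is too coarse. The plan is instead to bound $\ker R_u$ by varying the first argument of $b$, not only $u$.

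\textbf{Key step: apply the constraint with both roles.} For $v\in\ker R_u$ and any $u'\in S_0$, the pair $(u,v)$ satisfies $F(u,v)=\det(I+P(u))\det(I+R_u(v))=1$, hence $b(u,v)=0$ by \cref{p12:prop:private-determinant}. More usefully, consider $v\in\ker P\cap\ker Q$. Then for every $u\in X_0$,
\[
F(u,v)=\det\begin{pmatrix}I+P(u)&0\\ Q(u)&I\end{pmatrix}=\det(I+P(u)),
\]
so $b(u,v)=0$ for all $u\in S_0$. By \cref{p12:lem:RM-support} with $f=D$ (degree $\le h-2$, $D(0)=1$), $\operatorname{span}S_0$ has codimension at most $h-2$ in $X_0$. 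Since $b$ is nondegenerate on $X_0$, the space $\ker P\cap\ker Q$ lies in $(\operatorname{span}S_0)^{\perp_b}$, which has dimension $\le h-2$.

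\textbf{Controlling the rest of $\ker R_u$.} For a general $v\in\ker R_u$ we have $Q(v)=G_uP(v)$. Then
\[
\begin{pmatrix}I+P(u')&P(v)\\ Q(u')&I+Q(v)\end{pmatrix},
\]
taken with $u'$ ranging over the affine-like family $u+\ker R_u$, should reduce to a determinant polynomial of degree $\le h-2$ in $u'$ with value $1$ at a suitable base point. Applying \cref{p12:lem:RM-support} again would then give $v\in(\text{span of a set of codimension}\le h-2)^{\perp_b}$, but only inside $\ker R_u$.

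Concretely, consider $b$ restricted to $\ker R_u\times\ker R_u$. For $v,v'\in\ker R_u$, I would show $F(u+v',v)=1$ on a support set of the degree-$(h-2)$ polynomial $v'\mapsto\det(I+P(u+v'))$, which equals $1$ at $v'=0$. This forces $b(u+v',v)=0$ there, and hence, using $b(u,v)=0$, also $b(v',v)=0$. So $b(\cdot,v)$ vanishes on a subspace of $\ker R_u$ of codimension $\le h-2$. The radical of $b|_{\ker R_u}$ therefore has codimension $\le h-2$ in $\ker R_u$. By nondegeneracy of $b$ on $X_0$, that radical is contained in a space of dimension $\le h-2$ obtained by repeating the $\ker P\cap\ker Q$-type argument. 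Together these give $\dim\ker R_u\le 2(h-2)$.

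\textbf{Main obstacle.} The main obstacle is verifying that $F(u+v',v)$ simplifies as claimed when $v,v'\in\ker R_u$, i.e.\ that the Schur complement with respect to $I+P(u+v')$ still kills $v$. This requires an identity like $G_{u+v'}P(v)=Q(v)$, which is not automatic. If that fails, I would fall back on directly bounding the radical of $b|_{\ker R_u}$ via the pure $\ker P\cap\ker Q$ argument and the $P$-image of $\ker R_u$.
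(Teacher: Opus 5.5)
Your ``main obstacle'' is not actually an obstacle. If $v'\in\ker R_u$ and $w=u+v'\in S_0$, then $R_u(w)=R_u(u)=G_u$, which rewrites as $Q(w)=G_u(I+P(w))$. Hence $G_w=G_u$ and $R_w=R_u$, so $F(w,v)=\det(I+R_u(v))$ for every $v\in X_0$. Your conclusion is therefore sound: with $T=\{v'\in\ker R_u:\det(I+P(u+v'))=1\}$, $\operatorname{span}T$ has codimension at most $h-2$ in $\ker R_u$. This is the counterpart of the paper's lower bound $|U|\ge 2^{\dim\ker R_u-(h-2)}$, where $U=u+T$.

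The genuine gap is the final step. You test $b$ only against vectors $v\in\ker R_u$, and that gives information about $b|_{\ker R_u}$ alone. Nondegeneracy of $b$ on $X_0$ gives no upper bound on the radical of $b|_{\ker R_u}$, and nothing you derived rules out $\ker R_u$ being totally isotropic for $b$. The $\ker P\cap\ker Q$ argument also cannot be ``repeated'' here. It relied on $P(v)=Q(v)=0$ to get $F(w,v)=\det(I+P(w))$ for \emph{every} $w\in S_0$, and elements of $\operatorname{span}T$ or of that radical have no such property. Your fallback via the $P$-image of $\ker R_u$ is too coarse, as you note, and the $\ker P\cap\ker Q$ bound, though correct, is not needed.

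The missing idea is to swap the roles of the two arguments and test against all of $X_0$. For every $w\in U=u+T$ (note $u\in U$) you have $R_w=R_u$, so $F(w,v)=\det(I+R_u(v))$. Hence $b(w,v)=0$ whenever $\det(I+R_u(v))=1$. The polynomial $v\mapsto\det(I+R_u(v))$ on $X_0$ has degree at most $h-2$ and value $1$ at $0$. By the second part of \cref{p12:lem:RM-support}, its support spans a subspace $L'\le X_0$ of codimension at most $h-2$. Thus $U\subseteq(L')^{\perp_b}$, which has dimension at most $h-2$ by nondegeneracy of $b$ on $X_0$. Since $u\in U$, also $T\subseteq(L')^{\perp_b}$. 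Combined with your codimension bound, this gives $\dim\ker R_u\le 2(h-2)$.

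This is exactly the paper's argument. The paper phrases it as $|U|\le 2^{h-2}$, via the injection $w\mapsto\lambda_w$ into the annihilator of $\operatorname{span}\{M\in R_u(X_0):\det(I+M)=1\}$. It then pairs this with the Reed--Muller lower bound on $|U|$.
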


\begin{proof}
Fixing $u\in S_0$, let $U=\{w\in S_0:G_w=G_u\}$, where $G_u=Q(u)(I+P(u))^{-1}$. We first use~\cref{p12:lem:RM-support} to lower bound $|U|$. Note that $R_u(w)=Q(w)-G_uP(w)$. Meanwhile, since $w\in S_0$, $I+P(w)$ is invertible. The condition $G_w=G_u$ then gives
\[
  Q(w)(I+P(w))^{-1}=G_u~~\Leftrightarrow~~Q(w)=G_u(I+P(w))~~\Leftrightarrow~~R_u(w)=G_u.
\]
Thus, 
\[
 U=\{w\in R_u^{-1}(G_u):\det(I+P(w))=1\}.
\]
The affine space $R_u^{-1}(G_u)$ contains $u$. Indeed,
\[
  R_u(u)=Q(u)-G_uP(u)=Q(u)-G_u(I+P(u))+G_u=G_u,
\]
where the second equality uses the fact that $G_u(I+P(u))=Q(u)$.
Since $R_u^{-1}(G_u)$ is nonempty, it is a translate of $\ker R_u$
and therefore has dimension $\dim\ker R_u$.

 The polynomial $w\mapsto\det(I+P(w))$ has degree at
most $h-2$ and takes the value $1$ at $u$.  Its restriction to this
affine space is therefore nonzero.  Hence \cref{p12:lem:RM-support} gives
$|U|\ge2^{\dim\ker R_u-(h-2)}$.

We next upper bound $|U|$. Note that for any $w\in U$, we have $R_w(v)=R_u(v)$ for any $v\in X_0$. Thus, if $v\in \ker R_u$, then $\det(I+R_w(v))=\det I=1$, so
\eqref{p12:eq:support-orthogonality} gives $b(w,v)=0$.  Hence, for every
$w\in U$, the functional $b(w,-)$ vanishes on $\ker R_u$ and induces a
well-defined linear functional
\[
 \lambda_w:R_u(X_0)\longrightarrow\F_2,
 \qquad
 \lambda_w(R_u(v))=b(w,v).
\]

Apply the second assertion of \cref{p12:lem:RM-support} to the polynomial
$M\mapsto\det(I+M)$ on $R_u(X_0)$.  It has degree at most $h-2$ and takes
the value $1$ at $M=0$, so
\[
 L=\linspan\{M\in R_u(X_0):\det(I+M)=1\}
\]
has codimension at most $h-2$ in $R_u(X_0)$.  
On the other hand, every $\lambda_w$ belongs to the annihilator
\[
 L^\circ=\{\lambda\in R_u(X_0)^*: \lambda(M)=0
          \text{ for every }M\in L\}.
\]
Indeed, for any $M=R_u(v)$ satisfying $\det(I+M)=1$, since $R_u(v)=R_w(v)$ for any $v\in X_0$, we have $\det(I+R_w(v))=1$. By~\eqref{p12:eq:support-orthogonality}, $b(w,v)=0$. Thus,
\[
 \lambda_w(M)=\lambda_w(R_u(v))=b(w,v)=0.
\]
Since $\dim L^\circ=\codim_{R_u(X_0)}L\le h-2$,
the space $L^\circ$ contains at most $2^{h-2}$ functionals.

The map $w\mapsto\lambda_w$ is injective on $U$.  Indeed, if
$\lambda_w=\lambda_{w'}$, then $b(w+w',v)=0$ for every $v\in X_0$,
and the nondegeneracy of $b$ gives $w=w'$.  Therefore $|U|\le2^{h-2}$.
Together with $|U|\ge2^{\dim\ker R_u-(h-2)}$, this gives $\dim\ker R_u\le2(h-2)$. 
By rank--nullity,
\[
 \dim R_u(X_0)=\dim X_0-\dim\ker R_u\ge \dim X_0-2(h-2).
\]
\end{proof}

\paragraph{Fourier analysis on matrix spaces.}
\cref{p12:lem:fibre-kernel} bounds $\dim X_0$ by
$\dim R_u(X_0)+2(h-2)$. We next bound $\dim R_u(X_0)$ using Fourier analysis on matrix spaces. The following result is the full-rank specialization of the eigenvalue formula
for the association scheme of bilinear forms; see
\cite{DelsarteBilinearForms}. For completeness, we include its proof here.

\begin{lemma}[Fourier transform of the invertible-matrix indicator]\label{p12:lem:GL-Fourier}
For $k\geq1$, let $1_{\GL_k(2)}$ be the indicator function of
$\GL_k(2)\subseteq\F_2^{k\times k}$.  Put $\langle C,A\rangle=\tr(\trans{C}A)$
and define the unnormalised Fourier transform of $1_{\GL_k(2)}$ as
\[
 \widehat{1_{\GL_k(2)}}(C)=
 \sum_{A\in \F_2^{k\times k}}1_{\GL_k(2)}(A)(-1)^{\langle C,A\rangle}.
\]
If $\rk C=r$, then
\begin{equation}\label{p12:eq:GL-Fourier}
 \widehat{1_{\GL_k(2)}}(C)=
 (-1)^r2^{k(k-r)+\binom r2}
 \pi_{k-r}(2).
\end{equation}
In particular, $|\widehat{1_{\GL_k(2)}}(C)|$ is nonincreasing as a function of
$r$.
\end{lemma}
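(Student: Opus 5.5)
The plan is to exploit bi-invariance first. For $P,Q\in\GL_k(2)$ we have $\langle PCQ,A\rangle=\langle C,\trans{P}A\trans{Q}\rangle$, and $A\mapsto \trans{P}A\trans{Q}$ permutes $\GL_k(2)$. Hence $\widehat{1_{\GL_k(2)}}(C)$ depends only on $r=\rk C$, and we write it as $\lambda_r$.

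To compute the $\lambda_r$, use orthogonality over structured subspaces of $\F_2^{k\times k}$. Fix subspaces $U,U'\le\F_2^k$ of dimensions $u,u'$, and let
\[
V_{U,U'}=\{C:\ \text{column space of }C\subseteq U,\ \text{row space of }C\subseteq U'\},
\]
so that $\dim V_{U,U'}=uu'$. Its annihilator under $\langle\cdot,\cdot\rangle$ is $\{A:\ \trans{x}Ay=0\ \text{for all } x\in U,\ y\in U'\}$. Summing the Fourier transform over $V_{U,U'}$ and swapping sums gives
\[
\sum_{C\in V_{U,U'}}\widehat{1_{\GL_k(2)}}(C)=2^{uu'}\,\#\{A\in\GL_k(2):\ A(U')\subseteq U^{\perp}\}.
\]
The right side is explicit. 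It is zero unless $u'\le k-u$. Otherwise one chooses an injective map $U'\to U^\perp$ and then extends it to an invertible matrix, which is a product of terms $(2^{k-u}-2^i)$ and $(2^k-2^j)$.

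The left side equals $\sum_r N_r(u,u')\lambda_r$, where $N_r(u,u')$ is the number of rank-$r$ matrices in $V_{U,U'}\cong\F_2^{u\times u'}$, given by the classical count of rank-$r$ matrices. Taking $u=u'$, or simply $U'=U$, the case $u\le k/2$ gives a triangular system in $\lambda_0,\dots,\lambda_u$ with nonzero diagonal, since $N_u(u,u)=|\GL_u(2)|$. For $u>k/2$ the right side is $0$, and the same triangularity applies. So the system determines all $\lambda_r$ uniquely.

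It therefore suffices to verify that the claimed values $(-1)^r2^{k(k-r)+\binom r2}\pi_{k-r}(2)$ satisfy these identities. This is a $q$-binomial identity: expanding $N_r$ in Gaussian binomials reduces it to the $q$-Vandermonde / $q$-binomial theorem $\sum_r(-1)^rq^{\binom r2}\qbinom{u}{r}{q}x^r=\prod_{i<u}(1-q^ix)$. Carrying out this verification, with the bookkeeping of powers of $2$, is where I expect the main effort to lie. As a fallback, I would instead prove the formula by induction on $r$, comparing $C$ and $C+E$ for a rank-one $E$ and using a column-by-column count of invertible matrices in a normal form.

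Finally, the monotonicity. Passing from $r$ to $r+1$ with $t=k-r\ge1$, the ratio of absolute values is
\[
\frac{|\lambda_{r+1}|}{|\lambda_r|}=2^{-k+r}\,\frac{\pi_{t-1}(2)}{\pi_t(2)}=\frac{2^{-t}}{1-2^{-t}}\le1,
\]
which proves that $|\widehat{1_{\GL_k(2)}}(C)|$ is nonincreasing in $r$.
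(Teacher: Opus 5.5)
\textbf{Gap: the key verification is not carried out.} Your reduction is correct as far as it goes. The bi-invariance argument, the evaluation $\sum_{C\in V_{U,U'}}\widehat{1_{\GL_k(2)}}(C)=2^{uu'}\#\{A\in\GL_k(2):AU'\subseteq U^{\perp}\}$, and the triangularity of the system for $U'=U$ with $\dim U=0,1,\dots,k$ (pivot $N_u(u,u)=|\GL_u(2)|\ne0$) do show that the $\lambda_r$ are uniquely determined. But the whole content of the lemma is then the check that the claimed values solve this system, and you leave that undone.

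Nor is it the $q$-binomial theorem you display. Write the claimed value as $\lambda_r=(-1)^r2^{\binom r2}|\GL_k(2)|/\prod_{i=0}^{r-1}(2^k-2^i)$ and divide by $|\GL_k(2)|$. The equation for $\dim U=u$ then becomes
\[
 \sum_{r=0}^{u}(-1)^r2^{\binom r2}\qbinom ur2\prod_{i=0}^{r-1}\frac{2^u-2^i}{2^k-2^i}
 =2^{u^2}\prod_{i=0}^{u-1}\frac{2^{k-u}-2^i}{2^k-2^i}.
\]
Here the $r$-dependent ratio is not of the form $x^r$, so this is a terminating $q$-Chu--Vandermonde (${}_2\phi_1$) summation, not the displayed identity. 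It is true, but the proposal stops exactly where the proof has to begin. The fallback via rank-one perturbations is not specified enough to fill the hole.

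\textbf{A repair within your own framework.} Use a different test family. Take $U=\F_2^k$, so $U^{\perp}=0$, and let $U'$ range over all subspaces. The annihilator $\{A:AU'=0\}$ meets $\GL_k(2)$ only when $U'=0$, so
\[
 \sum_{C:\ \row(C)\le U'}\widehat{1_{\GL_k(2)}}(C)=
 \begin{cases}|\GL_k(2)|,&U'=0,\\ 0,&U'\ne0.\end{cases}
\]
Möbius inversion on the subspace lattice gives $\sum_{\row(C)=W}\widehat{1_{\GL_k(2)}}(C)=(-1)^r2^{\binom r2}|\GL_k(2)|$ for $\dim W=r$. The $\prod_{i=0}^{r-1}(2^k-2^i)$ matrices with row space $W$ all have rank $r$, hence all have transform $\lambda_r$. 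Therefore $\lambda_r=(-1)^r2^{\binom r2}\prod_{i=r}^{k-1}(2^k-2^i)=(-1)^r2^{\binom r2+k(k-r)}\pi_{k-r}(2)$, which is \eqref{p12:eq:GL-Fourier}.

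\textbf{Comparison with the paper.} This is the dual of the paper's proof. The paper applies Möbius inversion on the $A$-side, writing $1_{\GL_k(2)}(A)=\sum_{U\le\ker A}\mu(0,U)$, reduces to a sum over $U\supseteq\row(C)$, and then needs the $q$-binomial theorem at $x=2^{-(k-r)}$. The $C$-side inversion avoids any Gaussian-binomial summation. Your monotonicity computation is correct and matches the paper's ratio $1/(2^{k-r}-1)$.
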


\begin{proof}
For a subspace $U\le\F_2^k$, the M\"obius function of the subspace
lattice is $\mu(0,U)=(-1)^{\dim U}2^{\binom{\dim U}{2}}$.
Therefore
\[
 1_{\GL_k(2)}(A)
 =1_{\ker A=0}
 =\sum_{U\le\ker A}\mu(0,U).
\]
Substitution into the Fourier sum gives
\[
\widehat{1_{\GL_k(2)}}(C)=
 \sum_{U\le\F_2^k}\mu(0,U)
 \sum_{A:\,AU=0}(-1)^{\langle C,A\rangle}.
\]
For the inner sum, let $d=\dim U\leq k$. Write $a_i,c_i\in\F_2^k$
for the transposes of the $i$th rows of $A,C$, respectively.  The
condition $AU=0$ is equivalent to $a_i\in U^\perp$ for every $i$, and
\[
 \langle C,A\rangle
 =\tr(\trans{C}A)
 =\sum_{i=1}^k \trans{c_i}a_i.
\]
Thus the character $A\mapsto(-1)^{\langle C,A\rangle}$ is trivial on
the space $\{A:AU=0\}$ if and only if $c_i\in(U^\perp)^\perp=U$ for every $i$. 
This condition is equivalent to the row space of $C$ being
contained in $U$.  In this case every summand is $1$, so the inner
sum equals $2^{k(k-d)}$.

On the other hand, if the row space of $C$ is not contained in $U$, choose $i$ with
$c_i\notin U$.  Then there is $b\in U^\perp$ such that
$\trans{c_i}b=1$.  Let $B$ be the matrix whose $i$th row is
$\trans{b}$ and whose other rows are zero.  Then $BU=0$ and
$\langle C,B\rangle=1$.  Translation by $B$ permutes the matrices
satisfying $AU=0$ and changes the sign of every summand:
\[
 (-1)^{\langle C,A+B\rangle}
 =-(-1)^{\langle C,A\rangle}.
\]
The terms therefore cancel in pairs, and the inner character sum is zero. Thus,
\[
\widehat{1_{\GL_k(2)}}(C)=
 \sum_{\substack{U\le\F_2^k\\
                  \row(C)\le U}}
 \mu(0,U)\,2^{k(k-\dim U)}.
\]

Now suppose that $\rk C=r$.  For each $r\le d\le k$, the number of
$d$-dimensional subspaces $U$ containing $\row(C)$
is $\qbinom{k-r}{d-r}{2}$.
Grouping the above sum according to $d=\dim U$ and substituting
the formula for $\mu(0,U)$ gives
\[
 \widehat{1_{\GL_k(2)}}(C)=
 \sum_{d=r}^{k}
 \qbinom{k-r}{d-r}{2}
 (-1)^d2^{\binom d2}2^{k(k-d)}.
\]
Writing $d=r+j$ and $n=k-r$ gives
\[
 \widehat{1_{\GL_k(2)}}(C)=
 (-1)^r2^{k(k-r)+\binom r2}
 \sum_{j=0}^{n}
 \qbinom nj2(-1)^j2^{\binom j2}2^{-nj}=(-1)^r2^{k(k-r)+\binom r2}\pi_{k-r}(2),
\]
where the last equality uses the Gaussian binomial theorem
\[
 \sum_{j=0}^{n}\qbinom nj2(-1)^j2^{\binom j2}x^j
 =\prod_{i=0}^{n-1}(1-2^ix)
\]
with $x=2^{-n}$.  

The ratio of the
absolute values at ranks $r+1$ and $r$ is $\frac1{2^{k-r}-1}\le1$, so these absolute values are nonincreasing.

\end{proof}

In the proof of \cref{p12:lem:fibre-kernel}, the determinant constraint was
encoded by the functional $\lambda_w: R_u(X_0)\to\F_2$ defined by $\lambda_w(R_u(v))=b(w,v)=\trans{w}Av$. 
The low-degree polynomial argument used only that this functional is
well defined.  For Fourier analysis, we must additionally realize it
as the restriction of a Fourier character on the full matrix space,
which has the form $M\mapsto(-1)^{\tr(\trans{C}M)}$.  We record
this trace representation next.

\begin{lemma}[Trace representation of $b(u,-)$]\label{p12:lem:trace-representation}
For every $u\in S_0$, the rule
\[
 \lambda_u:R_u(X_0)\longrightarrow\F_2,
 \qquad \lambda_u(R_u(v))=b(u,v),
\]
defines a linear functional.  Define
\[
 \cC_u=\left\{C\in\F_2^{(h-2)\times(h-2)}:
 \tr(\trans{C}M)=\lambda_u(M)
 \text{ for every }M\in R_u(X_0)\right\}.
\]
Then $\cC_u$ is a nonempty affine space of dimension $(h-2)^2-\dim R_u(X_0)$.
\end{lemma}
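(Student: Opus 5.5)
The argument has two parts: first show that $\lambda_u$ is well defined and linear, then identify $\cC_u$ as the solution set of a consistent linear system on the full matrix space.

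\emph{Well-definedness.} We must show that $R_u(v)=R_u(v')$ forces $b(u,v)=b(u,v')$. By linearity of $R_u$ and bilinearity of $b$, this reduces to showing $b(u,-)$ vanishes on $\ker R_u$. This was observed just before \cref{p12:lem:fibre-kernel}. If $R_u(v)=0$, then $\det(I+R_u(v))=\det I=1$. Since $u\in S_0$, the Schur-complement rewriting gives $F(u,v)=\det(I+R_u(v))$. Then \eqref{p12:eq:support-orthogonality} yields $b(u,v)=0$. Hence $\lambda_u$ is well defined on $R_u(X_0)$. Linearity follows because $R_u$ is a linear surjection onto its image and $b(u,-)$ is linear.

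\emph{Identification of $\cC_u$.} The pairing $\langle C,M\rangle=\tr(\trans{C}M)$ on $\F_2^{k\times k}$, with $k=h-2$, is the standard dot product on $k^2$ coordinates. It is therefore nondegenerate, so $C\mapsto\langle C,-\rangle$ is an isomorphism $\F_2^{k\times k}\to(\F_2^{k\times k})^*$. Extend $\lambda_u$ to some linear functional $\tilde\lambda$ on all of $\F_2^{k\times k}$, for instance by choosing a complement of $R_u(X_0)$ and declaring $\tilde\lambda$ zero there. There is then a unique $C_0$ with $\langle C_0,-\rangle=\tilde\lambda$, so $C_0\in\cC_u$ and $\cC_u$ is nonempty.

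\emph{Affine structure and dimension.} Two elements of $\cC_u$ differ by some $C$ with $\langle C,M\rangle=0$ for all $M\in R_u(X_0)$. Conversely, adding such a $C$ to an element of $\cC_u$ stays in $\cC_u$. Hence $\cC_u=C_0+R_u(X_0)^{\perp}$, where the orthogonal complement is taken with respect to the trace pairing. By nondegeneracy, $\dim R_u(X_0)^{\perp}=k^2-\dim R_u(X_0)$, which is $(h-2)^2-\dim R_u(X_0)$ as claimed.

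I expect no step to be a genuine obstacle. The only point needing care is the well-definedness, which relies on $u\in S_0$ so that the Schur-complement identity $F(u,v)=\det(I+R_u(v))$ is available.
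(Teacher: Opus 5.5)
Your proposal is correct and follows the paper's proof essentially step for step. You show well-definedness by noting that $\det(I+R_u(v))=1$ on $\ker R_u$ and applying \eqref{p12:eq:support-orthogonality} (which needs $u\in S_0$), then extend $\lambda_u$ to the full matrix space and represent it through the nondegenerate trace pairing, and you obtain the dimension from the orthogonal complement $R_u(X_0)^{\perp}$ where the paper applies rank--nullity to the surjection $C\mapsto\tr(\trans{C}\,\cdot)|_{R_u(X_0)}$, which is the same computation.
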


\begin{proof}
If $v\in\ker R_u$, then $\det(I+R_u(v))=\det I=1$.
By \eqref{p12:eq:support-orthogonality}, this implies $b(u,v)=0$.
Consequently, if $R_u(v)=R_u(v')$, then $v-v'\in\ker R_u$ and
$b(u,v)=b(u,v')$.  Thus $\lambda_u$ is well defined, and its linearity
follows from the linearity of $R_u$ and $b(u,-)$.

Consider the linear map
\[
 \F_2^{(h-2)\times(h-2)}\longrightarrow R_u(X_0)^*,
 \qquad
 C\longmapsto\bigl(M\mapsto\tr(\trans{C}M)\bigr).
\]
This map is surjective.  Indeed, choose a basis of $R_u(X_0)$ and
extend it to a basis of $\F_2^{(h-2)\times(h-2)}$.  Any linear
functional on $R_u(X_0)$ can then be extended to the full matrix space
by assigning the value zero to the additional basis matrices.

Every such extension is represented by the trace pairing.  To see
this, first note that the trace pairing on the full matrix space is
nondegenerate.  If $C=(c_{ij})\ne0$, choose an entry with $c_{ij}=1$ and let $E_{ij}$
be the matrix with its only nonzero entry in position $(i,j)$.  Then
\[
 \tr(\trans{C}E_{ij})=c_{ij}=1.
\]
Thus the map from the full matrix space to its dual induced by the
trace pairing is injective and, since the two spaces have the same
dimension, is an isomorphism. This proves the claimed surjectivity.~\footnote{The extension of a linear functional
from a subspace and its representation by a nondegenerate bilinear
form are standard finite-dimensional duality results; see
\cite[Section~3.5, Exercise~12, and Section~10.1]{HoffmanKunze}.}

Applying surjectivity to $\lambda_u$ gives a matrix $C_0\in\cC_u$.
Moreover,
\[
 \cC_u=C_0+
 \left\{C\in\F_2^{(h-2)\times(h-2)}:
 \tr(\trans{C}M)=0
 \text{ for every }M\in R_u(X_0)\right\}.
\]
Hence $\cC_u$ is an affine space whose translation space is the kernel
of the displayed surjective map.  Its image has dimension
$\dim R_u(X_0)^*=\dim R_u(X_0)$, so rank--nullity gives
\[
 \dim\cC_u=(h-2)^2-\dim R_u(X_0).
\]
\end{proof}

With the invertible-matrix indicator, we derive the upper bound for $\dim R_u(X_0)$.

\begin{proposition}[Fourier bound for $R_u(X_0)$]\label{p12:lem:affine-singular}
For every $0\ne u\in S_0$, let $\lambda_u$ and $\cC_u$ be as in
\cref{p12:lem:trace-representation}. Let $\rho_u=\min\{\rk C:~C\in\cC_u\}$. Then 
\begin{equation}\label{p12:eq:affine-singular}
 \dim R_u(X_0)
 \le (h-2)^2+(h-2)+1-(h-2)\rho_u+\binom{\rho_u}{2}.
\end{equation}
\end{proposition}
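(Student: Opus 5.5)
We will run a Fourier double-counting on the subspace $V:=R_u(X_0)\le\F_2^{k\times k}$, where $k=h-2$. The determinant constraint says the character $\chi(M)=(-1)^{\lambda_u(M)}$ is identically $1$ on $\Sigma:=\{M\in V:\det(I+M)=1\}=V\cap(\GL_k(2)-I)$. By \cref{p12:lem:trace-representation}, $\lambda_u(M)=\tr(\trans{C}M)$ for every $C\in\cC_u$.

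The first step is to show $\lambda_u\ne0$. Since $u\ne0$ and $b$ is nondegenerate on $X_0$, there is $v$ with $b(u,v)=1$, so $\lambda_u(R_u(v))=1$. Hence $\cC_u$ is disjoint from the annihilator $V^\circ$, which means $\cC_u=C_0+V^\circ$ with $C_0\notin V^\circ$.

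Next we count $\Sigma$ in two ways. Put $S(C)=\sum_{M\in V}1_{\GL_k}(I+M)(-1)^{\tr(\trans{C}M)}$. For $C\in\cC_u$ every summand with nonzero indicator equals $+1$, so $S(C)=|\Sigma|=S(0)$. On the other hand, Poisson summation over $V$ gives
\[
S(C)=\frac{|V|}{2^{k^2}}\sum_{D\in C+V^\circ}\widehat{1_{\GL_k}}(D)(-1)^{\tr(\trans{D}I)}.
\]
Taking the difference $S(0)-S(C_0)=0$ for $C\in\cC_u$ is not directly useful. The more effective route is to use $\lambda_u\ne0$: the kernel hyperplane $V'=\ker\lambda_u$ contains $\Sigma$, so $|\Sigma|=|V'\cap(\GL_k-I)|$, and comparing the sums over $V$ and over $V'$ yields
\[
\sum_{M\in V,\,\lambda_u(M)=1}1_{\GL_k}(I+M)=0.
\]
In Fourier terms, this says
\[
\sum_{D\in V^\circ}\widehat{1_{\GL_k}}(D)\pm\sum_{D\in C_0+V^\circ}\widehat{1_{\GL_k}}(D)=0,
\]
with the sign determined after the twist by $I$ (a trace twist).

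Rearranging this identity gives $|\sum_{D\in V^\circ}\cdots|=|\sum_{D\in\cC_u}\cdots|$. The left side is dominated by the $D=0$ term $|\GL_k|\ge\pi_\infty(2)2^{k^2}$, and the other terms over $V^\circ$ should be controlled using monotonicity in rank. The right side has $|\cC_u|=2^{k^2-\dim V}$ terms, each of size at most $2^{k(k-\rho_u)+\binom{\rho_u}2}$ by \cref{p12:lem:GL-Fourier} and the definition of $\rho_u$.

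The main obstacle is the $V^\circ$ side. Its nonzero terms could cancel the main term, so I would avoid it. Instead, I would use the constant sign $(-1)^{\lambda_u}$ on $\Sigma$ together with the fact that $\Sigma$ is all of $V\cap(\GL_k-I)$, and write
\[
|\Sigma|=\frac12\sum_M 1_{\GL_k}(I+M)\bigl(1_V(M)+\chi(M)1_V(M)\bigr),
\]
so that $\sum_{M\in V}\chi(M)1_{\GL_k}(I+M)=\sum_{M\in V}1_{\GL_k}(I+M)$. The left side is bounded above by $|V|\,2^{-k^2}\,|\cC_u|\max_{D\in\cC_u}|\widehat{1}(D)|\le 2^{k(k-\rho_u)+\binom{\rho_u}{2}}$. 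The right side equals $|\Sigma|$, which is at least $2^{\dim V-k}$ by \cref{p12:lem:RM-support}, since $M\mapsto\det(I+M)$ has degree $\le k$ and is nonzero at $0$. Comparing the two bounds gives
\[
\dim V\le k+k^2-k\rho_u+\binom{\rho_u}2,
\]
and the extra $+1$ in \eqref{p12:eq:affine-singular} absorbs the factor $\pi_{k-r}(2)\le1$ and the factor $\frac12$ in the rewriting above.
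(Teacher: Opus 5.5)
Your final argument is correct and is essentially the paper's proof. The character sum $\sum_{M\in V}1_{\GL_{h-2}(2)}(I+M)(-1)^{\lambda_u(M)}$ equals the number of $M\in V$ with $I+M$ invertible, because $\lambda_u$ vanishes on that support. Fourier inversion then collapses this sum to one over $\cC_u$, which the triangle inequality and rank monotonicity bound, and \cref{p12:lem:RM-support} supplies the lower estimate; the exploratory detour through $V^\circ$ can simply be deleted.

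The one difference is that you apply \cref{p12:lem:RM-support} on all of $V$, while the paper applies it on the hyperplane $\ker\lambda_u$. Your version is valid and gives a bound one better, so the $+1$ in \eqref{p12:eq:affine-singular} is just slack. It is not, as you say, absorbing $\pi_{h-2-\rho_u}(2)\le1$ or the factor $\frac12$, since neither of those costs anything.
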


\begin{proof}
Since $u\ne0$ and $b$ is nondegenerate, there is $v\in X_0$ such
that $b(u,v)=1$.  Hence $\lambda_u(R_u(v))=1$, so $\lambda_u$ is
nonzero.  Thus $0\notin\cC_u$, which gives $\rho_u\ge1$.  Since
$\cC_u$ is nonempty and consists of $(h-2)\times(h-2)$ matrices, we
also have $\rho_u\le h-2$.

Let $T_u=\ker\lambda_u=\{M\in R_u(X_0):\lambda_u(M)=0\}$.  Since $\lambda_u$ is nonzero, its image is
$\F_2$. Thus,
\[
 \dim T_u=\dim R_u(X_0)-1.
\]
The Boolean polynomial $M\mapsto\det(I+M)$ restricted to $T_u$ has
degree at most $h-2$ and takes the value $1$ at zero.  Let
\[
 N_0=|\{M\in T_u:I+M\in\GL_{h-2}(2)\}|.
\]
Then \cref{p12:lem:RM-support} gives
\begin{equation}\label{p12:eq:N0-lower}
 N_0\ge
 2^{\dim R_u(X_0)-(h-2)-1}.
\end{equation}

Note that we can write $N_0$ as the indicator sum $N_0=\sum_{M\in T_u}1_{\GL_{h-2}(2)}(I+M)$.
Recall from \eqref{p12:eq:Riccati-constraint} that, for $M\in R_u(X_0)$, $\lambda_u(M)=1$ implies $\det(I+M)=0$.
Thus, the coset $\{M\in R_u(X_0):\lambda_u(M)=1\}$ contains no
$M$ for which $I+M$ is invertible. We obtain
\begin{equation}\label{p12:eq:character-sum-N0}
\begin{split}
&\sum_{M\in R_u(X_0)}1_{\GL_{h-2}(2)}(I+M)(-1)^{\lambda_u(M)}\\
=&
 \sum_{\substack{M\in R_u(X_0)\\ \lambda_u(M)=0}}
 1_{\GL_{h-2}(2)}(I+M)
 -
 \sum_{\substack{M\in R_u(X_0)\\ \lambda_u(M)=1}}
 1_{\GL_{h-2}(2)}(I+M)\\
 =&\sum_{M\in T_u}
 1_{\GL_{h-2}(2)}(I+M)\\
 =& N_0.
 \end{split}
\end{equation}
With the unnormalized Fourier transform in \cref{p12:lem:GL-Fourier},
Fourier inversion gives, for every $M\in\F_2^{(h-2)\times(h-2)}$,
\[
\begin{split}
 1_{\GL_{h-2}(2)}(I+M)
 &=2^{-(h-2)^2}\sum_{C\in \F_2^{(h-2)\times(h-2)}}
 \widehat{1_{\GL_{h-2}(2)}}(C)
 (-1)^{\tr(\trans{C}(I+M))}\\
&=2^{-(h-2)^2}\sum_{C\in \F_2^{(h-2)\times(h-2)}}
 \widehat{1_{\GL_{h-2}(2)}}(C)(-1)^{\tr C}
 (-1)^{\tr(\trans{C}M)}.
\end{split}
\]
Substitute this into the character-sum expression for $N_0$ as in~\eqref{p12:eq:character-sum-N0}.
Interchanging the sums gives
\[
 N_0=2^{-(h-2)^2}\sum_{C\in \F_2^{(h-2)\times(h-2)}}
 \widehat{1_{\GL_{h-2}(2)}}(C)(-1)^{\tr C}
 \sum_{M\in R_u(X_0)}
 (-1)^{\tr(\trans{C}M)+\lambda_u(M)}.
\]
Note that the inner character sum is $2^{\dim R_u(X_0)}$ precisely when
$C\in\cC_u$, and vanishes otherwise.  Therefore,
\begin{equation}\label{p12:eq:N0-fourier-exact}
 N_0=
 2^{\dim R_u(X_0)-(h-2)^2}
 \sum_{C\in\cC_u}
 \widehat{1_{\GL_{h-2}(2)}}(C)(-1)^{\tr C}.
\end{equation}
By \cref{p12:lem:trace-representation}, $|\cC_u|=2^{(h-2)^2-\dim R_u(X_0)}$.
Applying the triangle inequality to
\eqref{p12:eq:N0-fourier-exact} gives
\[
 N_0\le
 \max_{C\in\cC_u}
 \bigl|\widehat{1_{\GL_{h-2}(2)}}(C)\bigr|.
\]
By the explicit formula and rank monotonicity in
\cref{p12:lem:GL-Fourier}, the largest absolute Fourier coefficient in the maximum
therefore occurs at rank $\rho_u$.  Using also
$\pi_{h-2-\rho_u}(2)\le1$, we obtain
\[
 N_0\le
 \max_{C\in\cC_u}
 \bigl|\widehat{1_{\GL_{h-2}(2)}}(C)\bigr|
 =2^{(h-2)(h-2-\rho_u)+\binom{\rho_u}{2}}
   \pi_{h-2-\rho_u}(2)
 \le
 2^{(h-2)(h-2-\rho_u)+\binom{\rho_u}{2}}.
\]
Comparing this with \eqref{p12:eq:N0-lower} proves
\eqref{p12:eq:affine-singular}.
\end{proof}

\subsubsection{\texorpdfstring{Proof of~\cref{p12:prop:private-rank}}{Proof of the private-witness rank estimate}}
Now we are ready to prove~\cref{p12:prop:private-rank}, which we restate here.
\begin{trivlist}
\item[\hskip\labelsep{\bfseries \Cref*{p12:prop:private-rank} (restated).}]
\itshape
Let $X$ be an $R$-dimensional $\F_2$-space, let
$\cB\le\Alt(X)$ be minimally $h$-hollowing with
$\dim\cB=\binom h2$, and let $H\in\Gr(h,X)$ satisfy
$\cB_H=\linspan_{\F_2}\{A\}$ for some nonzero $A\in\cB$.  Then
\[
 \rk A\le \frac{10-\sqrt{10}}9 \cdot h^2+O(h),
\]
where the implied constant is absolute.
\end{trivlist}

\begin{proof}[Proof of \cref{p12:prop:private-rank}]
Recall that~\cref{p12:lem:rank-bound-private-span} shows that $\rk A\leq 2h+\dim X_0$, where $X_0$ is the complement of $H+\rad A$ in $H^{\perp_A}$. If $\dim X_0\le h-2$, then
\eqref{p12:eq:rank-A-X0} immediately gives
\[
 \rk A\le2h+\dim X_0\le2h+(h-2)=O(h).
\]

Suppose from now on that $\dim X_0\ge h-1$. Then \eqref{p12:eq:S0-lower} gives
$|S_0|\ge 2^{\dim X_0-(h-2)}\ge2$. Since $P(0)=0$, we have $\det(I+P(0))=1$, so $0\in S_0$ and hence $S_0\setminus\{0\}$ is nonempty.
Transferring the hollowing property of $\cB$ to determinant constraints on the evaluation matrix of $\cB$, we then use a low-degree polynomial argument to prove $\dim X_0\leq \dim R_u(X_0)+2(h-2)$ (\cref{p12:lem:fibre-kernel}) for any nonzero $u\in S_0$. The Fourier-analysis argument then shows that for any nonzero $u\in S_0$,
\begin{equation}\label{p12:eq:X0-rho-one}
   \dim X_0\leq \dim R_u(X_0)+2(h-2)
 \le (h-2)^2+3(h-2)+1-(h-2)\rho_u+\binom{\rho_u}{2},
\end{equation}
where $\rho_u$ is the minimal rank of matrices in the affine space $\cC_u$ (\cref{p12:lem:affine-singular}). 

We now discuss possible choices of $\rho_u$ and derive the stated bound for $\rk A$. For each $0\neq u\in S_0=\{u\in X_0:\det(I+P(u))=1\}$,
choose one minimum-rank representative $C_u\in\cC_u$. Since $\lambda_u\ne0$, we have $0\notin\cC_u$, and hence
$1\leq\rho_u=\rk C_u\leq h-2$. 

Recall that $G_u=Q(u)(I+P(u))^{-1}$ and $R_u(v)=Q(v)-G_uP(v)$. Let 
$D_u=\trans{G_u}C_u$.  By~\cref{p12:lem:trace-representation},
\[
 b(u,v)=\tr(\trans{C_u}(Q(v)-G_uP(v)))=\tr(\trans{C_u}Q(v))-\tr(\trans{D_u}P(v)).
\]
Note that the map
\[
 S_0\setminus\{0\}\longrightarrow
 \F_2^{(h-2)\times(h-2)}\times\F_2^{(h-2)\times(h-2)},
 \qquad u\longmapsto(C_u,D_u),
\]
is injective.  Indeed, suppose that
$(C_u,D_u)=(C_{u'},D_{u'})$.  Then, for every $v\in X_0$,
\[
\begin{split}
 b(u,v)
 &=\tr(\trans{C_u}Q(v))-\tr(\trans{D_u}P(v))\\
 &=\tr(\trans{C_{u'}}Q(v))-\tr(\trans{D_{u'}}P(v))
 =b(u',v).
\end{split}
\]
Thus $b(u-u',v)=0$ for every $v\in X_0$.  Since $b$ is
nondegenerate on $X_0$, we have $u-u'=0$, and hence $u=u'$.

Recall that the number of rank-at-most-$\rho$ matrices $C$ is
$2^{\rho(2(h-2)-\rho)+O(h)}$. For a fixed $C$ of rank at most $\rho$,
there are at most $2^{(h-2)\rho}$ matrices $D$ whose row space is
contained in the row space of $C$. Thus,
\[
|\{(C,D):~\rk C\le\rho\ \text{and}\ \row(D)\leq
\row(C)\}|\leq 2^{\rho(3(h-2)-\rho)+O(h)}.
\]
Since $D_u=\trans{G_u}C_u$, every pair $(C_u,D_u)$ is counted in
this set.

Partition $S_0\setminus\{0\}$ into the $h-2$
possible exact ranks.  Some class has at least
\[
 \frac{|S_0|-1}{h-2}
 \ge\frac{2^{\dim X_0-(h-2)}-1}{h-2}
 \ge\frac{2^{\dim X_0-(h-2)-1}}{h-2}
\]
elements.  If its common rank is $\rho$, injectivity of
$u\mapsto(C_u,D_u)$ and the preceding pair count give
\begin{equation}\label{p12:eq:X0-rho-two}
 \dim X_0\le\rho(3(h-2)-\rho)+O(h).
\end{equation}
Combining \eqref{p12:eq:X0-rho-one} and \eqref{p12:eq:X0-rho-two}, and putting
$x=\rho/(h-2)$, gives
\[
 \frac{\dim X_0}{(h-2)^2}
 \le
 \min\left\{3x-x^2,1-x+\frac{x^2}{2}\right\}+O(h^{-1}).
\]
For $0\le x\le1$, the first function in the minimum is increasing and
the second is decreasing. Hence the maximum of this minimum is attained
at their intersection $x_0=\frac{4-\sqrt{10}}3$, with value
$\frac{10-\sqrt{10}}9$.
Thus we have
\[
 \rk A\le2h+\dim X_0\le \frac{10-\sqrt{10}}9\cdot h^2+O(h).
\]
\end{proof}

\section{Counting alternating bilinear maps with fixed Sims rank}
\label{p12:sec:endpoint-absorption}

The matrix-space count is now complete.  We first record its precise
relation to $N_p(r_1,r_2,s)$, the base-$p$ logarithm of the number of
surjective alternating maps
$\beta:\wedge^2\F_p^{r_1}\to\F_p^{r_2}$ whose minimum generating
dimension (Sims rank) is $s$; if there are no such maps, set
$N_p(r_1,r_2,s)=-\infty$~\cite[Proposition~5.5]{BNV}.

Let $X=\F_p^{r_1}$, $W=\F_p^{r_2}$ and $h=s-1$.
By \cref{p12:prop:sims-hollowing-dictionary}, a surjective alternating map
$\beta:\wedge^2X\to W$ has Sims rank $s$ if and only if its dual
image $\cA=\beta^*(W^*)\le\Alt(X)$
is an $r_2$-dimensional $h$-hollowing space that is
$(h+1)$-faithful.  Conversely, every such space $\cA$, together with
an isomorphism $W^*\to\cA$, determines one such map.  Consequently,
\begin{equation}\label{p12:eq:Np-hollowing-correspondence}
\begin{split}
 p^{N_p(r_1,r_2,s)}
 &={}|\GL_{r_2}(p)|
 \mathrel{}\cdot
 \left|\left\{\cA\le\Alt(X):
 \begin{array}{l}
  \dim X=r_1,\ \dim\cA=r_2,\ \\
  \cA\text{ is $h$-hollowing and $(h+1)$-faithful}
 \end{array}
 \right\}\right|
 \\
 &\leq{}|\GL_{r_2}(p)|
 \mathrel{}\cdot
 \left|\left\{\cA\le\Alt(X):
 \begin{array}{l}
  \dim X=r_1,\ \dim\cA=r_2,\ \\
  \cA\text{ is $h$-hollowing}
 \end{array}
 \right\}\right|.
\end{split}
\end{equation}
Here and below, $p^{-\infty}$ is interpreted as $0$.
The resulting group-counting consequence is the following.

\begin{corollary}[The nonsaturated range]
\label{p12:cor:nonsaturated-group}
For every fixed $A>0$, there is an integer $m_A$, independent of
$p$, such that, for every prime $p$ and every $m\ge m_A$, the
number of isomorphism classes of nonabelian groups $G$ of order
$p^m$ satisfying $r_2=d(G_2/G_3)\le\binom{s(G)-1}{2}$
is at most $p^{\frac{2}{27}m^3-Am^2}$.
Here $s(G)$ is the Sims rank. The bound includes all choices of
numerical and auxiliary Sims-presentation data.
\end{corollary}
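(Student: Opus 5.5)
We plan to follow Sims' framework through the master bound \eqref{p12:eq:book-master}, replacing only the term $N_p(r_1,r_2,s)$ by the improved hollowing count. Fix the numerical data $c,s,(r_i),(d_k),(a(i,j))$; summing over these choices costs $2^{O(m)}$, and the power word arrays cost $p^{O(m^2)}$. The power word count can be absorbed because we only need a saving of $Am^2$ with $A$ arbitrary, and $O(m^2)$ is harmless once the cubic deficit is of order $m^3$ or the leading coefficient is strictly below $\frac{2}{27}$. Put $h=s-1$. By \eqref{p12:eq:Np-hollowing-correspondence} we have
\[
N_p(r_1,r_2,s)\le r_2^2+\log_p|\{\text{$r_2$-dim $h$-hollowing }\cA\le\Alt(\F_p^{r_1})\}|.
\]
The hypothesis $r_2\le\binom{s-1}{2}=\binom h2$ places us in the range of \cref{p12:prop:exponent-away} when $p\ge3$, or when $p=2$ and $r_2<\binom h2$. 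At the binary endpoint $r_2=\binom h2$ we use \cref{p12:lem:minimal-reduction} together with \cref{p12:prop:endpoint-exponent}. In every case the bound is
\[
N_p\le (r_2-s+1)\binom{r_1}{2}+E+O(r_1^2+r_2^2),
\]
where $E=0$ away from the endpoint and $E=h a_*(2r_1-2a_*-1)$ at the endpoint. Here $\kappa=\frac45$, which gives $a_*\le\frac25h^2$. Also $h\ge3$ is forced by \cref{p12:lem:hollowing-min-dim}, and small $h$ is absorbed into the constants.

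Next we substitute this into \eqref{p12:eq:book-master} and optimize. Write $r_1=\alpha m$, $r_2=\beta m$, $s=\sigma m$, and $t=m-r_1-r_2\ge0$. Compared with the classical bound \eqref{eq:BNV's Nprs}, the leading term is the same $\frac12r_1^2(r_2-s+1)$. Sims' extra term $\frac12r_1r_2(s-1)$ is gone, however, and the saturation condition is now reversed: $r_2\le\binom{s-1}{2}$, so $s\gtrsim\sqrt{2r_2}$. The resulting cubic objective is
\[
\tfrac12r_1^2(r_2-s)+\bigl(\tbinom{r_1}{2}+sr_2-\tfrac{s^2}{2}\bigr)t+\tfrac{s}{2}t^2+E.
\]
The plan is to show that, uniformly, this is at most $\frac{2}{27}m^3-\Omega(m^{5/2})$ or at most $\frac{2}{27}m^3-Am^2$ for large $m$. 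The unconstrained maximizer of $\frac12r_1^2r_2$ with $r_1+r_2\le m$ is at $r_1=\frac23m$, $r_2=\frac13m$, $t=0$. The subtracted $\frac12r_1^2s$ then gives a loss of order $m^2s$, and we have $s\ge\sqrt{2r_2}-O(1)$.

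We split into cases. If $r_2\ge\epsilon m$, then $s\gtrsim\sqrt{m}$, and the loss $\frac12r_1^2 s$ dominates every other term, since $E=O(s\cdot s^2 r_1)=O(m^{5/2})$ with a smaller constant. This needs checking, because at the endpoint $E\approx\frac45h^3r_1$, which is tied to the saving $\frac12r_1^2 s$ only through $h^2\approx2r_2$. That gives $E\lesssim\frac45\cdot 2r_2\cdot h r_1$. For large $r_1$ relative to $r_2$, $\frac12r_1^2h$ beats $\frac85r_2hr_1$ exactly when $r_1>\frac{16}{5}r_2$, so the near-optimal region $r_1\approx2r_2$ is delicate. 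I expect this endpoint comparison to be the main obstacle. I would first try the exact $a_*$ truncation $a_*\le r_1/2$, and treat $E$ together with the $t$-terms, using the cubic deficit \eqref{p12:eq:cubic-deficit} away from balance. If that fails, I would refine using the fact that $E$ only replaces $h$ of the $M$ basis forms, giving $E\le h\cdot\frac45h^2r_1-h\binom{r_1}{2}+\ldots$; that is, the endpoint forms are counted against the subtracted $(M-h)$ rather than added. If $r_2\le\epsilon m$, then $\frac12r_1^2r_2+O(tm^2)\le(\frac{1}{2}\epsilon+o(1))m^3$, which is far below $\frac{2}{27}m^3$ for small $\epsilon$. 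Finally we collect all cases, take $m_A$ large enough that the $\Omega(m^{5/2})$ saving exceeds $Am^2$ plus the $O(m^2)$ error terms, and note that all constants are independent of $p$.
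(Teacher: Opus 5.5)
There is a genuine gap in your case $r_2\le\epsilon m$. Your bound $\frac12r_1^2r_2+O(tm^2)\le(\frac12\epsilon+o(1))m^3$ treats the $t$-terms as small, but $t=m-r_1-r_2$ is unconstrained. In \eqref{p12:eq:book-master} the term $\binom{r_1}{2}t$ alone is $\frac{2}{27}m^3+O(m^2)$ at $r_1\approx\frac23m$, $t\approx\frac13m$, which is allowed when $r_2=o(m)$. So in the nonsaturated range the master bound gives at best $\frac{2}{27}m^3-\Theta(hm^2)+O(m^2)$. Even this needs a uniform deficit $\frac{2}{27}-B\ge c\,h/m$ over all normalized parameters, not only for $r_2\ge\epsilon m$. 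That suffices once $h\ge H(A)$.

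For bounded $h$ it does not suffice; bounded $h$ is the same as bounded $r_2$, since $h\le r_2\le\binom h2$. The errors you carry all have fixed constants, while $A$ is arbitrary:
\begin{itemize}
\item the $O(m^2)$ in \eqref{p12:eq:book-master};
\item the $p^{O(m^2)}$ power-word allowance;
\item the $O(r_1^2+r_2^2)$ from the hollowing count.
\end{itemize}
So ``small $h$ is absorbed into the constants'' is exactly the step that fails.

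The missing idea is that for bounded $r_2$ you must stop counting deeper commutator coefficients naively as $\binom{r_1}{2}(m-r_1-r_2)$. Use instead the fixed-breadth relation bound of Part~I, which charges only about $\frac12r_1^2(m_{\deep}-\mathfrak b_{\mathrm{tot}})$. The breadth relations $s_{i+1}\le s\mathfrak b_i$, together with $s_2\le r_2(\delta+1)$, give $m_{\deep}-\mathfrak b_{\mathrm{tot}}\le\frac{r_2}{r_2+1}(m-r_1+1)$. This yields a fixed cubic deficit, with leading coefficient $\frac{2}{27}\cdot\frac{K}{K+1}$ when $r_2\le K$, and that absorbs any $O_K(m^2)$. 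This is \cref{p12:lem:bounded-second-rank}. The correct case split is therefore $h\ge H$ versus $h<H$, not $r_2$ versus $\epsilon m$.

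The binary endpoint is also left open, and your estimate $E\lesssim\frac45h^3r_1$ discards exactly the term that makes it work. With $\theta=2a_*/h^2\le\frac45$, one has exactly $ha_*(2r_1-2a_*-1)=\theta h^3r_1-\frac{\theta^2}{2}h^5-\frac\theta2h^3$. Take $r_1\approx\frac23m$, $r_2\approx\frac13m$, $h^2\approx2r_2$ and $\theta=\frac45$. In units of $hm^2$ the contributions are:
\begin{itemize}
\item $-\frac{50}{225}$ from $-\frac12r_1^2h$;
\item $+\frac{80}{225}$ from $\theta h^3r_1$;
\item $-\frac{32}{225}$ from $-\frac{\theta^2}{2}h^5$.
\end{itemize}
The net saving is only $\frac{2}{225}$, and it exists solely because of the $-2ha_*^2$ term. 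Making this uniform over all $R$ and $w=m-R-\binom h2$ needs the completion-of-squares argument of \cref{p12:lem:endpoint-absorption}. Your fallback of subtracting a further $h\binom{r_1}{2}$ from $E$ is not valid. The main term $(M-h)\binom{r_1}{2}$ already records that the $h$ low-rank forms replace $h$ freely chosen ones, so this would count the replacement twice.
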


For large $h$, the hollowing-space estimates give a saving of order
$hm^2$ in the group-counting exponent. For bounded $h$, we use the
fixed-breadth relation bound from Part~I to obtain a fixed cubic
saving. We first establish the estimates needed for the proof.
The leading-map count splits into three cases:
\begin{enumerate}
  \item $p\geq 3$ and $r_2\le\binom{s-1}{2}$, or $p=2$ and $r_2\le\binom{s-1}{2}-1$;
  \item $p=2$, $r_2=\binom{s-1}{2}$ and $\cA$ is not minimally $h$-hollowing;
  \item $p=2$, $r_2=\binom{s-1}{2}$ and $\cA$ is minimally $h$-hollowing.
\end{enumerate}
The first two cases in the proof satisfy the ordinary alternating-map
bound. For the third case, we shall use the following numerical absorption lemma.

\begin{lemma}[Endpoint absorption]\label{p12:lem:endpoint-absorption}
Let $m,R,h$ be integers satisfying
\[
 m\ge2,\qquad h\ge1,\qquad R\ge0,\quad\text{and}\quad
 R+\binom h2\le m.
\]
Denote $w=m-R-\binom h2$
and suppose $0\le\theta\le4/5$.
Then
\begin{equation}\label{p12:eq:endpoint-full-exponent}
\begin{split}
 &\frac12R^2(m-R-h)
 +\left(\frac12h^3-h^2\right)w
 +\frac h2w^2
 +\theta h^3R-\frac{\theta^2}{2}h^5\\
 &\hspace{20mm}\le \frac{2}{27}m^3-\frac{11}{1602}hm^2+O(m^2),
\end{split}
\end{equation}
with an absolute implied constant.
\end{lemma}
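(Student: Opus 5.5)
The plan is to eliminate $\theta$, pass to continuous variables, and reduce \eqref{p12:eq:endpoint-full-exponent} to a two-variable quadratic optimization with $h$ as a parameter.

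\textbf{Step 1: coordinates.} Put $t:=m-R=\binom h2+w\ge\binom h2$, so that $R+t=m$. Since $R\ge0$ we have $\binom h2\le m$, hence $h=O(\sqrt m)$. Consequently $h^4$, $h^2w$ and $h\cdot h^3$ are all $O(m^2)$ and may be discarded freely. Using $w=t-\binom h2$, the $w$-terms become
\[
\Bigl(\tfrac12h^3-h^2\Bigr)w+\tfrac h2w^2=\tfrac h2\Bigl(t^2-\tbinom h2^2\Bigr)+O(m^2)=\tfrac h2\Bigl(t^2-\tfrac{h^4}{4}\Bigr)+O(m^2).
\]
The first term $\tfrac12R^2(m-R-h)$ equals $\tfrac12R^2t-\tfrac12R^2h$ exactly.

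\textbf{Step 2: eliminate $\theta$.} The function $\theta\mapsto\theta h^3R-\tfrac{\theta^2}2h^5$ is concave, with unconstrained maximum $\tfrac12R^2h$ at $\theta=R/h^2$.
\begin{itemize}
\item If $R\le\tfrac45h^2$, this maximum is attainable, and it cancels $-\tfrac12R^2h$.
\item If $R>\tfrac45h^2$, the maximum over $[0,4/5]$ is attained at $\theta=4/5$. Completing the square gives the value $\tfrac12R^2h-\tfrac h2\bigl(R-\tfrac45h^2\bigr)^2$.
\end{itemize}
In both cases the left side is at most
\[
\Phi:=\tfrac12R^2t+\tfrac h2\Bigl[t^2-\tfrac{h^4}4-\bigl(R-\tfrac45h^2\bigr)_+^2\Bigr]+O(m^2).
\]

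\textbf{Step 3: compare with the cubic maximum.} By \eqref{p12:eq:cubic-deficit}, with $z:=R-2t$ we have
\[
\tfrac12R^2t=\tfrac2{27}m^3-\tfrac{z^2(4R+t)}{54}.
\]
\emph{Case $R\le\tfrac45h^2$.} Here $\tfrac45h^2\le\tfrac85 t$, since $t\ge\binom h2\approx h^2/2$. This forces $z\le-\tfrac25t$ and $t\ge m/3$, so the cubic deficit is $\Omega(m^3)$. This dominates the positive term $\tfrac h2t^2=O(m^{5/2})$ for large $m$, and small $m$ is absorbed into $O(m^2)$.

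\emph{Case $R>\tfrac45h^2$.} Set $y=h^2/m\in[0,2/3+o(1)]$ and $\zeta=z/m$. Dividing by $hm^2$ and using $4R+t\approx 3m$, the claim reduces to
\[
-\frac{m}{18h}\zeta^2+\frac12\Bigl[\tau^2-\frac{y^2}4-\Bigl(\rho-\frac45y\Bigr)^2\Bigr]\le-\frac{11}{1602}+O(m^{-1/2}),
\]
where $\tau=t/m=(1-\zeta)/3$ and $\rho=R/m=(2+\zeta)/3$. Because $m/h\ge\sqrt m\to\infty$, the $\zeta^2$ penalty forces $\zeta=O(\sqrt{h/m})$. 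The bracket is Lipschitz in $\zeta$, so up to an error of order $h/m\cdot m/h$ times constants (hence $O(m^2)$ overall after multiplying back by $hm^2$ and tracking the factors), one may set $\zeta=0$. I would make this rigorous by completing the square in $\zeta$.

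The remaining one-variable problem is to maximize
\[
g(y)=\tfrac19-\tfrac{y^2}4-\bigl(\tfrac23-\tfrac45y\bigr)^2
\]
over $y\in[0,2/3]$. This concave quadratic peaks at $y=\tfrac{16/15}{89/50}=\tfrac{160}{267}$. The value there should equal $-\tfrac{11}{801}$, so that $\tfrac12g\le-\tfrac{11}{1602}$. Checking this exact rational computation is routine. As a sanity check, $y\approx0.599$ gives $g\approx-0.0137$.

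\textbf{Main obstacle.} The delicate point is the $\zeta$-perturbation in the second case. The cross term between $\zeta$ and $y$ inside the bracket is of size $h\,m^2|\zeta|$, and it must be absorbed by the cubic deficit $\asymp m^3\zeta^2$ while losing only $O(m^2)$ overall, not $O(hm^2)$. The plan is to write the total as an explicit quadratic in $(t,R)$ with $R=m-t$, and bound the linear-in-$\zeta$ term via $ab\le\epsilon a^2+b^2/(4\epsilon)$. This costs $O(h^2m)=O(m^2)$, which is exactly the allowed error. One also has to check the boundary constraints $w\ge0$ and $y\le2/3$; the unconstrained maximizer $y=160/267<2/3$ is interior, so they do not bind.
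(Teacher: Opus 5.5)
Your proposal is correct, but it takes a different route from the paper's proof.

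\textbf{The paper's route.} The paper never eliminates $\theta$. It substitutes $w$ and writes $\frac12R^2(m-R)=\frac2{27}m^3-\frac12(R+\frac m3)(R-\frac{2m}3)^2$. It then regroups the remaining terms exactly into three pieces:
\begin{itemize}
\item an $R$-dependent group linear in $R-\frac{2m}3$;
\item a $\theta$-square group $-\frac h{24}\bigl(4(m-2\theta h^2)^2+(3-4\theta^2)h^4\bigr)$;
\item a remainder of size $O(h^4+h^2m)$.
\end{itemize}
Completing the square in $h^2$ bounds the $\theta$-group by $-\frac{4(3-4\theta^2)}{12\theta^2+3}\cdot\frac h{24}m^2\le-\frac{11}{1602}hm^2$, uniformly for $0\le\theta\le\frac45$. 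Completing the square in $|R-\frac{2m}3|$ bounds the cross term by $\frac{75}2h^2m$. The result is an explicit error $117m^2$, with no case split, no threshold in $m$, and no perturbation step.

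\textbf{Your route.} You maximize over $\theta$ first, using concavity. This forces the split $R\le\frac45h^2$ versus $R>\frac45h^2$.
\begin{itemize}
\item In the first case the $\theta$-terms yield no $h$-saving, so you must use the $\Omega(m^3)$ cubic deficit.
\item In the second case a Lipschitz/AM--GM perturbation in $\zeta$ reduces everything to the concave quadratic $g(y)$. Its maximum $-\frac{11}{801}$ at $y=\frac{160}{267}$ gives $-\frac{11}{1602}$.
\end{itemize}
This is the same extremal configuration, $\theta=\frac45$, $h^2=\frac{160}{267}m$, $R\approx\frac{2m}3$, at which the paper's square inequality is tight. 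Your route makes transparent why $\frac{11}{1602}$ is the right constant and why the boundary $\theta=\frac45$ is the binding case. The cost is non-explicit constants and a large-$m$ threshold that must be absorbed into the absolute $O(m^2)$.

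\textbf{Fixes for the write-up.} A few approximations need tightening.
\begin{itemize}
\item Replace $4R+t\approx3m$ by the exact $4R+t=3R+m\ge m$. Any fixed positive coefficient on $m\zeta^2/h$ suffices for your AM--GM step, which then costs $O(h^2m)=O(m^2)$.
\item The inequality $\frac45h^2\le\frac85t$ holds only up to an additive $\frac45h$. This is harmless.
\item After dividing by $hm^2$, the Step~1 error is $O(1/h)$, not $O(m^{-1/2})$. It returns to $O(m^2)$ when you multiply back.
\item In the case $R>\frac45h^2$ you only know $y\lesssim\frac{10}{13}$ a priori, not $y\le\frac23+o(1)$. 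This does not matter, because $g$ is concave on all of $\mathbb R$ with global maximum $-\frac{11}{801}$.
\end{itemize}
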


\begin{proof}
Let $E$ denote the left-hand side of
\eqref{p12:eq:endpoint-full-exponent}.  Substituting
$w=m-R-\binom h2$ and expanding gives
\[
 E=\frac12R^2(m-R)
 +\frac h2(m-\theta h^2)(m+\theta h^2-2R)+\frac{h^2}{2}(R-m)
 -\frac{h^3}{8}(h-1)(h-3).
\]
Note that
\[
 \frac12R^2(m-R)
 =\frac{2}{27}m^3
  -\frac12\left(R+\frac m3\right)
   \left(R-\frac{2m}{3}\right)^2.
\]
For the remaining terms, write
$R=(R-2m/3)+2m/3$ and expand.  This gives
\begin{align*}
 &\frac h2(m-\theta h^2)(m+\theta h^2-2R)
 +\frac{h^2}{2}(R-m)
 -\frac{h^3}{8}(h-1)(h-3)\\
 ={}&-h\left(m-\frac h2-\theta h^2\right)
       \left(R-\frac{2m}{3}\right)-\frac h{24}\left(
       4(m-2\theta h^2)^2+(3-4\theta^2)h^4
      \right)
 -\frac16h^2m+\frac12h^4-\frac38h^3.
\end{align*}
Adding these two identities yields
\begin{align*}
 E={}\frac{2}{27}m^3&+\underbrace{
   -\frac12\left(R+\frac m3\right)
       \left(R-\frac{2m}{3}\right)^2-h\left(m-\frac h2-\theta h^2\right)
       \left(R-\frac{2m}{3}\right)
  }_{\text{$R$-dependent group}}\\
 &+\underbrace{
  -\frac h{24}\left(
       4(m-2\theta h^2)^2+(3-4\theta^2)h^4
      \right)
  }_{\text{$\theta$-square group}}\\
 &+\underbrace{
  -\frac16h^2m+\frac12h^4-\frac38h^3
  }_{\text{lower-order remainder}}.
\end{align*}
We retain the negative contribution of the ``$\theta$-square group''.
Completing the square in $h^2$ gives
\begin{align*}
 &4(m-2\theta h^2)^2+(3-4\theta^2)h^4=(12\theta^2+3)
 \left(h^2-\frac{8\theta m}{12\theta^2+3}\right)^2
 +\frac{4(3-4\theta^2)}{12\theta^2+3}m^2
 \ge\frac{44}{267}m^2.
\end{align*}
For the last inequality, use
$3-4\theta^2\ge11/25$ and $12\theta^2+3\le267/25$.
Thus the ``$\theta$-square group'' is at most
$-11hm^2/1602$.  The ``lower-order remainder'' is at most
$\frac12h^4$ since $h,m\geq0$.

We now deal with the ``$R$-dependent group''.
First note that $h\leq m$: this follows from $m\geq2$ when $h\leq2$, and from
$h\leq\binom h2\leq m$ when $h\geq3$.  Since
$h(h-1)\leq2m$, it follows that
$h^2\leq2m+h\leq3m$.
Note that \[
\left|h\left(m-\frac h2-\theta h^2\right)\right|\leq h(m+\frac h2+\theta h^2)
\leq h\left(m+\frac{13}{10}h^2\right)
\leq\frac{49}{10}hm<5hm,
\]
where we use $h/2\leq h^2/2$, $h^2\leq3m$, and $0\leq \theta\leq4/5$. Since $R+m/3\ge m/3$, the ``$R$-dependent group'' term
\[
  -\frac12\left(R+\frac m3\right)
       \left(R-\frac{2m}{3}\right)^2-h\left(m-\frac h2-\theta h^2\right)
       \left(R-\frac{2m}{3}\right)
\]
is upper bounded by
\begin{align*}
 -\frac m6\left(R-\frac{2m}{3}\right)^2
 +5hm\left|R-\frac{2m}{3}\right|.
\end{align*}
The right-hand side equals
\[
 -\frac m6\left(\left|R-\frac{2m}{3}\right|-15h\right)^2
 +\frac{75}{2}h^2m,
\]
so it is at most $\frac{75}{2}h^2m$.

Combining these estimates, we obtain that
\begin{align*}
 E&\le\frac{2}{27}m^3-\frac{11}{1602}hm^2
       +\frac{75}{2}h^2m+\frac12h^4\le\frac{2}{27}m^3-\frac{11}{1602}hm^2+117m^2,
\end{align*}
where the second inequality again uses $h^2\leq3m$.
\end{proof}

For bounded $h$, the negative term in
\eqref{p12:eq:endpoint-full-exponent} need not absorb the quadratic
cost of the remaining presentation parameters.  The same issue
arises with the ordinary map bound.  We use the following lemma
for all these bounded-parameter cases.

\begin{lemma}[Bounded second-layer rank]\label{p12:lem:bounded-second-rank}
For every prime $p$ and fixed integer $K\ge1$, the number of isomorphism classes
of nonabelian groups $G$ of order $p^m$ with $r_2\le K$ is at most $p^{\frac{2}{27}\frac{K}{K+1}m^3+O_K(m^2)}$.
\end{lemma}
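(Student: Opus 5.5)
The plan is to bypass the hollowing estimates completely and bound the count of Sims relation arrays directly with the joint exponent \eqref{p12:eq:joint-relation-power} from Part~I. Isomorphism classes are at most the number of arrays, by \cref{thm:sims_presentation_determines_isomorphism_classes_}. Summing over the numerical and auxiliary data costs only $O(m\log m)$ in the exponent. So it suffices to show that, uniformly over all data with $r_2\le K$, the exponent $\cE$ is at most $\frac{2}{27}\frac{K}{K+1}m^3+O_K(m^2)$.

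The first step is to record what $r_2\le K$ forces. By \cref{lem:relation between s(G) and r_2}, $s\le r_2+1\le K+1$, so $s$ is bounded. Every term of $\cE$ carrying a factor $s$ and at most two factors from $\{m_{\deep},m_{\ge3},r_i,s_i\}$ is therefore $O_K(m^2)$. This covers $sT_{\core}\le s\,m_{\deep}^2/2$ (by \eqref{p12:eq:core-tail-two-bounds}), the term $s(2m_{\deep}+1)\mathfrak b_{\mathrm{tot}}$ in \eqref{p12:eq:X-rough}, and $\sum_i s\mathfrak b_i(r_i+s_{i+1}-\mathfrak b_i+1)$. The terms $3m_{\deep}m_{\ge3}$, $2m_{\deep}e$, $r_1\delta$ and $\frac12 m_{\deep}z_{\mathrm{bal}}$ from \eqref{p12:eq:joint-relation-power} are also trivially $O(m^2)$.

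What remains cubic is
\[
 \tfrac12 r_1^2 m_{\deep}-\tbinom{r_1-s}{2}\mathfrak b_{\mathrm{tot}}
 =\tfrac12 r_1^2\bigl(s_2+m_{\ge3}\bigr)-\tfrac12 r_1^2\mathfrak b_{\mathrm{tot}}+O_K(m^2).
\]
Using \eqref{p12:eq:breadth-total-le-deep}, $\mathfrak b_{\mathrm{tot}}\ge m_{\ge3}/s\ge m_{\ge3}/(K+1)$, so the $m_{\ge3}$ part is multiplied by at most $K/(K+1)$. The defect bound \cref{p12:lem:second-layer-defect} is far too weak here: $e\le\binom\delta2$ would let a tiny $\delta$ buy a large $s_2$. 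I would replace it by a direct bound. The abelian group $L_2$ is generated by $r_2\le K$ elements, and it is a quotient of $\wedge^2L_1$, whose exponent is at most $p^{\max_j\alpha_j}$. Since $\max_j\alpha_j\le\delta+1$, this gives $s_2\le K(\delta+1)$, i.e.\ $\delta\ge s_2/K-1$.

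Then $m=r_1+s_2+m_{\ge3}+\delta\ge r_1+\frac{K+1}{K}(s_2+m_{\ge3})-1$, so
\[
 s_2+\tfrac{K}{K+1}m_{\ge3}\le \tfrac{K}{K+1}(s_2+m_{\ge3})\le\tfrac{K}{K+1}\bigl(m-r_1+1\bigr)\cdot\tfrac{K}{K+1}\cdot\tfrac{K+1}{K}.
\]
Hence the cubic part is at most $\frac{K}{2(K+1)}r_1^2(m-r_1)+O_K(m^2)\le\frac{2}{27}\frac{K}{K+1}m^3+O_K(m^2)$, maximizing at $r_1=2m/3$.

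The main obstacle is making sure the exponent bound $s_2\le K(\delta+1)$ is airtight and correctly combined with the $m_{\ge3}$ saving. In particular, the bracket count in \cref{p12:prop:count_leading_maps} charges $p^{s_2}$ per pair, so the cubic coefficient of $s_2$ is genuinely $\frac12r_1^2$ and must be paid for through $\delta$. I would double-check the exponent claim on $\wedge^2L_1$ via its cyclic decomposition, exactly as in the proof of \cref{p12:lem:second-layer-defect}. The case $c=2$, where $m_{\ge3}=0$, needs no breadth input and follows from the same computation.
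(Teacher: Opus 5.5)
Your route is the paper's. The ingredients match: the fixed-breadth commutator count (no saturation needed), $s\le r_2+1\le K+1$, the breadth saving $\mathfrak b_{\mathrm{tot}}\ge m_{\ge3}/s$, the exponent bound $s_2\le r_2(\delta+1)$ from $\wedge^2L_1\twoheadrightarrow L_2$, and maximization of $r_1^2(m-r_1)$ at $r_1=2m/3$. The differences are cosmetic. You use the joint exponent with the refined power-word count, whereas the paper uses the crude $p^{m^2}$ allowance; both are $O(m^2)$ here. You also substitute $s\le K+1$ at once, instead of carrying $r_2$ and taking the minimum of two affine functions of $\delta$.

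However, your displayed combination step is wrong as written. The claim $m\ge r_1+\frac{K+1}{K}(s_2+m_{\ge3})-1$ would need $\delta\ge(s_2+m_{\ge3})/K-1$, but you only have $\delta\ge s_2/K-1$. The first inequality $s_2+\frac{K}{K+1}m_{\ge3}\le\frac{K}{K+1}(s_2+m_{\ge3})$ is false, since $s_2\ge r_2\ge1$. The trailing factor $\frac{K}{K+1}\cdot\frac{K+1}{K}$ is just $1$.

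The correct step is one line:
\[
 m-r_1+1=s_2+m_{\ge3}+\delta+1\ge\tfrac{K+1}{K}s_2+m_{\ge3}.
\]
Multiplying by $\frac{K}{K+1}$ gives exactly $s_2+\frac{K}{K+1}m_{\ge3}\le\frac{K}{K+1}(m-r_1+1)$. The $s_2$ part is paid for through $\delta$ and the $m_{\ge3}$ part through the breadth saving, with matching weights. That is why your direct weighted inequality needs no case analysis in $\delta$. With this correction the rest of your argument goes through and yields $\frac{2}{27}\frac{K}{K+1}m^3+O_K(m^2)$.
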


\begin{proof}
Fix $r_2\le K$.  By the Sims rank inequality,
$s\le r_2+1\le K+1$.  Retain
$m_{\deep}=\sum_{i=2}^c s_i$,
$\mathfrak b_{\mathrm{tot}}=\sum_{i=2}^{c-1}\mathfrak b_i$ and
$\delta=s_1-r_1$ from Section~\ref{p12:sec:saturated-optimisation}.
The general fixed-breadth bound in
\cref{p12:prop:total-relation-bound} has no saturation assumption.
Fix the presentation data and a breadth sequence as in that
proposition, and let $\cP$ be the set of possible commutator
coefficient arrays.
Since $T_{\core}\le m^2/2$, $\mathfrak b_{\mathrm{tot}}\le m$, and
$\binom{r_1-s}{2}=r_1^2/2+O_K(m)$, it gives the bound
\begin{equation}\label{p12:eq:bounded-second-rank-relations}
 \log_p|\cP|
 \le\frac12r_1^2(m_{\deep}-\mathfrak b_{\mathrm{tot}})+O_K(m^2).
\end{equation}
Indeed, discard the nonpositive term $-r_2m_{\deep}$ in
\eqref{p12:eq:total-relation-bound}. All positive terms apart from
$\binom{r_1}{2}m_{\deep}$ are $O_K(m^2)$.

By \eqref{p12:eq:e-breadth}, $s_{i+1}\le s \mathfrak b_i$, and hence
\[
 \mathfrak b_{\mathrm{tot}}\ge\frac{m_{\deep}-s_2}{s}.
\]
Each invariant-factor exponent of $L_1$ is at most $\delta+1$, so
$\exp(L_1)\le p^{\delta+1}$.  The epimorphism
$\bigwedge^2L_1\twoheadrightarrow L_2$ implies
$\exp(L_2)\le p^{\delta+1}$.  Since $d(L_2)=r_2$, this gives
$s_2\le r_2(\delta+1)$.  Using also
$m_{\deep}=m-r_1-\delta$, we obtain
\begin{align*}
 m_{\deep}-\mathfrak b_{\mathrm{tot}}
 &\le\min\left\{m-r_1-\delta,
 \frac{(s-1)(m-r_1)+(r_2-s+1)\delta+r_2}{s}\right\}\le\frac{r_2}{r_2+1}(m-r_1+1).
\end{align*}
To see the last inequality, the first affine function of $\delta$
is decreasing, and the second is nondecreasing since $r_2\ge s-1$.
They agree at $\delta=(m-r_1-r_2)/(r_2+1)\ge0$, where their
common value is $r_2(m-r_1+1)/(r_2+1)$.  Their minimum cannot
exceed this common value.

Substituting in \eqref{p12:eq:bounded-second-rank-relations} gives
\[
 \log_p|\cP|
 \le\frac{r_2}{r_2+1}\frac12r_1^2(m-r_1)+O_K(m^2)
 \le\frac{2}{27}\frac{r_2}{r_2+1}m^3+O_K(m^2).
\]
The numerical, indexing, and breadth choices give at most
$m^{O(m)}$ possibilities. There are at most $m$ power relations,
and each power word has at most $p^m$ possible normal forms, so
there are at most $p^{m^2}$ full power word arrays. These choices
cost another $O(m^2)$ in the exponent, uniformly in $p$, including
the sum over all numerical data and breadth sequences.
Maximizing over $1\le r_2\le K$ proves the lemma.
\end{proof}

\begin{proof}[Proof of \cref{p12:cor:nonsaturated-group}]
Fix the numerical and auxiliary Sims-presentation data and the full
power word array. In each subfamily considered below, let $\cP$
denote the set of possible selected commutator coefficient arrays
with these fixed data.
By \cref{p12:lem:hollowing-min-dim}, every dual image occurring here
satisfies $s-1\le r_2$.  In particular, the parameters used below
lie in the ranges of the corresponding matrix-space propositions.
Put $h=s-1$.

First consider any case in which the full leading-map count satisfies
\begin{equation}\label{p12:eq:ordinary-map-bound}
 N_p(r_1,r_2,s)\le(r_2-s+1)\binom{r_1}{2}+O(m^2).
\end{equation}
The same calculation applies to a subfamily whenever its logarithmic
count satisfies the right-hand side of \eqref{p12:eq:ordinary-map-bound}.
After inserting this estimate in \eqref{p12:eq:book-master}, the cubic
part, divided by $m^3$, is
\[
 B(x,y,z,u)=
 \frac{x^2}{2}(y+z-u)
 +(uy-u^2/2)z+uz^2/2,
\]
where
\[
 x=\frac{r_1}{m},\qquad y=\frac{r_2}{m},\qquad
 z=\frac{m-r_1-r_2}{m},\qquad u=\frac{s-1}{m},
\]
so $x+y+z=1$ and $0\le u\le\min\{x,y\}$.
We strengthen the cubic comparison in \cite[Lemma~A.1]{BNV} by
retaining its deficit.  Direct expansion gives
\begin{equation}\label{p12:eq:ordinary-cubic-deficit}
 \frac2{27}-B(x,y,z,u)
 =\frac12\left(x-\frac23\right)^2\left(x+\frac13\right)
  +\frac u2(2x-1+y^2+uz).
\end{equation}
For $x\ge7/12$, the right-hand side is at least $u/12$.
For $1/2\le x\le7/12$, its second term is nonnegative, while
the first is at least
\[
 \frac2{27}-\frac12\left(\frac7{12}\right)^2\frac5{12}
 =\frac{11}{3456}.
\]
For $x\le1/2$, use the factorization
\[
 B(x,y,z,u)
 =\frac{1-x-u}{2}\bigl(x^2+u(1-x)\bigr)
  -\frac{uy(y-u)}{2}.
\]
Here $1-x-u\ge0$, $y\ge u$, and
\[
 x^2+u(1-x)-\frac{x+u}{2}
 =\left(x-\frac12\right)(x-u)\le0.
\]
Consequently,
$B(x,y,z,u)\le(x+u)(1-x-u)/4\le1/16$ in this range.
Since $u\le1$, these three ranges give
\begin{equation}\label{p12:eq:ordinary-group-deficit}
 m^3 B(x,y,z,u)
 \le\frac2{27}m^3-\frac{11}{3456}hm^2.
\end{equation}
Thus the ordinary map bound gives the presentation exponent
$2m^3/27-11hm^2/3456+O(m^2)$, with an absolute implied constant.

We now distinguish the three cases.

\smallskip
\noindent\emph{Case 1: away from the binary endpoint.}
Assume that $p\ge3$, or that $p=2$ and
$r_2\le\binom{s-1}{2}-1$.  By
\eqref{p12:eq:Np-hollowing-correspondence}, we apply \cref{p12:prop:exponent-away} with $(q,R,M,h)=(p,r_1,r_2,s-1)$.
The number of possible dual images is at most
\[
 p^{(r_2-s+1)\binom{r_1}{2}+O(r_1^2+r_2^2)}.
\]
For each image, the factor $|\GL_{r_2}(p)|\le p^{r_2^2}$ counts the
choices of $\beta^*:W^*\to\cA$.  Since
$r_1^2+r_2^2=O(m^2)$, this proves
\eqref{p12:eq:ordinary-map-bound}.

\smallskip
\noindent\emph{Case 2: the nonminimal binary endpoint.}
Suppose that $p=2$, $r_2=\binom{s-1}{2}$, and the dual image $\cA$
is not minimally $(s-1)$-hollowing.  Apply
\cref{p12:lem:minimal-reduction} with
$(R,M,h)=(r_1,r_2,s-1)$.  By
\eqref{p12:eq:Np-hollowing-correspondence}, its space count and the factor
$|\GL_{r_2}(2)|\le2^{r_2^2}$ show that the maps in this subfamily have
base-$2$ logarithmic count at most
\[
 (r_2-s+1)\binom{r_1}{2}+O(r_1^2+r_2^2).
\]
Thus the ordinary map-bound calculation applies to this case as well.

\smallskip
\noindent\emph{Case 3: the minimal binary endpoint.}
Suppose that $p=2$, $r_2=\binom{s-1}{2}$, and $\cA$ is minimally
$(s-1)$-hollowing.  Let $R=r_1$, $M=r_2=\binom h2$ and $h=s-1$.
If $h<h_0$, then $r_2\le\binom{h_0}{2}$ is bounded, and we use
\cref{p12:lem:bounded-second-rank} at the end of the proof.

It remains to assume $h\ge h_0$.  Denote
\[
 \kappa=\frac45,
 \qquad
 w=m-R-\binom{h}{2},
 \qquad
 a_*=\min\left\{
       \left\lfloor\frac{\kappa h^2}{2}\right\rfloor,
       \left\lfloor\frac R2\right\rfloor
      \right\},
 \qquad
 \theta=\frac{2a_*}{h^2}\le\kappa.
\]
By \cref{p12:prop:endpoint-exponent} and
\eqref{p12:eq:Np-hollowing-correspondence}, the logarithmic count of the
leading maps is at most
\[
 (M-h)\binom R2
 +h a_*(2R-2a_*-1)
 +O(hR+h^4),
\]
where the factor $|\GL_M(\F_2)|\le2^{M^2}$ is absorbed by $O(h^4)$.
Under the substitutions
\[
 r_1=R,\qquad r_2=M=\binom h2,\qquad s-1=h,
 \qquad m-r_1-r_2=w,
\]
the master estimate~\eqref{p12:eq:book-master} and the preceding
leading-map bound give
\begin{align*}
 \log_2|\cP|
 \leq{}&(M-h)\binom R2+h a_*(2R-2a_*-1)\\
 &+\binom R2w
  +\left(hM-\frac{h^2}{2}\right)w
  +\frac h2w^2+O(m^2+hR+h^4).
\end{align*}
By the definition $\theta=2a_*/h^2$, the rank-variety term is exactly
\[
 h a_*(2R-2a_*-1)
 =\theta h^3R-\frac{\theta^2}{2}h^5-\frac\theta2h^3.
\]
Also, since $w=m-R-M=m-R-\binom h2$,
\begin{align*}
 \bigl((M-h)+w\bigr)\binom R2
 &= (m-R-h)\frac{R(R-1)}2=\frac12R^2(m-R-h)-\frac12R(m-R-h),\\
 hM-\frac{h^2}{2}
 &=\frac{h^2(h-1)}2-\frac{h^2}{2}
  =\frac12h^3-h^2.
\end{align*}
Consequently,
\begin{align*}
 \log_2|\cP|
 \leq{}&\frac12R^2(m-R-h)
 +\left(\frac12h^3-h^2\right)w
 +\frac h2w^2+\theta h^3R-\frac{\theta^2}{2}h^5\\
 &-\frac12R(m-R-h)-\frac\theta2h^3
 +O(m^2+hR+h^4).
\end{align*}
The first line on the right-hand side is exactly the left-hand
side of~\eqref{p12:eq:endpoint-full-exponent}.  The hypotheses of
\cref{p12:lem:endpoint-absorption} hold, so this line is at most
$2m^3/27-11hm^2/1602+O(m^2)$.  Moreover, since $R+M\le m$, we have
$R\le m$ and $h^2=O(m)$.  Thus the two explicit correction terms and
the error term on the last line are all $O(m^2)$.  Therefore
\begin{equation}\label{p12:eq:endpoint-group-deficit}
 \log_2|\cP|\le\frac{2}{27}m^3-\frac{11}{1602}hm^2+O(m^2).
\end{equation}

These cases exhaust the nonsaturated range.  Fix $A>0$.
The ordinary and endpoint estimates, after including the power-word
allowance $p^{m^2}$ from the proof of
\cref{p12:lem:bounded-second-rank}, have absolute error constants
independent of $p$. Choose an integer $H\ge h_0$
so large that the negative $hm^2$ terms in
\eqref{p12:eq:ordinary-group-deficit} and
\eqref{p12:eq:endpoint-group-deficit} absorb these errors and leave
at least $(A+2)m^2$ of saving whenever $h\ge H$.

If $h<H$, then $r_2\le\binom{H-1}{2}$ is bounded.
With $K=\max\{1,\binom{H-1}{2}\}$,
\cref{p12:lem:bounded-second-rank} gives the fixed cubic deficit
$2m^3/(27(K+1))$.  For sufficiently large $m$, this absorbs its
$O_K(m^2)$ error and leaves at least $(A+2)m^2$ of saving as well.
Finally, summing over the numerical data, the three cases and the
two ranges of $h$ costs at most $O(m\log m)$ in the exponent,
which is covered by the spare $2m^2$ for sufficiently large $m$.
All thresholds are independent of $p$. This proves the corollary.
\end{proof}

\clearpage

\section{Common bookkeeping}
\label{p12:sec:parameter-ledger}

\subsection{Remaining presentation parameters}

In the saturated regime, Section~\ref{p12:sec:saturated-optimisation}
counts the commutator and power data together.  The remaining
presentation parameters cost only $p^{O(m\log m)}$.  In the
nonsaturated regime, the stronger saving in the commutator count
allows us to retain the classical $p^{O(m^2)}$ allowance for power
words and the auxiliary matrix-space errors.  We record these two
bounds separately.

\begin{proposition}[Presentation-parameter ledger]\label{p12:prop:parameter-ledger}
For groups of order $p^m$, uniformly for all primes $p$, the
following bounds hold.
\begin{enumerate}[label=\textup{(\alph*)}]
 \item After the commutator coefficient arrays of
 \cref{p12:prop:total-relation-bound} and the power words of
 \cref{p12:lem:power-ledger} have been counted, all remaining numerical,
 abelian, indexing, breadth and summation choices in the saturated
 argument contribute at most $p^{O(m\log m)}$ possibilities.

 \item If the power words are instead counted by the crude bound,
 the numerical, abelian, power, indexing and summation choices
 contribute at most $p^{O(m^2)}$ possibilities, in either regime.
 In the nonsaturated argument, all residual matrix-space errors
 used in the proof of \cref{p12:cor:nonsaturated-group} are also
 $O(m^2)$ in the base-$p$ logarithm.
\end{enumerate}
\end{proposition}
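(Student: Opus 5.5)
The plan is to enumerate, one family at a time, every piece of data that the Sims framework of \cref{thm:sims_presentation_determines_isomorphism_classes_} uses besides the commutator and power word arrays, and to bound each family by a count that is uniform in $p$.

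\textbf{Numerical data.} The integers $c,s,(r_i),(d_k)$ satisfy $c,s,r_i\le m$, and $(d_k)$ is a strictly increasing sequence of length $s\le m$ with entries at most $m^2$. So they contribute at most $m^{O(m)}=2^{O(m\log m)}\le p^{O(m\log m)}$ choices.

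\textbf{Relative orders.} The array $(a(i,j))$ is a sequence of at most $m$ positive integers summing to $m$. Encoding it by partial sums gives at most $2^m$ choices. The abelian-type choices are handled the same way via \cref{p12:lem:universal-field}(iv): each layer has at most $2^{s_i-1}$ types.

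\textbf{Indexing choices.} The auxiliary function $f:[r_2]\to[s-1]$ has at most $m^{m}$ choices. The commutator pairs defining the generators $g_{i,j}$ for $i\ge3$ are selected from sets of size at most $m^2$, one selection per generator. There are at most $m$ generators, so this gives $m^{O(m)}$ choices.

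\textbf{Breadth sequences.} A breadth sequence $(\mathfrak b_i)_{i=2}^{c}$ has entries $0\le\mathfrak b_i\le s_{i+1}$. Hence the number of sequences is at most $\prod_i(s_{i+1}+1)\le 2^{m}$.

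\textbf{Summation and auxiliary parameters.} The remaining sums run over $t$, the regime or case labels, and $h$. Each ranges over at most $m^{O(1)}$ values, and there are $O(1)$ such sums, costing only $O(\log m)$.

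Multiplying these counts gives (a): every factor is $2^{O(m\log m)}$, and $2^{x}\le p^{x}$ for every prime $p$, so the bound is uniform in $p$.

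For (b), I will also count the power words crudely. There are at most $m$ power relations, and each right side is a normal-form word with at most $p^m$ choices, so the power data cost at most $p^{m^2}$. This is the estimate already used in the proof of \cref{p12:lem:bounded-second-rank}. Combined with (a), the total allowance is $p^{O(m^2)}$.

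It remains to check that every auxiliary matrix-space error in the proof of \cref{p12:cor:nonsaturated-group} is $O(m^2)$. Case by case:
\begin{itemize}
\item The $O(r_1^2+r_2^2)$ terms from \cref{p12:prop:exponent-away} and \cref{p12:lem:minimal-reduction} are $O(m^2)$, because $r_1+r_2\le m$.
\item The $\GL_{r_2}(p)$ factor costs at most $r_2^2\le m^2$, by \cref{p12:lem:universal-field}(iii).
\item In the endpoint case, $O(hR+h^4)$ is $O(m^2)$, because $\binom h2\le r_2\le m$ forces $h^2=O(m)$, while $R\le m$.
\end{itemize}

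The main obstacle is uniformity in $p$ rather than any individual estimate. One has to make sure that no count secretly depends on $p$, for instance through the number of abelian groups or of subgroup choices. Each such count must be either purely combinatorial (bounded by a power of $2$) or already absorbed into the $p$-power counts of the arrays. I would audit each item against the index ranges of \cref{thm:sims_presentation_determines_isomorphism_classes_} to confirm that nothing else is chosen.
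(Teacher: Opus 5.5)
Your proposal is correct and follows essentially the same route as the paper: an item-by-item ledger of the numerical, composition and partition, indexing, breadth and summation choices, each bounded by $2^{O(m\log m)}\le p^{O(m\log m)}$. It then adds the crude $p^{m^2}$ power-word allowance and checks that the matrix-space errors $O(r_1^2+r_2^2)$, $|\GL_{r_2}(p)|\le p^{r_2^2}$ and $O(hR+h^4)$ are all $O(m^2)$.

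The paper also records two points you may want to add for completeness. First, the image-subgroup choices for the core bracket maps are already inside the Lie-bracket count, so they are not a remaining choice. Second, replacing binomial coefficients by their quadratic leading terms in the nonsaturated optimization costs only $O(m^2)$.
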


\begin{proof}
We list the choices.
\begin{enumerate}[label=\textup{(\roman*)}]
 \item The class and the sequence $(s_1,\ldots,s_c)$ form an ordered
 partition of $m$, giving at most $2^{m-1}$ choices.  For each
 $1\le i\le c$,
 the isomorphism type of the abelian group $L_i$ is a partition of
 $s_i$; using the elementary bound on partition numbers, the product
 over all layers is at most $2^m$.

 \item For each $1\le i\le c$, the positive relative-order exponents
 $a(i,1),\ldots,a(i,r_i)$ form an ordered composition of $s_i$.
 Across all layers these compositions have at most $2^{m-1}$ choices,
 and their lengths determine the ranks $r_i$, exactly as in the proof
 of \cite[Theorem~5.7]{BNV}.  Ordering and relabelling the generators
 used in the Sims normal form, choosing the $s$ core positions, the chain
 $(d_1,\ldots,d_s)$, and the auxiliary coordinate choices made in \cref{p12:par:fixed-sims-data} cost at
 most $m^{O(m)}$ choices.
 This includes the auxiliary function $f$ and the defining-commutator
 indices; they are counted here as coordinate choices, not as
 additional commutator values.
 Any same-layer coefficients not specified by the exponents are
 included in the power-word count in~\textup{(iii)}.
 The count runs over fixed formal generators, so a choice of basis
 of each abstract layer creates no further factor: its additive
 relations are specified by those power coefficients.
 \Cref{p12:prop:total-relation-bound} is applied with one fixed
 feasible breadth sequence.  Since $0\le \mathfrak b_i\le s_{i+1}$ for
 $2\le i\le c-1$, the number
 of such sequences is at most
 \[
  \prod_{i=2}^{c-1}(s_{i+1}+1)
  \le 2^{\sum_{j=3}^{c}s_j}
  \le 2^m\le p^m.
 \]
 For all $p$, the preceding ordering and indexing factors are
 $p^{O(m\log m)}$.

 \item In the saturated argument, all power words, including their
 same-layer coefficients, have already been counted by
 \cref{p12:lem:power-ledger} and combined with the commutator
 count in Section~\ref{p12:sec:saturated-optimisation}.  They therefore
 contribute no further factor to part~\textup{(a)}.  The full power
 words remain fixed when the comparison results of
 Section~\ref{p12:sec:filtered-HW} are applied; counting those fixed
 fibres afterward does not change their hypotheses.

 For the nonsaturated argument, the original bound remains
 available.  There are at most $\sum_{i=1}^{c}r_i\le m$ power
 relations.  Each right-hand side is a Sims normal-form word in a
 subgroup of order at most $p^m$, so there are at most $p^{m^2}$
 power-word arrays in total.  This is the quadratic-size allowance
 already isolated in the classical Sims count; it is not added to
 the refined power count.

 \item The image subgroups used to count the core bracket maps
 $L_i\times C_{\core}\to L_{i+1}$, $2\le i\le c-1$, are included in
 the Lie-bracket estimate~\eqref{p12:eq:lie-bracket-count}, including the
 $+1$ in the subgroup estimate.
 Thus they create no additional ledger factor.  In particular, the
 resulting term $s \mathfrak b_{\mathrm{tot}}$ is retained in $\cX$ in the
 saturated optimization, rather than absorbed into an $O(m^2)$
 allowance.  Choices of the integers $\mathfrak b_i$ themselves were included
 in~\textup{(ii)}.

 \item The following matrix-space factors occur only in the
 nonsaturated argument.  Away from the binary endpoint, that
 argument counts the full $(s-1)$-hollowing dual space directly; no
 minimal-core extension is used.  Changes of basis in the target are
 already counted by the factor $|\GL_{r_2}(p)|$ in
 \eqref{p12:eq:Np-hollowing-correspondence}.  At the binary
 endpoint, the proper-core branch additionally uses the Gaussian
 superspace exponent $(r_2-t)\binom{r_1}{2}$ from
 \cref{p12:lem:minimal-reduction}.  The residual finite-product and basis
 errors from these steps contribute $O(r_2^2)$ to the base-$p$
 logarithm.  The incidence error $O((r_1+r_2)^2)$ is at most $O(m^2)$.
 None of these factors is used in the saturated count.

 \item Again only in the nonsaturated argument, at the binary
 endpoint with $s-1\ge h_0$, substitute the local matrix parameters
 $(R,h)=(r_1,s-1)$ in \eqref{p12:eq:endpoint-exponent}.  Its error is
 $O((s-1)r_1+(s-1)^4)$.  Since
 $\binom{s-1}{2}\le m$ and $r_1\le m$, one has
 $(s-1)^2=O(m)$, so this is $O(m^2)$.  The residual overhead from
 ordering the chosen unique hollowing forms and changing bases in
 the endpoint matrix space is covered by the same $O((s-1)^4)$ term.
 The choice of the forms themselves is counted by the main
 rank-variety term in \eqref{p12:eq:endpoint-exponent}.
 The bounded-$s$ range is handled by the fixed cubic deficit in
 Section~\ref{p12:sec:endpoint-absorption}, which also absorbs the
 quadratic ledger.

 \item In the nonsaturated optimization, replacing binomial
 coefficients by their quadratic leading terms costs at most
 $O(m^2)$.  This allowance is not used in the saturated optimization:
 the defining-commutator saving and the linear term in
 $\binom{r_1}{2}$ are retained before the refined power count is added.
 Finally, summing over the numerical patterns and indexing choices
 in~\textup{(i)}--\textup{(ii)} costs $p^{O(m\log m)}$.  This gives
 the final $O(m\log m)$ upper error in the saturated regime and is
 absorbed by the quadratic saving in the nonsaturated regime.
\end{enumerate}
Every group under consideration has at least one presentation of the counted form.
Counting invalid arrays or counting a group more than once only
increases the upper bound, so no division by presentation
multiplicity is required.  Every displayed estimate above is uniform
in $p$: partition counts are independent of $p$, Gaussian and general
linear counts are bounded by \cref{p12:lem:universal-field}, and
$m^{O(m)}\le p^{O(m\log m)}$ because $p\ge2$.
\end{proof}

\clearpage
\addtocontents{toc}{\protect\newpage}
\part[Sharp enumeration of \texorpdfstring{$p$}{p}-groups and domination by \texorpdfstring{$2$}{2}-groups of Frattini class \texorpdfstring{$2$}{2}]{Sharp enumeration and domination by\\
\texorpdfstring{$2$}{2}-groups of Frattini class \texorpdfstring{$2$}{2}}
\label{part:sharp-enumeration}

Parts~I and~II improve parameter estimates in Sims' enumeration framework.
In this part, we first prove Theorem~\ref{intro:sharp} using results in Parts~I and~II, by proving explicit numerical saving and source-basis estimates in \cref{p3:sec:normalization} and the Frattini group concentration in~\cref{intro:concentration} in \cref{p3:sec:linear}. In~\cref{p4:sec:extensions}, we utilize extension counts with the cumulative cutoff, Pyber's theorem, and the Frattini group concentration to prove~\cref{intro:domination}. We also prove Corollary~\ref{intro:bnv-2218} on cumulative enumeration and the
eventual inequalities in BNV Questions 22.16--22.18.


\section{Sharp enumeration bounds for \texorpdfstring{$p$}{p}-groups}\label{p3:sec:statement}

\cref{p12:part:valid-completions} sharpens the bounds on the
numbers of power word arrays and commutator coefficient arrays
in Sims presentations with fixed numerical data.
\cref{p12:part:leading-maps} improves, in the nonsaturated
range, the bounds on the number of surjective linear maps
$\beta:\bigwedge^2\F_q^{r_1}\to\F_q^{r_2}$ with Sims rank $s$.
Specializing to $q=p$ bounds the choices of the leading
commutator maps associated with $p$-groups of order $p^m$.


We now pass from Sims presentation to isomorphism classes and
isolate the family that supplies the leading contribution in the enumeration bound exponent. Recall that $f(n)$ stands for the number of isomorphism classes of finite groups of order $n$. 
We also recall Theorem~\ref{intro:sharp}.

\begin{theorem}[Restatement of Theorem~\ref{intro:sharp}]\label{p3:thm:linear}
There is an absolute constant $C$ such that, for every prime $p$
and every integer $m\ge1$,
\begin{equation}\label{p3:eq:sharp}
 p^{\frac{2}{27}m^3-\frac49m^2}\le f(p^m)\le p^{\frac{2}{27}m^3-\frac49m^2+Cm}.
\end{equation}
Thus $\log_p f(p^m)=2m^3/27-4m^2/9+O(m)$ uniformly in $p$.
\end{theorem}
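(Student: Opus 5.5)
The proof splits $f(p^m)$ into the Frattini-class-two part $f_{\mathfrak H_p}(p^m)$ and its complement, and bounds each separately.

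\emph{Lower bound.} This is Higman's count, recorded as the lower bound of \cite[Theorem~1.3]{BLQR}: $f(p^m)\ge f_{\mathfrak H_p}(p^m)\ge p^{\frac{2}{27}m^3-\frac49m^2}$, uniformly in $p$ and for all $m\ge1$.

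\emph{Upper bound.} The main contribution is controlled by the upper bound of \cite[Theorem~1.3]{BLQR}:
\[
f_{\mathfrak H_p}(p^m)\le p^{\frac{2}{27}m^3-\frac49m^2+C_1m}.
\]
The complement is controlled by \cref{intro:concentration}. For $m\ge m_0$ it gives
\[
f(p^m)-f_{\mathfrak H_p}(p^m)\le p^{\frac{2}{27}m^3-\frac49m^2-\varepsilon m^2}.
\]
Adding the two bounds gives $f(p^m)\le 2p^{\frac{2}{27}m^3-\frac49m^2+C_1m}$, and the factor $2\le p^{m}$ is absorbed by increasing the constant. For $m<m_0$ we use the trivial bound $f(p^m)\le p^{O(m^3)}$ with a bounded exponent; this is also absorbed by $Cm$ after enlarging $C$, since $m_0$ is absolute. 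Uniformity in $p$ holds because every constant involved is prime-independent.

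\emph{Proof of \cref{intro:concentration}.} Essentially all of the work lies here. The complement of $\mathfrak H_p$ splits into three cases.
\begin{enumerate}[label=\textup{(\alph*)}]
\item The nonsaturated range $r_2\le\binom{s-1}{2}$. \cref{p12:cor:nonsaturated-group} bounds this by $p^{\frac{2}{27}m^3-Am^2}$, with $A$ arbitrary.
\item The saturated range with $m_{\ge3}>0$. Here the joint exponent \eqref{p12:eq:joint-relation-power}, together with the source-basis normalization, gives the group-level exponent $\cE-r_1^2+O(m^{3/2})$. Combining the identity \eqref{p12:eq:corrected-count-objective} at $h=0$ with the positive-depth objective \eqref{p12:eq:positive-depth-objective} yields $\frac{2}{27}m^3-\frac49m^2-\kappa m^2$.
\item The saturated range with $c=2$ but $\Phi(G)$ not central elementary abelian, that is, $\delta>0$ or $e>0$. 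The first-layer deficit \eqref{p12:eq:first-layer-defect-deficit} gives a loss of order $\delta m^2$ against $r_1\delta$, and \cref{p12:lem:second-layer-defect} gives $e\le\binom{\delta}{2}$. For bounded $\delta$ these again leave a loss of order $m^2$; for large $\delta$ the coarse bound $2(m-\delta)^3/27$ suffices.
\end{enumerate}

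\emph{Main obstacle.} The delicate step is passing from arrays to isomorphism classes with the exact $-\frac49m^2$ term. This requires showing that the source-basis normalization saves $r_1^2$ up to $O(m^{3/2})$ in the saturated range, and optimizing $\mathcal F_{-5/6}$ to $\frac{2}{27}n^3+O(n)$. I would first attempt the optimization of $\mathcal F_{-5/6}$ via the cubic-deficit identity \eqref{p12:eq:cubic-deficit}, balancing the $z_{\mathrm{bal}}$ terms. These steps are deferred to \cref{p3:sec:normalization} and \cref{p3:sec:linear}.
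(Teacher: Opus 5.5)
Your argument for the theorem is the paper's own proof: Higman's lower bound, the \cite{BLQR} upper bound for $f_{\mathfrak H_p}(p^m)$ plus the concentration estimate for $m\ge m_0$, and the crude bound $f(p^m)\le p^{O(m^3)}$ absorbed into $Cm$ for the finitely many small $m$. One caveat on your optional sketch of the concentration theorem: the source-basis normalization gives $\cE-r_1^2+O(m^{3/2})$ only for bounded $\delta$ and $sr_2\ge 2r_1$ (which makes the coefficient of $d$ in the normalized exponent nonpositive), so the paper handles the small-core range $sr_2<2r_1$ separately, via the cubic deficit when $r_1$ or $m_{\deep}$ is below $m/12$ and otherwise via the large breadth saving forced by $m_{\ge3}\ge m/24$.
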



To prove Theorem~\ref{intro:sharp}, we first count the groups outside
the family of $p$-groups of Frattini class at most $2$. Recall that
\[
 \mathfrak H_p=
 \{G:\ |G|\text{ is a power of }p,\quad
       \Phi(G)\le Z(G),\ \Phi(G)^p=1\},
\]
where $\Phi(G)=G^p[G,G]$ is the Frattini subgroup.
Let $f_{\mathfrak H_p}(p^m)$ denote the number of isomorphism
classes of groups in $\mathfrak H_p$ of order $p^m$.
We shall prove the concentration estimate of
Theorem~\ref{intro:concentration}.

\begin{theorem}[Restatement of Theorem~\ref{intro:concentration}]\label{p3:prop:frattini-concentration}
There is an absolute $\varepsilon>0$ such that, for all sufficiently
large $m$, uniformly in $p$,
\begin{equation}\label{p3:eq:complement-gap}
 f(p^m)-f_{\mathfrak H_p}(p^m)
 \le p^{\frac{2}{27}m^3-\frac49m^2-\varepsilon m^2}.
\end{equation}
Consequently,
\[
 1\le\frac{f(p^m)}{f_{\mathfrak H_p}(p^m)}
 \le1+p^{-\varepsilon m^2}.
\]
\end{theorem}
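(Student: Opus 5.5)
We partition the nonabelian groups of order $p^m$ outside $\mathfrak H_p$ (the at most $p^m$ abelian ones are negligible) into three classes and show each contributes at most $p^{\frac{2}{27}m^3-\frac49m^2-\varepsilon m^2}$. The classes are:
\begin{enumerate}[label=\textup{(\roman*)}]
 \item the nonsaturated range $r_2\le\binom{s-1}{2}$;
 \item the saturated range with $G_3\ne1$, i.e.\ $m_{\ge3}>0$;
 \item the saturated range with $c=2$, where the Frattini quotient structure still fails, i.e.\ $\delta>0$ or $e>0$, or $\Phi(G)$ is not elementary abelian central.
\end{enumerate}
Note that a class-$2$ group with $\delta=e=0$ has $L_1=V$ and $L_2=W$ elementary abelian. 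Since $G_3=1$, we get $G^p\le G_2\le Z(G)$ with $(G_2)^p=1$, so $\Phi(G)=G_2$ is central and elementary abelian, and $G\in\mathfrak H_p$. Hence (iii) reduces to $c=2$ with $\delta+e\ge1$.

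Class (i) is immediate. \Cref{p12:cor:nonsaturated-group} with $A=1$ gives at most $p^{\frac{2}{27}m^3-m^2}$ classes, and $1>\frac49+\varepsilon$ for small $\varepsilon$.

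For (ii) and (iii) we work in the saturated range, where $s=O(\sqrt m)$, and use the joint array bound \eqref{p12:eq:joint-balance-objective}. The naive array count $\mathcal F_{1/2}$ is too large by roughly $r_1^2\approx\frac49m^2$, since the maximizing configuration has $r_1\approx2m/3$. To recover the $-\frac49m^2$ we need the source-basis normalization from \cref{p3:sec:normalization}: a single group is represented by many arrays, because changes of the generating basis outside the Sims core act with orbits of size about $p^{r_1^2}$. The passage from arrays to isomorphism classes should therefore cost exponent $\cE-r_1^2+O(m^{3/2})$. This step requires $sr_2\ge2r_1$, and the complementary range is again handled by a cubic deficit. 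Via \eqref{p12:eq:corrected-count-objective} with $h=0$, the class count is then bounded by
\[
\mathcal F_{-5/6}-\tfrac49 n^2-\tfrac59 z_{\mathrm{bal}}^2+2m_{\deep}e+r_1\delta,
\qquad n=m-\delta.
\]
For class (ii), \eqref{p12:eq:positive-depth-objective} gives $\mathcal F_{-5/6}\le\frac{2}{27}n^3-\kappa n^2$, which yields the required saving with $\varepsilon<\kappa$.

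For class (iii), $m_{\ge3}=0$, so $\cX=0$ and \eqref{p12:eq:balance-optimization-objective} gives $\frac{2}{27}n^3+O(n)$. We split by the size of $\delta$.
\begin{itemize}
 \item If $\delta\ge1$: by \eqref{p12:eq:first-layer-defect-deficit}, $\frac{2}{27}(m-\delta)^3\le\frac{2}{27}m^3-\frac{2}{27}\delta m^2$, and $-\frac49(m-\delta)^2=-\frac49m^2+O(\delta m)$. The positive terms $2m_{\deep}e+r_1\delta$ are $O(\delta^2 m)$ by \cref{p12:lem:second-layer-defect}.
  \begin{itemize}
   \item For bounded $\delta$, the saving $\frac{2}{27}\delta m^2$ dominates these $O(\delta m)$ and $O(\delta^2m)$ terms.
   \item For $\delta$ large, $\delta\ge\sqrt m$ say, we instead use the crude bound $\cE\le\frac{2}{27}(m-\delta)^3+O(m^2)$, which loses order $\delta m^2\gg m^2$.
  \end{itemize}
  In both regimes this beats $\frac{2}{27}m^3-\frac49m^2-\varepsilon m^2$ once $\varepsilon<\frac{2}{27}$.
 \item If $\delta=0$: \cref{p12:lem:second-layer-defect} forces $e=0$, which contradicts $\delta+e\ge1$.
\end{itemize}
Thus (iii) reduces entirely to $\delta\ge1$. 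Summing over parameter data costs $p^{O(m\log m)}$ by \cref{p12:prop:parameter-ledger}, which is absorbed by the $\varepsilon m^2$ margin.

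The main obstacle is the source-basis normalization, i.e.\ justifying the subtraction of $r_1^2$ uniformly, including for $p=2$ and for $\delta>0$ where $L_1$ is not elementary abelian. We must show that $\GL$-type changes of the noncore generators act on valid arrays with stabilizers of size at most $p^{O(m^{3/2})}$, uniformly in the breadth sequence. First I would restrict to basis changes of $g_{1,s+1},\dots,g_{1,r_1}$ by multiples of the core generators and elements of $\Phi(G)$, which preserve the Sims core and the flag $(d_k)$. I would then bound stabilizers via the centralizer of the core, using \cref{p12:rem:conditional-leading-constraints} to convert a prescribed core centralizer into linear savings. The once-only extraction of $-\frac49 m^2$ from $-r_1^2$, together with the balance term $-\frac59 z_{\mathrm{bal}}^2$, then follows from identity \eqref{p12:eq:quadratic-correction-identity}. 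Finally, $f_{\mathfrak H_p}(p^m)\ge p^{\frac{2}{27}m^3-\frac49m^2}$ by Higman's bound \cite{Higman}, so the ratio statement follows from \eqref{p3:eq:complement-gap}.
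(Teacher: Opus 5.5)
Your overall architecture is the paper's, with the case split ordered differently: you split by $m_{\ge3}$ first, the paper by $\delta$ first, and both orderings work. The ingredients are the same: nonsaturated groups via \cref{p12:cor:nonsaturated-group}, large $\delta$ via the coarse bound, source-basis normalization when $sr_2\ge 2r_1$, \cref{p3:lem:positive-depth} when $m_{\ge3}>0$, and \cref{p3:lem:eta} with $\eta=-5/6$ when $\delta\ge1$. The residue $\delta=m_{\ge3}=0$ forces $e=0$ and lies in $\mathfrak H_p$.

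The normalization you call the main obstacle is already \eqref{p3:eq:normalizedraw}, but its mechanism differs from what you propose. One does not get a uniform $p^{O(m^{3/2})}$ bound on fibres: \cref{p3:lem:bases} only bounds them by $p^{ud}$, with $d=\dim K$ possibly of order $r_1$. Instead, that loss is paid for by prescribing $K$, at cost $p^{d(r_1-d)}$ and with saving $sdr_2$ (\cref{p3:lem:kernel}). The net coefficient $r_1+u-d-sr_2$ of $d$ is nonpositive precisely because $sr_2\ge 2r_1$. Also, discard all $\delta\ge\delta_0$ by the coarse bound before normalizing, in class (ii) as well. Here $\delta_0$ is an absolute constant with $2\delta_0/27>C_0+\frac49+2$, so your $\sqrt m$ cutoff and the intermediate range it leaves are unnecessary.

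The genuine gap is the small-core range $sr_2<2r_1$, which you dismiss as ``again handled by a cubic deficit.'' That holds only when $r_1<m/12$ or $m_{\deep}<m/12$. This covers all of class (iii), since there $m_{\deep}=r_2+e=O(m^{2/3})$. In class (ii), however, you can have $r_1\approx 2m/3$, $m_{\deep}\approx m/3$, $r_2=O(m^{2/3})$ and $m_{\ge3}\ge m/24$. Then $\frac12 r_1^2m_{\deep}$ is within $O(m^2)$ of $\frac{2}{27}m^3$, and no normalization is available. The coarse bound $\frac{2}{27}m^3+O(m^2)$ therefore cannot reach $\frac{2}{27}m^3-\frac49m^2$.

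The missing ingredient is the breadth term of $\cX$. Saturation gives $s=O(m^{1/3})$, so $s(2m_{\deep}+1)-\binom{r_1-s}{2}\le -c_1m^2$. At the same time $\mathfrak b_{\mathrm{tot}}\ge m_{\ge3}/s=\Omega(m^{2/3})$ and $sT_{\core}=O(m^{7/3})$. Hence $\cX\le -c_2m^{8/3}$, a super-quadratic but not cubic saving, which absorbs the remaining terms and the parameter ledger. With this argument added, your proof goes through.
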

Combining~\cref{p3:prop:frattini-concentration} with the enumeration theorem of \cite{BLQR} gives
the upper bound in Theorem~\ref{intro:sharp}.

\paragraph{Improved enumeration bounds of Sims' relation arrays.}


We collect the presentation bounds and structural constraints needed
below, using the notation of
\cref{p12:par:fixed-sims-data,p12:sec:saturated-optimisation}.
For a nonabelian group $G$ of order $p^m$ and nilpotency class $c$,
let $L_i=G_i/G_{i+1}$, $s_i=\log_p|L_i|$, and $r_i=d(L_i)$.
Denote
\[
 \begin{gathered}
 m_{\deep}=\log_p|G_2|=\sum_{i=2}^c s_i,\qquad
 m_{\ge3}=\log_p|G_3|=\sum_{i=3}^c s_i,\\
 \delta=s_1-r_1,\qquad e=s_2-r_2,\qquad
 z_{\mathrm{bal}}=r_1-2m_{\deep}.
 \end{gathered}
\]
Thus $m=r_1+m_{\deep}+\delta$,
$m_{\deep}=r_2+e+m_{\ge3}$, and $s_2=m_{\deep}-m_{\ge3}$.
Let $H$ be the chosen Sims core. Denote $s=s(G)=d(H)$ and $C_{\core}=HG_2/G_2\le L_1$.

Retain the core breadths of Part~I:
\[
 \mathfrak b_i=\max_{u\in C_{\core}}\log_p|[L_i,u]_L|
 \quad(2\le i\le c),\qquad \mathfrak b_c=0.
\]
We write $\mathfrak b_{\mathrm{tot}}=\sum_{i=2}^{c-1}\mathfrak b_i$.
The quantities $T_{\core}$ and $\cX$ retain
their definitions from
\eqref{p12:eq:optimization-breadth-parameters} and
\eqref{p12:eq:optimization-X}:
\[
 \begin{aligned}
 T_{\core}&=\sum_{i=2}^{c-2}r_i\sum_{j=i+2}^c s_j,\\
 \cX&=sT_{\core}+\sum_{i=2}^{c-1}\mathfrak b_i
 \left(s(r_i+s_{i+1}-\mathfrak b_i+1)-\binom{r_1-s}{2}\right).
 \end{aligned}
\]
All empty sums are zero. In particular, when $c=2$,
$m_{\ge3}=T_{\core}=\mathfrak b_{\mathrm{tot}}=\cX=0$.

\paragraph{Joint presentation count.}
Fix the numerical data $p,c,s,(r_i),(d_k),(a(i,j))$, the ordered
formal generator symbols, the auxiliary function $f$, and the
defining-commutator indices from
\cref{p12:par:fixed-sims-data}. Also fix a breadth sequence that
occurs for these data. We call this choice a \emph{numerical
stratum}; the full power word array $(b(i,j,u,v))$ and the selected
commutator coefficient array $(c(i,j,k,u,v))$ still vary.
Here $s_i=\sum_j a(i,j)$ is fixed by the relative-order exponents.
An array is counted only if it arises from a group with the
specified Sims presentation. By
\cref{thm:sims_presentation_determines_isomorphism_classes_}, every
complete such array determines a group up to isomorphism, although
different arrays may represent the same group.

Let $\cE$ denote the upper exponent supplied by the joint counting
argument for the power word and commutator coefficient arrays in
a fixed stratum, so their number is at most $p^{\cE}$.
The fixed-breadth relation bound
\cref{p12:prop:total-relation-bound} and the power word bound
\cref{p12:lem:power-ledger} have already been combined in
\eqref{p12:eq:joint-relation-power}, giving
\begin{equation}\label{p3:eq:raw}
 \begin{split}
 \cE\le{}&\frac12r_1^2m_{\deep}+\cX
       +\frac12m_{\deep}z_{\mathrm{bal}}+3m_{\deep}m_{\ge3}+2m_{\deep}e+r_1\delta.
 \end{split}
\end{equation}
The defining commutator identities are included in this bound.
By \cref{p12:prop:parameter-ledger}, the remaining numerical,
abelian, indexing, breadth, and summation choices give
$m^{O(m)}$ patterns, contributing $O(m\log m)$ to the base-$p$
exponent uniformly in $p$. The power words are already included
in $\cE$ and create no further ledger factor.

We also retain the conditional form of this count from
\cref{p12:rem:conditional-leading-constraints}. After fixing the
full power word array, the higher core bracket maps, and the
core--core leading values, a prescribed system of affine linear
equations of rank at least $q$ on the core--outside coefficients
$c(1,j,k,2,v)\bmod p$ saves $q$ in the exponent, provided the rank
bound is uniform over those fixed choices. The defining core--core
positions and the outside--outside positions used for breadth
savings are disjoint from these coefficients, and the deeper count
is uniform for each fixed graded bracket. Choices used to specify
the equations are counted separately. This is the input used in
\cref{p3:lem:kernel} below. When applying the source-basis
multiplicity argument, we first sum the conditional counts over
the full power word arrays and then divide by the multiplicity.

\paragraph{Defect and breadth constraints.}
By \cref{p12:lem:second-layer-defect},
$e\le\binom{\delta}{2}$. The breadth relations
\eqref{p12:eq:e-breadth} give
$s_{i+1}\le s\mathfrak b_i$ for $2\le i\le c-1$, while
$\mathfrak b_i\le\min\{s_i,s_{i+1}\}$.
In particular, \eqref{p12:eq:X-rough} gives
\begin{equation}\label{p3:eq:roughX}
 \cX\le\frac{s m_{\deep}^2}{2}
       +\mathfrak b_{\mathrm{tot}}
        \left(s(2m_{\deep}+1)-\binom{r_1-s}{2}\right).
\end{equation}
The more precise constraints in
\eqref{p12:eq:core-tail-two-bounds} and
\eqref{p12:eq:breadth-total-le-deep} are
\begin{equation}\label{p3:eq:breadth}
 \begin{gathered}
 T_{\core}\le\min\left\{
 \frac{m_{\deep}^2-(m_{\deep}-m_{\ge3})^2}{2},\quad
 s m_{\deep}(\mathfrak b_{\mathrm{tot}}-\mathfrak b_2)\right\},
 \end{gathered}
\end{equation}
where $\frac{m_{\ge3}}s\le \mathfrak b_{\mathrm{tot}}\le m_{\ge3}\le m_{\deep}$.
Here $\mathfrak b_2=0$ when $G_3=1$, and $\mathfrak b_2\ge1$ otherwise.
These bounds and the joint count above hold without saturation.

\paragraph{The two parameter ranges.}
We call the range
\begin{equation}\label{p3:eq:saturation}
 \binom{s-1}{2}<r_2\le\binom{s}{2},\qquad 2\le s\le r_1
\end{equation}
\emph{saturated}; the nonsaturated range has
$r_2\le\binom{s-1}{2}$. By
\cref{p12:cor:nonsaturated-group}, for every fixed
$R>0$ there is an integer $m_R$, independent of $p$, such that for
$m\ge m_R$ the number of nonsaturated nonabelian groups of order
$p^m$ is at most $p^{\frac{2}{27}m^3-Rm^2}$.
All presentation-parameter choices are included in this estimate.

The remaining numerical analysis therefore concerns the saturated
range. As in \eqref{p12:eq:saturation}, saturation gives
$s^2\le2(m_{\deep}-m_{\ge3})+3s$, and hence
$s=O(\sqrt{m_{\deep}})$. Put $n=r_1+m_{\deep}=m-\delta$ and retain
the balance objective from \eqref{p12:eq:balance-objective}:
\[
 \mathcal F_\eta
 =\frac12r_1^2m_{\deep}+\cX+3m_{\deep}m_{\ge3}
       +\eta m_{\deep}z_{\mathrm{bal}}.
\]
Part~I specifies this optimization problem; its numerical estimates
are proved in the following section.
\Cref{p3:lem:positive-depth} gives the quadratic saving when
$m_{\ge3}>0$, and \cref{p3:lem:eta} treats every fixed real
coefficient $\eta$.
In particular, applying the latter with $\eta=1/2$ to
\eqref{p12:eq:joint-balance-objective} will give the coarse bound
at fixed numerical data
\begin{equation}\label{p3:eq:coarse}
 \cE\le\frac{2}{27}(m-\delta)^3+C_0m^2,
\end{equation}
where $C_0$ is absolute. Indeed, the remaining terms
$2m_{\deep}e+r_1\delta$ are $O(m^2)$.
We use this consequence after proving the numerical estimates below.
All constants and thresholds are independent of the prime;
subscripts on later error terms indicate dependence on fixed
auxiliary parameters.

\section{Numerical saving and source-basis normalization}
\label{p3:sec:normalization}

We first strengthen the saturated optimization to retain a
quadratic saving whenever $G_3\ne1$. Its zero-depth consequence
handles the other saturated strata. We then quantify how many
complete presentation arrays represent each group. These estimates
will prove concentration in Section~\ref{p3:sec:linear} and will
also accommodate the extension costs in Section~\ref{p4:sec:extensions}.

\subsection{A quadratic saving and the general balance estimate}

\begin{lemma}[Positive-depth quadratic saving]\label{p3:lem:positive-depth}
There is an absolute $\kappa>0$, with
the following property. For every fixed $\eta\in\mathbb R$, every
sufficiently large $n=r_1+m_{\deep}$, and every feasible saturated stratum
with $m_{\ge3}>0$,
\begin{equation}\label{p3:eq:positive-depth}
 \frac12r_1^2m_{\deep}+\cX+3m_{\deep}m_{\ge3}+\eta m_{\deep}z_{\mathrm{bal}}
 \le\cc n^3-\kappa n^2.
\end{equation}
The threshold depends only on $\eta$, not on $p$.
\end{lemma}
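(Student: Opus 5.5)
The plan is to bound $\cX$ via \eqref{p3:eq:roughX}. The breadth term $\mathfrak b_{\mathrm{tot}}\bigl(s(2m_{\deep}+1)-\binom{r_1-s}{2}\bigr)$ supplies a negative contribution of order $r_1^2\mathfrak b_{\mathrm{tot}}$. Since $m_{\ge3}\le s\,\mathfrak b_{\mathrm{tot}}$, this is at least of order $r_1^2 m_{\ge3}/s$. I will show that it dominates the positive terms $sm_{\deep}^2/2$ and $3m_{\deep}m_{\ge3}$, except when $r_1$ is far from balanced; in that case the cubic deficit \eqref{p12:eq:cubic-deficit} takes over.

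\textbf{Step 1 (reduce $sT_{\core}$).} The rough bound $sm_{\deep}^2/2$ is too large. I will use \eqref{p3:eq:breadth} instead:
\[
T_{\core}\le \min\Bigl\{m_{\deep}m_{\ge3},\; s\,m_{\deep}(\mathfrak b_{\mathrm{tot}}-\mathfrak b_2)\Bigr\}.
\]
This gives
\[
sT_{\core}\le s^2 m_{\deep}\,\mathfrak b_{\mathrm{tot}}.
\]
By saturation, $s^2=O(m_{\deep})$, so this is $O(m_{\deep}^2\mathfrak b_{\mathrm{tot}})$. The remaining positive terms are $s(2m_{\deep}+1)\mathfrak b_{\mathrm{tot}}=O(m_{\deep}^{3/2}\mathfrak b_{\mathrm{tot}})$ and $3m_{\deep}m_{\ge3}$. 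Using $m_{\ge3}\le s\,\mathfrak b_{\mathrm{tot}}$, the latter is $O(m_{\deep}^{3/2}\mathfrak b_{\mathrm{tot}})$. Collecting everything,
\[
\cX+3m_{\deep}m_{\ge3}\le \mathfrak b_{\mathrm{tot}}\Bigl(C m_{\deep}^2-\tfrac12 (r_1-s)^2+O(r_1)\Bigr).
\]

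\textbf{Step 2 (case split on balance).} Write $r_1=2m_{\deep}+z_{\mathrm{bal}}$.

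\emph{Case A:} $r_1\ge c_1 n$, with $c_1$ chosen so that $\tfrac12(r_1-s)^2\ge (C+1)m_{\deep}^2$ whenever $m_{\deep}\le r_1/K$ for a suitable constant $K$. If in addition $m_{\deep}\le r_1/K$, the bracket above is at most $-\Omega(n^2)$. Since $\mathfrak b_{\mathrm{tot}}\ge 1$ (because $\mathfrak b_2\ge1$ when $m_{\ge3}>0$), we gain $-\Omega(n^2)$, with coefficient larger than any $|\eta|$ contribution after absorbing it below.

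\emph{Case B:} the comparison fails, so $m_{\deep}$ is a constant fraction of $r_1$ too large, or $r_1$ is small relative to $n$. Then $|z_{\mathrm{bal}}|\ge c_2 n$ or the ratio is off. The deficit \eqref{p12:eq:cubic-deficit}, $z_{\mathrm{bal}}^2(4r_1+m_{\deep})/54$, is then of order $n^3$. This absorbs all the bounded positive terms, which are $O(n^2\mathfrak b_{\mathrm{tot}})\le O(n^2 m_{\ge3})$. Caution is needed here: $\mathfrak b_{\mathrm{tot}}$ can be of size $n$, so I will bound $\cX\le s m_{\deep}^2/2+O(m_{\deep}^{3/2}\mathfrak b_{\mathrm{tot}})=O(n^{5/2})$ directly, which is still $o(n^3)$.

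\textbf{Step 3 (the $\eta$ term near balance).} When $|z_{\mathrm{bal}}|$ is small, the term $\eta m_{\deep}z_{\mathrm{bal}}$ must be handled. I complete the square against the deficit:
\[
\eta m_{\deep}z_{\mathrm{bal}}-\frac{z_{\mathrm{bal}}^2(4r_1+m_{\deep})}{54}\le \frac{54\,\eta^2 m_{\deep}^2}{4(4r_1+m_{\deep})}=O_\eta(n).
\]
This is negligible against $\kappa n^2$ once $n$ is large depending on $\eta$, which matches the stated threshold dependence.

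The main obstacle is making the constants in Case A uniform: the negative coefficient $\tfrac12(r_1-s)^2\approx 2m_{\deep}^2$ near balance must beat $C m_{\deep}^2$ coming from $s^2 m_{\deep}\mathfrak b_{\mathrm{tot}}$. My first attempt is to sharpen Step 1 with the saturation inequality $s^2\le 2(m_{\deep}-m_{\ge3})+3s$, which gives $s^2 m_{\deep}\mathfrak b_{\mathrm{tot}}\lesssim 2m_{\deep}^2\mathfrak b_{\mathrm{tot}}$. I would also use the exclusion of $\mathfrak b_2$, which yields $sT_{\core}\le s^2 m_{\deep}(\mathfrak b_{\mathrm{tot}}-\mathfrak b_2)$. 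The saving then comes from the $\mathfrak b_2\ge1$ term, which contributes $-\mathfrak b_2\cdot\tfrac12 r_1^2\approx -2m_{\deep}^2$ with no matching positive term. This should leave a clean $-\Omega(n^2)$.
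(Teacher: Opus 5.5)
There is a genuine gap, and it sits at the step you flag as the main obstacle. In Step~1 you write $T_{\core}\le\min\{m_{\deep}m_{\ge3},\,s m_{\deep}(\mathfrak b_{\mathrm{tot}}-\mathfrak b_2)\}$, but you then keep only the second branch. Your last paragraph also relies only on $sT_{\core}\le s^2m_{\deep}(\mathfrak b_{\mathrm{tot}}-\mathfrak b_2)$, and that is too weak near balance.

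Put $S_1=\binom{r_1-s}{2}-s(5m_{\deep}+1)$. Then your estimate reads $\cX+3m_{\deep}m_{\ge3}\le(s^2m_{\deep}-S_1)(\mathfrak b_{\mathrm{tot}}-\mathfrak b_2)-S_1\mathfrak b_2$. Using saturation and $r_1=2m_{\deep}+z_{\mathrm{bal}}$, the coefficient $s^2m_{\deep}-S_1$ is about $-2m_{\deep}(m_{\ge3}+z_{\mathrm{bal}})+\Theta(sm_{\deep})$.

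This coefficient is positive whenever $m_{\ge3}+z_{\mathrm{bal}}\lesssim s$. Nothing stops $\mathfrak b_{\mathrm{tot}}-\mathfrak b_2$ from equalling $m_{\ge3}-1$: take $s_i=\mathfrak b_i=1$ for $i\ge3$. Two cases show the failure.
\begin{itemize}
\item Take $z_{\mathrm{bal}}=0$, $\mathfrak b_2=1$, $m_{\ge3}\approx s$ and $r_2=\binom{s-1}{2}+1$. The first term is then about $8s^2m_{\deep}\approx16m_{\deep}^2$, which swamps the saving $S_1\mathfrak b_2\approx2m_{\deep}^2$.
\item Take $\sqrt{m_{\deep}}\ll -z_{\mathrm{bal}}\ll m_{\deep}$ and $m_{\ge3}\approx|z_{\mathrm{bal}}|/2$. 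The first term is then about $m_{\deep}z_{\mathrm{bal}}^2/2$, three times the available cubic deficit of roughly $m_{\deep}z_{\mathrm{bal}}^2/6$.
\end{itemize}
So the claim that the $\mathfrak b_2$ saving has ``no matching positive term'' is false. Case~B does not cover this regime either, since near balance $|z_{\mathrm{bal}}|\ge c_2n$ fails; the near-balance case rests entirely on your last paragraph.

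The missing step is to keep the minimum. Set $T_{\max}:=\bigl(m_{\deep}^2-(m_{\deep}-m_{\ge3})^2\bigr)/2\le m_{\deep}m_{\ge3}$, so that $T_{\core}\le T_{\max}$. Then
\[
\cX+3m_{\deep}m_{\ge3}\le s\min\{T_{\max},\,sm_{\deep}(\mathfrak b_{\mathrm{tot}}-\mathfrak b_2)\}-S_1\mathfrak b_{\mathrm{tot}}.
\]
The right side is piecewise linear in $\mathfrak b_{\mathrm{tot}}$, with slope $-S_1<0$ beyond the breakpoint $\mathfrak b_2+T_{\max}/(sm_{\deep})$. Its maximum is therefore at most $\frac{m_{\ge3}}{s}(s^2m_{\deep}-S_1)_+-S_1\mathfrak b_2$. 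In effect, the growth in $\mathfrak b_{\mathrm{tot}}-\mathfrak b_2$ is capped at $m_{\ge3}/s$ rather than $m_{\ge3}$.

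Maximizing over $m_{\ge3}$ gives at most $m_{\deep}z_{\mathrm{bal}}^2/(2s)+6m_{\deep}|z_{\mathrm{bal}}|+18sm_{\deep}-S_1\mathfrak b_2$. For $s\ge11$, half of the cubic deficit $5m_{\deep}z_{\mathrm{bal}}^2/54$ absorbs the square term, and $S_1\mathfrak b_2\ge m_{\deep}^2/4$ gives the quadratic saving. Consequently, in Step~3 you may spend only the other half of the deficit on $\eta m_{\deep}z_{\mathrm{bal}}$ and $6m_{\deep}|z_{\mathrm{bal}}|$, not the whole of it.

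For $2\le s\le10$ one has $s^2m_{\deep}-S_1\le-m_{\deep}^2/4$ outright, so the entire breadth contribution is already at most $-m_{\deep}^2/4$. The unbalanced range $r_1\notin[m_{\deep},3m_{\deep}]$ is handled as in your Case~B, via $\cX=O(n^{5/2})$.
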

\begin{proof}
The exact cubic deficit is
\[
 D_{\mathrm{cub}}:=\cc n^3-\tfrac12r_1^2m_{\deep}
 =\frac{z_{\mathrm{bal}}^2(4r_1+m_{\deep})}{54}.
\]
By saturation, $\binom{s-1}{2}<r_2$. Since
$r_2\le s_2=m_{\deep}-m_{\ge3}$, we have
\[
 s^2-3s+2=2\binom{s-1}{2}
 <2r_2\le2(m_{\deep}-m_{\ge3}).
\]
Thus $s^2\le2(m_{\deep}-m_{\ge3})+3s$ and
$s=O(\sqrt{m_{\deep}})$.
Dropping the negative term in \eqref{p3:eq:roughX} and using
$\mathfrak b_{\mathrm{tot}}\le m_{\deep}$ gives
$\cX\le O(m_{\deep}^{5/2})$.
Outside $m_{\deep}\le r_1\le3m_{\deep}$, the displayed deficit
is at least an absolute positive multiple of $n^3$:
indeed $|z_{\mathrm{bal}}|\ge n/4$ and $4r_1+m_{\deep}\ge n$.
This absorbs the bound on $\cX$ and
$3m_{\deep}m_{\ge3}+\eta m_{\deep}z_{\mathrm{bal}}=O_\eta(n^2)$.
This proves \eqref{p3:eq:positive-depth} outside that range for large $n$.

Hence assume $m_{\deep}\le r_1\le3m_{\deep}$. Then $|z_{\mathrm{bal}}|\le m_{\deep}$,
$D_{\mathrm{cub}}\ge5m_{\deep}z_{\mathrm{bal}}^2/54$, and saturation gives $s=O(\sqrt{m_{\deep}})$.
Since $m_{\ge3}>0$, the breadth relations give $\mathfrak b_2\ge1$.
Put
\[
 S_1=\binom{r_1-s}{2}-s(5m_{\deep}+1),\qquad
 T_{\max}=\frac{m_{\deep}^2-(m_{\deep}-m_{\ge3})^2}{2}\le m_{\deep}m_{\ge3}.
\]
For large $m_{\deep}$, uniformly in all feasible $s$,
\begin{equation}\label{p3:eq:S1}
 S_1\ge m_{\deep}^2/4.
\end{equation}
Indeed, $r_1\ge m_{\deep}$ and $s=O(\sqrt{m_{\deep}})$ give
$S_1\ge m_{\deep}^2/2-O(m_{\deep}^{3/2})$.

The definition of $\cX$, \eqref{p3:eq:breadth}, and $m_{\ge3}\le s\mathfrak b_{\mathrm{tot}}$ imply
\begin{equation}\label{p3:eq:absorbed-depth}
 Y:=\cX+3m_{\deep}m_{\ge3}
 \le s\min\{T_{\max},s m_{\deep}(\mathfrak b_{\mathrm{tot}}-\mathfrak b_2)\}-S_1\mathfrak b_{\mathrm{tot}}.
\end{equation}
This absorbs the whole positive term $3m_{\deep}m_{\ge3}$ before maximizing.
Write $a_+=\max\{a,0\}$ for the positive part of a real number $a$.
As a function of $\mathfrak b_{\mathrm{tot}}\ge \mathfrak b_2$, the right side of
\eqref{p3:eq:absorbed-depth} is linear on either side of
$\mathfrak b_2+T_{\max}/(s m_{\deep})$. Its slope after that breakpoint is
$-S_1<0$, so its maximum is attained at $\mathfrak b_2$ or at the breakpoint.
Consequently
\begin{equation}\label{p3:eq:depth-max}
 Y\le\frac{T_{\max}}{s m_{\deep}}(s^2m_{\deep}-S_1)_+-S_1\mathfrak b_2
 \le\frac{m_{\ge3}}{s}(s^2m_{\deep}-S_1)_+-S_1\mathfrak b_2.
\end{equation}
Expanding $r_1=2m_{\deep}+z_{\mathrm{bal}}$ and using $s^2\le2(m_{\deep}-m_{\ge3})+3s$ gives
\begin{align*}
 s^2m_{\deep}-S_1
 &\le-2m_{\deep}(m_{\ge3}+z_{\mathrm{bal}})+10s m_{\deep}-\tfrac12(z_{\mathrm{bal}}-s)^2+m_{\deep}+z_{\mathrm{bal}}/2+s/2\\
 &\le-2m_{\deep}(m_{\ge3}+z_{\mathrm{bal}})+12s m_{\deep}.
\end{align*}
Substitute this into \eqref{p3:eq:depth-max} and maximize in $m_{\ge3}$:
\begin{align}\label{p3:eq:depth-square}
 Y
 &\le\frac{m_{\deep}}{s}m_{\ge3}(-2m_{\ge3}-2z_{\mathrm{bal}}+12s)_+-S_1\mathfrak b_2\notag\\
 &\le\frac{m_{\deep}}{8s}(-2z_{\mathrm{bal}}+12s)_+^2-S_1\mathfrak b_2\notag\\
 &\le\frac{m_{\deep}z_{\mathrm{bal}}^2}{2s}+6m_{\deep}|z_{\mathrm{bal}}|+18s m_{\deep}-S_1\mathfrak b_2.
\end{align}
For $s\ge11$, the square term is at most $5m_{\deep}z_{\mathrm{bal}}^2/108$.
Writing $A_\eta=6+|\eta|$, we have
\[
 A_\eta m_{\deep}|z_{\mathrm{bal}}|
 \le\frac5{108}m_{\deep}z_{\mathrm{bal}}^2+\frac{27A_\eta^2}{5}m_{\deep}.
\]
Therefore
\[
 Y+\eta m_{\deep}z_{\mathrm{bal}}-D_{\mathrm{cub}}
 \le\frac{27A_\eta^2}{5}m_{\deep}+18s m_{\deep}-\frac14m_{\deep}^2
 \le-\frac18m_{\deep}^2
\]
for all sufficiently large $m_{\deep}$.

If $2\le s\le10$, the definition of $S_1$ gives
$S_1\ge m_{\deep}^2/2-O(m_{\deep})$, so
$s^2m_{\deep}-S_1\le-m_{\deep}^2/4$ for large $m_{\deep}$.
Dropping $\mathfrak b_2$ from the first term of \eqref{p3:eq:absorbed-depth}
gives $Y\le(s^2m_{\deep}-S_1)\mathfrak b_{\mathrm{tot}}\le-m_{\deep}^2/4$, since $\mathfrak b_{\mathrm{tot}}\ge \mathfrak b_2\ge1$.
Also $\eta m_{\deep}z_{\mathrm{bal}}\le D_{\mathrm{cub}}+(27\eta^2/10)m_{\deep}$.
Thus $Y+\eta m_{\deep}z_{\mathrm{bal}}-D_{\mathrm{cub}}\le-m_{\deep}^2/8$ eventually in this case too.

Finally, $n\le4m_{\deep}$, so the saving $m_{\deep}^2/8$ is at least
$n^2/128$. The smaller constant $\kappa=1/256$ works in all
ranges once $n$ is sufficiently large.
\end{proof}

\begin{corollary}[An arbitrary fixed balance coefficient]\label{p3:lem:eta}
For every fixed $\eta\in\mathbb R$ and every feasible saturated stratum,
\begin{equation}\label{p3:eq:opt}
 \frac12r_1^2m_{\deep}+\cX+3m_{\deep}m_{\ge3}
 +\eta m_{\deep}z_{\mathrm{bal}}
 \le\cc(r_1+m_{\deep})^3+O_\eta(r_1+m_{\deep}),
\end{equation}
uniformly in $p$.
\end{corollary}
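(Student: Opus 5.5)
We split into the cases $m_{\ge3}>0$ and $m_{\ge3}=0$. When $m_{\ge3}>0$ and $n$ exceeds the threshold of \cref{p3:lem:positive-depth} (which depends only on $\eta$), the left side of \eqref{p3:eq:opt} is at most $\cc n^3-\kappa n^2$, stronger than needed. The finitely many $n$ below this threshold are absorbed into $O_\eta(n)$: for bounded $n$ every quantity is bounded, since $s,r_i,s_i,\mathfrak b_i\le n$ and $\cX=O(n^4)$. So only the zero-depth strata remain.

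When $m_{\ge3}=0$ we have $c=2$, so $T_{\core}=\mathfrak b_{\mathrm{tot}}=\cX=0$ and $3m_{\deep}m_{\ge3}=0$. The left side reduces to $\frac12r_1^2m_{\deep}+\eta m_{\deep}z_{\mathrm{bal}}$. By the exact cubic deficit \eqref{p12:eq:cubic-deficit} it equals
\[
\cc n^3-\frac{z_{\mathrm{bal}}^2(4r_1+m_{\deep})}{54}+\eta m_{\deep}z_{\mathrm{bal}}.
\]
Since $r_1\ge0$, the deficit is at least $m_{\deep}z_{\mathrm{bal}}^2/54$. Completing the square in $z_{\mathrm{bal}}$ then gives
\[
\eta m_{\deep}z_{\mathrm{bal}}-\frac{m_{\deep}z_{\mathrm{bal}}^2}{54}\le\frac{27\eta^2}{2}m_{\deep}\le\frac{27\eta^2}{2}n .
\]
This yields the $O_\eta(n)$ error, uniformly in $p$ and in $s$. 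Saturation is not even needed in this case.

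The proof is therefore mainly bookkeeping. The only point needing care is that \cref{p3:lem:positive-depth} requires $n$ large, with a threshold depending on $\eta$. Below that threshold we bound the left side crudely: using \eqref{p3:eq:roughX}, $\cX=O(n^{3})$, and the whole expression is $O_\eta(1)$, which is again absorbed into $O_\eta(n)$.
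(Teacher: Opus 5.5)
Your proof is correct and matches the paper's argument: for $m_{\ge3}>0$ and large $n$ you invoke \cref{p3:lem:positive-depth}; for $m_{\ge3}=0$ you use $\cX=0$, the cubic-deficit identity and completing the square to get the error $\frac{27\eta^2}{2}m_{\deep}$; and bounded $n$ is absorbed into the implied constant. One cosmetic inconsistency: you write $\cX=O(n^4)$ in one place and $\cX=O(n^3)$ in another, but either bound is enough because $n$ is bounded in that case.
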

\begin{proof}
Put $n=r_1+m_{\deep}$. If $m_{\ge3}>0$ and $n$ is sufficiently
large, Lemma~\ref{p3:lem:positive-depth} gives the stronger bound
with a quadratic saving. If $m_{\ge3}=0$, then
$T_{\core}=\mathfrak b_{\mathrm{tot}}=\cX=0$, and the exact cubic deficit satisfies
\[
 \cc n^3-\tfrac12r_1^2m_{\deep}
 =\frac{z_{\mathrm{bal}}^2(4r_1+m_{\deep})}{54}
 \ge\frac{m_{\deep}z_{\mathrm{bal}}^2}{54}.
\]
Completing the square gives
\[
 \eta m_{\deep}z_{\mathrm{bal}}
 \le\frac{m_{\deep}z_{\mathrm{bal}}^2}{54}
       +\frac{27\eta^2}{2}m_{\deep},
\]
which proves the required bound in this case.
Finally, bounded $n$ permits only finitely many numerical
parameter choices, independently of $p$; increasing the constant
in $O_\eta(n)$ covers them all.
\end{proof}

\subsection{Source bases and the core centralizer}
\label{p3:subsec:source-bases}

Throughout this subsection, $G$ is in the saturated range.
Set $V=G/\Phi(G)\cong L_1/pL_1$. Then $V$ and $W$ are
$\mathbb F_p$-spaces of dimensions $r_1$ and $r_2$, respectively,
and commutation induces an alternating map $\beta_G:V\times V\to W$.
Fix a Sims core $H$, and let
$U=H\Phi(G)/\Phi(G)\le V$ be its $s$-dimensional image,
identified with $(C_{\core}+pL_1)/pL_1$.
Define
\[
 K=\{v\in V:\beta_G(v,U)=0\},\qquad d=\dim K.
\]

\begin{lemma}\label{p3:lem:core}
In the saturated range, $K\cap U=0$.
\end{lemma}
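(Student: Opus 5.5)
We argue by contradiction: suppose $0\ne k\in K\cap U$, and use it to produce a subspace of $U$ of dimension $s-1$ whose $\beta_G$-image is still all of $W$. That contradicts the saturation inequality $r_2>\binom{s-1}{2}$, because an $(s-1)$-dimensional space $U'$ satisfies $\dim\beta_G(U',U')\le\binom{s-1}{2}$.

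The first step is a change of basis of $U$. Extend $k$ to a basis $k=u_1',u_2',\dots,u_s'$ of $U$. This is legitimate since $\dim U=s$ by the minimality of the Sims core.

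The second step is to compute $\beta_G(U,U)$ in this basis. By \cref{p12:prop:sims-hollowing-dictionary}(i), or directly by the definition \eqref{eq: linearised sims rank}, we have $\beta_G(U,U)=W$. As an $\F_p$-span, $\beta_G(U,U)$ is generated by the values $\beta_G(u_a',u_b')$ with $1\le a<b\le s$. Every value involving $u_1'=k$ vanishes, because $k\in K$ means $\beta_G(k,U)=0$. Hence
\[
 W=\beta_G(U,U)=\linspan_{\F_p}\{\beta_G(u_a',u_b'):2\le a<b\le s\}=\beta_G(U',U'),
\]
where $U'=\linspan\{u_2',\dots,u_s'\}$. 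This shows
\[
 r_2=\dim W\le\binom{s-1}{2}.
\]

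The third step closes the argument. The bound $r_2\le\binom{s-1}{2}$ contradicts the saturated hypothesis \eqref{p3:eq:saturation}. Alternatively, it contradicts the minimality of $s$ directly, since $U'$ is an $(s-1)$-dimensional subspace with $\beta_G(U',U')=W$; either route works.

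There is no real obstacle here. The only point needing care is that $K$ is defined using the bilinear map on $V=G/\Phi(G)$, while $U$ is the image of the core in $V$. Both live in the same space, so the computation above is entirely within $V\times V\to W$ and needs no lifting to $G$.
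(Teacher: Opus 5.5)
Your proof is correct and is the paper's argument: the paper observes that $\beta_G|_{U\times U}$ factors through $U/\langle v\rangle$, which is your basis computation showing that $W$ is spanned by the $\binom{s-1}{2}$ values on a complement of $\langle k\rangle$, contradicting saturation. Your alternative route via minimality of $s$ is also valid, and it shows that the saturation hypothesis is not actually needed for this lemma.
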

\begin{proof}
If $0\ne v\in K\cap U$, then $\beta_G|_{U\times U}$ factors
through an alternating map on $U/\langle v\rangle$. Its image
would have dimension at most $\binom{s-1}{2}$, contrary to
\eqref{p3:eq:saturation} and the spanning property of the core.
\end{proof}

\begin{lemma}[Multiplicity across the power-word fibres]
\label{p3:lem:bases}
For bounded $\delta$, the core and the derived generators can be
fixed while varying a space of outside generators of dimension
$u\ge r_1-s-\delta$. The resulting complete arrays number at least
\[
 c_*p^{u^2-ud},\qquad
 c_*:=\prod_{i\ge1}(1-2^{-i})>0,
\]
for each fixed group. The numerical parameters and the breadths
are preserved; the full power words are allowed to vary.
\end{lemma}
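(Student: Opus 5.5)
The plan is to exhibit, for a fixed group $G$ in the saturated range with a fixed Sims core and fixed derived generators, a large family of admissible choices of the outside generators $g_{1,s+1},\dots,g_{1,r_1}$. Each such choice yields a valid ordered Sims generating set with the same numerical data and breadths, and distinct choices give distinct complete arrays.

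\textbf{Step 1: the varying space.} Keep $g_{1,1},\dots,g_{1,s}$ fixed, hence also $C_{\core}$, the flag $(W_k)$, the function $f$ and all higher generators $g_{i,j}$ with $i\ge2$, since these are commutators of core elements and lower-layer generators. In $L_1$, choose a subgroup decomposition in which the outside part, modulo the core and modulo $pL_1$, contains a subspace $X\le V$ of dimension $u\ge r_1-s-\delta$ with $X\cap U=0$. The elements of $L_1$ of order exactly $p$ modulo $\langle C_{\core},\dots\rangle$ form a summand of $\F_p$-rank at least $(r_1-s)-\delta$, because at most $\delta$ generators have $a(1,j)\ge2$. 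The first $u$ outside generators, namely those with relative order $p$, span such an $X$.

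\textbf{Step 2: the admissible substitutions.} Replace each of these $u$ generators $x_j$ by $x_j\prod_k x_k^{t_{jk}}\,y_j$, where $T=(t_{jk})$ ranges over a set giving $I+T\in\GL_u(p)$, and $y_j$ ranges over corrections in $G_2\Phi$ chosen so that the relative orders are preserved. Each substitution keeps the same core, the same $a(i,j)$, the same $r_i$ and $s_i$, and the same breadths, since the breadths depend only on $C_{\core}$ and $L$. Only the power words and commutator coefficients change, so the result is again a valid Sims presentation of $G$.

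\textbf{Step 3: distinctness modulo the kernel.} Two substitutions $T,T'$ may give identical arrays. This happens only when the induced change of basis is invisible in the recorded data. The leading core–outside coefficients $c(1,j,k,2,v)\bmod p$ with $k\le s$ record $\beta_G(x_j',u_k)$. So the new basis is detected exactly modulo $K$: substitutions that differ by elements of $\operatorname{Hom}(X,K)$, a group of order at most $p^{ud}$, may collapse. Here Lemma~\ref{p3:lem:core}, $K\cap U=0$, ensures that the detection map $X\to\operatorname{Hom}(U,W)$ has kernel $X\cap K$. Counting $|\GL_u(p)|\ge c_*p^{u^2}$ and dividing by $p^{ud}$ then gives at least $c_*p^{u^2-ud}$ distinct complete arrays.

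\textbf{Main obstacle.} The delicate point is Step 3. One must show rigorously that distinct cosets modulo $\operatorname{Hom}(X,K)$ give distinct arrays. This requires that changing $x_j$ by an element outside $K+\langle\text{core}\rangle+\Phi$ changes some recorded leading coefficient, namely the $[x_j,u_k]$ entries. It also requires checking that the relative-order normalisation (Step 2 with $\delta$ bounded) does not force extra identifications. I would handle this by working entirely with the order-$p$ part of $L_1$ and using the unique Sims normal form to read off $\beta_G(x_j',u_k)$ directly.
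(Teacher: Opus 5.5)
Your proposal is correct and follows the paper's route. You fix the core, which fixes the derived generators, flags and breadths. You let $\GL_u(p)$ act on a basis of an elementary outside space of dimension $u\ge r_1-s-\delta$. Equal arrays force equal core--outside leading coefficients, so each moving generator is determined modulo $X\cap K$, the fibres have size at most $p^{ud}$, and $|\GL_u(p)|\ge c_*p^{u^2}$ gives the bound.

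Two points to tidy. First, take the moving generators as lifts of a basis of an elementary direct summand: the paper uses $G/G_2=B_0\oplus E_0$ and appends $U_{\mathrm{out}}$ last, and your quotient by the core also works if those generators are placed right after the core. Outside generators that merely have relative order $p$ in an arbitrary ordering can change relative order under a $\GL_u(p)$ substitution. Second, Lemma~\ref{p3:lem:core} is not needed here, since $\ker\bigl(X\to\operatorname{Hom}(U,W)\bigr)=X\cap K$ by the definition of $K$.
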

\begin{proof}
Decompose the abelianization as $G/G_2=B_0\oplus E_0$, where
$B_0$ is the sum of cyclic factors of order at least $p^2$ and
$E_0$ is elementary abelian. The rank of $B_0$ is at most
$\delta$. Let $E_1$ span the $E_0$-components of the core
elements; its dimension is at most $s$. Choose a complement
$U_{\mathrm{out}}$ to $E_1$ in $E_0$, so $u:=\dim U_{\mathrm{out}}\ge r_1-\delta-s$.

Complete the core to a minimal generating set of $B_0\oplus E_1$
and append a basis of $U_{\mathrm{out}}$. Core independence modulo $p$ ensures
this completion exists. Fix actual lifts of the nonmoving
generators, including all derived generators defined from core
words. Vary only the basis of $U_{\mathrm{out}}$ by $\GL(u,p)$, choosing one
tuple of actual lifts for each basis. The appended generators all
have relative order $p$ in the abelianization. All defining
commutators and core flags remain unchanged, as do the higher
layer structures and their core breadth maps.

If two choices give the same complete selected array, their
core--outside leading commutator coefficients agree. For each
moving generator the difference of its two images in $V$ belongs
to $K$. Thus an array has at most $p^{ud}$ preimages among these
basis choices. Since $|\GL(u,p)|\ge c_*p^{u^2}$, the assertion
follows. No assertion of a free automorphism action is needed.
\end{proof}

\begin{lemma}[Prescribing the core centralizer]\label{p3:lem:kernel}
The number of possible $K$ of dimension $d$ is at most
$c_*^{-1}p^{d(r_1-d)}$. For each prescribed $K$ disjoint from $U$,
the equations $\beta_G(K,U)=0$ save $sd r_2$ scalar parameters in the
raw graded count. The other conditional bounds remain applicable.
\end{lemma}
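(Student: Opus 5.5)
The first assertion is a Grassmannian count, and I would dispose of it directly. The subspace $K$ is a $d$-dimensional subspace of the $r_1$-dimensional $\F_p$-space $V$. By \cref{p12:lem:universal-field}\textup{(i)}, the number of such subspaces is
\[
 \qbinom{r_1}{d}{p}\le\pi_\infty(2)^{-1}p^{d(r_1-d)}.
\]
Since $c_*=\prod_{i\ge1}(1-2^{-i})=\pi_\infty(2)$, this is exactly the claimed bound $c_*^{-1}p^{d(r_1-d)}$. This count is the separate charge for ``choices used to specify the equations'' required in \cref{p12:rem:conditional-leading-constraints}.

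For the saving I would fix $K$ with $K\cap U=0$; \cref{p3:lem:core} shows this is the only relevant case. Choose the outside generators $g_{1,s+1},\dots,g_{1,r_1}$ so that the images of $d$ of them, say $g_{1,s+1},\dots,g_{1,s+d}$, span $K$ modulo $\Phi(G)$. This is legitimate because the outside basis is an auxiliary choice counted in \cref{p12:prop:parameter-ledger}, or it can be absorbed by enlarging the prescription by the basis-of-$K$ data, which costs at most $p^{d^2}\le p^{d r_1}$ and is still counted separately. Alternatively, keep the outside basis arbitrary and express $K$ through a fixed $d\times(r_1-s)$ coordinate matrix relative to the outside generators modulo $U$; the prescribed $K$ determines this matrix.

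The condition $\beta_G(K,U)=0$ then reads as follows: for each of $d$ basis vectors $x$ of $K$ and each core generator $u_k$ with $k\in[s]$, the $r_2$ coordinates of $\beta_G(x,u_k)\in W$ vanish. Writing $x$ in the outside coordinates, each such coordinate is an affine linear function of the core--outside leading residues $c(1,j,k,2,v)\bmod p$. The constant term comes from the core components of $x$, which involve core--core values that are already fixed.

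These $sdr_2$ equations are independent. For a fixed pair $(k,v)$, the $d$ equations involve the variables $c(1,j,k,2,v)$ through the rows of the coordinate matrix of $K$, and those rows are linearly independent because $K\cap U=0$. Different pairs $(k,v)$ use disjoint variables. The rank is therefore $sdr_2$, uniformly over the conditional choices. \Cref{p12:rem:conditional-leading-constraints} then subtracts $sdr_2$ from the joint exponent, since the deeper and breadth counts are untouched.

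The main obstacle I expect is making the independence uniform. The coordinate matrix of $K$ must not depend on the conditional data over which the count is summed, and the core components of $K$'s basis must only shift constants, never coefficients. I would handle this by fixing $K$, together with a basis expressed in fixed generator coordinates, before the power words and brackets vary. The ranks $r_i$ and $d$ are preserved, so the remark applies verbatim.
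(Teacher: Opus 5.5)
Your argument is correct and is essentially the paper's proof. For the choice of $K$ you use the Gaussian-binomial bound with $c_*=\pi_\infty(2)$. Then, because $K\cap U=0$, the outside-coordinate projection of $K$ is injective (the paper phrases this as $K$ being the graph of a map $U_1\to U$ with $U_1\le U_0$), so $\beta_G(K,U)=0$ prescribes the core--outside residues on a $d$-dimensional space, with coefficients depending only on $K$ and constants coming from the fixed core--core values; this gives $sdr_2$ independent affine conditions to feed into \cref{p12:rem:conditional-leading-constraints}.

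Keep $K$ fixed in the formal generator coordinates, as you eventually do, and drop the option of adapting the outside generators to $K$. That adaptation is not a choice recorded in \cref{p12:prop:parameter-ledger}, and it would clash with the variation of the outside basis used for the multiplicity in \cref{p3:lem:bases}.
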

\begin{proof}
The subspace count is the usual Gaussian-binomial bound. Choose
a complement $V=U\oplus U_0$. Since $K\cap U=0$, projection
identifies $K$ with a $d$-subspace $U_1\le U_0$, with $K$ the
graph of a linear map $U_1\to U$. After fixing core--core values,
the equations prescribe the restriction to $U_1$ of the map
\[
 U_0\longrightarrow\operatorname{Hom}(U,W),
 \qquad v\longmapsto\beta_G(v,-)|_U.
\]
This amounts to exactly $sd r_2$ independent affine scalar conditions on
the freely counted core--outside coordinates. Reduction
$L_2\to L_2/pL_2=W$ has constant fibre size $p^{s_2-r_2}$, so
the saving holds also for nonelementary $L_2$.

The defining core--core slots and the breadth-restricted
outside--outside slots are disjoint from these coordinates.
The higher completion bounds hold uniformly for each fixed
graded bracket, so they are unchanged. Counting maps for which
the centralizer contains the prescribed $K$ may overcount
the stratum where its dimension is exactly $d$, which is harmless.
\end{proof}

Combining these lemmas, an exponent $\cE$ for raw arrays in a
numerical stratum becomes
\begin{equation}\label{p3:eq:normalizedraw}
 \cE-u^2+d(r_1+u-d-s r_2)+O(m\log m)
\end{equation}
for the corresponding number of group isomorphism classes.
Here the union over full power words is taken before division
by the multiplicity. Different group isomorphism classes cannot
produce the same complete array. To avoid any ambiguity from
choices, assign one admissible core stratum to each group before
counting. The number of numerical patterns and the choices of
$d,u$ cost $O(m\log m)$ in the logarithm.

\section{Structural concentration and the linear error term}\label{p3:sec:linear}

We prove Theorem~\ref{intro:concentration} directly from the
preceding estimates. Combining this concentration with the known
count of $p$-groups of Frattini class at most $2$ then proves
Theorem~\ref{intro:sharp} with linear error.

Recall the family of $p$-groups of Frattini class at most two is
\[
 \mathfrak H_p=\{G:G\text{ is a finite $p$-group},\quad
                     \Phi(G)\le Z(G),\ \Phi(G)^p=1\},
\]
and let $f_{\mathfrak H_p}(p^m)$ count this family up to isomorphism.
We retain this inclusive convention for $\mathfrak H_p$ throughout.
Every member has nilpotency class at most two, since
$[G,G]\le\Phi(G)\le Z(G)$. Its nonabelian members have both
Frattini class and nilpotency class exactly two.
The key is to retain a quadratic deficit outside the leading family.

\subsection{Concentration in the Frattini-class-two family}

\begin{proof}[Proof of Theorem~\ref{intro:concentration}]
\emph{Large defects and the nonsaturated range.}
We count the groups outside $\mathfrak H_p$. Abelian groups of
order $p^m$ number at most $p^m$, by the partition count, so even
including all of them is negligible for \eqref{p3:eq:complement-gap}.
For nonabelian saturated groups, use
$m^3-(m-\delta)^3\ge\delta m^2$ in \eqref{p3:eq:coarse}.
Choose an absolute integer $\delta_0$ with
\[
 \frac{2\delta_0}{27}>C_0+\frac49+2.
\]
The total contribution from $\delta\ge\delta_0$ is then at most
$p^{\frac{2}{27}m^3-\frac49m^2-m^2}$ for large $m$, even after the
$m^{O(m)}$ numerical patterns are summed.
The nonsaturated estimate, with any fixed $R>4/9+1$, gives the
same bound for all nonsaturated groups.
Hence it suffices to consider saturated strata with
$0\le\delta<\delta_0$ and $e\le\binom\delta2=O(1)$.

\emph{The small-core range.}
Suppose $s r_2<2r_1$. Saturation implies
$s\binom{s-1}{2}<2m$, hence
\[
 s=O(m^{1/3}),\qquad r_2=O(m^{2/3}).
\]
Dropping the negative term in \eqref{p3:eq:roughX} and using
$\mathfrak b_{\mathrm{tot}}\le m_{\deep}$ gives $\cX\le O(m^{7/3})$.
If $r_1<m/12$ or $m_{\deep}<m/12$, the main term
$r_1^2m_{\deep}/2$ is smaller than $2m^3/27$ by a fixed positive
multiple of $m^3$, because $r_1+m_{\deep}\le m$.
This deficit absorbs the preceding bound on $\cX$, all remaining
terms in \eqref{p3:eq:raw}, and the parameter ledger.

Otherwise $r_1,m_{\deep}\ge m/12$. Since $e$ is bounded and
$r_2=O(m^{2/3})$, we have
$m_{\ge3}=m_{\deep}-r_2-e\ge m/24$ for large $m$.
There is then an absolute constant $c_1>0$ such that
\[
 s(2m_{\deep}+1)-\binom{r_1-s}{2}\le-c_1m^2.
\]
Using $\mathfrak b_{\mathrm{tot}}\ge m_{\ge3}/s$ in \eqref{p3:eq:roughX} yields
\[
 \cX\le O(m^{7/3})-c_1m^2m_{\ge3}/s
 \le-c_2m^{8/3}
\]
for an absolute $c_2>0$ and sufficiently large $m$.
Since $r_1^2m_{\deep}/2\le2m^3/27$ and the other terms in
\eqref{p3:eq:raw} are $O(m^2)$, this also leaves any prescribed
fixed quadratic saving. Thus the whole small-core range
contributes at most $p^{\frac{2}{27}m^3-\frac49m^2-m^2}$ for large $m$.

\emph{Source normalization in the remaining strata.}
We now have bounded $\delta,e$ and $s r_2\ge2r_1$.
Since $u\le r_1$, the coefficient of $d$ in
\eqref{p3:eq:normalizedraw} is nonpositive. Moreover,
$r_1-s-\delta\le u\le r_1$ and $s=O(\sqrt m)$ give
$u^2=r_1^2+O(m^{3/2})$.
The source-basis estimate therefore bounds the logarithm of the
contribution from each stratum by
\begin{equation}\label{p3:eq:normalized-for-gap}
 \cE-r_1^2+O(m^{3/2}).
\end{equation}
The error includes the $O(m\log m)$ cost of summing the numerical
patterns. Put $n=r_1+m_{\deep}=m-\delta$. The identity
\begin{equation}\label{p3:eq:identity}
 -r_1^2=-\frac49n^2-\frac43m_{\deep}z_{\mathrm{bal}}
              -\frac59z_{\mathrm{bal}}^2
\end{equation}
and \eqref{p3:eq:raw} bound the exponent in
\eqref{p3:eq:normalized-for-gap} by
\begin{equation}\label{p3:eq:normalized-gap-identity}
 \left(\tfrac12r_1^2m_{\deep}+\cX+3m_{\deep}m_{\ge3}
                 -\tfrac56m_{\deep}z_{\mathrm{bal}}\right)
 -\tfrac49n^2-\tfrac59z_{\mathrm{bal}}^2+O(m^{3/2}).
\end{equation}
Here $2m_{\deep}e+r_1\delta=O(m)$ has been included in the error.

\emph{A quadratic saving outside the Frattini family.}
If $1\le\delta<\delta_0$, apply Corollary~\ref{p3:lem:eta} with
$\eta=-5/6$ and discard the negative square. The result is
\[
 Q(m-\delta)+O(m^{3/2})
 =\frac{2}{27}m^3-\frac49m^2-\frac{2\delta}{9}m^2+O(m^{3/2})
 \le \frac{2}{27}m^3-\frac49m^2-\frac19m^2
\]
for sufficiently large $m$. The constants are uniform because
$\delta$ ranges over a fixed finite set.

We may now assume $\delta=0$, so $n=m$ and $e=0$.
If $m_{\ge3}>0$, use Lemma~\ref{p3:lem:positive-depth} in
\eqref{p3:eq:normalized-gap-identity}; this gives
\[
 \frac{2}{27}m^3-\frac49m^2-\frac{m^2}{256}+O(m^{3/2})
 \le \frac{2}{27}m^3-\frac49m^2-\frac{m^2}{512}
\]
for large $m$. The finite combination of all discarded families
is at most $p^{\frac{2}{27}m^3-\frac49m^2-m^2/1024}$ after increasing the threshold.

The only remaining nonabelian groups have $\delta=m_{\ge3}=0$.
Then $G/G_2$ is elementary abelian, $G_3=1$, and $e=0$
makes $G_2=[G,G]$ elementary abelian. In particular
\[
 G^p\le G_2,\qquad
 \Phi(G)=G^pG_2=G_2\le Z(G),\qquad \Phi(G)^p=1.
\]
Thus every remaining group lies in $\mathfrak H_p$, proving
\eqref{p3:eq:complement-gap}.
Higman's lower bound, recalled in \eqref{p3:eq:BLQR}, gives
$f_{\mathfrak H_p}(p^m)\ge p^{\frac{2}{27}m^3-\frac49m^2}$.
Division proves the last assertion.
\end{proof}

\subsection{The linear error term}
\label{p3:subsec:linear-upper}

Higman's lower bound \cite[Theorem~2.3]{Higman} and the upper
bound of \cite[Theorem~1.3 and Section~4.3]{BLQR} give
\begin{equation}\label{p3:eq:BLQR}
 p^{\frac{2}{27}m^3-\frac49m^2}\le f_{\mathfrak H_p}(p^m)\le p^{\frac{2}{27}m^3-\frac49m^2+C_{\!H}m},
\end{equation}
where $C_{\!H}$ is an absolute constant. 
Both bounds include $p=2$. Higman's theorem applies for $m\ge2$;
for $m=1$, its lower bound follows from $f_{\mathfrak H_p}(p)=1$.
\begin{remark}
The constant $C_{\!H}$ in the upper bound may be taken absolute without dependence on $p$.
For clarity, the cited theorem in \cite{BLQR} does not explicitly append the
word \emph{uniformly}. This follows from its proof: the dimension
ratio is restricted to $[1/5,1]$, on which the finite thresholds
in its tensor count depend on bounded expressions in that ratio and
its reciprocal, and can therefore be chosen uniformly. Its field factors
satisfy
\[
 1-\log_q(q-1)\le1,\qquad
 \prod_{i\ge1}(1-q^{-i})\ge c_*>0\quad(q\ge2).
\]
A uniform bound $q^{O(r)}$ for the number of conjugacy classes
in $\mathrm{GL}(r,q)$ follows from rational canonical forms and
compositions. Thus these estimates introduce no dependence on
the prime into $C_{\!H}$.
\end{remark}

\begin{proof}[Proof of Theorem~\ref{intro:sharp}]
Combine \eqref{p3:eq:complement-gap} and \eqref{p3:eq:BLQR}:
\[
 f(p^m)\le
 p^{\frac{2}{27}m^3-\frac49m^2+C_{\!H}m}+p^{\frac{2}{27}m^3-\frac49m^2-\varepsilon m^2}
 \le p^{\frac{2}{27}m^3-\frac49m^2+(C_{\!H}+1)m}
\]
for sufficiently large $m$.
The lower bound follows from \eqref{p3:eq:BLQR} and
$f(p^m)\ge f_{\mathfrak H_p}(p^m)$.
The finitely many smaller exponents are absorbed by an absolute
increase of $C$, using the elementary uniform bound
$f(p^m)\le p^{O(m^3)}$ from power-commutator presentations.
\end{proof}

The quadratic saving is what makes the improvement possible:
the $O(m^{3/2})$ source-basis loss and the $O(m\log m)$ ledger
are absorbed on the discarded families. The main family is counted
by \eqref{p3:eq:BLQR}.

The following proposition shows that the linear error term is necessary:
\begin{proposition}\label{p3:prop:linear-necessary}
For every prime $p$ and all sufficiently large $m$,
\[
 \log_p f(p^m)\ge \frac{2}{27}m^3-\frac49m^2+\frac29m-2.
\]
In particular, the remainder after $\frac{2}{27}m^3-\frac49m^2$ cannot be $o(m)$.
\end{proposition}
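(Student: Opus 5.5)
The plan is to sharpen Higman's lower-bound construction by an integer choice of the two layer dimensions. For integers $r\ge1$ and $w=m-r\ge0$, let $V=\F_p^r$ and $W=\F_p^w$, and consider groups $G\in\mathfrak H_p$ with $G/\Phi(G)\cong V$ and $\Phi(G)\cong W$. Such a group is determined up to isomorphism by its $p$-power map $V\to W$ together with its commutator map $\beta_G:\wedge^2V\to W$ (cf.\ \cite{Higman}).

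\begin{itemize}
\item For odd $p$ the $p$-power map is linear in $V$ and independent of $\beta_G$. The data is then a linear map $(\wedge^2V)\oplus V\to W$.
\item For $p=2$ the squaring map is a quadratic map $q:V\to W$ whose polar form is $\beta_G$. The data is then just $q$, a point of $W\otimes S^2(V^*)$.
\end{itemize}

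In both cases the parameter space has dimension $w\cdot\frac{r(r+1)}{2}$ over $\F_p$. The realizable groups of order exactly $p^m$ correspond to the data whose values span $W$. I will cite Higman's theorem \cite[Theorem~2.3]{Higman} for the facts that every such surjective datum is realized by a group in $\mathfrak H_p$ with these invariants, and that two data give isomorphic groups only if they lie in one orbit of $\GL(V)\times\GL(W)$.

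The counting then proceeds in two steps.

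\begin{enumerate}
\item The proportion of data whose values span $W$ is at least the probability that a random $w\times N$ matrix over $\F_p$ has full rank, with $N=r(r+1)/2\ge w$. This is at least $\pi_\infty(2)>1/4$ by \eqref{p12:eq:pi-infinity-two-lower} and \eqref{p12:eq:uniform-pi}.
\item By \cref{p12:lem:universal-field}(iii), $|\GL_r(p)|\,|\GL_w(p)|\le p^{r^2+w^2}$. Dividing the count from step 1 by this orbit bound gives
\[
 \log_p f(p^m)\ge \tfrac12(m-r)r(r+1)-r^2-(m-r)^2-\log_p4 .
\]
\end{enumerate}

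Since $\log_p 4\le 2$, the constant $-2$ in the statement is accounted for.

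It remains to optimize over integers $r$. I will write $r=\tfrac{2m}{3}+t$ and expand the right-hand side exactly as a polynomial in $t$. The cubic part $\tfrac12(m-r)r^2$ contributes
\[
 \tfrac{2}{27}m^3-\tfrac m2t^2-\tfrac12t^3 .
\]
The quadratic part $\tfrac12(m-r)r-r^2-(m-r)^2$ contributes
\[
 -\tfrac49m^2-\tfrac56mt+O(t^2).
\]
Now $t$ must lie in $\{0,-\tfrac13,-\tfrac23\}$ modulo $1$ for $r$ to be an integer, so I will choose $t\in\{-1,-\tfrac13,-\tfrac23\}$ according to $m\bmod 3$. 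This gives linear gains $-\tfrac56mt-\tfrac m2t^2$ equal to $\tfrac m3$, $\tfrac29m$ and $\tfrac m3$ respectively. All of these are at least $\tfrac29m$, and the remaining $O(1)$ terms in $t$ are bounded.

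The main obstacle is bookkeeping at the constant level. The leftover constants coming from $t^2$, $t^3$ and the quadratic part must be combined with $\log_p 4$ to fit within the stated $-2$. In the cases $t=-1$ and $t=-\tfrac23$ there is a spare $\tfrac19m$, which absorbs everything once $m$ is large. In the case $t=-\tfrac13$ I must compute the bounded remainder exactly and check that it is at least $-2+\log_p4$. If it falls short, I would sharpen the $\GL$ bound to $|\GL_d(p)|\le p^{d^2}(1-p^{-1})$ and use the surjectivity proportion more carefully for small $p$.

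Finally, the statement that the remainder is not $o(m)$ follows immediately, since the bound gives a remainder of at least $\tfrac29m-2$ for all large $m$.
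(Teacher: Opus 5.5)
Your route is the paper's route. Higman's Frattini-class-two construction gives $\log_p f(p^m)\ge E(r,d)=\frac12r^2d+\frac12rd-d^2-r^2$ with $m=r+d$. Your shift $t$ is the paper's $z_{\Phi}=r-2d_{\Phi}$ divided by $3$. Your expansion $E=\frac{2}{27}m^3-\frac49m^2-\frac56mt-\frac12mt^2-\frac12t^3-\frac52t^2$ and the three linear gains $\frac m3,\frac29m,\frac m3$ match the paper's computation. One correction: integrality of $r=\frac{2m}{3}+t$ forces $t=-1,-\frac23,-\frac13$ for $m\equiv0,1,2\pmod 3$. So the tight case, with gain $\frac29m$, is $m\equiv2\pmod 3$.

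The step you deferred does fail as written. At $t=-\frac13$ the bounded remainder is exactly $-\frac12t^3-\frac52t^2=\frac1{54}-\frac{5}{18}=-\frac{7}{27}$. Your inequality therefore yields $\frac29m-\frac7{27}-\log_p4$. For $p=2$ this equals $\frac29m-\frac{61}{27}<\frac29m-2$ for every $m\equiv2\pmod 3$. So your bound, with the $-\log_p4$ loss, breaks exactly for $2$-groups, and your fallback is required rather than optional.

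The clean repair removes the loss entirely. The number of surjective data is exactly $\prod_{i=0}^{w-1}(p^N-p^i)=|\GL_w(p)|\,\qbinom{N}{w}{p}$. Dividing by $|\GL_w(p)|$ therefore gives $\qbinom{N}{w}{p}\ge p^{w(N-w)}$ with no constant lost. Dividing further by $|\GL_r(p)|\le p^{r^2}$ gives exactly $E(r,m-r)$, which is Higman's Theorem~2.1 bound as quoted in the paper.

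Your proposed variant also works. The full-rank proportion is $\prod_{i=N-w+1}^{N}(1-p^{-i})$, which tends to $1$ because $N-w\approx\frac29m^2$. Also $|\GL_w(p)|\le(1-p^{-1})p^{w^2}$. Together these eventually give a nonnegative, rather than negative, constant. With either fix the three remainders are $\frac m3-2$, $\frac m3-\frac{26}{27}$ and $\frac29m-\frac7{27}$, all at least $\frac29m-2$. This is exactly the paper's conclusion.
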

\begin{proof}
For $r\ge1$ and $1\le d_{\Phi}\le r(r+1)/2$, Higman's bound
\cite[Theorem~2.1]{Higman} for groups whose Frattini subgroup has
order $p^{d_{\Phi}}$ and index $p^r$ gives
\[
 \log_p f(p^m)\ge E(r,d_{\Phi}),\qquad m=r+d_{\Phi},
\]
where
\[
 E(r,d_{\Phi})=\tfrac12r^2d_{\Phi}+\tfrac12r d_{\Phi}-d_{\Phi}^2-r^2.
\]
Set $z_{\Phi}=r-2d_{\Phi}$, so $r=(2m+z_{\Phi})/3$ and $d_{\Phi}=(m-z_{\Phi})/3$. Direct
expansion gives
\[
 E(r,d_{\Phi})-\frac{2}{27}m^3-\frac49m^2
 =-\frac{z_{\Phi}^2+5z_{\Phi}}{18}m-\frac{z_{\Phi}^3}{54}-\frac{5z_{\Phi}^2}{18}.
\]
Choose $z_{\Phi}=-3,-2,-1$ according as $m\equiv0,1,2\pmod3$.
These give integral $r,d_{\Phi}$ with $1\le d_{\Phi}\le r(r+1)/2$
for sufficiently large $m$, and yield respectively
\[
 E(r,d_{\Phi})-\frac{2}{27}m^3-\frac49m^2=
 \begin{cases}
 m/3-2,&m\equiv0\pmod3,\\
 m/3-26/27,&m\equiv1\pmod3,\\
 2m/9-7/27,&m\equiv2\pmod3.
 \end{cases}
\]
Each is at least $2m/9-2$, as required.
\end{proof}

\section{Cumulative domination by \texorpdfstring{$2$}{2}-groups of Frattini class \texorpdfstring{$2$}{2}}
\label{p4:sec:extensions}

\subsection{Cumulative domination}\label{p4:sec:statement}

Write $[G]$ for the isomorphism class of a finite group $G$.
We retain the family $\mathfrak H_p$ defined by
$\Phi(G)\le Z(G)$ and $\Phi(G)^p=1$ for finite $p$-groups $G$.
For $x\ge1$, we use the cumulative counts
\[
 \begin{aligned}
 F(x)&=\sum_{1\le n\le x}f(n),\\
 F_2(x)&=\#\{[G]:|G|\le x,\ G\text{ is a $2$-group}\}
       =\sum_{k=0}^{\lfloor\log_2x\rfloor}f(2^k),\\
 F_{\mathfrak H_p}(x)&=\#\{[G]:|G|\le x,\ G\in\mathfrak H_p\}
       =\sum_{k=0}^{\lfloor\log_p x\rfloor}f_{\mathfrak H_p}(p^k).
 \end{aligned}
\]
The sums starting at $k=0$ include the trivial group, with
$f(1)=f_{\mathfrak H_p}(1)=1$. The family
$\mathfrak H_p$ includes abelian groups, and is contained in the
family of $p$-groups of nilpotency class at most two. Its nonabelian
members have both classes exactly two. To record the consequence
for nilpotency class, we also retain
\[
 F_{2,2}(x)=\#\{[G]:|G|\le x,\ |G|\text{ is a power of }2,
                         \ \operatorname{cl}(G)=2\},
\]
where $\operatorname{cl}(G)$ denotes nilpotency class.

\begin{theorem}[Restatement of Theorem~\ref{intro:domination}]\label{p4:thm:main}\label{p4:thm:class-2}
One has
\[
 \lim_{x\to\infty}\frac{F_{\mathfrak H_2}(x)}{F(x)}=1.
\]
Moreover, almost all finite groups, counted up to isomorphism and
ordered by their cardinalities, are nonabelian $2$-groups of Frattini
class exactly two. In particular, they have nilpotency class exactly
two, and
\[
 \lim_{x\to\infty}\frac{F_{2,2}(x)}{F(x)}=1.
\]
\end{theorem}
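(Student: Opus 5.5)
The plan is to split $F(x)$ into three pieces and show that the last two are negligible against the first: groups whose order is not a power of $2$; $2$-groups outside $\mathfrak H_2$; and $F_{\mathfrak H_2}(x)$. Put $N=\lfloor\log_2x\rfloor$. By \cref{intro:concentration} with $p=2$ together with \eqref{p3:eq:BLQR}, the cumulative count $F_2(x)=\sum_{k\le N}f(2^k)$ is dominated by its top term $f(2^N)$. Indeed, consecutive terms grow by a factor $2^{\frac{2}{9}k^2+O(k)}$, so $F_2(x)=f(2^N)(1+O(2^{-cN^2}))$. Summing the bound of \cref{intro:concentration} over $k\le N$, and handling small $k$ trivially, gives
\[
F_2(x)-F_{\mathfrak H_2}(x)\le 2^{\frac2{27}N^3-\frac49N^2-\varepsilon N^2}+O(1)=o(F_{\mathfrak H_2}(x)).
\]
It therefore remains to show that $\sum_{n\le x,\ n\ne 2^k}f(n)=o\bigl(f(2^N)\bigr)$.

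For an order $n\le x$ that is not a power of $2$, write $n=2^a n'$ with $n'>1$ odd. Then $a\le N-1$, and more precisely $a\le N-\log_2 n'$. I would use Pyber's prescribed-Sylow theorem in the form: the number of groups of order $n$ is at most $n^{O(\mu(n))}\prod_{q\mid n}f_{\mathrm{Syl}}$-type factors. That is, a group is determined up to $n^{O(\mu(n))}$ choices by its Sylow subgroups, which suggests an exponent of the form $\sum_q \log f(q^{a_q})$ plus $O(\mu\log n)$ extension cost.

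This crude form is not enough when $a=N-1$ and $n'=3$, because $n^{O(\mu)}$ is $2^{O(N^2)}$ while the margin is only $2^{\frac29N^2}$. This is where the fixed-quotient extension estimates enter, and I expect this step to be the main obstacle. The aim is a sharper bound: for $n=2^a n'$ with $n'$ odd,
\[
\log_2 f(n)\le \tfrac{2}{27}a^3-\tfrac49a^2+h\,a\log_2 n'+O_h(a)+\log_2 f(n')\cdot O(1),
\]
with the extension cost linear in $a$ times a factor depending on the odd part. The idea is to count the $2$-part via Sims presentations with the quotient fixed, which is exactly the role of the parameter $h$ in \eqref{p12:eq:corrected-count-objective}. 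There the extra cost $h r_1(m-r_1)$ is absorbed by \cref{p3:lem:eta} with $\eta=-(2h+5)/6$, giving an exponent $\frac2{27}a^3+\frac{2h-4}{9}a^2+O_h(a)$. One must then check that $\frac{2h-4}{9}$ stays below the gap $\frac29 (\log_2 n')\cdot$ (comparison with $2^N$) whenever $h$ is comparable to $\log_2 n'$. Concretely, the comparison with $f(2^{a+\log_2 n'})$ loses $\frac29a^2\log_2 n'$, while the extension costs only about $\frac{2h}{9}a^2$ with $h\approx$ the number of generators acting. So the key quantitative point is that the coefficient of $a^2$ in the extension cost is strictly smaller than $\frac29\log_2 n'$. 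For large $n'$, say $n'\ge 2^{\sqrt N}$, Pyber's bound \eqref{eq:pyber} alone already suffices, since $\mu(n)\le N-\sqrt N$ costs a factor $2^{-\Omega(N^{5/2})}$.

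Finally, I would sum over all $n'$ odd and all $a$. The number of orders is at most $x=2^{N+1}$, which is negligible against the saving of order $2^{-cN^2}$. This gives $\sum_{n\ne 2^k}f(n)=o(f(2^N))$.

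The nilpotency-class statement then follows because abelian groups contribute at most $\sum_{k\le N}2^k=O(x)$, and nonabelian members of $\mathfrak H_2$ have class exactly $2$. Hence $F_{\mathfrak H_2}(x)-F_{\mathfrak H_2}^{\mathrm{ab}}(x)\le F_{2,2}(x)\le F(x)$, which yields $\lim F_{2,2}(x)/F(x)=1$.
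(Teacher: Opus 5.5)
You have the same outer structure as the paper: $2$-groups outside $\mathfrak H_2$ are handled via \cref{intro:concentration}, abelian groups are negligible, and odd cofactors are split into large and small. Those steps are fine. The treatment of orders that are not powers of $2$, however, has a genuine gap.

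\textbf{The large-odd-part step.} As written it fails. \eqref{eq:pyber} only gives $\log_2 f(n)\le\log_2 n\,\bigl(\frac{2}{27}\mu(n)^2+O(\mu(n)^{5/3})\bigr)$. Its error term is $O(N^{8/3})$, which swamps the saving of about $\frac{4}{27}N^{5/2}$ that you get from $\mu(n)\le N-\sqrt N$.

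More to the point, the threshold can be taken constant, and your middle range needs this. The paper uses Pyber's prescribed-Sylow form $f(n)\le n^{75\mu(n)+16}\prod_{p\mid n}f(p^{v_p(n)})$ together with the sharp bound \eqref{p3:eq:sharp}. Write $\ell=\log_2x$ and $B=\log_2 n'$. The extension cost is then at most $2^{75\ell^2+O(\ell)}$. Convexity of $(\ell-B)^3+B^3/(\log_2 3)^2$ in $B$ gives a cubic loss of at least $76\ell^2-O(\ell)$ once $B\ge 342$. So this form is ``crude'' only for bounded odd parts.

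With your threshold $2^{\sqrt N}$, the middle range contains unboundedly many possible quotients, of unbounded order. Any fixed-quotient estimate whose constants depend on the quotient, for instance the factor-system count $|Z(P)|^{|\Gamma|^2}$, then cannot be summed.

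\textbf{The fixed-quotient estimate.} This is the real missing content. You state it only as an aim, and your two versions of it disagree: one has extension cost $ha\log_2 n'$, linear in $a$, the other $\frac{2h-4}{9}a^2$. The identity \eqref{p12:eq:corrected-count-objective} only performs the optimization once one knows the extension cost is $hr_1(k-r_1)$ on top of the source-normalized kernel count $\cE-r_1^2$. Proving that is the content of Section~\ref{p4:sec:extensions}, and it needs three ideas you do not supply.

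(i) A reduction to a normal $2$-subgroup. A Sylow $2$-subgroup $S$ need not be normal, so take its core $P=\bigcap_{g\in G}S^g$. Then $\Gamma=G/P$ embeds in $\operatorname{Sym}(n')$, which gives finitely many $\Gamma$ for bounded $n'$. Since $|\Gamma|=2^jn'$ with possibly $j>0$, the correct bookkeeping is $k=\log_2|P|\le N-v$ and $h\le v-1$, where $v=j+\lfloor\log_2 n'\rfloor$. Comparing $h$ with $\log_2 n'$ at kernel exponent $a$ is not the right test.

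(ii) An odd first generator. Choose the first generator of $\Gamma$ to have odd prime order. The kernel $T_P$ of $\Out(P)\to A(P)$ is a $2$-group, so Schur--Zassenhaus shows this generator costs only $a(P)=|A(P)|$ up to conjugacy, not $|\Out(P)|$.

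(iii) A bound on $a(P)$. In the saturated range, $a(P)\le 2^{sr_1+d(r_1-s)}$, using the core image $U$ and its centralizer $K$. The $d$-term is then absorbed by the $sdr_2$ saving from prescribing $K$ (\cref{p3:lem:kernel}) when $sr_2\ge(h+3)r_1$. This requires first discarding the nonsaturated, large-$\delta$ and small-core strata with the extension cost already included.

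Neither (ii) nor (iii) is optional. With the naive bound $a(P)\le 2^{r_1^2}$, already $\Gamma=C_3$ (where $k\le N-1$) gives exponent about $\frac{2}{27}(N-1)^3=\frac{2}{27}N^3-\frac{2}{9}N^2+O(N)$. This exceeds the lower bound $\frac{2}{27}N^3-\frac{4}{9}N^2$ for $F(x)$. Without (ii), the odd generator costs $r_1(k-r_1)\approx\frac{2}{9}k^2$. That consumes exactly the $\frac{2}{9}N^2$ margin for $\Gamma=C_3$ and leaves nothing to absorb the $O(N^{3/2})$ source-normalization errors.
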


The fixed-quotient estimate below removes bounded nontrivial odd
parts. Pyber's theorem removes large odd parts, and structural
concentration leaves the family $\mathfrak H_2$. Finally, the
abelian-group count shows that its nonabelian members already
account for asymptotically all finite groups.

\subsection{Outer actions and factor systems}

Fix a finite group $\Gamma$ whose order has nontrivial odd part, and write
\[
 |\Gamma|=2^jb,\qquad j\ge0,\quad b>1\text{ odd},\qquad
 v=j+\lfloor\log_2 b\rfloor.
\]
An extension of $N$ by $\Gamma$ is a group $G$ in an exact sequence
$1\to N\to G\to\Gamma\to1$.
For a finite $2$-group $N$ of order $2^n$, let $r_1=d(N)$, the
minimum number of generators, and put
\[
 A(N)=\im\bigl(\Aut(N)\longrightarrow\GL(N/\Phi(N))\bigr),
 \qquad a(N)=|A(N)|.
\]
Here $N/\Phi(N)$ is an $r_1$-dimensional vector space over
$\mathbb F_2$, and $\Out(N)=\Aut(N)/\Inn(N)$.

\begin{lemma}[An odd first generator]\label{p4:lem:generators}
The group $\Gamma$ has a generating tuple $\gamma_1,\ldots,\gamma_{h+1}$ in which
$\gamma_1$ has odd prime order, with $h\ge0$ satisfying
\begin{equation}\label{p4:eq:h}
 h\le\lfloor\log_2(|\Gamma|/3)\rfloor\le v-1.
\end{equation}
\end{lemma}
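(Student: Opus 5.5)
We need a generating tuple of $\Gamma$ whose first element has odd prime order, of total length $h+1$ with $h\le\lfloor\log_2(|\Gamma|/3)\rfloor$; then we check that this is at most $v-1$. The plan is a greedy subgroup-chain argument.

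\textbf{Step 1: the chain.} Since $b>1$, some odd prime $q$ divides $|\Gamma|$, and Cauchy's theorem gives $\gamma_1\in\Gamma$ of order $q\ge3$. Put $\Gamma_1=\langle\gamma_1\rangle$, so $|\Gamma_1|\ge3$. Inductively, while $\Gamma_k\ne\Gamma$, choose $\gamma_{k+1}\in\Gamma\setminus\Gamma_k$ and set $\Gamma_{k+1}=\langle\Gamma_k,\gamma_{k+1}\rangle$. By Lagrange, $\Gamma_k$ is a proper subgroup of $\Gamma_{k+1}$, so $|\Gamma_{k+1}|\ge2|\Gamma_k|$. Hence after choosing $\gamma_1,\dots,\gamma_{k}$ we have $|\Gamma_{k}|\ge3\cdot2^{k-1}$. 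The process stops at some $\Gamma_{h+1}=\Gamma$, and then
\[
3\cdot2^{h}\le|\Gamma|,
\]
so $2^h\le|\Gamma|/3$. Since $h$ is an integer, $h\le\lfloor\log_2(|\Gamma|/3)\rfloor$. This also covers the case $\Gamma=\Gamma_1$, where $h=0$.

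\textbf{Step 2: comparison with $v-1$.} We have
\[
\log_2(|\Gamma|/3)=j+\log_2 b-\log_2 3 .
\]
Because $b$ is odd and $b>1$, we have $b\ge3$. Write $\log_2 b=\lfloor\log_2 b\rfloor+\theta$ with $0\le\theta<1$. Since $\log_2 3>1$, this gives
\[
\log_2(|\Gamma|/3)<j+\lfloor\log_2 b\rfloor+1-1=v .
\]
The floor of a real number strictly less than the integer $v$ is at most $v-1$, which proves the second inequality in \eqref{p4:eq:h}.

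No step is a real obstacle. The only point needing care is that the bound uses $|\Gamma_1|\ge3$, and this is exactly why the first generator is chosen of odd prime order. The strict inequality $\log_2 3>1$ is what yields the gain of one in the bound $v-1$.
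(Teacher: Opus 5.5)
Your proposal is correct and follows essentially the same route as the paper: a Cauchy element of odd prime order, then greedy adjunction with each step at least doubling the order, giving $3\cdot2^h\le|\Gamma|$. Your final comparison via $\log_2(|\Gamma|/3)<v$ is just an unpacked form of the paper's observation $\lfloor\log_2(b/3)\rfloor\le\lfloor\log_2 b\rfloor-1$.
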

\begin{proof}
Choose $\gamma_1$ by Cauchy's theorem. Successively adjoin an element
outside the subgroup already generated, until that subgroup is $\Gamma$.
Each additional generator at least doubles the subgroup's order. If $h$
generators have been added, then $3\cdot2^h\le|\Gamma|$. Finally,
$\lfloor\log_2(b/3)\rfloor\le\lfloor\log_2 b\rfloor-1$.
\end{proof}

For each fixed $\Gamma$, fix one generating tuple supplied by
Lemma~\ref{p4:lem:generators}; its length $h+1$ is fixed throughout.

\begin{lemma}[The kernel of the Frattini action]\label{p4:lem:frattini}
There is a surjection $\Out(N)\to A(N)$ whose kernel $T_N$ is a
$2$-group satisfying
\begin{equation}\label{p4:eq:frattini}
 |T_N|\le2^{r_1(n-r_1)},\qquad |\Out(N)|=a(N)|T_N|.
\end{equation}
\end{lemma}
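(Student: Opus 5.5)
We compose the natural maps $\Aut(N)\to\GL(N/\Phi(N))$ and observe that inner automorphisms act trivially on $N/\Phi(N)$: since $N/\Phi(N)$ is abelian, conjugation by $g$ sends $x\Phi(N)$ to $x[x,g]\Phi(N)=x\Phi(N)$, because $[N,N]\le\Phi(N)$. Hence the map factors through $\Out(N)$, and by definition its image is $A(N)$. This gives the surjection $\Out(N)\to A(N)$; call its kernel $T_N$. The identity $|\Out(N)|=a(N)|T_N|$ is then just the first isomorphism theorem.

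It remains to control $T_N$. Let $C=\ker(\Aut(N)\to\GL(N/\Phi(N)))$, the automorphisms acting trivially on the Frattini quotient. Then $T_N=C/(C\cap\Inn(N))=C\Inn(N)/\Inn(N)$ is a quotient of $C$. It therefore suffices to show that $C$ is a $2$-group with $|C|\le 2^{r_1(n-r_1)}$, which is the classical counting argument from Burnside's basis theorem.

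Fix a minimal generating tuple $x_1,\dots,x_{r_1}$ of $N$. An automorphism is determined by the images of the $x_i$. An element of $C$ sends each $x_i$ into the coset $x_i\Phi(N)$, and $|\Phi(N)|=2^{n-r_1}$. So there are at most $(2^{n-r_1})^{r_1}=2^{r_1(n-r_1)}$ such automorphisms, giving $|C|\le 2^{r_1(n-r_1)}$. By Lagrange's theorem $|C|$ divides $|\Aut(N)|$, but that alone does not make $C$ a $2$-group. For this we use the standard fact (Burnside/Hall) that an automorphism of a $p$-group acting trivially on the Frattini quotient has $p$-power order: a $p'$-automorphism acting trivially on $N/\Phi(N)$ is trivial, by coprime action (for coprime $\alpha$, $N=[N,\alpha]C_N(\alpha)$, and $[N,\alpha]\le\Phi(N)$ forces $N=C_N(\alpha)\Phi(N)$, hence $N=C_N(\alpha)$). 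Therefore $C$ is a $2$-group, and so is its quotient $T_N$, with $|T_N|\le|C|\le2^{r_1(n-r_1)}$.

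The only nonformal input is this coprime-action fact ensuring $C$ is a $2$-group. We would cite it (e.g.\ Gorenstein, Theorem~5.1.4) rather than reprove it. Everything else is bookkeeping with the first isomorphism theorem and the count of coset choices for generator images.
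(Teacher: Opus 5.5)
Your proof is correct, but it gets the $2$-group property by a different and heavier route than the paper.

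The paper counts the same set you do: the $|\Phi(N)|^{r_1}=2^{r_1(n-r_1)}$ ordered tuples lifting a fixed basis of $N/\Phi(N)$. It then observes that the kernel $J_N$ of $\Aut(N)\to A(N)$ (your $C$) actually permutes this set, and does so freely. Each tuple generates $N$ by the Burnside basis theorem, and an automorphism fixing a generating tuple is trivial. Hence every orbit has size $|J_N|$, so $|J_N|$ divides $2^{r_1(n-r_1)}$. This gives the bound and the $2$-group property in one step, and $T_N=J_N/\Inn(N)$ inherits both.

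You use the injection $\gamma\mapsto(\gamma(x_i))_i$ only for the inequality. You then import Burnside's coprime-action theorem (via $N=[N,\alpha]C_N(\alpha)$) to rule out nontrivial odd-order elements of $C$. That is valid, together with the routine step that if $\gamma\in C$ has order $2^a k$ with $k$ odd, then $\gamma^{2^a}$ is an odd-order element of $C$ and hence trivial. But it is unnecessary: where you note that Lagrange alone does not suffice, the free-action orbit count on the set you already had closes the gap with no external input.

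The coprime-action argument explains structurally why $C$ is a $p$-group without any counting. The paper's argument is self-contained and elementary.
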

\begin{proof}
Let $J_N=\ker(\Aut(N)\to A(N))$. Fix an ordered basis of
$N/\Phi(N)$. There are exactly $|\Phi(N)|^{r_1}=2^{r_1(n-r_1)}$ ordered
tuples lifting it, and each tuple generates $N$ by the Burnside
basis theorem. The action of $J_N$ on these tuples is free, since
an automorphism fixing a generating tuple is the identity. Thus
$|J_N|$ divides $2^{r_1(n-r_1)}$. Inner automorphisms act trivially on
$N/\Phi(N)$, so $\Inn(N)\le J_N$. The desired kernel is
$T_N=J_N/\Inn(N)$.
\end{proof}

\begin{lemma}[Outer actions and factor systems]\label{p4:lem:extensions}
For the tuple of Lemma~\ref{p4:lem:generators}, the number of
isomorphism types of extensions of $N$ by $\Gamma$ is at most
\begin{equation}\label{p4:eq:extension-cost}
 a(N)^{h+1}2^{h r_1(n-r_1)+n|\Gamma|^2}.
\end{equation}
\end{lemma}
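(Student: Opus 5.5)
We will use the classical description of extensions by an outer action together with a factor system. This is the Schreier/Eilenberg--MacLane parametrisation: an extension $1\to N\to G\to\Gamma\to1$ is determined up to isomorphism by two pieces of data. The first is a set-theoretic transversal $t:\Gamma\to G$ with $t(1)=1$. The second is the pair of maps
\[
\varphi:\Gamma\to\Aut(N),\ \varphi(\gamma)=(x\mapsto t(\gamma)^{-1}xt(\gamma)),
\qquad
f:\Gamma\times\Gamma\to N,\ t(\gamma)t(\gamma')=t(\gamma\gamma')f(\gamma,\gamma').
\]
Indeed, $G$ is recovered as the set $N\times\Gamma$ with the multiplication determined by $(\varphi,f)$. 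So it suffices to bound the number of possible pairs $(\varphi,f)$, after normalising them cheaply. Isomorphism types can only be fewer.

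\textbf{Step 1: the factor system.} The factor system $f$ is a function on $|\Gamma|^2$ arguments with values in $N$. This gives at most $|N|^{|\Gamma|^2}=2^{n|\Gamma|^2}$ choices, which is exactly the last factor in \eqref{p4:eq:extension-cost}.

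\textbf{Step 2: reducing the action to the generators.} Given $f$, the map $\varphi$ is determined by its values on the generators $\gamma_1,\ldots,\gamma_{h+1}$ of Lemma~\ref{p4:lem:generators}. To see this, write every $\gamma\in\Gamma$ as a fixed word in the generators. The relation $t(\gamma)t(\gamma')=t(\gamma\gamma')f(\gamma,\gamma')$ then gives
\[
\varphi(\gamma\gamma')=\varphi(\gamma)\varphi(\gamma')\,\iota_{f(\gamma,\gamma')}^{-1}
\]
(up to the convention for inner automorphisms), so $\varphi$ on all of $\Gamma$ is recovered from $f$ and the generator values. The transversal $t(\gamma_i)$ itself is only determined modulo $N$. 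However, changing $t(\gamma_i)$ by an element of $N$ changes $\varphi(\gamma_i)$ by an inner automorphism and changes $f$ accordingly, so that freedom is already absorbed into the count of $f$.

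\textbf{Step 3: counting the generator values.} By Lemma~\ref{p4:lem:frattini}, each $\varphi(\gamma_i)$ is specified by its image in $A(N)$, which has $a(N)$ choices, together with an element of $J_N=\ker(\Aut(N)\to A(N))$, where $|J_N|\le 2^{r_1(n-r_1)}$. For the $h$ generators $\gamma_2,\ldots,\gamma_{h+1}$ we pay the full factor $a(N)\,2^{r_1(n-r_1)}$ each. For $\gamma_1$ we pay only $a(N)$. This is where the main care is needed. Since $\gamma_1$ has odd prime order $\ell$, the automorphism $\alpha=\varphi(\gamma_1)$ satisfies $\alpha^\ell\in\Inn(N)$. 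Its image in $\Out(N)$ is an element of odd order whose image in $A(N)$ is prescribed. The kernel $T_N$ of $\Out(N)\to A(N)$ is a $2$-group, so it meets the odd-order cyclic subgroup $\langle\bar\alpha\rangle$ trivially.

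\textbf{Step 4: the main obstacle.} The delicate point is that fixing the image of $\bar\alpha$ in $A(N)$ does not obviously determine $\bar\alpha$ in $\Out(N)$. We resolve this as follows. The preimage of that image is a coset $\bar\alpha T_N$, and an element $\bar\alpha\tau$ of it with odd order generates a subgroup with $\langle\bar\alpha\tau\rangle\cap T_N=1$. Such odd-order elements in a coset of the normal $2$-subgroup $T_N$ form a single $T_N$-conjugacy class, by Schur--Zassenhaus applied to $T_N\langle\bar\alpha\rangle$. Conjugating by an element of $T_N$ is realised by an automorphism $\sigma$ of $N$. Replacing the identification of $N$ inside $G$ by $\sigma$ does not change the isomorphism type of $G$, and it conjugates all the $\varphi(\gamma_i)$ simultaneously. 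So, up to isomorphism, we may fix the outer class of $\varphi(\gamma_1)$ once its $A(N)$-image is chosen, and its inner part is absorbed by adjusting $t(\gamma_1)$ as in Step 2.

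\textbf{Conclusion.} Multiplying the counts gives
\[
a(N)\cdot\bigl(a(N)\,2^{r_1(n-r_1)}\bigr)^{h}\cdot 2^{n|\Gamma|^2},
\]
which is \eqref{p4:eq:extension-cost}.
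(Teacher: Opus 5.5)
Your argument is correct and follows the paper's route. Schur--Zassenhaus conjugacy inside the preimage of $\langle\alpha\rangle$, combined with relabelling $N$ by a lift of an element of $T_N$, pins the image of $\gamma_1$ to one of at most $a(N)$ representatives; the other $h$ generators cost at most $a(N)2^{r_1(n-r_1)}$ each, and factor systems cost $2^{n|\Gamma|^2}$. The only difference is bookkeeping, and both versions relax to the same bound $a(N)^{h+1}2^{h r_1(n-r_1)+n|\Gamma|^2}$:
\begin{itemize}
\item \emph{Your version.} You count generator images in $\Aut(N)$, using $|J_N|\le 2^{r_1(n-r_1)}$ from the proof of Lemma~\ref{p4:lem:frattini}. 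You count factor values freely in $N$ and recover $\varphi$ on all of $\Gamma$ from the cocycle relation.
\item \emph{The paper's version.} It counts outer actions in $\Out(N)$ up to conjugacy. It then normalises the transversal so that each $\gamma$ acts by a fixed lift $\alpha_\gamma$, which confines each factor value to a coset of $Z(N)$.
\end{itemize}
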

\begin{proof}
Write $\lambda$ for the odd prime order of $\gamma_1$. For each element
$\alpha\in A(N)$ of order dividing $\lambda$, there is at most one
$T_N$-conjugacy class of elements $y\in\Out(N)$ with $y^\lambda=1$
and image $\alpha$. If $\alpha=1$, then $y=1$, since $T_N$ is a
$2$-group. Otherwise the inverse image of $\langle\alpha\rangle$
is an extension of the normal $2$-group $T_N$ by a cyclic group of
order $\lambda$. Each such $y$ generates a complement to $T_N$.
Schur--Zassenhaus conjugacy applies because $T_N$ is a normal Hall
subgroup and the quotient is cyclic. The complements are therefore
conjugate by elements of $T_N$, and in each complement exactly one
element maps to $\alpha$.

The image of $\gamma_1$ therefore has at most $a(N)$ choices up to
conjugacy in $\Out(N)$. After fixing it, allow arbitrary images
for the remaining $h$ generators. Relations and further conjugacy
identifications can only reduce the count. Hence the number of
outer actions $\Gamma\to\Out(N)$, up to conjugacy, is at most
\[
 a(N)|\Out(N)|^h\le a(N)^{h+1}2^{h r_1(n-r_1)}.
\]

Fix one outer action $\theta:\Gamma\to\Out(N)$.
For each $\gamma\in\Gamma$, choose a lift
$\alpha_\gamma\in\Aut(N)$ of $\theta(\gamma)$, with $\alpha_1=1$.
In any realizing extension, choose a representative $t_\gamma$ of
each coset, with $t_1=1$. Its conjugation action differs from
$\alpha_\gamma$ by an inner automorphism, so multiplication by an
element of $N$ makes the actions equal. With
$\Inn(c)(x)=cxc^{-1}$, the factors
$c(\gamma,\omega)=t_\gamma t_\omega t_{\gamma\omega}^{-1}$ satisfy
\[
 \alpha_\gamma\alpha_\omega=\Inn(c(\gamma,\omega))\alpha_{\gamma\omega}.
\]
For each pair $(\gamma,\omega)$, this identity prescribes the inner
automorphism induced by $c(\gamma,\omega)$; its possible values form
a coset of $Z(N)$ and hence number at most $|Z(N)|$.
The actions and factors determine multiplication by
\[
 (u,\gamma)(w,\omega)=
 \bigl(u\alpha_\gamma(w)c(\gamma,\omega),\gamma\omega\bigr)
 \qquad(u,w\in N,\ \gamma,\omega\in\Gamma).
\]
Associativity only imposes further restrictions. There are at
most $|Z(N)|^{|\Gamma|^2}\le2^{n|\Gamma|^2}$ factor systems. Every conjugator
in $\Out(N)$ lifts to an automorphism of $N$, so relabeling the
kernel identifies extensions with conjugate outer actions.
Multiplying the two bounds proves the result.
\end{proof}

\subsection{Counting the extension families}\label{p4:sec:fixed-quotient-count}

For each integer $n\ge0$, let $g_{\Gamma}(n)$ be the number of
isomorphism classes of groups
admitting a normal subgroup of order $2^n$ with quotient isomorphic
to $\Gamma$. Several choices of the normal subgroup only overcount
$g_{\Gamma}(n)$, which is harmless.

\begin{proposition}\label{p4:prop:fixedQ}
For each fixed $\Gamma$ as above and the integer $h$ in
Lemma~\ref{p4:lem:generators},
\begin{equation}\label{p4:eq:gQ}
 g_{\Gamma}(n)\le
 2^{\frac{2}{27}n^3+\frac{2h-4}{9}n^2+O_{\Gamma}(n^{3/2})}
 \qquad(n\ge1).
\end{equation}
\end{proposition}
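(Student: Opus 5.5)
We bound $g_\Gamma(n)$ by summing, over isomorphism types of the normal $2$-subgroup $N$ of order $2^n$, the extension count of Lemma~\ref{p4:lem:extensions}. Since $|\Gamma|$ is fixed, the factor $2^{n|\Gamma|^2}$ costs only $O_\Gamma(n)$ in the exponent. Thus
\[
 g_\Gamma(n)\le 2^{O_\Gamma(n)}\sum_{[N]} a(N)^{h+1}2^{h r_1(n-r_1)}.
\]
It therefore suffices to show that the weighted sum over $N$ is at most $2^{\frac{2}{27}n^3+\frac{2h-4}{9}n^2+O_\Gamma(n^{3/2})}$.

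For the weighted sum, the plan is to rerun the concentration argument of Section~\ref{p3:sec:linear} with the weight inserted. The weight $a(N)^{h+1}$ is the right shape for this. Lemma~\ref{p3:lem:bases} divides the raw array count by roughly $p^{u^2-ud}$, with $u\approx r_1$. The factor $a(N)=|A(N)|$ corrects exactly for automorphisms acting on $N/\Phi(N)$, so counting pairs ($N$, element of $A(N)$) loses one factor of the source-basis saving. I would make this precise by counting pairs $(N,\alpha)$, or equivalently by counting presentation arrays modulo the action of $A(N)$ on outside generators. By orbit–stabilizer, $\sum_{[N]}a(N)\le$ (number of arrays)$\cdot 2^{-u^2+ud+O(m^{3/2})}\cdot a(N)$. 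The crude bound is $a(N)\le|\GL_{r_1}(2)|\le 2^{r_1^2}$. So each extra power of $a(N)$ beyond the first costs at most $r_1^2$, while the first power cancels one normalization. The net effect is that $\sum_{[N]}a(N)^{h+1}2^{hr_1(n-r_1)}$ has exponent at most
\[
 \cE-r_1^2+h r_1^2+h r_1(n-r_1)+O(n^{3/2})=\cE-r_1^2+h r_1 n+O(n^{3/2})
\]
in each stratum. Since $n=r_1+m_{\deep}+\delta$ with $\delta$ bounded, this equals $\cE-r_1^2+h r_1 m_{\deep}+O_h(n^{3/2})$. That is precisely the quantity prepared in \eqref{p12:eq:corrected-count-objective}.

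The exponent is then optimized stratum by stratum, following the concentration proof. Nonsaturated strata, large $\delta$, and the small-core range are all discarded by their cubic or large quadratic savings. These savings absorb the at most $O_h(n^2)$ extra cost $h r_1 n$ once the threshold is chosen depending on $h$; Corollary~\ref{p12:cor:nonsaturated-group} allows $A>h+1$. In the remaining saturated strata with $s r_2\ge(h+3)r_1$, I apply \eqref{p12:eq:quadratic-correction-identity} and Corollary~\ref{p3:lem:eta} with $\eta=-(2h+5)/6$. This gives $\frac{2}{27}n^3+\frac{2h-4}{9}n^2+O_h(n^{3/2})$, since the $z_{\mathrm{bal}}^2$ term is negative and $2m_{\deep}e+r_1\delta=O(n)$. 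The range $2r_1\le sr_2<(h+3)r_1$ still forces $s=O(n^{1/3})$ and $r_2=O(n^{2/3})$, so the small-core argument (negative $\cX$ of order $n^{8/3}$) extends to it. Finally, summing over the $2^{O(n\log n)}$ numerical patterns stays within the error.

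The main obstacle is justifying that $a(N)^{h+1}$ interacts with the source-basis division as claimed. Lemma~\ref{p3:lem:bases} gives only a lower bound $c_*p^{u^2-ud}$ on the number of arrays per group, and this is not literally an action of $A(N)$. I would try first to count triples $(N,\text{array},\alpha_1)$ with $\alpha_1\in A(N)$ the image of $\gamma_1$. The observation to use is that the $\GL(u,2)$ basis changes composed with $\alpha_1$ again produce arrays of the same group. Hence the pairs with a fixed array have multiplicity at least comparable to $a(N)$, up to the $p^{ud}$ centralizer loss, and this loss is again controlled by Lemma~\ref{p3:lem:kernel}. If that fails, the fallback is the crude bound $a(N)\le 2^{r_1^2}$ for all $h+1$ factors. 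This costs an extra $r_1^2$, which corresponds to $h$ replaced by $h+1$ in the exponent. That weaker bound may still suffice for the later cumulative comparison, but I would aim for the sharp form stated.
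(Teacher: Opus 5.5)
There is a genuine gap, located exactly at the step you call the main obstacle, and an algebra slip hides part of it. You correctly write $\cE-r_1^2+hr_1^2+hr_1(n-r_1)=\cE-r_1^2+hr_1n$. But since $n=r_1+m_{\deep}+\delta$, this equals $\cE-r_1^2+hr_1m_{\deep}+hr_1^2+O(n)$, not $\cE-r_1^2+hr_1m_{\deep}+O(n^{3/2})$. With $r_1\approx 2n/3$, the dropped term is about $\frac{4h}{9}n^2$.

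Nothing justifies treating the first factor of $a(N)$ as free either. Applying an automorphism of $N$ to a Sims generating tuple yields exactly the same relation arrays. So automorphisms never create new arrays, and a large $A(N)$ lowers, rather than raises, the number of arrays per group. Counting triples with $\alpha_1$ therefore cannot offset any power of $a(N)$; \cref{p3:lem:bases} supplies only $c_*2^{u^2-ud}$ arrays per group, independently of $a(N)$.

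The fallback $a(N)\le 2^{r_1^2}$ for all $h+1$ factors costs about $\frac{4(h+1)}{9}n^2$, not ``$h$ replaced by $h+1$''. It cannot be tolerated later. The margin $-\frac29m^2$ in \cref{p4:prop:cutoff} uses the exact coefficient $\frac{2h-4}{9}$ together with $h\le v-1$. That margin already vanishes if the coefficient rises by $\frac29$, for example when $|\Gamma|=3$, $h=0$, $v=1$.

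The missing idea is a pointwise structural bound on $a(N)$ in the saturated range. Let $U\le V=N/\Phi(N)$ be the $s$-dimensional core image, $K=\{v\in V:\beta_N(v,U)=0\}$ and $d=\dim K$. If $g\in A(N)$ fixes $U$ pointwise, it fixes $W=\beta_N(U,U)$ pointwise, so $\beta_N((g-1)v,u)=0$ for all $u\in U$. Thus $g-1$ vanishes on $U$ and takes values in $K$, leaving at most $2^{d(r_1-s)}$ such elements. Multiplying by the at most $2^{sr_1}$ images of a basis of $U$ gives
\[
 a(N)\le 2^{sr_1+d(r_1-s)}.
\]
Saturation gives $s=O(\sqrt n)$, so $2^{(h+1)sr_1}=2^{O(n^{3/2})}$. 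After \cref{p3:lem:bases,p3:lem:kernel}, the total coefficient of $d$ is $r_1+u-d-sr_2+(h+1)(r_1-s)\le (h+3)r_1-sr_2\le 0$. This is why the threshold is $sr_2\ge(h+3)r_1$, which you quote but never use.

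The weighted count of a stratum then has exponent $\cE-u^2+hr_1(n-r_1)+O(n^{3/2})=\cE-r_1^2+hr_1m_{\deep}+O_\Gamma(n^{3/2})$. The extension weight is an isomorphism invariant bounded uniformly on the fixed core stratum, so it simply multiplies the normalized count and never interacts with the multiplicity argument. From there your remaining steps go through: the identity \eqref{p12:eq:quadratic-correction-identity}, \cref{p3:lem:eta} with $\eta=-(2h+5)/6$, and the discarded ranges, where the crude bound $a(N)\le 2^{r_1^2}$ is harmless.
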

\begin{proof}
We insert the extension cost~\eqref{p4:eq:extension-cost} into the
presentation count of Section~\ref{p3:sec:normalization}.
For nonabelian $N$, put $N_1=N$ and $N_{i+1}=[N_i,N]$.
All presentation parameters refer to this kernel:
$r_i=d(N_i/N_{i+1})$, $s_i=\log_2|N_i/N_{i+1}|$,
$s$ is its Sims rank, and
\[
 m_{\deep}=\log_2|N_2|,\qquad
 m_{\ge3}=\log_2|N_3|,\qquad
 \delta=s_1-r_1,\qquad e=s_2-r_2.
\]
Thus $n=r_1+m_{\deep}+\delta$ and
$m_{\deep}=r_2+e+m_{\ge3}$; the quantities $\cX$, $\cE$ and
$\mathfrak b_{\mathrm{tot}}$ retain their definitions for $N$.
Constants may depend on the fixed group $\Gamma$.
The factor $2^{n|\Gamma|^2}$ costs only $O_{\Gamma}(n)$ in the logarithm and
will be included at the end.

\emph{Discarded ranges.}
The elementary bounds $a(N)\le2^{r_1^2}$ and $r_1(n-r_1)\le n^2/4$
give
\begin{equation}\label{p4:eq:crude-cost}
 a(N)^{h+1}2^{h r_1(n-r_1)}
 \le2^{(h+1+h/4)n^2}.
\end{equation}
By Corollary~\ref{p12:cor:nonsaturated-group}, the nonsaturated
count has an arbitrarily large fixed quadratic saving. Choose
that saving larger than
$h+1+h/4+|(2h-4)/9|+2$; after the extension cost is included, these
kernels satisfy a stronger bound than~\eqref{p4:eq:gQ}.
Abelian kernels number at most $2^n$, so their contribution is
$2^{O_{\Gamma}(n^2)}$ and is also negligible on the cubic scale.

For saturated kernels, the coarse estimate~\eqref{p3:eq:coarse}
bounds the logarithm of the array count at fixed numerical data
by $2(n-\delta)^3/27+C_0n^2$. Since
\[
 n^3-(n-\delta)^3\ge\delta n^2,
\]
choose a fixed integer $\delta_0$ such that
\[
 \frac{2\delta_0}{27}>
 C_0+h+1+h/4+|(2h-4)/9|+2.
\]
All $\delta\ge\delta_0$ can then be discarded, including their
extension costs and the $O(n\log n)$ parameter ledger. We may
assume $\delta<\delta_0$ and hence
$e\le\binom\delta2=O_{\Gamma}(1)$.

There is one further range to discard: $s r_2<(h+3)r_1$.
Saturation~\eqref{p3:eq:saturation} implies
$s=O_{\Gamma}(n^{1/3})$ and $r_2=O_{\Gamma}(n^{2/3})$. Dropping negative terms
in~\eqref{p3:eq:roughX} gives $\cX\le O_{\Gamma}(n^{7/3})$.
If $r_1<n/12$ or $m_{\deep}<n/12$, then $r_1^2m_{\deep}/2$ has a fixed cubic
deficit from $2n^3/27$; this absorbs the preceding error and
all quadratic extension and power costs in~\eqref{p3:eq:raw}.
Otherwise $r_1,m_{\deep}\ge n/12$, and $m_{\ge3}=m_{\deep}-r_2-e\ge n/24$ for large $n$.
In that case
\[
 s(2m_{\deep}+1)-\binom{r_1-s}{2}\le-c n^2
\]
for an absolute $c>0$ once $n$ is sufficiently large depending
on $\Gamma$. Using $\mathfrak b_{\mathrm{tot}}\ge m_{\ge3}/s$ in~\eqref{p3:eq:roughX} gives
\[
 \cX\le O_{\Gamma}(n^{7/3})-c n^2m_{\ge3}/s\le-c_{\Gamma} n^{8/3}
\]
for some $c_{\Gamma}>0$ and large $n$. This again leaves any prescribed
fixed quadratic saving after the extension costs are included.

\emph{Source normalization with the extension cost retained.}
We are left with bounded $\delta$ and $s r_2\ge(h+3)r_1$.
Let $\beta_N:V\times V\to W$ be the leading alternating map of $N$,
where $V=N/\Phi(N)$ and $W=N_2/(N_3\Phi(N_2))$.
Fix a Sims core with $s$-dimensional image $U\le V$, and let
\[
 K=\{v\in V:\beta_N(v,U)=0\},\qquad d=\dim K.
\]
Lemma~\ref{p3:lem:core} gives $K\cap U=0$. An element of $A(N)$
fixing $U$ pointwise fixes $W$ pointwise, because the core
commutators span $W$. Its action $g$ on $V$ therefore satisfies
$(g-1)V\subseteq K$ and $(g-1)U=0$. There are at most
$2^{d(r_1-s)}$ such linear maps. The images of a basis of $U$
have at most $2^{s r_1}$ possibilities, so
\begin{equation}\label{p4:eq:autoimage}
 a(N)\le2^{s r_1+d(r_1-s)}.
\end{equation}

Apply Lemmas~\ref{p3:lem:bases} and~\ref{p3:lem:kernel} before
dividing by source multiplicity. They give a moving outside
space of dimension $u\ge r_1-s-\delta$, a multiplicity at least
$c_*2^{u^2-ud}$, a cost at most $c_*^{-1}2^{d(r_1-d)}$ for choosing
$K$, and a saving of $sd r_2$ in the raw exponent $\cE$, where
$c_*=\prod_{i\ge1}(1-2^{-i})>0$. All power-word fibres are
included before this division. The extension cost is an
isomorphism invariant, and~\eqref{p4:eq:autoimage} bounds it for
every group represented by the fixed core stratum. Different
isomorphism classes cannot yield the same complete array.
Consequently the total contribution of this stratum, with
$a(N)^{h+1}2^{h r_1(n-r_1)}$ included, has logarithm at most
\begin{align}\label{p4:eq:normalized-extension}
 &\cE-u^2+(h+1)s r_1+h r_1(n-r_1)\notag\\
 &\quad+d\bigl(r_1+u-d-s r_2+(h+1)(r_1-s)\bigr)+O(n\log n).
\end{align}
One may assign a single admissible core stratum to each group;
allowing more choices only increases the upper bound.

Since $u\le r_1$ and $s r_2\ge(h+3)r_1$, the expression in
parentheses multiplying $d$ in~\eqref{p4:eq:normalized-extension}
is at most
\[
 (h+3)r_1-sr_2-d-(h+1)s\le0.
\]
Bounded $\delta$ and saturation give
$u^2=r_1^2+O_{\Gamma}(n^{3/2})$ and $s r_1=O(n^{3/2})$. Thus this expression
is at most
\[
 \cE-r_1^2+h r_1(n-r_1)+O_{\Gamma}(n^{3/2}).
\]
Put $n_0=r_1+m_{\deep}=n-\delta$ and $z_{\mathrm{bal}}=r_1-2m_{\deep}$. The identity
\begin{equation}\label{p4:eq:quadratic-identity}
 -r_1^2+h r_1 m_{\deep}
 =\frac{2h-4}{9}n_0^2-\frac{h+4}{3}m_{\deep}z_{\mathrm{bal}}
                  -\frac{2h+5}{9}z_{\mathrm{bal}}^2
\end{equation}
and~\eqref{p3:eq:raw} now give the upper bound
\begin{align*}
 &\left(\frac12r_1^2m_{\deep}+\cX+3m_{\deep}m_{\ge3}-\frac{2h+5}{6}m_{\deep}z_{\mathrm{bal}}\right)
 +\frac{2h-4}{9}n_0^2-\frac{2h+5}{9}z_{\mathrm{bal}}^2+O_{\Gamma}(n^{3/2})\\
 &\hspace{8mm}\le
 \frac{2}{27}n_0^3+\frac{2h-4}{9}n_0^2+O_{\Gamma}(n^{3/2}),
\end{align*}
using Corollary~\ref{p3:lem:eta} with $\eta=-(2h+5)/6$ and
discarding the nonpositive square term. The terms
$2m_{\deep}e+r_1\delta+h r_1\delta$ are $O_{\Gamma}(n)$ here. The polynomial
$2y^3/27+(2h-4)y^2/9$ is increasing for sufficiently large $y$,
so $n_0\le n$ permits replacement by $n$. Finally, the parameter
ledger, the sum over $d,u$, and the factor $2^{n|\Gamma|^2}$ contribute
only $O_{\Gamma}(n\log n)$. The discarded ranges obey stronger bounds,
and finitely many small values of $n$ are absorbed by the implied
constant. This proves~\eqref{p4:eq:gQ}.
\end{proof}

\subsection{The cumulative cutoff and bounded odd cofactors}
\label{p4:sec:odd-part}

\begin{proposition}[The cumulative cutoff]\label{p4:prop:cutoff}
For each fixed $\Gamma$ with nontrivial odd part and all sufficiently
large $x$, the proportion of all groups
of order at most $x$ that admit a normal $2$-subgroup with quotient
$\Gamma$ is at most
\begin{equation}\label{p4:eq:margin}
 2^{-\frac29m^2+O_{\Gamma}(m^{3/2})},\qquad m=\lfloor\log_2x\rfloor.
\end{equation}
\end{proposition}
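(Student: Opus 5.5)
The plan is to compare the count of groups with normal $2$-subgroup and quotient $\Gamma$ against the single term $f(2^m)$, using \cref{p4:prop:fixedQ} above and the Higman lower bound \eqref{p3:eq:BLQR} below. Write $|\Gamma|=2^jb$ with $b>1$ odd. A group of order at most $x$ admitting such a normal subgroup of order $2^n$ has order $2^n|\Gamma|\le x$. This forces $n\le \lfloor\log_2(x/|\Gamma|)\rfloor\le m-j-\lfloor\log_2 b\rfloor=m-v$, since $2^{n}\cdot 2^{j}\cdot 2^{\lfloor\log_2 b\rfloor}\le 2^n|\Gamma|\le x<2^{m+1}$ together with $b>2^{\lfloor\log_2 b\rfloor}$ gives $n+v\le m$. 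The numerator is therefore at most $\sum_{n=0}^{m-v}g_\Gamma(n)$. The denominator is $F(x)\ge f(2^m)\ge 2^{\frac2{27}m^3-\frac49m^2}$ by \eqref{p3:eq:BLQR} with $p=2$.

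Next I would bound the numerator by its top term. The exponent $\frac2{27}n^3+\frac{2h-4}{9}n^2+O_\Gamma(n^{3/2})$ in \eqref{p4:eq:gQ} is eventually increasing in $n$. Hence the sum is at most $(m+1)$ times its value at $n=m-v$, up to finitely many small $n$, which contribute $O_\Gamma(1)$. Write $n=m-v$ and expand:
\[
\tfrac2{27}(m-v)^3=\tfrac2{27}m^3-\tfrac29 v m^2+O_\Gamma(m),\qquad \tfrac{2h-4}{9}(m-v)^2=\tfrac{2h-4}{9}m^2+O_\Gamma(m).
\]
Subtracting the denominator exponent gives
\[
-\tfrac29 vm^2+\tfrac{2h-4}{9}m^2+\tfrac49m^2+O_\Gamma(m^{3/2})=-\tfrac{2(v-h)}{9}m^2+O_\Gamma(m^{3/2}).
\]
By \eqref{p4:eq:h}, $h\le v-1$, so $v-h\ge1$. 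The ratio is therefore at most $2^{-\frac29m^2+O_\Gamma(m^{3/2})}$, where the factor $m+1$ is absorbed into the error term. This is \eqref{p4:eq:margin}.

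The main point to check is that the cutoff $n\le m-v$ is sharp enough. The loss of $v$ in the cubic term must dominate the extension penalty $\frac{2h}{9}n^2$, and this is exactly what the choice of an odd first generator in \cref{p4:lem:generators}, giving $h\le v-1$, ensures. I would also verify the monotonicity of the exponent for large $n$, which holds uniformly once $\Gamma$ is fixed.
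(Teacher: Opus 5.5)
Your proof is correct and follows the paper's argument essentially step for step: the integrality cutoff $n\le m-v$, bounding the sum by $O(m)$ copies of the top term of \eqref{p4:eq:gQ}, the expansion at $n=m-v$ using $h\le v-1$, and division by Higman's lower bound for $f(2^m)$. One small point of phrasing: apply monotonicity only to the polynomial part of the exponent, and bound the $O_\Gamma(n^{3/2})$ error uniformly by $O_\Gamma(m^{3/2})$ for $n\le m$, since the error term itself need not be monotone; this is exactly how the paper handles it.
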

\begin{proof}
Such a group has order $b2^{n+j}$. Write
$u_b=\lfloor\log_2b\rfloor$, so $v=j+u_b$.
The inequalities $b\ge2^{u_b}$ and $b2^{n+j}\le x<2^{m+1}$
imply, by integrality, $n\le m-v$. Hence the numerator is bounded
by $\sum_{n=0}^{m-v}g_{\Gamma}(n)$.
The polynomial in~\eqref{p4:eq:gQ} is increasing for sufficiently
large $n$, and its error is bounded by $O_{\Gamma}(m^{3/2})$ throughout
this range. Bounded kernel orders contribute finitely many
groups. Evaluating at $n=m-v$ gives
\begin{align*}
 \frac{2}{27}(m-v)^3+\frac{2h-4}{9}(m-v)^2
 &=\frac{2}{27}m^3-
   \left(\frac49+\frac29(v-h)\right)m^2+O_{\Gamma}(m)\\
 &\le\frac{2}{27}m^3-\frac69m^2+O_{\Gamma}(m),
\end{align*}
because $h\le v-1$. Summing the $O(m)$ possible kernel orders
adds only $O(\log m)$ to the exponent. By Higman's lower bound
in \eqref{p3:eq:BLQR}, for $m\ge1$,
\begin{equation}\label{p4:eq:denominator}
 F(x)\ge f(2^m)\ge2^{\frac{2}{27}m^3-\frac49m^2}.
\end{equation}
Division proves~\eqref{p4:eq:margin}.
\end{proof}

\begin{corollary}[Bounded odd parts]\label{p4:cor:boundedodd}
For every fixed $b_{\max}>1$, the proportion of groups of order at most
$x$ whose odd part lies in $1<b\le b_{\max}$ tends to zero.
\end{corollary}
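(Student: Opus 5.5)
The plan is to sum Proposition~\ref{p4:prop:cutoff} over the finitely many quotient groups $\Gamma$ that can occur. A group $G$ of order $2^k b$ with $b$ odd and $1<b\le b_{\max}$ is solvable-by-bounded in the following sense: let $N=O_2(G)$ be its largest normal $2$-subgroup. I would like $G/N$ to have order bounded in terms of $b_{\max}$. However, this is not automatic, since $G/N$ may still contain a large $2$-part. I will therefore work instead with a Sylow-type reduction, as described next.

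First, $G$ acts by conjugation on the set of its Sylow $2$-subgroups. The number of Sylow $2$-subgroups divides $b\le b_{\max}$. The kernel of this action is a normal subgroup whose index divides $b!\le b_{\max}!$, so it has bounded index. Its Sylow $2$-subgroup $P$ is the intersection of all Sylow $2$-subgroups, namely $O_2(G)$. Hence $|G/O_2(G)|$ divides $2^{?}\cdot b$, where the $2$-part is at most that of $b_{\max}!$. Concretely, I claim that $O_2(G)=\bigcap_{S\in\mathrm{Syl}_2(G)}S$ has index dividing $b\cdot|\text{image in }\mathrm{Sym}(n_2)|_2$. This is bounded by $B:=b_{\max}\cdot b_{\max}!$.

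So $\Gamma=G/O_2(G)$ is a group of order at most $B$ with nontrivial odd part. Its odd part equals $b>1$, because $O_2(G)$ is a $2$-group. Consequently $G$ is counted by $g_\Gamma(n)$ for some $\Gamma$ among the finitely many isomorphism types of groups of order at most $B$ with nontrivial odd part.

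Applying Proposition~\ref{p4:prop:cutoff} to each such $\Gamma$ bounds the proportion by $2^{-\frac29 m^2+O_\Gamma(m^{3/2})}$ with $m=\lfloor\log_2 x\rfloor$. Summing over the finite list of $\Gamma$ multiplies this by a constant depending only on $b_{\max}$, and the result still tends to $0$ as $x\to\infty$.

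The main obstacle is the bounded-index step, namely verifying that the normal $2$-core has bounded index. I would handle it via the permutation action on $\mathrm{Syl}_2(G)$ exactly as above. The kernel $K$ of that action normalizes every Sylow $2$-subgroup, so its Sylow $2$-subgroup $K\cap S$ is the same for all $S$ and is normal in $G$. It is contained in $O_2(G)$, and $|G:K\cap S|\le b_{\max}!\cdot b_{\max}$.
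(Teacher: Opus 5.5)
Your proof is correct and follows the paper's route: produce a normal $2$-subgroup of $G$ whose quotient has order bounded in terms of $b_{\max}$ (with odd part $b>1$), then apply Proposition~\ref{p4:prop:cutoff} to each of the finitely many possible quotients and take a union bound. The only difference is that the paper uses the action of $G$ on the $b$ cosets of a single Sylow $2$-subgroup $S$. The kernel of that action is $\bigcap_g S^g$, which is already a $2$-group, and $G/\bigcap_g S^g\hookrightarrow\mathrm{Sym}(b)$. So boundedness is in fact automatic, and your detour through the conjugation action on $\mathrm{Syl}_2(G)$ and the normal Sylow subgroup of its kernel is valid but unnecessary.
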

\begin{proof}
For $|G|=b2^k$ with $b$ odd and $1<b\le b_{\max}$, choose a Sylow
$2$-subgroup $S$ and take its core
$N=\bigcap_{g\in G}S^g$. The action on the $b$ cosets of $S$ has
kernel $N$, so it embeds $\Gamma=G/N$ into $\operatorname{Sym}(b)$.
Since $N\le S$, it is a normal $2$-subgroup of $G$.
Moreover $|\Gamma|=2^jb$ for some $j\ge0$. For bounded $b$ there are
only finitely many possible quotient isomorphism types, all of
nontrivial odd part. Let $\Gamma_1,\ldots,\Gamma_t$ represent these
types; the list depends only on $b_{\max}$. If $t=0$, there is nothing
to prove.

Put $m=\lfloor\log_2x\rfloor$. For each $\Gamma_i$,
Proposition~\ref{p4:prop:cutoff} bounds the proportion of groups
admitting a normal $2$-subgroup with quotient $\Gamma_i$ by
\[
 2^{-\frac29m^2+C_{\Gamma_i}m^{3/2}}
\]
for all sufficiently large $m$, where $C_{\Gamma_i}\ge0$ is fixed.
Since the list is finite, both these constants and the thresholds
can be bounded in terms of $b_{\max}$ alone. In particular, with
$C=\max_{1\le i\le t}C_{\Gamma_i}$, for all sufficiently large
$m$ we have
\[
 -\frac29m^2+C m^{3/2}\le-\frac19m^2.
\]
Thus the $O_{\Gamma}(m^{3/2})$ error in
Proposition~\ref{p4:prop:cutoff} is absorbed by its negative quadratic
term. Every group under consideration belongs to at least one of
these quotient families, so the union bound gives
\[
 \frac{\#\{[G]:|G|\le x,\ \text{the odd part of }|G|
                    \text{ lies in }(1,b_{\max}]\}}{F(x)}
 \le t\,2^{-m^2/9}\longrightarrow0.
\]
\end{proof}

\subsection{Large odd cofactors and domination by \texorpdfstring{$2$}{2}-groups}

\begin{proposition}[Large odd parts]\label{p4:prop:largeodd}
The proportion of groups of order at most $x$ whose orders have
odd part at least $2^{342}$ tends to zero.
\end{proposition}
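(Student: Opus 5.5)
The plan is to bound the number of groups of order $n=b2^k\le x$ with odd part $b\ge 2^{342}$ using Pyber's prescribed-Sylow theorem \cite[p.~204]{Pyber}, and to compare the result with the denominator bound \eqref{p4:eq:denominator}, $F(x)\ge f(2^m)\ge 2^{\frac{2}{27}m^3-\frac49m^2}$, where $m=\lfloor\log_2x\rfloor$.

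The input we take from Pyber is the following estimate. The number of groups of order $n$ with prescribed Sylow subgroups is at most $n^{O(\mu(n)^2)}$, or in the sharper form we need, at most $n^{c\,\mu(n)+d}$ times the product of the numbers of choices of the Sylow subgroups; we would check which explicit constant Pyber states. Summing over the Sylow choices then gives
\[
f(n)\le \prod_{p\mid n} f(p^{k_p})\cdot n^{O(\mu(n)^2)} .
\]
The Sylow factor is controlled prime by prime using \cref{intro:sharp}, which gives $\log_2 f(p^{k_p})\le \log_2 p\,\bigl(\frac2{27}k_p^3+O(k_p)\bigr)$. With $\log_2 n\le m+1$, the odd primes carry total weight $\sum_{p\text{ odd}}k_p\log_2p=\log_2 b$, while the $2$-part has $k\le m-\log_2 b$. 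Since each $k_p\log_2 p\le \log_2 b$, the odd-prime contribution is at most $\frac2{27}(\log_2 b)^3$ up to lower order, and cubic convexity then gives
\[
\frac2{27}\bigl(k^3+(\log_2 b)^3\bigr)\le \frac2{27}m^3-\frac29\,m^2\log_2 b+\cdots
\]
whenever $\log_2 b\le m/2$. If instead $\log_2 b>m/2$, we get a fixed cubic deficit.

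The main obstacle is the cross term $n^{O(\mu^2)}$ in Pyber's bound, together with the $O(\mu^{5/3})$-type errors. These cost roughly $C\mu^2 m$. If $\mu$ is comparable to $m$, this is cubic and cannot be absorbed by the quadratic saving $\frac29 m^2\log_2 b$. Two remedies are available. One is to use Pyber's sharper form of the prescribed-Sylow count, which (as the threshold $2^{342}$ suggests) has an explicit exponent of roughly $n^{\mathrm{const}\cdot\mu}$; that costs $O(m^2)$, a quadratic term which $\frac29 m^2\log_2 b$ beats once $\log_2 b$ exceeds an explicit constant, and matching that constant is presumably where $342$ comes from. The other is to split off the $2$-part, treating the odd-order contribution with the generic bound $f(b)\le b^{\frac2{27}\mu(b)^2+O(\mu(b)^{5/3})}$ and the $2$-Sylow via \cref{intro:sharp}.

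Finally, we sum over all $n\le x$. The number of possible orders is at most $x\le 2^{m+1}$, which is negligible. The resulting bound on the proportion is $2^{-(\frac29\log_2 b-C)m^2+o(m^2)}$, uniformly once $b\ge 2^{342}$, and this tends to $0$. I expect the delicate step to be matching Pyber's explicit constants so that the quadratic saving $\frac29\log_2 b$ strictly exceeds the quadratic overhead.
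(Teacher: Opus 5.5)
Your overall route is the paper's: Pyber's prescribed-Sylow theorem in its explicit form $f(n)\le n^{75\mu(n)+16}\prod_{p\mid n}f(p^{v_p(n)})$, the bound of Theorem~\ref{intro:sharp} on each Sylow factor, and comparison with $F(x)\ge f(2^m)$. You are right that the $n^{O(\mu(n)^2)}$ version you display first is useless here. Your ``second remedy'' does not work on its own: bounding $f(b)$ and $f(2^k)$ separately does not bound $f(2^kb)$ without a prescribed-Sylow type theorem to combine them. The constant $342$ is where $\frac29\cdot342=76$ beats Pyber's $75$ with room for the denominator's quadratic loss. Working with $\ell=\log_2x$, that denominator is $F(x)\ge2^{\frac{2}{27}\ell^3-\frac23\ell^2-O(\ell)}$.

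There is, however, a genuine gap when the odd part is most of the order. You bound $\sum_{p\ \mathrm{odd}}\log_2p\,k_p^3$ by $(\log_2b)^3$, which amounts to replacing $\log_2p\,k_p^3$ by $(k_p\log_2p)^3$. That discards exactly the factor that makes large odd parts harmless, so your claim that $\log_2b>m/2$ gives a fixed cubic deficit does not follow. Indeed, $k^3+B^3$ with $k+B\approx\ell$ has deficit only $3kB\ell$, which is $o(\ell^3)$ when $k=o(\ell)$. Take $k$ bounded and $B=\log_2b\approx\ell$, for instance $n=3^a$ just below $x$. Your bound is then $\frac2{27}\ell^3+75\ell^2+O(\ell)$, which exceeds the denominator.

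The missing observation is that odd primes satisfy $\log_2p\ge\rho:=\log_23>1$. With $w_p=k_p\log_2p$ one has $\log_2p\,k_p^3=w_p^3/(\log_2p)^2\le w_p^3/\rho^2$, so the odd Sylow factors contribute at most $\frac2{27}B^3/\rho^2$. This gives
\[
\log_2f(n)\le\frac{2}{27}\Bigl((\ell-B)^3+\frac{B^3}{\rho^2}\Bigr)+75\ell^2+O(\ell).
\]
The bracket is convex in $B$ on $[0,\ell]$, so on $[342,\ell]$ its maximum is at an endpoint. At $B=342$ the bound becomes $\frac2{27}\ell^3-\ell^2+O(\ell)$ after adding $75\ell^2$. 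At $B=\ell$ the cubic coefficient is $\frac{2}{27\rho^2}<\frac2{27}$.

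This one convexity step also replaces your split at $B=m/2$. There the dots hide a term $+\frac29mB^2$, which is not lower order when $B\asymp m$.

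A minor point: $2^kb\le x$ gives $k\le\ell-B<m+1-B$, not $k\le m-\log_2b$. This only shifts the quadratic bookkeeping slightly, and $342$ still suffices.
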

\begin{proof}
For an integer $n\ge2$ and a prime $p$, let $v_p(n)$ be the
largest nonnegative integer $a$ such that $p^a\mid n$, and let
$\mu(n)=\max_{p\mid n}v_p(n)$.
Pyber's prescribed-Sylow theorem~\cite[p.~204]{Pyber} gives
\begin{equation}\label{p4:eq:pyber}
 f(n)\le n^{75\mu(n)+16}\prod_{p\mid n}f(p^{v_p(n)}).
\end{equation}
The sharp uniform bound~\eqref{p3:eq:sharp} implies, in particular,
$f(p^a)\le p^{2a^3/27+Ca}$ for $a\ge1$, with an absolute $C$.
Let $\ell=\log_2x$, write $n=2^ab\le x$ with
$a=v_2(n)$ and $b$ odd, and put
$B=\log_2b$ and $\rho=\log_2 3$. For
$w_p=v_p(n)\log_2p$, unique prime factorization gives
$\sum_{p\text{ odd}}w_p=B$ and
$\sum_p w_p=\log_2n\le\ell$. Hence
\[
 \sum_{p\text{ odd}}v_p(n)^3\log_2p
 =\sum_{p\text{ odd}}\frac{w_p^3}{(\log_2p)^2}
 \le\frac{\sum_{p\text{ odd}}w_p^3}{\rho^2}
 \le\frac{B^3}{\rho^2}.
\]
Since $a+B=\log_2n\le\ell$ and
$\mu(n)\le\log_2n\le\ell$, equation~\eqref{p4:eq:pyber}
yields
\begin{equation}\label{p4:eq:large-odd-exponent}
 \log_2 f(n)\le
 \frac{2}{27}\left((\ell-B)^3+\frac{B^3}{\rho^2}\right)
 +75\ell^2+O(\ell).
\end{equation}
The function $(\ell-B)^3+B^3/\rho^2$ is convex on $[0,\ell]$,
since its second derivative is $6(\ell-B)+6B/\rho^2\ge0$.
For $342\le B\le\ell$, its maximum is therefore at an endpoint.
At $B=342$, multiplication by $2/27$ gives
$2\ell^3/27-76\ell^2+O(\ell)$; adding $75\ell^2$ leaves
$2\ell^3/27-\ell^2+O(\ell)$.
At $B=\ell$, the smaller cubic coefficient
$2/(27\rho^2)<2/27$ gives a stronger bound for sufficiently large
$\ell$. Summing over at most $\lfloor x\rfloor$ integer orders
adds at most $\ell$ to the logarithm. Thus the number of groups
under consideration is at most
\[
 2^{\frac{2}{27}\ell^3-\ell^2+O(\ell)}.
\]
For $m=\lfloor\ell\rfloor$, write $t=\ell-m\in[0,1)$.
The denominator bound~\eqref{p4:eq:denominator} and the expansion
$Q(\ell-t)=2\ell^3/27-(4/9+2t/9)\ell^2+O(\ell)$ give
\[
 \log_2 F(x)\ge\frac{2}{27}\ell^3-\frac23\ell^2-O(\ell).
\]
The required proportion is at most
$2^{-\ell^2/3+O(\ell)}$, which tends to zero.
\end{proof}

\begin{corollary}[Domination by $2$-groups]\label{p4:cor:two-groups}
\[
 \lim_{x\to\infty}\frac{F_2(x)}{F(x)}=1.
\]
\end{corollary}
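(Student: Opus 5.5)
The plan is to split the groups of order $n\le x$ according to the odd part $b$ of $n$. There are three cases: $b=1$, $1<b\le 2^{342}$, and $b\ge 2^{342}$. Write
\[
 F(x)=F_2(x)+\#\{[G]:|G|\le x,\ 1<b\le 2^{342}\}+\#\{[G]:|G|\le x,\ b\ge 2^{342}\}.
\]
Since every odd part is an odd integer, these three ranges are exhaustive. In particular, the intermediate range contains only the odd $b$ with $1<b<2^{342}$.

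First I will apply Corollary~\ref{p4:cor:boundedodd} with the fixed constant $b_{\max}=2^{342}$. It shows that the proportion of groups of order at most $x$ whose odd part lies in $(1,b_{\max}]$ tends to zero. Next, Proposition~\ref{p4:prop:largeodd} shows that the proportion with odd part at least $2^{342}$ also tends to zero.

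Dividing the decomposition by $F(x)$ then gives $F_2(x)/F(x)=1-o(1)$. Since trivially $F_2(x)\le F(x)$, the limit equals $1$.

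The only point needing care is that the threshold $2^{342}$ is an absolute constant. This is what lets the bounded-odd-part corollary apply with a fixed $b_{\max}$, so that its finite list of quotient types $\Gamma_i$ depends on nothing else. Both proportions are measured against the same denominator $F(x)$, so they can be added directly. I expect no genuine obstacle: the substantive work lies in the two cited results, and this corollary is just their combination with the exhaustive case split.
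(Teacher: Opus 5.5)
Your proposal is correct and is essentially the paper's proof. The paper likewise observes that every group counted by $F(x)-F_2(x)$ has nontrivial odd part. It bounds those with odd part at least $2^{342}$ by Proposition~\ref{p4:prop:largeodd} and those with $1<b<2^{342}$ by Corollary~\ref{p4:cor:boundedodd} with the fixed bound $b_{\max}=2^{342}$, and concludes that $(F(x)-F_2(x))/F(x)\to0$.
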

\begin{proof}
Every group counted by $F(x)-F_2(x)$ has nontrivial odd part.
By Proposition~\ref{p4:prop:largeodd}, the groups whose orders have
odd part at least $2^{342}$ contribute $o(F(x))$.
Corollary~\ref{p4:cor:boundedodd} gives the same bound for those
with odd part $1<b<2^{342}$. Thus
\[
 \frac{F(x)-F_2(x)}{F(x)}\longrightarrow0,
\]
which proves the asserted limit.
\end{proof}

\subsection{Concentration in Frattini class two}\label{p4:sec:class-2}

\begin{proof}[Proof of Theorem~\ref{p4:thm:main}]
By Corollary~\ref{p4:cor:two-groups}, $F_2(x)/F(x)\to1$.
We first show that the $2$-groups outside $\mathfrak H_2$ are
negligible in the cumulative count.
By Proposition~\ref{p3:prop:frattini-concentration}, there are
constants $\varepsilon>0$ and $k_0$ such that, for $k\ge k_0$,
the number
\[
 e_k=f(2^k)-f_{\mathfrak H_2}(2^k)
 \qquad(k\ge0)
\]
of groups of order $2^k$ outside $\mathfrak H_2$ satisfies
\[
 e_k\le2^{Q(k)-\varepsilon k^2}.
\]
After increasing $k_0$ if necessary, we may assume that the polynomial
$Q(y)-\varepsilon y^2$ is increasing for $y\ge k_0$.
For $m=\lfloor\log_2x\rfloor$, this gives
\[
 \sum_{0\le k\le m}e_k
 \le O(1)+(m+1)2^{\frac{2}{27}m^3-\frac49m^2-\varepsilon m^2}.
\]
Division by $F(x)\ge2^{\frac{2}{27}m^3-\frac49m^2}$ gives
\[
 \frac{F_2(x)-F_{\mathfrak H_2}(x)}{F(x)}
 =\frac{\sum_{k=0}^{m}e_k}{F(x)}\longrightarrow0.
\]
Together with $F_2(x)/F(x)\to1$, this proves
$F_{\mathfrak H_2}(x)/F(x)\to1$.

It remains to justify the assertions about exact class.
There are at most $2^k$ abelian groups of order $2^k$, by the
partition count. Thus all abelian $2$-groups of order at most
$x$ number at most $\sum_{k=0}^m2^k<2^{m+1}$, also negligible
relative to $2^{\frac{2}{27}m^3-\frac49m^2}$. Hence the nonabelian members of
$\mathfrak H_2$ still account for a proportion tending to one of
all finite groups. For each such member,
$1\ne[G,G]\le\Phi(G)\le Z(G)$ and $\Phi(G)^2=1$, so both its
Frattini class and its nilpotency class are exactly two. In particular,
\[
 F_{\mathfrak H_2}(x)-2^{m+1}\le F_{2,2}(x)\le F(x).
\]
Dividing by $F(x)$ proves $F_{2,2}(x)/F(x)\to1$ as well.
\end{proof}

\subsection{Consequences for BNV Questions 22.16--22.18}
\label{p4:sec:bnv-2218}

\begin{proof}[Proof of Corollary~\ref{intro:bnv-2218}]
Put $Q(t)=2t^3/27-4t^2/9$, $a_m=f(2^m)$, and
$S_m=\sum_{j=0}^{m}a_j$.
By Theorem~\ref{intro:sharp}, $\log_2a_m=\frac{2}{27}m^3-\frac49m^2+O(m)$.
Summing the upper bounds for $a_j$ with $j<m$, and dividing by
the lower bound for $a_m$, gives
\[
 \frac{S_m-a_m}{a_m}
 \le2^{Q(m-1)-\frac{2}{27}m^3-\frac49m^2+O(m)}
 =2^{-2m^2/9+O(m)}.
\]
Here one may bound the finitely many small $j$ separately and use
that $Q(t)+Ct$ is eventually increasing for any fixed $C>0$.
Thus $S_m\sim a_m$. For $m=\lfloor\log_2x\rfloor$, we have
\[
 F_{\mathfrak H_2}(x)\le S_m\le F(x).
\]
Theorem~\ref{intro:domination} therefore gives $F(x)\sim S_m\sim a_m$.
Taking $x=2^m-1$ proves $F(2^m-1)\sim a_{m-1}$; since
$\log_2(a_{m-1}/a_m)=-2m^2/9+O(m)$, it also proves the displayed
ratio estimate and the eventual strict inequality.
For $1\le n<2^m$, we then have
$f(n)\le F(2^m-1)<f(2^m)$.

For an odd prime $p$, Pantelidakis's estimate
\cite[p.~265]{BNV} gives
\[
 f(2^m p)
 \le2^{3m/2}a_m+2^{(m^2+4m-3)/2}a_{m-1}.
\]
Applying Theorem~\ref{intro:sharp} to the powers of $2$ yields
\[
 \frac{f(2^m p)}{f(2^{m+2})}
 \le2^{-4m^2/9+O(m)}+2^{-m^2/6+O(m)}
 \longrightarrow0,
\]
with absolute implied constants independent of $p$.
For $p=2$, the same theorem gives
$a_{m+1}/a_{m+2}=2^{-2m^2/9+O(m)}\to0$.
Hence a single $m_0$ suffices for every prime, completing the proof.
\end{proof}



\nocite{*}
\bibliographystyle{alpha}
\bibliography{references-v3}

\end{document}